    \newcommand{\texorpdfstring}[2]{#1}
    \newcommand{\href}[2]{#2}
\newcommand{\AC}[1]{{\color{ForestGreen} AC: #1}}
  \DeclareFontFamily{OT1}{pzc}{}
  \DeclareFontShape{OT1}{pzc}{m}{it}{<-> s * [1.100] pzcmi7t}{}
  \DeclareMathAlphabet{\mathpzc}{OT1}{pzc}{m}{it}
\theoremstyle{plain}
  \newtheorem{theorem}{Theorem}[section]
  \newtheorem{lemma}[theorem]{Lemma}
  \newtheorem{proposition}[theorem]{Proposition}
  \newtheorem{corollary}[theorem]{Corollary}
    \newtheorem{conjecture}[theorem]{Conjecture}
\theoremstyle{definition}
  \newtheorem{definition}[theorem]{Definition}
  \newtheorem{example}[theorem]{Example}
  \newtheorem{remark}[theorem]{Remark}
\numberwithin{figure}{section}
\renewcommand{\thesection}{\arabic{section}}
\renewcommand{\theequation}{\thesection.\arabic{equation}}
 \DeclareFontFamily{U}{manual}{}
 \DeclareFontShape{U}{manual}{m}{n}{ <->  manfnt }{}
 \newcommand{\manfntsymbol}[1]{%
    {\fontencoding{U}\fontfamily{manual}\selectfont\symbol{#1}}}
\endgroup\end{trivlist}}
\def\f{\varphi}
\def\O{\mathcal{O}}
\def\a{\alpha}
\def\lam{\lambda}
\def\ZZ{\mathbb{Z}}
\def\QQ{\mathbb{Q}}
\def\PP{\mathbb{P}}
\def\L{\mathcal{L}}
\def\T{\mathsf{T}}
\def\sI{\mathsf{I}}
\def\e{\varepsilon}
\def\O{\mathcal{O}}
\renewcommand{\O}{\mathcal{O}}
\def\CC{\mathbb{C}}
\def\|{\Vert}
\newcommand\equa[1]{\begin{equation}\begin{split}#1\end{split}\nonumber\end{equation}}
\newcommand\equanum[1]{\begin{equation}\begin{split}#1\end{split}\end{equation}}
\newcommand\scr[1]{\mathscr{#1}}
\def\seq{\subseteq}
\def\tr{\operatorname{Tr}}
\def\hom{\operatorname{Hom}}
\def\End{\operatorname{End}}
\def\NO{\operatorname{NO}}
\def\shom{\scr{H}om}
\def\td{\operatorname{td}}
\def\quot{\operatorname{Quot}}
\def\rk{\operatorname{rank}}
\def\tr{\operatorname{tr}}
\def\gl{\operatorname{SL}}
\def\vir{\textnormal{vir}}
\def\Exp{\operatorname{Exp}}
\def\hilb{\operatorname{Hilb}}
\def\Span{\operatorname{span}}
\def\gl{\operatorname{GL}}
\def\pt{\text{pt}}
\def\redu{\text{red}}
\def\dt4{\text{DT}_4}
\def\ch{\operatorname{ch}}
\def\sym{\operatorname{Sym}}
\def\ext{\operatorname{Ext}}
\def\loc{\text{loc}}
\def\->{\rightarrow}
\def\=>{\Rightarrow}
\def\<{\langle}
\def\>{\rangle}
\title{Equivariant Segre and Verlinde invariants for Quot schemes}
\author{Arkadij Bojko}
\address{ETH Zürich \\
Department of Mathematics \\
Rämistrasse 101\\
8092 Zürich}
\email{arkadij.bojko@math.ethz.ch}
\author{Jiahui Huang}
\address{ETH Zürich \\
Department of Mathematics \\
Rämistrasse 101\\
8092 Zürich}
\email{huangjia@student.ethz.ch}
\begin{document}


\begin{abstract}
The problem of studying the two seemingly unrelated sets of invariants forming the Segre and the Verlinde series has gone through multiple different adaptations including a version for the virtual geometries of Quot schemes on surfaces and Calabi-Yau fourfolds. 
Our work is the first one to address the equivariant setting for both $\CC^2$ and $\CC^4$ by examining higher degree contributions which have no compact analogue. 
\begin{enumerate}
    \item For $\CC^2$, we work mostly with virtual geometries of Quot schemes. After connecting the equivariant series in degree zero to the existing results of the first author for compact surfaces, we extend the Segre-Verlinde correspondence to all degrees and to the reduced virtual classes. Apart from it, we conjecture an equivariant symmetry between two different Segre series building again on previous work.
    \item For $\CC^4$, we give further motivation for the definition of the Verlinde series. Based on empirical data and additional structural results, we conjecture the equivariant Segre-Verlinde correspondence and the Segre symmetry analogous to the one for $\CC^2$.
\end{enumerate}
\end{abstract}

\maketitle
\tableofcontents

\section{Introduction}

\subsection{Definitions of Segre and Verlinde invariants}

Let $Y$ be a smooth quasi-projective variety. For a torsion-free sheaf $E$, the Quot scheme $\quot_Y(E,n)$ parameterizes quotients
\[E\twoheadrightarrow F\]
such that
\[\rk(F)=0,\quad c_1(F)=0,\quad \chi(F)=n.\]
When $Y=\CC^2$ is a toric surface or $Y=\CC^4$ is a toric Calabi-Yau 4-fold, we shall define equivariant Segre and Verlinde invariants on these Quot schemes and find relations between them. In the non-equivariant case where $Y$ is a smooth projective curve, surface, or a Calabi-Yau 4-fold, such relations are called Segre-Verlinde correspondences and are studied in \cite{KHilb,boj,Bojko2,GM,G_ttsche_2022,johnson,Marian_2021}.

The Segre and Verlinde invariants are defined by taking integrals or Euler characteristics of various insertions constructed using \emph{tautological bundles} 
 \[V^{[n]}=p_*(\mathcal{F}\otimes q^*V)\]
 on $\quot_Y(E,n)$ constructed for some vector bundle $V$ on $Y$. Here $$\quot_Y(E,n)\xleftarrow{p}\quot_Y(E,n)\times Y\xrightarrow{q}Y$$ are projections, and $\mathcal{F}$ is the universal quotient sheaf. This definition extends to the Grothendieck groups and associates to each $\alpha\in K^0(Y)$ the \textit{tautological class} $\alpha^{[n]}\in K^0(\quot_Y(E,n))$.  

\subsubsection{Equivariant invariants on Hilbert schemes of surfaces}

Let $S$ be a toric quasi-projective surface with an action of $\T=(\CC^*)^2$. Before introducing the equivariant invariants, we give a brief summary of equivariant cohomology and equivariant integration with a further background in Section \ref{sec:equivar-coho}. Given a $\T$-representation $V$, define its \emph{equivariant characteristic classes} by considering the associated bundle 
\[E\T\times_\T V\-> E\T\times_\T\{\pt\}=B\T\]
and taking its characteristic classes in $H^*(B\T)=H^*_\T(\pt)$. Denote $c^\T,s^\T,e_\T,\ch_\T,\td_\T$ the equivariant Chern class, Segre class, Euler class, Chern character, and Todd class respectively. From here forward, we will always use equivariant classes for toric varieties, and we will omit the torus $\T$ from the notations when it is clear from the context. In our equivariant setting, the integration $\int_{\hilb^n(S)}$ denotes the equivariant push-forward, and $\chi$ refers to the equivariant Euler characteristic.

Consider $E=\O_S$, so $\quot_S(\O_S,n)$ is the smooth Hilbert scheme $\hilb^n(S)$ parametrizing ideal sheaves of 0-dimensional subschemes of $S$ of length $n$. The action of $\T$ on $S$ lifts to an action of $\T$ on $\hilb^n(S)$, giving an equivariant structure to $\a^{[n]}$ for any equivariant K-theory class $\a\in K_\T(S)$. The \emph{Segre and Chern series}, first defined by \cite{Tyurin_1994} in the study of Donaldson invariants, are respectively
\equanum{\label{defn:chern-series}I^{\mathcal{S}}(\alpha;q):=&\sum_{n=0}^\infty q^n\int_{\hilb^n(S)}s(\alpha^{[n]}),\\
I^{\mathcal{C}}(\alpha;q):=&\sum_{n=0}^\infty q^n\int_{\hilb^n(S)}c(\alpha^{[n]}).}
These series are related by $c(\alpha^{[n]})=s(-\alpha^{[n]})$. The \emph{Verlinde series} is originally defined for moduli spaces of bundles on curves but it has since then been extended even to moduli spaces of torsion-free sheaves on surfaces in \cite{G_ttsche_2022}. For Hilbert schemes it was defined originally in \cite{EGL} in a slightly different form than we use here:
\equa{I^{\mathcal{V}}(\a;q):=&\sum_{n=0}^\infty q^n\chi(\hilb^n(S),\det(\a^{[n]})).
}

\subsubsection{Virtual invariants on Quot schemes}
In general, Quot schemes are not smooth, in which case we do not have a deformation invariant fundamental class to integrate against. One way to resolve this issue is to work with a virtual fundamental class $[\quot_Y(E,n)]^\vir$.

For a smooth projective surface $S$ and a vector bundle $E$, a perfect obstruction theory for $\quot_Y(E,n)$ of virtual dimension $nN$ was constructed in \cite[Lemma 1]{MOP1} and \cite[Proposition 5]{stark}. When $E$ is a torsion-free sheaf, the same was remarked in \cite{Bojko2}. To this perfect obstruction theory, one may associate a virtual fundamental class $[\quot_S(E,n)]^\vir$ \cite{BF,LiTian} and a virtual structure sheaf $\O^\vir$ \cite{k-local-vir}.  The virtual invariants are defined similarly as before, with the usual fundamental class replaced by the virtual class $[\quot_S(E,n)]^\vir$, and the Euler characteristic replaced by the virtual Euler characteristic $\chi^\vir(-):=\chi(-\otimes \O^\vir)$ which are both invariant under the deformation of the complex structure of $S$.
 
Unlike for surfaces and Fano 3-folds, the usual obstruction theory for a Quot scheme of a Calabi-Yau 4-fold $X$ is not perfect, so the previous method does not induce a virtual fundamental class. However, using the method of \cite{RICOLFI} and tools from \cite{Oh:2020rnj}, a virtual class $[\quot_X(E,n)]^\vir_{o(\mathcal{L})}\in H_{2nN}(\quot_X(E,n),\ZZ)$ was constructed in \cite[§2.1]{Bojko2} for $X$ a \emph{strict} Calabi-Yau 4-fold and $E$ a simple rigid locally-free sheaf. Here $\mathcal{L}$ denotes the determinant line bundle $\det\mathbf{R}\shom_{q}(\mathcal{I},\mathcal{I})$ on $\quot_X(E,n)$ where $\mathcal{I}$ is the universal subsheaf. As indicated by the subscript, this class is dependent on some choice of orientation $o(\mathcal{L})$, that is a choice of a square root of the isomorphism
\[Q:\mathcal{L}\otimes\mathcal{L}\->\O_{\quot_X(E,n)}\]
induced by Serre duality. There is also no canonical virtual structure sheaf, and instead, we have a ``twisted" virtual structure sheaf $\hat{\O}^\vir$. The motivation for this notation comes from its relation to the twisted virtual structure sheaf $\hat{\O}^\vir_{\NO}$ of Nekrasov--Okounkov \cite{NO} which was made precise in \cite[(84), §8]{Oh:2020rnj}. Without going into details, $\hat{\O}^\vir_{\NO}$ is defined from the usual virtual structure sheaf $\O^\vir$ on a moduli space with obstruction theory by twisting with $\text{det}^{\frac{1}{2}}\big(K^\vir\big)$ where $K^\vir$ is the virtual cotangent bundle. Square roots of determinant line bundles appear also in the construction of $\hat{\O}^\vir$, but this time they are used to make the result independent of the choices made. Another distinguishing feature is that 
$$
\hat{\O}^\vir\in K_0\big(\quot_Y(E,n), \ZZ\big[2^{-1}\big]\big)
$$
is not an integer class precisely because of the square-root determinant line bundles. To obtain integer invariants, the first author introduced in \cite[§5.3]{boj} and \cite[§1.4]{Bojko2} the \textit{untwisted virtual structure sheaf}
$$
\O^\vir = \hat{\O}^\vir\otimes \mathsf{E}^{\frac{1}{2}}\,,\qquad \mathsf{E} = \det\big((E^*)^{[n]}\big)\,.
$$
In Section \ref{sec:CY4ktheory}, we motivate this definition further by showing in the case  of $X=\CC^4$ that $\mathsf{E}^{-\frac{1}{2}}$ naturally appears in the construction of $\hat{\O}^\vir$ as the only term that is not an integer class. As a consequence, we prove
\begin{proposition}[Proposition \ref{prop:integral}]
The untwisted virtual structure sheaf is integral:
$$
\O^\vir\in K_0\big(\quot_X(E,n), \ZZ\big)\,.
$$
\end{proposition}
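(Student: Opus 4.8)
The plan is to make precise the sentence preceding the proposition: on $\quot_{\CC^4}(E,n)$ we will exhibit a factorisation $\hat{\O}^\vir = \mathcal{A}\otimes\mathsf{E}^{-\frac12}$ with $\mathcal{A}\in K_0\big(\quot_{\CC^4}(E,n),\ZZ\big)$. Granting this, $\O^\vir = \hat{\O}^\vir\otimes\mathsf{E}^{\frac12} = \mathcal{A}$ is integral by construction, which is exactly the assertion. So the whole proof reduces to locating the single non-integral factor of $\hat{\O}^\vir$ and recognising it as $\mathsf{E}^{-\frac12}$.

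First I would unpack the construction of $\hat{\O}^\vir$ from \cite{Oh:2020rnj,Bojko2}. From the universal sequence $0\to\mathcal{I}\to q^*E\to\mathcal{F}\to 0$ on $\quot_{\CC^4}(E,n)\times\CC^4$ one forms the self-dual complex $T^\vir$ controlling the deformation theory, with $\det T^\vir = \mathcal{L}$ (using that $\mathbf{R}\shom_q(E,E)$ contributes trivially, as $E$ is rigid and simple); Serre duality on $\CC^4$ makes it self-dual up to the equivariant weight $\kappa$ of the canonical bundle $K_{\CC^4}=(t_1t_2t_3t_4)^{-1}$, and the orientation $o(\mathcal{L})$ amounts to a choice of ``half'' $\mathsf{H}$ with $T^\vir = \mathsf{H} + \kappa\otimes\mathsf{H}^\vee$. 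Writing $\hat{\O}^\vir$ in terms of $\mathsf{H}$ — a square root of $\Lambda^\bullet T^\vir$ in the Nekrasov--Okounkov normalisation — gives, after restriction to the $\T$-fixed points (the $N$-tuples of solid partitions of size $n$) and up to an orientation sign, a product of $(\det\mathsf{H})^{-\frac12}$ with a factor assembled from $\Lambda^\bullet\mathsf{H}$, genuine line bundles, and powers of $\kappa$. This second factor is visibly an element of $K_0(-,\ZZ)$ — it is the ordinary virtual-structure-sheaf-type contribution — so the \emph{only} potentially non-integral term is $(\det\mathsf{H})^{-\frac12}$, and the proposition comes down to computing $\det\mathsf{H}$ modulo squares of line bundles.

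The key step is then the determinant identity $\det\mathsf{H}\cong\mathsf{E}$ up to a square, which I would prove globally from the universal sequence by additivity of $\det$ along distinguished triangles, rather than fixed point by fixed point. Writing $\mathcal{I} = q^*E - \mathcal{F}$ in K-theory exhibits $T^\vir$ as the sum of the tautological class $(E^*)^{[n]} = p_*(\mathcal{F}\otimes q^*E^*)$ together with its Serre-dual companion $\big((E^*)^{[n]}\big)^\vee\otimes\kappa$ — these form one ``half plus conjugate half'' block, with half $(E^*)^{[n]}$ contributing $\det\big((E^*)^{[n]}\big) = \mathsf{E}$ — and a genuinely self-dual (up to $\kappa$) block built from $\mathbf{R}p_*\mathbf{R}\shom(\mathcal{F},\mathcal{F})$; rigidity and simplicity of $E$ again make the $\mathbf{R}\shom(E,E)$-contribution trivial. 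For the second block, its push-forward has vanishing rank on a fourfold, so the determinant of the block is trivial and that of any Lagrangian half is $2$-torsion; a direct computation with the orientation $o(\mathcal{L})$ should show it is in fact a square. Combining, $\det\mathsf{H} = \mathsf{E}\otimes L^{\otimes 2}$ for a line bundle $L$, whence $(\det\mathsf{H})^{-\frac12} = \mathsf{E}^{-\frac12}\otimes L^{-1}$ and $\O^\vir = \hat{\O}^\vir\otimes\mathsf{E}^{\frac12} = \mathcal{A}\otimes L^{-1}\in K_0\big(\quot_{\CC^4}(E,n),\ZZ\big)$.

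The main obstacle I anticipate is exactly this determinant bookkeeping, and two points make it delicate. First, ``bulk pairs up into squares'' must be upgraded from a statement about $\T$-weights at the isolated fixed points to a statement about \emph{honest} line bundles on all of $\quot_{\CC^4}(E,n)$, not merely squares in rational equivariant K-theory — which is why I would run the computation on the universal complexes over $\quot_{\CC^4}(E,n)\times\CC^4$, where $\det$ is literally additive in triangles. Second, the dependence on $o(\mathcal{L})$ and the many $\kappa$-powers produced both by the $\mathbf{R}\shom(\mathcal{F},\mathcal{F})$-block and by the Nekrasov--Okounkov normalisation must be tracked precisely: they have to cancel so that the residue is exactly $\mathsf{E}^{-1}$ — not $\mathsf{E}^{+1}$, not a $\kappa$-twist of either, and certainly not a stray $\kappa^{\pm\frac12}$ — since only $\mathsf{E}^{-1}$ is undone by the $\mathsf{E}^{\frac12}$-twist built into the definition of $\O^\vir$. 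The vanishing of $\rk\mathbf{R}p_*\mathbf{R}\shom(\mathcal{F},\mathcal{F})$ on a fourfold and the specific normalisation of $\hat{\O}^\vir$ are what should force these cancellations, and verifying this carefully is the real content of Section \ref{sec:CY4ktheory}; once $\det\mathsf{H}\cong\mathsf{E}$ modulo squares is established, the proposition follows at once.
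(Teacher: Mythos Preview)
Your overall strategy is exactly the paper's: isolate the unique non-integral factor of $\hat{\O}^\vir$ and identify it as $\mathsf{E}^{-\frac12}$. Where you diverge is in the execution, and here the paper takes a genuinely different and much shorter route.

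Rather than decomposing $T^\vir$ abstractly into a tautological block $(E^*)^{[n]}+\text{dual}$ and a self-dual $\mathbf{R}\shom(\mathcal{F},\mathcal{F})$-block, the paper uses the explicit Kool--Rennemo presentation of $\quot_{\CC^4}(E,n)$ as the zero locus of an isotropic section $s$ of an $SO(6n^2)$-bundle $B$ on the non-commutative Quot scheme $A$. In that local model Oh--Thomas give a closed formula
\[
\hat{\O}^\vir = (-1)^m\,\mathfrak{e}_\T(\sI^*,s)\big([\![\O_A]\!]\big)\cdot\sqrt{\det}\big(T^*+\sI^*\big),
\]
where $T=T_A$ and $\sI\subset B$ is the chosen positive isotropic subbundle. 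The first factor is manifestly integral, so everything reduces to computing $\det T$ and $\det \sI$. These are one-line computations in the quiver model: $T = R-\End(V)$ with $R=\End(V)^{\oplus 4}\oplus\hom(E,V)$, and since $\det\End(V)$ is canonically trivial and $\prod t_i=1$ one gets $\det T=\det\hom(E,V)=\mathsf{E}$; similarly $\sI = \langle e_4\rangle\wedge\langle e_4\rangle^\perp\otimes\End(V)$ gives $\det\sI=t_4^{2n^2}$, already a square. Hence $\sqrt{\det}(T^*+\sI^*)=\mathsf{E}^{-\frac12}\cdot t_4^{-n^2}$ and the proposition is immediate.

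Two concrete issues with your route. First, the inference ``push-forward has vanishing rank on a fourfold, so the determinant of the block is trivial'' is false in general: rank-zero K-classes can have nontrivial determinant. The correct reason the $\mathbf{R}\shom(\mathcal{F},\mathcal{F})$-contribution drops out is that in the quiver model it becomes (powers of) $\End(V)$, whose determinant is \emph{canonically} trivial---this is what the paper's computation exploits, and it is stronger than anything you extract from self-duality. Second, an orientation $o(\mathcal{L})$ is not a choice of K-theoretic half $\mathsf{H}$ of $T^\vir$; in the paper's framework it corresponds to a positive isotropic subbundle $\sI$ of $B$, and the square-root determinant that appears is $\sqrt{\det}(T^*+\sI^*)$ rather than $(\det\mathsf{H})^{-\frac12}$. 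Your plan could perhaps be salvaged, but the step you flag as ``should show it is in fact a square'' is precisely where the paper's concrete model does the work for free.
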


Before defining the equivariant virtual invariants on $Y=\CC^d$, $d=2,4$, we describe the torus action used on $\quot_Y(E,n)$. Let \[\T_0=(\CC^*)^d/(\sim)=\{(t_1,\ldots,t_d):t_1,\ldots,t_d\neq 0\}/(\sim)\]
act on $Y$ naturally, where we quotient by the subgroup $\<\sim\>=\<t_1t_2t_3t_4\>$ when $d=4$. Let $
\T_1=(\CC^*)^N=\{(y_1,\dots,y_n):y_i\neq 0\}$ and $E=\oplus_{i=1}^N\O_Y\<y_i\>$ be the $\T_1$-equivariant bundle of rank $N$ with weights $y_1,\dots, y_N$. This induces a $\T_0\times \T_1$-action on $\quot_Y(E,n)$ by acting on the middle term of the sequence
\[0\->I\->E\->F\->0.\]
Let $\alpha\in K_{\T_0}(Y)$, then we can write
\[\alpha=[\oplus_{i=1}^r\O_Y\<v_i\>]-[\oplus_{i=r+1}^{r+s}\O_Y\<v_i\>]\]
where $v_1,\dots,v_{r+s}$ are its $\T_0$-weights. However, instead of thinking of $v_i$ as $\T_0$-weights, we would like to view them as generic parameters. Therefore we introduce an additional torus $\T_2=(\CC^*)^{r+s}$ acting on $\CC^r\times \CC^s$. Set $\T:=\T_0\times \T_1\times \T_2$ and denote
\equa{&K_\T(\pt)=\ZZ[t_1^{\pm1},\dots,t_d^{\pm1};y_1^{\pm1},\dots,y_N^{\pm1};v_1^{\pm1},\dots,v_{r+s}^{\pm1}]/(\sim),\\
&H^*_\T(\pt)=\CC[\lambda_1,\dots,\lambda_d;m_1,\dots,m_N;w_1,\dots,w_{r+s}]/(\sim).
}
where we quotient by the ideals $(\sim)=(t_1t_2t_3t_4-1)$ and $(\sim)=(\lambda_1+\lambda_2+\lambda_3+\lambda_4)$ respectively when $d=4$.

In the surface case, the equivariant virtual invariants is be defined using virtual equivariant localization of \cite{vir-loc,k-local-vir} in an analogous way as we did for the non-virtual case. For toric Calabi-Yau 4-folds, the equivariant Donaldson invariants were first introduced in Cao-Leung \cite[§8]{Cao:2014bca}; the K-theoretic equivariant invariants were predicted by Cao-Kool-Monavari \cite{CKM} and formalized by J. Oh and R. P. Thomas \cite[§7]{Oh:2020rnj} using the twisted virtual structure sheaf $\hat{\O}^\vir$ and their virtual equivariant localization. 
The \textit{equivariant virtual Segre, Chern} and \textit{Verlinde series} for $\a$ on $\quot_Y(E,n)$ are respectively
\equa{{\mathcal{S}}_Y(E,\a;q)&:=\sum_{n=0}^\infty q^n\int_{[\quot_Y(E,n)]^\vir}s(\alpha^{[n]}),\\
{\mathcal{C}}_Y(E,\a;q)&:=\sum_{n=0}^\infty q^n\int_{[\quot_Y(E,n)]^\vir}c(\alpha^{[n]}),\\
{\mathcal{V}}_Y(E,\a;q)&:=\sum_{n=0}^\infty q^n\chi^\vir(\quot_Y(E,n),\det(\a^{[n]})).
}
\begin{remark}
It is important to note that the above definitions, when $Y=X=\CC^4$, are dependent on a choice of orientation $o(\mathcal{L})$. This induces signs at each fixed point $Z\in\quot_X(E,n)^\T$, which are suppressed from the notation. We denote the sign at $Z$ to be $(-1)^{o(\mathcal{L})|_{Z}}$ and call $o(\mathcal{L})$ a \emph{choice of signs}.
\end{remark}

When $N=1$, the weight on $E=\O_Y\<y_1\>$ is not necessary as this extra action does not affect the fixed locus, so we sometimes ignore it by setting $y_1=1$. By definition, the coefficients of the Chern and Segre series are rational functions in the cohomological parameters $\lambda_1,\ldots,\lambda_d,m_1,\ldots,m_N,w_1,\ldots,w_{r+s}$. For the Verlinde series, they are in K-theoretic parameters $t_1,\ldots,t_d,y_1\ldots,y_N,v_1\ldots,v_{r+s}$. Using the identification of Remark \ref{rmk:K-H-identification}, they can be viewed as functions in the cohomological parameters as well. Define
\equa{\ast_{Y,i}(E,\a;q):=\ast_Y(E,\a;q)|_{\deg\vec\lambda,\vec m,\vec w=i}}
to be the part with total degree $i$ in those variables, for $\ast\in\{{\mathcal{S}},{\mathcal{C}},{\mathcal{V}}\}$. More precisely, by restricting a multi-variable function to a certain degree, we mean the following.
\begin{definition}\label{defn:degree}
Let $f(z_1,\dots,z_k)$ be a function in the ring of fractions of $\CC[\![z_1,\dots,z_k]\!]$. Consider the formal Laurent series expansion of $f(bz_1,\dots,bz_k)$ in the variable $b$:
\[f(bz_1,\dots,bz_k)=\sum_{i=-\infty}^\infty f_i(z_1,\dots z_k)b^i.\]
For $i\in\ZZ$, the part of $f$ with \emph{total degree $i$} is 
\[f(z_1,\dots,z_k)|_{\deg \vec{z}=i}:=f_i(z_1,\dots,z_k).\]
\end{definition}

\subsubsection{Reduced invariants on Quot schemes of surfaces}

When $S$ is a $K$-trivial surface, the obstruction on $\quot_S(E,n)$ contains a trivial summand making $e(T^\vir)$ vanish, as a result, the invariants all vanish. We may instead consider the reduced classes and invariants from Gromov-Witten theory and stable pair theory \cite{KT}, which has been employed to study the enumerative geometry of Hilbert schemes in for instance \cite{reduced}. In the case of $\quot_S(E,n)$, a reduced perfect obstruction theory can be obtained by removing one copy of $\O_{\quot_S(E,n)}$ from the usual obstruction for Quot schemes. The equivariant analogue of a $K$-trivial surface would be $S=\CC^2$ with the action of the 1-dimensional torus $\T_0=\{(t_1,t_2):t_1t_2=1\}$. Let $T^\redu$ be the virtual tangent bundle obtained from the reduced obstruction theory. For $E=\oplus_{i=1}^N\O_S\<y_i\>$ and $\a=[\oplus_{i=1}^r\O_S\<v_i\>]-[\oplus_{i=r+1}^s\O_S\<v_i\>]$, we define the \emph{reduced Segre, Chern, Verlinde series} to be respectively
\equa{{\mathcal{S}}^{\redu}(E,\alpha;q)&:=\sum_{n>0}^\infty q^n\int_{[\quot_S(E,n)]^\redu}s(\alpha^{[n]}),\\
{\mathcal{C}}^{\redu}(E,\alpha;q)&:=\sum_{n>0}^\infty q^n\int_{[\quot_S(E,n)]^\redu}c(\alpha^{[n]}),
\\
{\mathcal{V}}^{\redu}(E,\a;q)&:=\sum_{n>0}^\infty q^n\int_{[\quot_S(E,n)]^\redu}\td(T^\redu)\ch(\det(\alpha^{[n]})).
}

\subsection{Summary of results for surfaces}
\subsubsection{Computation of Chern series}

Consider the case $S=\CC^2$ with the $\T=(\CC^*)^2$-action. Using the tools from \cite{GM}, we are able to compute the virtual Chern series of line bundles for Hilbert schemes as follows in Section \ref{sec:chern-rank-2}. Note that in the non-equivariant setting, this can be retrieved from \cite[Corollary 15]{OP}.
\begin{corollary}
Let $S=\CC^2$, and $L=\O_S\<v_1\>$ a $\T$-equivariant line bundle over $S$. We have
\equa{{\mathcal{C}}_S(\O_S,L;q)=\left(\frac1{1-q}\right)^{\int_Sc(L)c_1(S)}.}
\end{corollary}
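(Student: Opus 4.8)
Although $\quot_{\CC^2}(\O_{\CC^2},n)=\hilb^n(\CC^2)$ is smooth of dimension $2n$, the Quot perfect obstruction theory has virtual dimension $n$ rather than $2n$, so the first task is to pin down the virtual class. The virtual tangent complex at a point $[\O_S\twoheadrightarrow\O_Z]$ is $\mathbf{R}\shom_p(\mathcal{I},\mathcal{F})$ with $\mathcal{F}=\O_{\mathcal{Z}}$; since the fibres of $p$ are $0$-dimensional one gets $\mathcal{E}xt^1_p(\mathcal{I},\O_{\mathcal{Z}})\cong\mathcal{E}xt^2_p(\O_{\mathcal{Z}},\O_{\mathcal{Z}})\cong\big((\omega_S)^{[n]}\big)^\vee$ by relative Serre duality, so the obstruction sheaf is the rank-$n$ locally free sheaf $\big((\omega_S)^{[n]}\big)^\vee$ and
\[
[\hilb^n(\CC^2)]^\vir=(-1)^n c_n\big((\omega_S)^{[n]}\big)\cap[\hilb^n(\CC^2)],\qquad {\mathcal{C}}_S(\O_S,L;q)=\sum_{n\ge0}(-q)^n\int_{\hilb^n(\CC^2)}c(L^{[n]})\,c_n\big((\omega_S)^{[n]}\big).
\]
The $n=1$ term equals $\int_S c(L)\,c_1(S)$, which already matches the linear coefficient of the claim; and when $S$ is $K$-trivial (the setting of the reduced invariants) $(\omega_S)^{[n]}=\O_S^{[n]}$ carries the nowhere-vanishing section $1$, forcing $c_n\big((\omega_S)^{[n]}\big)=0$, consistently with the vanishing of the invariants recalled above.

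The plan is then to evaluate this sum by equivariant localization on $\CC^2$. The $\T$-fixed points are the monomial ideals $I_\lambda$, $\lambda\vdash n$; the tangent weights of $\hilb^n(\CC^2)$ at $I_\lambda$ are $(l(\square)+1)\lambda_1-a(\square)\lambda_2$ and $-l(\square)\lambda_1+(a(\square)+1)\lambda_2$ for $\square\in\lambda$, the equivariant Chern roots of $L^{[n]}|_{I_\lambda}$ are $w_1+i\lambda_1+j\lambda_2$ over the boxes $(i,j)\in\lambda$, and those of $(\omega_S)^{[n]}|_{I_\lambda}$ are $(i-1)\lambda_1+(j-1)\lambda_2$. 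This writes ${\mathcal{C}}_S(\O_S,L;q)$ as an explicit sum over partitions of a product over boxes — an abelian vertex series carrying a single ``matter'' factor built from $L$ — and the tools of \cite{GM} are precisely what is needed to sum such a series in closed form, the point being that the factor attached to a \emph{line} bundle is exactly what makes the partition sum telescope into a product. I would then identify the product with $(1-q)^{-\int_S c(L)c_1(S)}$; since the exponent is the universal equivariant characteristic number $\int_S c(L)c_1(S)$ (which specializes to $-L\cdot K_S$ non-equivariantly), it is enough to match the closed form in one case with $L\cdot K_S\ne 0$, equivalently against the non-equivariant computation of \cite[Corollary~15]{OP}.

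A more structural variant avoids the partition sum altogether: the Ellingsrud--Göttsche--Lehn universality of tautological integrals on Hilbert schemes of surfaces \cite{EGL}, in its equivariant toric form, forces ${\mathcal{C}}_S(\O_S,L;q)=A_1(q)^{L^2}A_2(q)^{L\cdot K_S}A_3(q)^{K_S^2}A_4(q)^{c_2(S)}$ for universal power series $A_i$. One obtains $A_1=A_4=1$ by testing on abelian surfaces and then on $K3$ (again using the section $1\in\O_S^{[n]}$); $A_3=1$ by taking $L=\O_S$ and a degree count, since $c(\O_S^{[n]})$ then has top degree $n-1$ while $c_n\big((\omega_S)^{[n]}\big)$ has degree $n$, so the product cannot reach degree $2n=\dim\hilb^n$; and $A_2(q)=(1-q)^{-1}$ from \cite[Corollary~15]{OP}, the transfer back to $\CC^2$ being done by localizing the toric model on its affine charts. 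In either route the genuine work — and the step I expect to be the main obstacle — is the combinatorial evaluation of the equivariant sum over monomial ideals: recognizing it as a Nekrasov-type specialization that collapses to the clean answer $(1-q)^{-\int_S c(L)c_1(S)}$, which is precisely where the machinery of \cite{GM} is indispensable.
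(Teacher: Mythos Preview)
Your identification of the obstruction bundle $\big((\omega_S)^{[n]}\big)^\vee$ and the resulting formula
\[
{\mathcal{C}}_S(\O_S,L;q)=\sum_{n\ge0}(-q)^n\int_{\hilb^n(\CC^2)}c(L^{[n]})\,c_n\big((\omega_S)^{[n]}\big)
\]
is exactly the paper's starting point. The gap is in how you finish. Route~1 is only a plan; Route~2 has a genuine hole: the EGL universal series together with the specialisations to abelian, K3, and $L=\O_S$ determine the \emph{compact} answer $(1-q)^{L\cdot K_S}$, i.e.\ only the degree-$0$ part of the equivariant exponent. But the claimed exponent on $\CC^2$ is
\[
\int_{\CC^2}c(L)c_1(S)=\frac{(\lambda_1+\lambda_2)}{\lambda_1\lambda_2}+\frac{w_1(\lambda_1+\lambda_2)}{\lambda_1\lambda_2},
\]
whose degree-$(-1)$ piece $\int_{\CC^2}c_1(S)$ vanishes on every projective surface and therefore cannot be detected by your transfer argument. (That piece is precisely the content of Corollary~\ref{cor:nek2d}.) Localising a projective toric surface on its charts goes the wrong way: it expresses projective invariants in terms of the $\CC^2$ series, so it only constrains the degree-$0$ universal series, as in \eqref{eqn:proj-equi}.

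The paper avoids this by a short trick you did not try: it first records the full equivariant \emph{non-virtual} rank-$2$ Chern series $I^{\mathcal{C}}(V;q)=(1+q)^{\int_S c(V)}$ on $\CC^2$ (degree~$0$ from \cite{GM}, negative degrees via the differential Lemma~\ref{lem:differential}), and then specialises the second Chern root to $w_2=c_1(K_S)-1$. This turns the factor $\prod_{\square}(1+w_2-c(\square)\lambda_1-r(\square)\lambda_2)$ into $e\big(K_S^{[n]}|_{Z_\mu}\big)$, i.e.\ exactly the obstruction Euler class, so after $q\mapsto -q$ the non-virtual rank-$2$ integral becomes the virtual rank-$1$ integral and the closed form drops out in one line. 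This specialisation is the missing idea; once you have it, no partition sum and no separate treatment of the negative-degree terms is needed.
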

By extracting the part with the lowest total degree in $\lambda_1,\lambda_2,w_1$, we obtain the following 2-dimensional analogue to the Donaldson-Thomas partition function for $\CC^3$ \cite[Theorem 1]{MNOP2}, or Cao-Kool's formulation of Nekrasov's conjecture for $\CC^4$ \cite[Appendix B]{CK1}.
\begin{corollary}[Corollary \ref{cor:nek2d}]\label{cor:vanishing-intro}
For $S=\CC^2$, the following equality holds
\[\sum_{n=1}^\infty q^n\int_{[\hilb^n(S)]^\vir}1=e^{\frac{\lambda_1+\lambda_2}{\lambda_1\lambda_2}q}.\]
\end{corollary}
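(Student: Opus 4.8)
The plan is to deduce this corollary directly from the preceding one by extracting the lowest-total-degree part in $\lambda_1,\lambda_2,w_1$. First I would specialize the variety and bundle: take $S=\CC^2$ and $L=\O_S\<v_1\>$, so that the first Chern class $c_1(L)$ is the equivariant weight $w_1$, and $c_1(S)=\lambda_1+\lambda_2$ (equivariantly, the sum of the torus weights on the tangent directions). Then the exponent in the previous corollary is
\equa{
\int_S c(L)c_1(S)=\int_{\CC^2}(1+w_1)(\lambda_1+\lambda_2)=\frac{\lambda_1+\lambda_2}{\lambda_1\lambda_2},
}
since equivariant integration over $\CC^2$ is division by the equivariant Euler class $\lambda_1\lambda_2$ of the tangent space at the origin, and only the degree-$2$ part $w_1(\lambda_1+\lambda_2)$ of the numerator survives pulling down. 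So ${\mathcal{C}}_S(\O_S,L;q)=\left(\tfrac1{1-q}\right)^{(\lambda_1+\lambda_2)/(\lambda_1\lambda_2)}$.

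Next I would analyze the total-degree filtration. In the variables $\vec\lambda,\vec w$ assigned degree $1$ (as in Definition \ref{defn:degree}), the exponent $E:=(\lambda_1+\lambda_2)/(\lambda_1\lambda_2)$ is homogeneous of degree $-1$, while the Chern class $c(\alpha^{[n]})=1+c_1(\alpha^{[n]})+\cdots$ has its degree-$k$ piece being the equivariant codimension-$k$ class. The key observation is that on $\hilb^n(\CC^2)$ the localization contributions to $\int_{[\hilb^n]^\vir}c(\alpha^{[n]})$ organize by degree: the lowest-total-degree part of ${\mathcal{C}}_S(\O_S,L;q)$ picks out exactly $\int_{[\hilb^n(S)]^\vir}1$ times the appropriate power, because dropping $c_1,c_2,\dots$ lowers the degree and the virtual integral of $1$ is the leading term. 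Concretely, expand $\left(\tfrac1{1-q}\right)^{E}=\exp\!\big(-E\log(1-q)\big)=\exp\!\big(E\sum_{n\geq 1}q^n/n\big)$, and isolate the part of total degree equal to the minimal one occurring, which is linear in $E$ (degree $-1$), namely $\exp\!\big(E\cdot q\big)$ after checking that the $q$-linear term $E q$ is the unique source of the minimal degree and higher powers of $E$ only contribute in strictly lower (more negative) degree — so in fact one wants the part of total degree $-1$, giving $E\sum_{n\ge1}q^n/n$ at...

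let me restate this more carefully. Write $F(q)=\left(\tfrac1{1-q}\right)^{E}$. Each coefficient of $q^n$ is a polynomial in $E$ with lowest-degree term $E$ (from the $n=1$-type contribution $E/n$ in the exponent, the linear-in-$E$ part). Since $E$ has total degree $-1$ and is the only generator, the lowest-total-degree part of $F(q)$ is the linear-in-$E$ part of $\log F(q)=-E\log(1-q)$ exponentiated trivially, i.e. $\big(F(q)\big)\big|_{\deg\vec\lambda,\vec w=-1} = -E\log(1-q)\big|_{\text{linear in }E}= E\cdot q + \text{(wait: }-\log(1-q)=\sum q^n/n\text{)}$. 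Hmm — the issue is whether higher $q^n/n$ terms survive; they do in $\log F$, but in $F$ itself the products $E^2,E^3,\dots$ sit in total degrees $-2,-3,\dots$, strictly below $-1$, so they do NOT interfere with the degree-$(-1)$ part, which is therefore exactly the linear-in-$E$ part of $F$, equal to $E\cdot(-\log(1-q)) = E\sum_{n\geq1}q^n/n$.

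But the claimed answer is $e^{\frac{\lambda_1+\lambda_2}{\lambda_1\lambda_2}q}=e^{Eq}$, whose degree-$(-1)$ part is $Eq$, not $E\sum q^n/n$. So the statement must be extracting a \emph{different} slice: the point is that $\int_{[\hilb^n(S)]^\vir}1$ is itself the lowest-total-degree part of $\int_{[\hilb^n(S)]^\vir}c(\alpha^{[n]})$ (the other Chern terms raise degree), and separately one must match degrees on both sides. The hard part will be getting this bookkeeping exactly right: one must show that in each fixed coefficient of $q^n$, the term $\int_{[\hilb^n]^\vir}1$ is precisely the piece of total degree $-n$ (matching $E^n/n!$-type growth) — equivalently that $\int_{[\hilb^n(S)]^\vir}1 = \tfrac{1}{n!}E^n$ — and this is what reading off the leading term of $\left(\tfrac{1}{1-q}\right)^E = \sum_n \binom{E+n-1}{n}q^n$ gives, since $\binom{E+n-1}{n}=\tfrac{E(E+1)\cdots(E+n-1)}{n!}$ has leading-in-$E$ term $E^n/n!$, and $\sum_n \tfrac{E^n}{n!}q^n = e^{Eq}$. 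So the plan's crux is: (i) compute the exponent as $E=(\lambda_1+\lambda_2)/(\lambda_1\lambda_2)$, (ii) expand the binomial series and take the top power of $E$ in each $q^n$-coefficient, (iii) identify that top power with $\int_{[\hilb^n(S)]^\vir}1$ because all Chern insertions of positive degree strictly decrease the $E$-degree, (iv) resum to $e^{Eq}$. Step (iii), justifying that the virtual integral of $1$ is exactly the extremal-degree part of the Chern-series coefficient, is the main obstacle and should follow from the grading behaviour of equivariant localization on $\hilb^n(\CC^2)$ together with the fact that $c(\alpha^{[n]}) = 1 + (\text{higher degree})$.
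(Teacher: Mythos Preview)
Your approach is the same as the paper's — extract the lowest total degree in $\lambda_1,\lambda_2,w_1$ from the previous corollary — and your final steps (ii)--(iv) are correct: the $[q^n]$-coefficient $\binom{E+n-1}{n}$ has leading term $E^n/n!$, and on the other side $\int_{[\hilb^n(S)]^\vir}c_k(L^{[n]})$ is homogeneous of degree $k-n$, so the $k=0$ term is the unique piece of degree $-n$.

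There is, however, a genuine slip in step (i). Equivariant integration over $\CC^2$ is simply division by $\lambda_1\lambda_2$; nothing ``survives pulling down'' and nothing is discarded. Hence
\[
\int_S c(L)\,c_1(S)\;=\;\frac{(1+w_1)(\lambda_1+\lambda_2)}{\lambda_1\lambda_2}\;=\;\underbrace{\frac{\lambda_1+\lambda_2}{\lambda_1\lambda_2}}_{E_1,\ \deg=-1}\;+\;\underbrace{\frac{w_1(\lambda_1+\lambda_2)}{\lambda_1\lambda_2}}_{E_2,\ \deg=0},
\]
not just $E_1$. (Your sentence ``only the degree-$2$ part $w_1(\lambda_1+\lambda_2)$ of the numerator survives'' also contradicts the formula you wrote, which keeps the degree-$1$ part.) Fortunately this error is harmless: in $\binom{E_1+E_2+n-1}{n}$ the unique term of degree $-n$ is still $E_1^n/n!$, since every occurrence of $E_2$ contributes degree $0$. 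So once you correct the exponent, your degree-matching argument goes through unchanged. The middle meandering about whether to take the degree-$(-1)$ part of the full series can be dropped: the clean statement is that you match the degree-$(-n)$ part of the $[q^n]$-coefficient on each side.
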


\subsubsection{Universal series expressions}

A common approach used to find closed formulas for the Segre and Verlinde series is by computing their universal series. In the non-virtual Hilbert scheme case for projective surfaces, it was proven in \cite{EGL} using methods of cobordism classes that the Segre and Verlinde invariants admit the following universal series expressions
\[I^{\mathcal{C}}(\alpha;q)=A_0(q)^{c_2(\alpha)}A_1(q)^{\chi(\det(\alpha))}A_2(q)^{\frac12\chi(\O_S)}A_3(q)^{c_1(\a)K_S-\frac12K_S^2}A_4(q)^{K_S^2},\]
\equanum{\label{eqn:univ-Verlinde}I^{\mathcal{V}}(\a,q)=B_1(q)^{\chi(\det(\a))}B_2(q)^{\frac12\chi(\O_S)}B_3(q)^{c_1(\a)K_S-\frac12K_S^2}B_4(q)^{K_S^2}}
where the products in the exponents are intersection products. The series $A_i(x),B_i(x)$ are universal in the sense that they only depend on $\alpha$ through the rank and are independent of the surface. Explicit formulas for these series were conjectured and computed in \cite{Lehn_1999,MOP,Marian_2021,EGL,GM}.
The Segre-Verlinde correspondence in this case concerns the relations between $A_i$ and $B_i$. It was first proposed by D. Johnson and Marian-Oprea-Pandharipande in relation to the study of Le Potier’s strange duality \cite{MOP,DJon} and recently proven by G\"ottsche-Mellit \cite{GM}.

For virtual invariants on Quot schemes of a smooth projective surface $S$ and torsion-free sheaf $E$, the universal series expressions are given by \cite[Theorem 1.2]{Bojko2}:
\equa{{\mathcal{S}}_S(E,\a;q)&=A_1^\vir(q)^{c_1(S)c_1(\a)}A_2^\vir(q)^{c_1(S)^2}A_3^\vir(q)^{c_1(S)c_1(E)},\\
{\mathcal{V}}_S(E,\a;q)&=B_1^\vir(q)^{c_1(S)c_1(\a)}B_2^\vir(q)^{c_1(S)^2}B_3^\vir(q)^{c_1(S)c_1(E)}.\\
}
The explicit formulas are computed in \cite[Theorem 17]{KHilb} for $A_1,A_2,B_1,B_2$ and in \cite[Theorem 1.2]{Bojko2} for $A_3,B_3$.

Unlike the compact case where the invariants are simply numbers, the equivariant invariants can contain terms of various degrees in $H_\T^*(\pt)_\loc$. This is reflected by the following theorem where the virtual equivariant Segre and Verlinde invariants are written as infinite products of series labeled by partitions. The notations for partitions are set in Section \ref{sec:partition}. For a partition $\mu$ and a K-theory class $\a$, denote \[c_\mu(\a):=\prod_{i=1}^{\ell(\mu)}c_{\mu_i}(\a).\]

\begin{theorem}[Theorem \ref{thm:SV-univ-sieres}]\label{thm:SV2d-virtual-intro}
Let $S=\CC^2$. For any $r\in\ZZ$, $N>0$, there exist universal power series $A_{\mu,\nu,\xi}(q),B_{\mu,\nu,\xi}(q)$, dependent on $N$ and $r$, such that for $E=\oplus_{i=1}^{N}\O_S\<y_i\>$ and $\a\in K_\T(S)$ of rank $r$, the equivariant virtual Segre and Verlinde series on $\quot_S(E,n)$ can be written as the following infinite products
\equa{{\mathcal{S}}_S(E,\alpha;q)=&\prod_{\mu,\nu,\xi\text{ partitions}}A_{\mu,\nu,\xi}(q)^{\int_{S}c_\mu(\a) c_\nu(S)  c_\xi(E)c_1(S)},\\
{\mathcal{V}}_S(E,\alpha;q)=&\prod_{\mu,\nu,\xi\text{ partitions}}B_{\mu,\nu,\xi}(q)^{\int_{S}c_\mu(\a) c_\nu(S)  c_\xi(E)c_1(S)},\\
{\mathcal{C}}_S(E,\alpha;q)=&\prod_{\mu,\nu,\xi\text{ partitions}}C_{\mu,\nu,\xi}(q)^{\int_{S}c_\mu(\a) c_\nu(S)  c_\xi(E)c_1(S)}.
}
\end{theorem}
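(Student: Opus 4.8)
The plan is to reduce the statement to a structural analysis of the equivariant localization formula on $\hilb^n(\CC^2)$ (or more generally $\quot_{\CC^2}(E,n)$), following the strategy of \cite{EGL} adapted to the virtual torus-equivariant setting as in \cite{Bojko2} and \cite{KHilb}. First I would recall that $\quot_{\CC^2}(E,n)$ has isolated $\T$-fixed points indexed by $N$-tuples of partitions $\vec\mu=(\mu^{(1)},\dots,\mu^{(N)})$ with $\sum|\mu^{(i)}|=n$, and that by virtual equivariant localization the Segre, Chern, and Verlinde series each become a sum over $n$ and over $\vec\mu$ of a product of purely combinatorial contributions: the restriction of $s(\a^{[n]})$, $c(\a^{[n]})$, or $\det(\a^{[n]})$ to the fixed point, divided by the equivariant Euler class of the virtual tangent space $T^\vir|_{\vec\mu}$ (with the Todd/$\hat{\O}^\vir$ corrections for the Verlinde side). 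The key point is that each such fixed-point contribution depends on $\a$ and $E$ only through their $\T_0\times\T_1$-weights $v_j$, $y_i$ together with the equivariant parameters $\lambda_1,\lambda_2$.

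Next I would invoke the universality argument of Ellingsrud--Göttsche--Lehn: expand the logarithm of each series and argue, via the multiplicativity of the fixed-point contributions and a suitable cobordism/bootstrapping argument (or, in the equivariant algebraic setting, a direct analysis of how symmetric functions in the $v_j$ combine), that $\log {\mathcal{S}}_S(E,\a;q)$ is a $\QQ$-linear combination of ``monomials'' of the form $\int_S c_\mu(\a)c_\nu(S)c_\xi(E)c_1(S)$ with coefficients that are power series in $q$ depending only on $N$ and $r=\rk\a$. Exponentiating gives the claimed infinite product. The factor of $c_1(S)$ is forced because $S=\CC^2$ is noncompact: every localization contribution carries an overall equivariant ``volume'' factor $1/(\lambda_1\lambda_2) = \int_S 1$ in the equivariant sense, which in the universal expression is recorded as the extra $c_1(S)$ (one should check $\int_S c_1(S) = \int_{\CC^2} c_1 = $ the relevant equivariant normalization, cf. Corollary~\ref{cor:vanishing-intro}); more precisely the combinatorial sums force all the surviving terms to be proportional to the equivariant point class, which equals $c_1(S)$ times lower data after the standard rewriting. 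The same argument runs verbatim for $\mathcal{C}$ and $\mathcal{V}$, producing $C_{\mu,\nu,\xi}$ and $B_{\mu,\nu,\xi}$.

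I expect the main obstacle to be making the universality step rigorous in the noncompact equivariant setting: unlike \cite{EGL}, where one genuinely varies the compact surface $S$ and uses cobordism invariance of the generating series, here $S=\CC^2$ is fixed, so ``universality'' must instead be extracted purely from the combinatorial structure of the localization sum — i.e. one must show that the only way $\a$ and $E$ enter is through symmetric polynomials in their weights that can be repackaged as the intersection numbers $\int_S c_\mu(\a)c_\nu(S)c_\xi(E)c_1(S)$, and that the resulting coefficient series are independent of the specific weights chosen (only on $N$ and $r$). This is essentially the content of \cite[Theorem 1.2]{Bojko2} in the compact case, and the argument there should transplant, but one has to be careful that the infinite product converges $q$-adically (only finitely many partition triples contribute to each power of $q$, which holds because $c_\mu(\a^{[n]})$ vanishes for $|\mu|$ large compared to $n$) and that the identification of K-theoretic with cohomological parameters (Remark~\ref{rmk:K-H-identification}) does not introduce spurious denominators. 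Once these points are settled, collecting terms by the monomial $\int_S c_\mu(\a)c_\nu(S)c_\xi(E)c_1(S)$ and exponentiating completes the proof.
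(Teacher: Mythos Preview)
Your proposal identifies the right \emph{shape} of the argument but misses the key technical step, and your explanation of the $c_1(S)$ factor is incorrect.

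The real difficulty is not in packaging the fixed-point sum into symmetric functions of the weights---that part is essentially formal. The actual content of the proof is showing that when you expand $\log$ of the generating series as a Laurent series in $\lambda_1,\lambda_2$, the exponents are bounded below by $-1$ in each variable (this is the admissibility of Definition~\ref{def:admissible}). A priori there is no reason why terms with $\lambda_1^{j}\lambda_2^{k}$ for $\min\{j,k\}\leq -2$ should vanish, and your ``direct analysis of how symmetric functions in the $v_j$ combine'' does not address this at all. The paper's proof handles this by working with the auxiliary Nekrasov genus ${\mathcal N}_S(E,V;q,z)$ and then \emph{specializing to a compact toric surface} $S'=\PP^1\times\PP^1$ via the reduction of Section~\ref{sec:proj-reduction}: since the compact invariants are honest numbers, the sum over fixed points must be a polynomial in $\lambda_1,\lambda_2$, and a parity/linear-independence argument (varying the weights of $V$ by a trick using $K_{S'}$) forces all the bad Laurent coefficients to vanish. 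There is also input from Arbesfeld's compact-weight analysis \cite{Arbesfeld} to ensure the denominators behave, and Lemma~\ref{lem:differential} to kill the $\mu=(0)$ terms. Only after admissibility is established can one exponentiate and read off the universal series via the Chern and Verlinde limits of Lemma~\ref{lem:CV-limit}.

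Separately, the $c_1(S)$ factor does \emph{not} come from the equivariant volume $1/(\lambda_1\lambda_2)$. It comes from the obstruction: at every fixed point the obstruction bundle contains a trivial summand $K_S^\vee$, so $1/e(T^\vir)$ carries a factor of $e(K_S^\vee)=c_1(S)=\lambda_1+\lambda_2$ in its numerator (and one checks this factor does not also appear in the denominator because the Zariski tangent space has no fixed part under the anti-diagonal torus). This is a genuine geometric input, not a bookkeeping normalization.
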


The series in the above expressions are universal in the sense that they depend on the input $\a$ only by its rank $r$. Sometimes for clarity, we will add superscripts $N,r$ to indicate the ranks of $E$ and $\a$. Note that the series labeled by $\mu,\nu,\xi$ are exponentiated to homogeneous rational functions in 
\[H_\T^*(\pt)_\loc=\CC(\lambda_1,\lam_2;m_1,\ldots,m_N;w_1,\ldots,w_{r+s})\]
of degree $|\mu|+|\nu|+|\xi|-1$. The degree 0 terms occur when one of $\mu,\nu,\xi$ is the partition $(1)$ and the rest are the empty partition $(0)$. The argument of Section \ref{sec:proj-reduction} shows that the series with degree 0 exponents are necessarily equal to the series from in the projective case, that is
\equanum{\label{eqn:proj-equi}&A_{(1),(0),(0)}(q)=A_{1}^\vir(q),\indent A_{(0),(1),(0)}(q)=A_{2}^\vir(q),\indent A_{(0),(0),(1)}(q)=A_{3}^\vir(q),\\
&B_{(1),(0),(0)}(q)=B_{1}^\vir(q),\indent B_{(0),(1),(0)}(q)=B_{2}^\vir(q),\indent B_{(0),(0),(1)}(q)=B_{3}^\vir(q).}

The universal series expressions of the reduced invariants take a much simpler form; as opposed to having series exponentiated to some powers of cohomology classes, we have the following additive expressions.
\begin{theorem}[Theorem \ref{cor:reduced-series}]\label{thm:reduced-intro}
When $S=\CC^2$, the equivariant reduced Segre and Verlinde series for $E=\oplus_{i=1}^N\O_S\<y_i\>$ and $\a\in K_\T(S)$ are
\equa{
{\mathcal{S}}^{\redu}(E,\a;q)=&\sum_{\mu,\nu,\xi}\log\left(A_{\mu,\nu,\xi}(q)\right)\cdot \int_S c_\mu(\a)c_\nu(S)c_\xi(E),\\
{\mathcal{V}}^{\redu}(E,\a;q)=&\sum_{\mu,\nu,\xi}\log\left(B_{\mu,\nu,\xi}(q)\right)\cdot \int_S c_\mu(\a)c_\nu(S)c_\xi(E),\\
{\mathcal{C}}^{\redu}(E,\a;q)=&\sum_{\mu,\nu,\xi}\log\left(C_{\mu,\nu,\xi}(q)\right)\cdot \int_S c_\mu(\a)c_\nu(S)c_\xi(E)\\
}
where $A_{\mu,\nu,\xi}, B_{\mu,\nu,\xi}$ and $C_{\mu,\nu,\xi}$ are the same series from Theorem \ref{thm:SV2d-virtual-intro}.
\end{theorem}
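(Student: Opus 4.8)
The plan is to derive the statement from Theorem~\ref{thm:SV2d-virtual-intro} by identifying the reduced invariants with the ``coefficient of $c_1(S)$'' in the corresponding virtual invariants on $\quot_{\CC^2}(E,n)$; once this is set up, the product formula of Theorem~\ref{thm:SV2d-virtual-intro} makes that coefficient visible at once. Throughout I work with the full two-dimensional torus acting on $\CC^2$ and specialize to the one-dimensional subtorus $\T_0=\{\lambda_1+\lambda_2=0\}$ only at the end.

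\textbf{Step 1 (the removed summand, equivariantly).} From $0\to\mathcal{I}\to q^*E\to\mathcal{F}\to0$ one has $T^\vir=(E^*)^{[n]}-\mathbf{R}\shom_q(\mathcal{F},\mathcal{F})$ in $K_\T\bigl(\quot_S(E,n)\bigr)$. Relative Serre duality along $S=\CC^2$, whose relative canonical bundle is $\O\langle-c_1(S)\rangle$, identifies the degree-$2$ cohomology of $\mathbf{R}\shom_q(\mathcal{F},\mathcal{F})$ with the dual of its degree-$0$ cohomology twisted by $c_1(S)$; the trace then splits off a copy of $\O_{\quot_S(E,n)}\langle c_1(S)\rangle$, so that $-\O_{\quot_S(E,n)}\langle c_1(S)\rangle$ is a summand of $T^\vir$ (a trivial summand of the obstruction). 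A weight check at the $\T$-fixed points, which are the tuples of monomial ideals, shows this is the \emph{only} summand of $T^\vir$ becoming trivial along $\T_0$, so that deleting it --- which is what produces the reduced obstruction theory of the excerpt --- keeps the reduced theory non-degenerate for localization and gives, for $n>0$,
\[T^\vir=T^\redu-\O_{\quot_S(E,n)}\langle c_1(S)\rangle\qquad\text{in }K_\T\bigl(\quot_S(E,n)\bigr),\]
with $c_1(S)=\lambda_1+\lambda_2$ a \emph{constant} in $H^2_\T(\pt)$ and $T^\redu$ restricting on $\T_0$ to the genuine reduced virtual tangent bundle.

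\textbf{Step 2 (localization).} Virtual equivariant localization over the isolated $\T$-fixed points $Z$ together with Step~1 gives $e(T^\vir|_Z)=e(T^\redu|_Z)/c_1(S)$, whence for every $n>0$
\[\int_{[\quot_S(E,n)]^\vir}s(\alpha^{[n]})=c_1(S)\sum_Z\frac{s(\alpha^{[n]}|_Z)}{e(T^\redu|_Z)},\]
and the same with $c(\alpha^{[n]})$ in place of $s(\alpha^{[n]})$; for the Verlinde integrand one instead combines this with $\td(T^\vir)=\td(T^\redu)\cdot\bigl(1-e^{-c_1(S)}\bigr)/c_1(S)$ and virtual Hirzebruch--Riemann--Roch, producing the factor $1-e^{-c_1(S)}$. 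Summing over $n$ and restoring the $n=0$ term $1$,
\[\mathcal{S}_S=1+c_1(S)\,\widetilde{\mathcal{S}}^\redu,\qquad\mathcal{C}_S=1+c_1(S)\,\widetilde{\mathcal{C}}^\redu,\qquad\mathcal{V}_S=1+\bigl(1-e^{-c_1(S)}\bigr)\,\widetilde{\mathcal{V}}^\redu,\]
where $\widetilde{\mathcal{S}}^\redu,\widetilde{\mathcal{C}}^\redu,\widetilde{\mathcal{V}}^\redu\in\CC[\![q]\!]\otimes H^*_\T(\pt)_\loc$ are regular along $c_1(S)=0$ and, since equivariant push-forward is compatible with restriction to a subtorus, specialize on $\T_0$ to the reduced series $\mathcal{S}^\redu(E,\alpha;q),\mathcal{C}^\redu(E,\alpha;q),\mathcal{V}^\redu(E,\alpha;q)$ of the excerpt.

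\textbf{Step 3 (comparison).} Because $c_1(S)=\lambda_1+\lambda_2$ is constant, the exponents in Theorem~\ref{thm:SV2d-virtual-intro} factor as $\int_S c_\mu(\alpha)c_\nu(S)c_\xi(E)\,c_1(S)=c_1(S)\int_S c_\mu(\alpha)c_\nu(S)c_\xi(E)$, so
\[\mathcal{S}_S(E,\alpha;q)=\exp\Bigl(c_1(S)\sum_{\mu,\nu,\xi}\Bigl(\int_S c_\mu(\alpha)c_\nu(S)c_\xi(E)\Bigr)\log A_{\mu,\nu,\xi}(q)\Bigr),\]
and likewise with $B_{\mu,\nu,\xi}$ for $\mathcal{V}_S$ and $C_{\mu,\nu,\xi}$ for $\mathcal{C}_S$. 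Equating this with the expression from Step~2, taking logarithms, dividing by $c_1(S)$, and then setting $c_1(S)=0$ --- that is, restricting to $\T_0$, under which each $\int_S$ becomes the $\T_0$-equivariant integral with no pole at $c_1(S)=0$ --- yields exactly
\[\mathcal{S}^\redu(E,\alpha;q)=\sum_{\mu,\nu,\xi}\log\bigl(A_{\mu,\nu,\xi}(q)\bigr)\int_S c_\mu(\alpha)c_\nu(S)c_\xi(E),\]
the Chern case being verbatim and the Verlinde case requiring instead division by $1-e^{-c_1(S)}$ together with $\bigl(1-e^{-c_1(S)}\bigr)/c_1(S)\to1$ as $c_1(S)\to0$. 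The main obstacle is Step~1: the argument hinges on the removed trivial summand carrying equivariant weight exactly $+c_1(S)$ --- this sign being precisely what makes all three formulas come out with $+\log$ rather than $-\log$ --- so one must track relative Serre duality and the Todd-class twist carefully; everything afterward is formal power-series bookkeeping.
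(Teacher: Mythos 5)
Your proposal is correct and follows essentially the same route as the paper: identify the reduced series as the result of dividing the virtual series by $e(K_S^\vee)=c_1(S)$ (resp.\ $\ch(\Lambda_{-1}(K_S^\vee)^\vee)=1-e^{-c_1(S)}$ for Verlinde) and letting $\lambda_1+\lambda_2\to 0$, then read off the linear-in-$c_1(S)$ term from the universal product of Theorem \ref{thm:SV2d-virtual-intro}. The only cosmetic difference is that you take logarithms before dividing, whereas the paper expands the exponential directly and observes that all terms of order $\geq 2$ in $c_1(S)$ vanish in the limit.
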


\subsubsection{Virtual Segre-Verlinde correspondence}
When $Y$ is compact, the virtual Segre-Verlinde correspondence has been proven for compact surfaces and Calabi-Yau 4-folds \cite[Theorem 1.6]{Bojko2} for torsion-free sheaves $E$ to be
\[{\mathcal{S}}_Y(E,\alpha;q)={\mathcal{V}}_Y(E,\alpha;(-1)^Nq).\]

As a corollary of this result, Theorem \ref{thm:SV2d-virtual-intro} and the relations (\ref{eqn:proj-equi}), we first prove the following ``weak" equivariant Segre-Verlinde correspondence.

\begin{corollary}[Corollary \ref{cor:SV2d-virtual}]\label{cor:weak-SV2d-virtual-intro}
In the setting of Theorem \ref{thm:SV2d-virtual-intro}, we have the following correspondence
\equa{A_{\mu,\nu,\xi}(q)&=B_{\mu,\nu,\xi}((-1)^Nq).
}
whenever one of $\mu,\nu,\xi$ is $(1)$ and the other two are $(0)$. In particular, the degree 0 part satisfies
\[{\mathcal{S}}_{S,0}(E,\alpha;q)-{\mathcal{V}}_{S,0}(E,\alpha;(-1)^Nq)=\sum_{n=2}^\infty \frac{f_n}{(\lambda_1\lambda_2)^{n-2}}\cdot\left(\int_S c_1(S)\right)^2\cdot q^n\]
for some terms $f_n\in H_\T^{2n-2}(\pt)$ dependent on $\a$ through its rank and Chern classes.
\end{corollary}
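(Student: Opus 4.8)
The plan is to read off both claims from the universal product expansions of Theorem~\ref{thm:SV2d-virtual-intro}, the identifications~(\ref{eqn:proj-equi}), and the compact virtual Segre--Verlinde correspondence ${\mathcal{S}}_Y(E,\alpha;q)={\mathcal{V}}_Y(E,\alpha;(-1)^Nq)$ of \cite[Theorem 1.6]{Bojko2}.

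\emph{The three correspondences.} By~(\ref{eqn:proj-equi}) it suffices to prove $A_i^\vir(q)=B_i^\vir((-1)^Nq)$ for $i=1,2,3$. Substituting the compact product expansions of ${\mathcal{S}}_S(E,\a;q)$ and ${\mathcal{V}}_S(E,\a;q)$ recalled above into the compact correspondence and taking logarithms yields, for every smooth projective surface $S$, torsion-free $E$ of rank $N$ and $\a\in K^0(S)$ of rank $r$,
\[
c_1(S)c_1(\a)\log\frac{A_1^\vir(q)}{B_1^\vir((-1)^Nq)}+c_1(S)^2\log\frac{A_2^\vir(q)}{B_2^\vir((-1)^Nq)}+c_1(S)c_1(E)\log\frac{A_3^\vir(q)}{B_3^\vir((-1)^Nq)}=0.
\]
Taking $S=\PP^2$, where $c_1(S)^2=9$ is fixed while $c_1(S)c_1(\a)$ and $c_1(S)c_1(E)$ take every integer multiple of $3$ as $\a$ and $E$ vary within the ranks $r,N$ (so the universal series do not change), forces each logarithm to vanish; this is exactly the asserted identity for the triples with one part $(1)$ and the other two empty.

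\emph{A divisibility observation on $\CC^2$.} The torus $\T_0$ acts on $S=\CC^2$ with a single fixed point, with tangent weights $\lambda_1,\lambda_2$, so $\int_{\CC^2}\omega=\omega|_0/(\lambda_1\lambda_2)$. Since every exponent occurring in Theorem~\ref{thm:SV2d-virtual-intro} carries the factor $c_1(S)$, this gives
\[
\int_{\CC^2}c_\mu(\a)\,c_\nu(S)\,c_\xi(E)\,c_1(S)=\Big(\int_{\CC^2}c_1(S)\Big)\cdot\big(c_\mu(\a)\,c_\nu(S)\,c_\xi(E)\big)\big|_0,
\]
and $\big(c_\mu(\a)c_\nu(S)c_\xi(E)\big)|_0$ is a polynomial in $\lambda_1,\lambda_2,\vec m,\vec w$ of total degree $|\mu|+|\nu|+|\xi|$. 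Writing $c:=\int_S c_1(S)=(\lambda_1+\lambda_2)/(\lambda_1\lambda_2)$, it follows that $\log{\mathcal{S}}_S=c\,\Phi^{\mathcal{S}}$ and $\log{\mathcal{V}}_S(E,\a;(-1)^Nq)=c\,\Phi^{\mathcal{V}}$ for power series $\Phi^{\mathcal{S}},\Phi^{\mathcal{V}}$ whose coefficients all have total degree $\geq 0$, with total-degree-$0$ parts the numerical series $\log A_{(0),(0),(0)}(q)$ and $\log B_{(0),(0),(0)}((-1)^Nq)$. By the three correspondences just proved, the difference $\delta:=\Phi^{\mathcal{S}}-\Phi^{\mathcal{V}}$ has \emph{no total-degree-$1$ component}, so $\delta$ is supported in total degrees $0$ and $\geq 2$.

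\emph{Extracting the degree-zero part.} From $\log{\mathcal{S}}_S-\log{\mathcal{V}}_S((-1)^Nq)=c\,\delta$ we get
\[
{\mathcal{S}}_S(E,\a;q)-{\mathcal{V}}_S(E,\a;(-1)^Nq)={\mathcal{V}}_S(E,\a;(-1)^Nq)\cdot\big(e^{c\,\delta}-1\big),
\]
with every occurrence of the cohomology of $S$ now packaged into the single variable $c$. Expanding $e^{c\delta}-1=\sum_{m\geq1}c^{m}\delta^{m}/m!$ and using that $\delta$ has no total-degree-$1$ part, its total-degree-$0$ part is divisible by $c^{2}$, its total-degree-$(-1)$ part is $c$ times the numerical total-degree-$0$ part of $\delta$ plus terms divisible by $c^{3}$, and its total-degree-$(-j)$ part is divisible by $c^{j}$ for $j\geq2$; likewise $\log{\mathcal{V}}_S((-1)^Nq)=c\Phi^{\mathcal{V}}$ shows the total-degree-$j$ part of ${\mathcal{V}}_S((-1)^Nq)$ is divisible by $c$ for $j\neq0$ and by $c^{|j|}$ for $j<0$. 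Multiplying the two factors and collecting all terms of total degree $0$, each contribution carries at least two factors of $c$, hence is divisible by $c^{2}=\big(\int_S c_1(S)\big)^{2}$; recording the coefficient of $q^{n}$ defines $f_n$. Finally the $q^{0}$-coefficient of the left-hand side vanishes (both Quot schemes for $n=0$ are a point), and its $q^{1}$-coefficient equals $c\,\delta^{(1)}$, where $\delta^{(1)}$ is the $q^{1}$-coefficient of $\delta$ and has no total-degree-$1$ part, so $c\,\delta^{(1)}$ has no total-degree-$0$ part; hence the sum starts at $n=2$.

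\emph{Main difficulty.} Everything hinges on the divisibility observation: on $\CC^2$ each exponent in Theorem~\ref{thm:SV2d-virtual-intro} is divisible by $\int_S c_1(S)$, which collapses the a priori unbounded negative total-degree behaviour of the equivariant series into powers of the single factor $c$; and the hypothesis of Part~1 is exactly what removes the total-degree-$1$ component of $\delta$ and thereby upgrades $c$ to $c^{2}$. The rest is the $c$-adic bookkeeping of the degree-$0$ contributions, which is routine but must be carried out with care.
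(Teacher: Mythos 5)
Your proof is correct and follows essentially the same route as the paper's: part (1) is obtained from the projective reduction of Section \ref{sec:proj-reduction} together with the compact virtual Segre--Verlinde correspondence (your $\PP^2$ separation of the three universal series is just the explicit version of that step), and part (2) rests on the same key observation that every exponent in the universal product carries a factor of $c=\int_S c_1(S)$, so that after the degree-one cancellation the degree-zero part of the difference acquires at least two such factors. The only point you leave implicit is that $f_n$ really lies in $H^{2n-2}_\T(\pt)$ (equivalently, that the denominator is exactly $(\lambda_1\lambda_2)^n$); this follows immediately from your own setup, since $\delta$ and $\Phi^{\mathcal{V}}$ have no $q^0$-term and hence at most $n$ powers of $c$ can occur in the $q^n$-coefficient.
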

This is weak in the sense that only the series whose powers are degree 0 satisfy the usual correspondence, while Computations for small values of $n$ show that the terms $f_n$ can be non-zero so the naive correspondence does not hold in all degrees. One might instead ask whether there are any other relations between the series in all cohomological degrees generalizing the above corollary.

We are able to give a complete answer relating $B_{\mu,\nu,\xi}$ and $C_{\mu,\nu,\xi}$ once the sizes of the partitions $|\mu|,|\xi|$ are fixed and $\nu = (0)$. This is represented in Figure \ref{fig:sv}, but before stating our result, some further notation is needed. Given any partition $\mu$, integer $a\in\ZZ$ and $n> 0$, the binomial coefficients for $\mu$ is
\[\binom{a}{\mu}:=\prod_{i=1}^{\ell(\mu)}\binom{a}{\mu_i},\] 
and the downward factorial of $a$ by $n$ is
\[(a)_{(n)}:=a\cdot(a-1)\cdots(a-n+1),\] 
\[(a)_{(-1)}:=\frac1{a+1}.\] 

\begin{theorem}[Theorem \ref{thm:SV2d-deg-pos}]\label{thm:SV2d-deg-pos-intro}
For rank $r\geq 0$ and integers $k_1,k_2\geq 0$ with $k:=k_1+k_2$, the universal series of Theorem \ref{thm:SV2d-virtual-intro} satisfy
\equa{k_1!k_2!\sum_{|\mu|=k_1}\sum_{|\xi|=k_2}\binom{r}{\mu}\binom{N}{\xi}\log A_{\mu,(0),\xi}(q)&=-r\sum_{n=1}^\infty\frac{(-n(r+N))_{(k-1)}}{n}\binom{-nr-1}{nN-1}q^n,\\
k_1!k_2!r^{k_1}\sum_{|\xi|=k_2}\binom{N}{\xi}\log B_{(1)_{k_1},(0),\xi}(q)&=-r^{k}\sum_{n=1}^\infty n^{k-2}\binom{-nr-1}{nN-1}\left((-1)^Nq\right)^n.}
Furthermore, we have
\equa{
&k_1!k_2!\sum_{|\mu|=k_1}\sum_{|\xi|=k_2}\binom{r}{\mu}\binom{N}{\xi}\log C_{\mu,(0),\xi}(q)=r\sum_{n=1}^\infty\frac{(n(r-N))_{(k-1)}}{n}\binom{nr-1}{nN-1}q^n,
}
which can be compared to the identities above by replacing $r$ with $-r$.
\end{theorem}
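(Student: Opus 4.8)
The plan is to convert all three identities, via the product formula of Theorem~\ref{thm:SV2d-virtual-intro}, into the evaluation of explicit graded pieces of $\log\mathcal S_{\CC^2}$ and $\log\mathcal V_{\CC^2}$, and then to compute those pieces by equivariant virtual localization on $\quot_{\CC^2}(E,n)$. For the reduction I would first specialize $E=\bigoplus_{i=1}^N\O_{\CC^2}\<y_i\>$ and $\a=\bigoplus_{k=1}^r\O_{\CC^2}\<v_k\>$ so that the cohomological $E$-weights become a single variable $m$ and the $\a$-weights a single variable $w$; since the product formula is an identity of rational functions in the equivariant parameters this is a legitimate substitution. Then $c_\mu(\a)=\binom{r}{\mu}w^{|\mu|}$, $c_\xi(E)=\binom{N}{\xi}m^{|\xi|}$, and $\int_{\CC^2}c_\nu(S)c_1(S)$ is homogeneous of degree $|\nu|-1$ in $\lambda_1,\lambda_2$, equal to $\tfrac{\lambda_1+\lambda_2}{\lambda_1\lambda_2}$ exactly when $\nu=(0)$. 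Taking $\log$ of the product formula and extracting the coefficient of $w^{k_1}m^{k_2}$ together with the part of degree $-1$ in $\lambda_1,\lambda_2$ annihilates every term with $\nu\neq(0)$, giving
\[
k_1!\,k_2!\sum_{|\mu|=k_1}\sum_{|\xi|=k_2}\binom{r}{\mu}\binom{N}{\xi}\log A_{\mu,(0),\xi}(q)=k_1!\,k_2!\,\frac{\lambda_1\lambda_2}{\lambda_1+\lambda_2}\cdot\bigl[\log\mathcal S_{\CC^2}(E,\a;q)\bigr]_{w^{k_1}m^{k_2},\,\deg_{\lambda_1,\lambda_2}=-1},
\]
and the analogous statement for $\mathcal V_{\CC^2}$, where the extra factor $r^{k_1}$ in the second identity reflects that $\det(\a^{[n]})$ depends on $\a$ only through $c_1(\a)$, so $B_{\mu,(0),\xi}=1$ unless $\mu=(1)_{k_1}$. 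Finally the $C$-identity is not independent: from $c(\a^{[n]})=s\bigl((-\a)^{[n]}\bigr)$ one gets $\mathcal C_{\CC^2}(E,\a;q)=\mathcal S_{\CC^2}(E,-\a;q)$, and since $c_\mu(-\a)=\binom{-r}{\mu}w^{|\mu|}$ under the specialisation, matching product formulas shows the $C$-identity is the $A$-identity after $r\mapsto-r$ (which indeed carries $-r\sum_n\tfrac{(-n(r+N))_{(k-1)}}{n}\binom{-nr-1}{nN-1}q^n$ to $r\sum_n\tfrac{(n(r-N))_{(k-1)}}{n}\binom{nr-1}{nN-1}q^n$).

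It thus remains to compute those graded pieces by localization. Keeping the torus weights generic, the $\T$-fixed locus of $\quot_{\CC^2}(E,n)$ is the finite set of monomial quotients, indexed by $N$-tuples of partitions $\vec\lambda=(\lambda^1,\dots,\lambda^N)$ with $\sum_i|\lambda^i|=n$; there $\a^{[n]}|_{\vec\lambda}=\bigoplus_{i,k}\<y_i v_k\>\otimes H^0(\O_{\CC^2}/I_{\lambda^i})$ is a sum of $rn$ weight lines indexed by pairs (box of $\vec\lambda$, index $k\le r$), and the virtual tangent space is the equivariant Quot vertex $T^\vir_{\vec\lambda}=\sum_{i,j}\tfrac{y_j}{y_i}\mathsf V(\lambda^i,\lambda^j;t_1,t_2)$ arising from $\chi(E,F)-\chi(F,F)$ on $\CC^2$. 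The virtual, resp. K-theoretic virtual, localization of \cite{vir-loc,k-local-vir} then writes $\mathcal S_{\CC^2}(E,\a;q)=\sum_nq^n\sum_{\vec\lambda}s(\a^{[n]}|_{\vec\lambda})/e(T^\vir_{\vec\lambda})$, and the analogous sum for $\mathcal V_{\CC^2}$ with $s,e$ replaced by $\det$ and the K-theoretic Euler class. One then takes $\log$, specialises the weights, and isolates the required graded piece.

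The real difficulty — and where I expect the bulk of the work — is the evaluation of these sums: they do not factor over the $N$ components because $T^\vir_{\vec\lambda}$ couples every pair $\lambda^i,\lambda^j$, and one must also extract the leading cohomological degree after taking the logarithm. Two structural facts should make this possible. First, after the specialisation $s(\a^{[n]}|_{\vec\lambda})$ depends only on $w+m$ while $e(T^\vir_{\vec\lambda})$ is independent of $m$ (the $E$-weights cancel in $\chi(E,F)-\chi(F,F)$), so the whole series — hence $\Phi(z;q):=\sum_{\mu,\xi}\binom{r}{\mu}\binom{N}{\xi}z^{|\mu|+|\xi|}\log A_{\mu,(0),\xi}(q)$ — depends on the weights only through $z=w+m$; this already explains why the right-hand sides see $k_1,k_2$ only through $k$. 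Second, in the degree $-1$ piece almost all of the vertex contributions should drop out and the remaining sum resum, or equivalently this piece is a dimensional reduction computing the corresponding Segre/Verlinde invariant of Quot schemes of a curve, which can also be accessed from the rank~$1$ Hilbert-scheme case — where Corollary~\ref{cor:vanishing-intro} and the closed form for $\mathcal C_{\CC^2}(\O,L;q)$ apply — through the universality of Theorem~\ref{thm:SV2d-virtual-intro}. The outcome should then be matched, via Lagrange inversion, with
\[
\Phi(z;q)=-r\sum_{n\ge 1}\frac{(1+z)^{\,1-n(r+N)}}{n\,\bigl(1-n(r+N)\bigr)}\binom{-nr-1}{nN-1}q^n,
\]
after which extracting the coefficient of $z^k$ and using $\sum_{k_1+k_2=k}\tfrac{x^{k_1}y^{k_2}}{k_1!k_2!}=\tfrac{(x+y)^k}{k!}$ together with $\sum_{k\ge1}\tfrac{z^k}{k}\binom{a}{k-1}=\tfrac{(1+z)^{a+1}-1}{a+1}$ recovers the first identity with its downward factorials $(-n(r+N))_{(k-1)}$.

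Running the same argument in K-theory will give the second identity; it is genuinely not the first with $q\mapsto(-1)^Nq$ — that agreement holds only in cohomological degree $0$, where Corollary~\ref{cor:weak-SV2d-virtual-intro} already forces it — with the powers $n^{k-2}$ and the sign $(-1)^{nN}$ emerging respectively from the K-theoretic Euler class and from the virtual structure sheaf twisted by $\det(\a^{[n]})$. Combined with the substitution $r\mapsto-r$ from the reduction step, this establishes all three identities. The single hardest point throughout is the closed evaluation of the degree $-1$ part of the interacting tuple-of-partitions sum, i.e. proving that its leading cohomological degree collapses precisely to the stated binomial generating series; a rank reduction to the Hilbert scheme (or the curve dimensional reduction) followed by Lagrange inversion is the route I would take.
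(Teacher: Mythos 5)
Your reduction is sound and, in fact, matches the paper's key structural insight: after specializing all $w_i=w$ and $m_j=m$, the relevant series depends on the weights only through the common shift $z=w+m$ (in the paper this is the identity $\mathcal{N}_S(\vec y;\vec v;q,z)=\mathcal{N}_S(\vec y;ze^{w_1},\dots,ze^{w_r};q,1)$, packaged as Lemma \ref{lem:differential}, which converts the sums $k!\sum_{|\mu|=k}\binom{r}{\mu}H_{\mu,(0),\xi}$ into $D_z^{k-1}$ applied to the single series $H_{(1),(0),\xi}$). The extraction of the $\nu=(0)$, degree-$(-1)$ piece and the deduction of the $C$-identity from the $A$-identity via $\mathcal{C}(E,\a;q)=\mathcal{S}(E,-\a;q)$ are also consistent with what the paper does.

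The genuine gap is the base case: you never actually evaluate the $k=1$ (degree-zero) universal series from which everything else is bootstrapped, and the two routes you propose for it do not work as stated. Universality in Theorem \ref{thm:SV2d-virtual-intro} means the series $A_{\mu,\nu,\xi},B_{\mu,\nu,\xi}$ are independent of the particular equivariant weights, but they still depend on \emph{both} ranks $N$ and $r$; Corollary \ref{cor:vanishing-intro} and the closed form for $\mathcal{C}_S(\O_S,L;q)$ only pin down the $(N,r)=(1,\pm 1)$ case, so no amount of universality recovers general $(N,r)$ from them. The "dimensional reduction to a curve" is likewise asserted, not proved. The paper closes this gap by importing the explicit compact-surface formula for the universal Nekrasov-genus series from \cite[(4.17), Theorem 5.1]{Bojko2} (obtained there by Lagrange inversion with $R=f^rg^N$), then applying $D_z^{k-1}$ and taking the Chern and Verlinde limits of Lemma \ref{lem:CV-limit} --- a computation that requires real care (the polynomial $p_{n,k}$ with $p_{n,k}(1)=(nr-1)_{(k-1)}$, the identity $(nr-1)_{(k-1)}\binom{nr-k}{nN-1}=(n(r-N))_{(k-1)}\binom{nr-1}{nN-1}$, and for the Verlinde case a separate change of variables $\tilde H_i=1-e^{-H_i}$ and its own Lagrange inversion). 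Two further points you omit: the theorem includes $k=0$, where your scheme requires $D_z^{-1}$ and hence an undetermined integration constant that the paper kills by a degree analysis of the Chern limit together with polynomiality in $r$; and your generating function $\Phi(z;q)=\sum_{\mu,\xi}\binom{r}{\mu}\binom{N}{\xi}z^{|\mu|+|\xi|}\log A_{\mu,(0),\xi}$ is off by $z\mapsto 2z$ from the function whose $[z^k]$-coefficient is $R_k/k!$ (extracting $[w^{k_1}m^{k_2}]$ from a function of $w+m$ produces $\binom{k}{k_1}[z^k]$, not $[z^k]$), a bookkeeping slip that would propagate into the final identity if not fixed.
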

In particular, setting $k=1$, the first two equalities of this theorem give 
\[A_{(1),(0),(0)}(q)=\left(B_{(1),(0),(0)}((-1)^Nq)\right),\]
\[A_{(0),(0),(1)}(q)=\left(B_{(0),(0),(1)}((-1)^Nq)\right).\]
This is consistent with the Segre-Verlinde correspondence in degree 0 from Corollary \ref{cor:weak-SV2d-virtual-intro}. Going one degree higher by setting $k=2$, we get the following correspondence.
\begin{corollary}[Corollary \ref{cor:sv2d-deg1}]\label{cor:sv2d-deg1-intro}
For rank $r\geq 0$, the universal series of Theorem \ref{thm:SV2d-virtual-intro} satisfy the following correspondences
\equa{&\indent A_{(1,1),(0),(0)}(q)^{-r}A_{(2),(0),(0)}(q)^{\frac{-(r-1)}{2}}=B_{(1,1),(0),(0)}\left((-1)^Nq\right)^{r+N},\\
&\indent A_{(1),(0),(1)}(q)^{-r}=B_{(1),(0),(1)}\left((-1)^Nq\right)^{r+N},\text{ and }\\
&\indent A_{(0),(0),(1,1)}(q)^{-rN}A_{(0),(0),(2)}(q)^{\frac{-r(N-1)}{2}}\\
&=B_{(0),(0),(1,1)}\left((-1)^Nq\right)^{N(r+N)}B_{(0),(0),(2)}\left((-1)^Nq\right)^{\frac{(N-1)(r+N)}{2}}.
}
\end{corollary}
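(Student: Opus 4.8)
The plan is to read off Corollary~\ref{cor:sv2d-deg1-intro} as the case $k=2$ of Theorem~\ref{thm:SV2d-deg-pos-intro}, reorganised through logarithms. First I would specialise that theorem to the three splittings $(k_1,k_2)\in\{(2,0),(1,1),(0,2)\}$. Since $k=2$ we have $k-1=1$, so $(-n(r+N))_{(1)}=-n(r+N)$, and $n^{k-2}=1$; hence, abbreviating $\sigma(q):=\sum_{n\geq 1}\binom{-nr-1}{nN-1}q^n$, the right-hand sides of the first two displays of Theorem~\ref{thm:SV2d-deg-pos-intro} equal $r(r+N)\,\sigma(q)$ and $-r^2\,\sigma((-1)^Nq)$ respectively, independently of which splitting with $k_1+k_2=2$ is chosen. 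Using $\binom r{(1)}=r$, $\binom r{(2)}=\binom r2$, $\binom r{(1,1)}=r^2$ and the analogues in $N$, together with the fact that $(0)$ is the unique partition of size $0$, Theorem~\ref{thm:SV2d-deg-pos-intro} then yields the Segre identities
\[
r(r-1)\log A_{(2),(0),(0)}(q)+2r^2\log A_{(1,1),(0),(0)}(q)=r(r+N)\,\sigma(q),\qquad rN\log A_{(1),(0),(1)}(q)=r(r+N)\,\sigma(q),
\]
\[
N(N-1)\log A_{(0),(0),(2)}(q)+2N^2\log A_{(0),(0),(1,1)}(q)=r(r+N)\,\sigma(q),
\]
and the three Verlinde identities obtained from these by replacing every $A$ with the corresponding $B$ and the right-hand side with $-r^2\,\sigma((-1)^Nq)$.

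With these six identities in hand, each of the three claimed correspondences follows by taking logarithms and matching. For the second one, its logarithm reads $-r\log A_{(1),(0),(1)}(q)=(r+N)\log B_{(1),(0),(1)}((-1)^Nq)$; dividing the two $(1,1)$-identities by $rN$ gives $\log A_{(1),(0),(1)}(q)=\tfrac{r+N}{N}\sigma(q)$ and $\log B_{(1),(0),(1)}(q)=-\tfrac rN\sigma((-1)^Nq)$, and substituting $q\mapsto(-1)^Nq$ in the latter while using $((-1)^N)^2=1$ shows both sides equal $-\tfrac{r(r+N)}{N}\sigma(q)$. For the first correspondence, its logarithm is $-r\log A_{(1,1),(0),(0)}(q)-\tfrac{r-1}{2}\log A_{(2),(0),(0)}(q)=(r+N)\log B_{(1,1),(0),(0)}((-1)^Nq)$; multiplying the left side by $-2r$ produces exactly the left-hand side of the $(2,0)$-Segre identity, which equals $r(r+N)\,\sigma(q)$, so the left side is $-\tfrac{r+N}{2}\sigma(q)$, while the $(2,0)$-Verlinde identity gives $\log B_{(1,1),(0),(0)}((-1)^Nq)=-\tfrac12\sigma(q)$, so the right side is $-\tfrac{r+N}{2}\sigma(q)$ as well. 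The third correspondence is handled identically with the roles of $r$ and $N$ partly interchanged: multiplying the logarithm of its left side by $-\tfrac{2N}{r}$ reproduces the left-hand side of the $(0,2)$-Segre identity, multiplying the logarithm of its right side by $\tfrac{2N}{r+N}$ reproduces the left-hand side of the $(0,2)$-Verlinde identity evaluated at $(-1)^Nq$, and both reduce, via $((-1)^N)^2=1$, to $-\tfrac{r^2(r+N)}{2N}\sigma(q)$.

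There is no substantial obstacle here: the content lies entirely in Theorem~\ref{thm:SV2d-deg-pos-intro}, and the corollary is precisely the observation that the $r$- and $N$-dependent exponents occurring in the three displayed identities are the unique ones for which the left- and right-hand logarithms reorganise into the weighted partition sums $\sum_{|\mu|=k_1}\sum_{|\xi|=k_2}\binom r\mu\binom N\xi\log A_{\mu,(0),\xi}(q)$ (and the analogous Verlinde combinations). The one step that needs a word is the division by $r$ (respectively $rN$, respectively $r/N$): this is valid for $r\geq 1$, and the case $r=0$ is a minor separate point, following from the fact that the universal series are, coefficient-wise in $q$, polynomial in $r$ — so the identities obtained after dividing by $r$ persist at $r=0$ by continuity — after which one invokes $\binom{-1}{k}=(-1)^k$ to check the equality directly. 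I would also take care to use $((-1)^N)^2=1$ at every point where $q$ is replaced by $(-1)^Nq$ inside $\sigma$, since this is exactly what forces the Segre and Verlinde right-hand sides to coincide.
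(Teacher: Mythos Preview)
Your proposal is correct and is precisely the paper's approach: the corollary is just the specialisation $k=2$ of Theorem~\ref{thm:SV2d-deg-pos-intro}, and you have carried out the bookkeeping the paper leaves implicit. One small imprecision: your phrase ``the three Verlinde identities obtained from these by replacing every $A$ with the corresponding $B$'' is not literally right for $(k_1,k_2)=(2,0)$, since the Verlinde side of Theorem~\ref{thm:SV2d-deg-pos-intro} fixes $\mu=(1)_{k_1}$ and therefore involves only $B_{(1,1),(0),(0)}$ and not $B_{(2),(0),(0)}$ --- but you use the correct identity $2r^2\log B_{(1,1),(0),(0)}(q)=-r^2\sigma((-1)^Nq)$ immediately afterwards, so no error propagates.
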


It is natural to expect that applying Lagrange inversion to the expressions 
in Theorem \ref{thm:SV2d-deg-pos-intro} will simplify the formulae. In Section \ref{sec:closed-form}, we use this to phrase the higher degree Segre and Verlinde series in terms of applying differential operators of comparable form to a single function $\f(t)=\log(1+t)$ with common variable change except for the usual sign $q\leadsto (-1)^Nq$.
\begin{theorem}[Theorem \ref{cor:SV2d-closed}]\label{cor:SV2d-closed-intro}
Let $\f(t)=\log(1+t)$ and $\psi(t)=Nt^{-1}+r(1+t)^{-1}$. Define the differential operator
\[ D_\psi=\frac{1}{\psi}\cdot \frac{d}{dt}.\]
Furthermore, use the notation
\[D_{\mathcal{S}}^{(k)}=(-(r+N)D_\psi)_{(k-1)},\indent D_{\mathcal{V}}^{(k)}=r^{k-1}D_\psi^{k-1}\]
for $k\geq 0$ where $D_\psi^{-1}(-)$ denotes integrating $\psi\cdot (-)$ assuming a constant term 0. In the setting of Theorem \ref{thm:SV2d-deg-pos-intro}, we have the following relations
\equa{
k_1!k_2!\sum_{|\mu|=k_1}\sum_{|\xi|=k_2}\binom{r}{\mu}\binom{N}{\xi}\log A_{\mu,(0),\xi}(q)&=-r\sum_{i=1}^N\left(D_{\mathcal{S}}^{(k)}\f\right)(H_i),\\
k_1!k_2!r^{k_1}\sum_{|\xi|=k_2}\binom{N}{\xi}\log B_{(1)_{k_1},(0),\xi}\left((-1)^Nq\right)&=-r\sum_{i=1}^N\left(D_{\mathcal{V}}^{(k)}\f\right)(H_i)}
where $H_i$ are the Newton-Puiseux solutions to $H_i^N=q(1+H_i)^{-r}$.
\end{theorem}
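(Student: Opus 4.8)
The plan is to reduce Theorem~\ref{cor:SV2d-closed-intro} to the explicit $q$-series already obtained in Theorem~\ref{thm:SV2d-deg-pos-intro}, the key observation being that $D_\psi$ is secretly the Euler operator $q\tfrac{d}{dq}$. Indeed, along each Newton--Puiseux branch $H_i$ the defining relation $H_i^{N}=q(1+H_i)^{-r}$ reads $q=H_i^{N}(1+H_i)^{r}$, so
\[ \frac{dq}{dH_i}=H_i^{N}(1+H_i)^{r}\Big(\frac{N}{H_i}+\frac{r}{1+H_i}\Big)=q\,\psi(H_i), \]
and hence $q\tfrac{d}{dq}\big(f(H_i)\big)=f'(H_i)/\psi(H_i)=(D_\psi f)(H_i)$ for any Puiseux series $f$. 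Thus $D_\psi$ commutes with $\sum_{i=1}^N$, and because the monodromy $q^{1/N}\mapsto\zeta\,q^{1/N}$ ($\zeta$ a primitive $N$-th root of unity) permutes the branches, the series $G(q):=\sum_{i=1}^N\f(H_i)=\sum_{i=1}^N\log(1+H_i)$ is an honest element of $q\,\CC[\![q]\!]$, on which $q\tfrac{d}{dq}$ multiplies $q^n$ by $n$.

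Next I would evaluate $G$ by Lagrange--B\"urmann inversion. In the uniformiser $u=q^{1/N}$ a branch satisfies $u=H/\phi(H)$ with $\phi(H)=(1+H)^{-r/N}$, $\phi(0)=1$, so $[u^m]\log(1+H(u))=\tfrac1m[H^{m-1}]\big((1+H)^{-1}\phi(H)^{m}\big)$; averaging over the $N$ branches kills every $u^m$ with $N\nmid m$, giving
\[ [q^n]\,G(q)=N\,[u^{Nn}]\log(1+H(u))=\frac1n\,[H^{Nn-1}]\big((1+H)^{-nr-1}\big)=\frac1n\binom{-nr-1}{nN-1}. \]
So $G(q)=\sum_{n\ge1}\tfrac1n\binom{-nr-1}{nN-1}q^n$. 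One can check this another way: $\tfrac{\psi(H)}{H^{nN}(1+H)^{nr}}=\tfrac{d}{dH}\big(\tfrac{-1}{n}H^{-nN}(1+H)^{-nr}\big)$, so $[q^n]G$ equals a residue at $H=0$ which integration by parts against $\log(1+H)$ turns into $\tfrac1n[H^{Nn-1}]\big((1+H)^{-nr-1}\big)$.

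Finally I would apply the operators term by term. Since $(-(r+N)D_\psi)_{(k-1)}$ and $r^{k-1}D_\psi^{k-1}$ act on $q^n$ by $(-n(r+N))_{(k-1)}$ and $r^{k-1}n^{k-1}$ respectively,
\[ -r\sum_{i=1}^N\big(D_{\mathcal{S}}^{(k)}\f\big)(H_i)=-r\sum_{n\ge1}\frac{(-n(r+N))_{(k-1)}}{n}\binom{-nr-1}{nN-1}q^n, \]
\[ -r\sum_{i=1}^N\big(D_{\mathcal{V}}^{(k)}\f\big)(H_i)=-r^{k}\sum_{n\ge1}n^{k-2}\binom{-nr-1}{nN-1}q^n, \]
and these are precisely the right-hand sides of the two formulas in Theorem~\ref{thm:SV2d-deg-pos-intro}, the Verlinde one after the substitution $q\leadsto(-1)^Nq$ built into the present statement. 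Matching the left-hand sides yields the claim. The hard part is not conceptual but is the careful bookkeeping in the middle step: verifying that $G$ genuinely descends to a power series in $q$, pinning down the Lagrange-inversion exponents and the sign inside $\binom{-nr-1}{nN-1}$ exactly, and handling the small values of $k$, where $D_{\mathcal{S}}^{(k)}$ and $D_{\mathcal{V}}^{(k)}$ degenerate to the empty product, the identity, or the antiderivative $D_\psi^{-1}$ governed by the convention $(a)_{(-1)}=\tfrac1{a+1}$.
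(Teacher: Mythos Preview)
Your proposal is correct and follows essentially the same route as the paper: both arguments hinge on the identity $q\tfrac{d}{dq}\big(f(H_i)\big)=(D_\psi f)(H_i)$ obtained by differentiating the defining relation $H_i^N=q(1+H_i)^{-r}$, and both evaluate $\sum_i\log(1+H_i)$ via Lagrange inversion (the paper cites \cite[Corollary 2]{bojko-lag}) to reduce the statement to the explicit $q$-series of Theorem~\ref{thm:SV2d-deg-pos-intro}. The only cosmetic difference is direction: the paper starts from the coefficient-extraction form $\sum_n\frac{1}{n}q^n[t^{nN-1}]\{n^{k-1}(1+t)^{-nr}\f'\}$ and rewrites it as $\sum_i(D_\psi^{k-1}\f)(H_i)$, whereas you start from the latter and expand it termwise; the content is identical.
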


In this theorem, setting $k=1$ allows us to retrieve the degree 0 parts for both the Segre and Verlinde invariants using the function $\f$, hence the Segre-Verlinde correspondence of Corollary \ref{cor:weak-SV2d-virtual-intro}. When $k=2$, we get a relation in the degree 1 part which is equivalent to Corollary \ref{cor:sv2d-deg1-intro}. In general, the Segre and Verlinde relations are obtained by applying $D_{\mathcal{S}}^{(k)}$ and $D_{\mathcal{V}}^{(k)}$ to $\f$. Figure \ref{fig:sv} summarizes these observations.

\begin{remark}
The above results only dealt with the $\nu=(0)$ case. When $\nu=(1)$, similar techniques of Section \ref{sec:reduced-computation} can be applied to the term \cite[(4.24)]{Bojko2}, if an explicit expression for it is found. For $|\nu|>1$, the same techniques no longer work due to limitations of Lemma \ref{lem:differential}.
\end{remark}

 \begin{figure}
     \centering
\begin{tikzcd}
	& {\text{Segre}} && {\text{Verlinde}} \\
	{k=0}\arrow[dd,dash,
    start anchor={[xshift=8ex, yshift=4ex]},
    end anchor={[xshift=8ex,yshift=-32ex]}
    ] & {D_{\mathcal{S}}^{(0)}\varphi} && {D_{\mathcal{V}}^{(0)}\varphi} &\\
	{k=1} && \varphi && {\leadsto\text{Corollary \ref{cor:weak-SV2d-virtual-intro}}} \\
	{k=2} & {D_{\mathcal{S}}^{(2)}\varphi} && {D_{\mathcal{V}}^{(2)}\varphi} & {\leadsto\text{Corollary \ref{cor:sv2d-deg1-intro}}} \\
	{k=3} & {D_{\mathcal{S}}^{(3)}\varphi} && {D_{\mathcal{V}}^{(3)}\varphi}\tikzmark{bracebegin}  \\
	\vdots & \vdots && \vdots & {\leadsto\text{Theorem \ref{cor:SV2d-closed-intro}}} \\
	k & {D_{\mathcal{S}}^{(k)}\varphi} && {D_{\mathcal{V}}^{(k)}\varphi\tikzmark{braceend} }
	\arrow[from=3-3, to=2-2]
	\arrow[from=3-3, to=2-4]
	\arrow[from=3-3, to=4-2]
	\arrow[from=3-3, to=4-4]
	\arrow[from=3-3, to=5-2]
	\arrow[from=3-3, to=5-4]
	\arrow[from=3-3, to=7-2]
	\arrow[from=3-3, to=7-4]
\end{tikzcd}
\begin{tikzpicture}[overlay,remember picture]
        \draw[decorate,decoration={brace}]  ( $ (pic cs:bracebegin) +(6ex, 9pt)  $ ) -- ( $ (pic cs:braceend) -(-6ex, 4pt) $ );
    \end{tikzpicture}
     \caption{Relating Segre and Verlinde invariants by the common function $\f$.}
     \label{fig:sv}
 \end{figure}
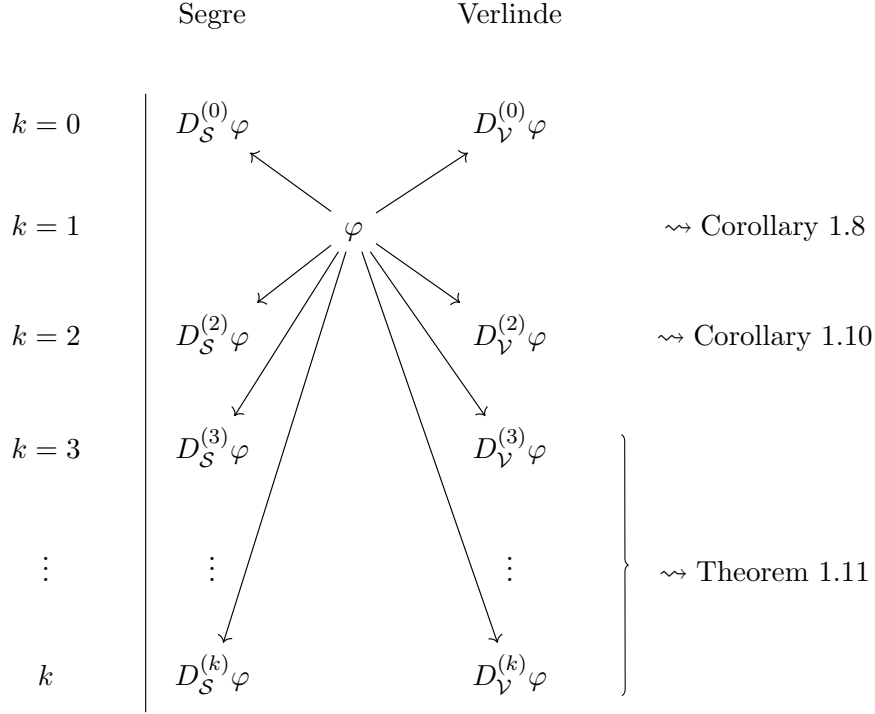

Also proven in \cite[Theorem 1.7]{Bojko2} is a symmetry for virtual Segre series, which states
\[{\mathcal{S}}_Y(E,V;(-1)^Nq)={\mathcal{S}}_Y(V,E;(-1)^rq)\]
for torsion free sheaves $E$ and $V$ of rank $N$ and $r$ respectively. Similar to Corollary \ref{cor:weak-SV2d-virtual-intro}, we have the following weak version of this symmetry.

\begin{corollary}\label{cor:sv-sym2d-intro}
In the setting of Theorem \ref{thm:SV2d-virtual-intro}, for $\a=V=\oplus_{i=1}^{r}\O_S\<v_i\>$, we have the following symmetry
\equa{A^{N,r}_{\mu,\nu,\xi}((-1)^Nq)&=A^{r,N}_{\xi,\nu,\mu}((-1)^rq)
}
whenever one of $\mu,\nu,\xi$ is $(1)$ and the other two are $(0)$. In degree 0, we have
\[{\mathcal{S}}_{S,0}(E,V;(-1)^Nq)-{\mathcal{S}}_{S,0}(V,E;(-1)^rq)=\sum_{n=1}^\infty \frac{g_n}{(\lambda_1\lambda_2)^{n-2}}\cdot\left(\int_S c_1(S)\right)^2\cdot q^n\]
for some terms $g_n\in H_\T^{2n-2}(\pt)$.
\end{corollary}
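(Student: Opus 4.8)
The plan is to deduce Corollary~\ref{cor:sv-sym2d-intro} from the compact symmetry ${\mathcal{S}}_Y(E,V;(-1)^Nq)={\mathcal{S}}_Y(V,E;(-1)^rq)$ of \cite[Theorem 1.7]{Bojko2} in exactly the way Corollary~\ref{cor:weak-SV2d-virtual-intro} is deduced from the compact Segre--Verlinde correspondence. First I would observe that the product formula of Theorem~\ref{thm:SV2d-virtual-intro}, applied with $\alpha=V=\oplus_{i=1}^r\O_S\langle v_i\rangle$ of rank $r$, reads
\[
{\mathcal{S}}_S(E,V;q)=\prod_{\mu,\nu,\xi}A^{N,r}_{\mu,\nu,\xi}(q)^{\int_S c_\mu(V)c_\nu(S)c_\xi(E)c_1(S)},
\]
and that by the ``projective reduction'' argument of Section~\ref{sec:proj-reduction} (the same one giving \eqref{eqn:proj-equi}) the three degree-$0$ series $A^{N,r}_{(1),(0),(0)}$, $A^{N,r}_{(0),(1),(0)}$, $A^{N,r}_{(0),(0),(1)}$ coincide with the universal series $A_1^\vir,A_2^\vir,A_3^\vir$ of the compact theory \cite[Theorem 1.2]{Bojko2}, which control the compact Segre series through the intersection numbers $c_1(S)c_1(V)$, $c_1(S)^2$, $c_1(S)c_1(E)$. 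The compact symmetry \cite[Theorem 1.7]{Bojko2} then forces a symmetry among these compact universal series under interchanging the roles of $E$ and $V$ (i.e.\ of $N$ and $r$) together with $q\mapsto(-1)^Nq$ versus $q\mapsto(-1)^rq$: matching the exponents of $c_1(S)c_1(V)$ on one side with those of $c_1(S)c_1(E)$ on the other, and the self-intersection $c_1(S)^2$ with itself, one reads off
\[
A^{N,r}_{(1),(0),(0)}((-1)^Nq)=A^{r,N}_{(0),(0),(1)}((-1)^rq),\quad
A^{N,r}_{(0),(0),(1)}((-1)^Nq)=A^{r,N}_{(1),(0),(0)}((-1)^rq),
\]
\[
A^{N,r}_{(0),(1),(0)}((-1)^Nq)=A^{r,N}_{(0),(1),(0)}((-1)^rq),
\]
which is precisely the claimed identity $A^{N,r}_{\mu,\nu,\xi}((-1)^Nq)=A^{r,N}_{\xi,\nu,\mu}((-1)^rq)$ in the three cases where one of $\mu,\nu,\xi$ equals $(1)$ and the other two equal $(0)$.

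For the degree-$0$ statement I would then isolate, on both sides of the symmetry, the part of lowest total degree in $\vec\lambda,\vec m,\vec w$; by the discussion following Theorem~\ref{thm:SV2d-virtual-intro} this degree-$0$ part ${\mathcal{S}}_{S,0}$ comes exactly from the three degree-$0$ series above, each weighted by its intersection number. Using the three matched identities, every contribution cancels in the difference ${\mathcal{S}}_{S,0}(E,V;(-1)^Nq)-{\mathcal{S}}_{S,0}(V,E;(-1)^rq)$ \emph{except} that the self-intersection number $\int_S c_1(S)^2$ is not literally an honest number in the equivariant setting: on $S=\CC^2$ one has, after equivariant integration, $\int_S c_1(S)^2 = \bigl(\int_S c_1(S)\bigr)^2 / \int_S 1$, so the naive cancellation in the $c_1(S)^2$ term leaves a nonzero remainder of the shape $\sum_n g_n (\lambda_1\lambda_2)^{-(n-2)}(\int_S c_1(S))^2 q^n$ with $g_n\in H^{2n-2}_\T(\pt)$. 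Concretely I would mimic the final computation in the proof of Corollary~\ref{cor:weak-SV2d-virtual-intro}: write $\int_S c_1(S)=(\lambda_1+\lambda_2)/(\lambda_1\lambda_2)$, $\int_S 1 = 1/(\lambda_1\lambda_2)$, substitute into the product formula, take the degree-$0$ truncation, and collect the coefficient of $q^n$, which will automatically be proportional to $(\int_S c_1(S))^2$ with a prefactor that is a degree-$(2n-2)$ element of $H^*_\T(\pt)$; this defines $g_n$.

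The main obstacle is not the algebra but making rigorous the bookkeeping that lets one ``read off'' the pairwise identities of universal series from a single equality of generating functions: one must argue that the intersection numbers $\int_S c_1(S)c_1(V)$, $\int_S c_1(S)^2$, $\int_S c_1(S)c_1(E)$ (and, for the full product formula, all the higher $\int_S c_\mu c_\nu c_\xi c_1(S)$) can be varied independently enough — e.g.\ by choosing auxiliary families of surfaces/sheaves, or by exploiting that the $v_i,y_j,w_k$ are generic parameters — so that equality of the two products forces equality of the corresponding exponents term by term. This is the same independence/universality input already used to prove Theorem~\ref{thm:SV2d-virtual-intro} and Corollary~\ref{cor:weak-SV2d-virtual-intro}, so I would invoke it in the same form; the only genuinely new point is keeping careful track of the asymmetric sign substitutions $(-1)^Nq$ versus $(-1)^rq$ on the two sides, and checking that the degree-$0$ self-intersection term is the unique source of the residual $g_n$.
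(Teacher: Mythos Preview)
Your approach to the first claim is correct and matches the paper's: by the projective reduction of Section~\ref{sec:proj-reduction} the three degree-$0$ series coincide with the compact universal series $A_1^\vir,A_2^\vir,A_3^\vir$, and the compact Segre symmetry \cite[Theorem~1.7]{Bojko2} then forces the three identities $A^{N,r}_{\mu,\nu,\xi}((-1)^Nq)=A^{r,N}_{\xi,\nu,\mu}((-1)^rq)$ by matching the exponents $c_1(S)c_1(V)$, $c_1(S)^2$, $c_1(S)c_1(E)$. Your concern about ``reading off'' termwise identities is handled exactly as in the paper, by varying over enough projective toric data; no new input is needed.

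Your explanation of the degree-$0$ residual, however, misidentifies its source. You write that the $\int_S c_1(S)^2$ contribution fails to cancel because this integral ``is not literally an honest number in the equivariant setting''. That is not the mechanism: the term with exponent $\int_S c_1(S)^2$ \emph{does} cancel, precisely by your own identity $A^{N,r}_{(0),(1),(0)}((-1)^Nq)=A^{r,N}_{(0),(1),(0)}((-1)^rq)$. The residual arises instead from exponentiating the log expansion of Theorem~\ref{thm:SV2d-virtual-intro}: a degree-$0$ contribution to $\mathcal{S}_{S,0}$ can come from a product of two or more log terms whose degrees (positive and negative) sum to zero. Each such log term carries one factor of $\int_S(\cdots)c_1(S)$, so any product of at least two of them is automatically a multiple of $\bigl(\int_S c_1(S)\bigr)^2$, and the $[q^n]$ coefficient involves at most $n$ factors, giving the denominator $(\lambda_1\lambda_2)^{n}$; clearing two of these against the $(\int_S c_1(S))^2$ leaves the stated $(\lambda_1\lambda_2)^{-(n-2)}$. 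This is exactly the argument in the proof of Corollary~\ref{cor:SV2d-virtual}, which you correctly propose to mimic --- so your proof would go through once you replace your mistaken explanation of the residual with this one.
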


Motivated by our study of the higher degree Segre series in Section \ref{sec:sym} which as we see after using Theorem \ref{thm:SV2d-deg-pos-intro} are symmetric under interchanging $\mu$ and $\xi$\textemdash the partitions keeping track of Chern classes of $\alpha$ and $E$ respectively, we conjecture the following \textit{strong Segre symmetry}.
\begin{conjecture}\label{con:strong-sym}
Let $r,N>0$, $S=\CC^2$. For $E=\oplus_{i=1}^N\O_S\<y_i\>,V=\oplus_{i=1}^r\O_S\<v_i\>$, we have the following symmetry
\equa{{\mathcal{S}}_{S}(E,V;(-1)^Nq)={\mathcal{S}}_{S}(V,E;(-1)^rq).}
\end{conjecture}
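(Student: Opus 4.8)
\textbf{The plan} is to translate the conjecture, via the universal product formula of Theorem~\ref{thm:SV2d-virtual-intro}, into a symmetry of the universal series $A_{\mu,\nu,\xi}$, and then to verify that symmetry as far as Theorem~\ref{thm:SV2d-deg-pos-intro} permits. Applying Theorem~\ref{thm:SV2d-virtual-intro} to both sides of ${\mathcal{S}}_S(E,V;(-1)^Nq)={\mathcal{S}}_S(V,E;(-1)^rq)$, taking logarithms, relabelling $\mu\leftrightarrow\xi$ on the right-hand side, and using that on $S=\CC^2$ the equivariant Chern classes are $c_j(V)=e_j(\vec w)$, $c_j(E)=e_j(\vec m)$ (elementary symmetric functions of the equivariant roots), $c_1(S)|_0=\lambda_1+\lambda_2$, $c_2(S)|_0=\lambda_1\lambda_2$, $c_j(S)=0$ for $j\geq 3$, and $\int_S(-)=(-)|_0/(\lambda_1\lambda_2)$, the identity becomes (after cancelling the common factor $(\lambda_1+\lambda_2)/(\lambda_1\lambda_2)$)
\[
\sum_{\mu,\nu,\xi}\Big(\log A^{N,r}_{\mu,\nu,\xi}\big((-1)^Nq\big)-\log A^{r,N}_{\xi,\nu,\mu}\big((-1)^rq\big)\Big)\,e_\mu(\vec w)\,c_\nu(S)|_0\,e_\xi(\vec m)=0 .
\]
Since the products of elementary symmetric functions $\{e_\mu\}$ form a $\ZZ$-basis of the ring of symmetric functions, and $\{(\lambda_1+\lambda_2)^a(\lambda_1\lambda_2)^b\}$ a $\ZZ$-basis in $\lambda_1,\lambda_2$, this is equivalent to the \emph{master identity}
\[
A^{N,r}_{\mu,\nu,\xi}\big((-1)^Nq\big)=A^{r,N}_{\xi,\nu,\mu}\big((-1)^rq\big)\qquad\text{for all }\mu,\nu,\xi\text{ and all }N,r>0;
\]
by Theorem~\ref{thm:reduced-intro} the same is equivalent to the reduced symmetry ${\mathcal{S}}^{\redu}(E,V;(-1)^Nq)={\mathcal{S}}^{\redu}(V,E;(-1)^rq)$, whichever form is more convenient to attack.

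\textbf{Next} I would prove the master identity for $\nu=(0)$ in aggregate from Theorem~\ref{thm:SV2d-deg-pos-intro}, which evaluates $k_1!k_2!\sum_{|\mu|=k_1,|\xi|=k_2}\binom r\mu\binom N\xi\log A_{\mu,(0),\xi}(q)$. Pairing the two sides of the master identity with $\binom r\mu\binom N\xi$ and feeding them into that theorem (with $k=k_1+k_2$), the prefactor $(-n(r+N))_{(k-1)}/n$ is already symmetric under $r\leftrightarrow N$, so the comparison of the $q^n$-coefficients collapses to the elementary identity
\[
r\,(-1)^{nN}\binom{-nr-1}{nN-1}=N\,(-1)^{nr}\binom{-nN-1}{nr-1},
\]
both sides being equal to $-\tfrac1n\cdot\tfrac{(n(r+N)-1)!}{(nr-1)!\,(nN-1)!}$ once one uses $\binom{-nr-1}{nN-1}=(-1)^{nN-1}\tfrac{(n(r+N)-1)!}{(nr)!\,(nN-1)!}$. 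This proves the $\nu=(0)$ part of the master identity after summing against $\binom r\mu\binom N\xi$; for $|\mu|,|\xi|\leq 3$ the weights $\{\binom{r}{\mu}\}_{|\mu|=k_1}$ are even linearly independent in $r$, so one recovers the individual identities in that range, extending the $k=1$ case of Corollary~\ref{cor:sv-sym2d-intro}. Equivalently, the $\nu=(0)$ contribution to Conjecture~\ref{con:strong-sym} holds whenever $E\cong L_1^{\oplus N}$, $V\cong L_2^{\oplus r}$ for line bundles $L_1,L_2$. One could alternatively run this check through the closed forms of Theorem~\ref{cor:SV2d-closed-intro}, provided one identifies the change of variable intertwining $D^{(k)}_{\mathcal{S}}$ for $(r,N)$ with $D^{(k)}_{\mathcal{S}}$ for $(N,r)$.

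\textbf{The hard part} is everything this argument omits. First, from the $\binom r\mu\binom N\xi$-weighted sums one cannot in general extract the individual equalities $A^{N,r}_{\mu,(0),\xi}((-1)^Nq)=A^{r,N}_{\xi,(0),\mu}((-1)^rq)$ that the conjecture requires for $E,V$ with \emph{generic} weights: already for $|\mu|\geq 4$ the polynomials $\binom r\mu$ (over partitions $\mu$ of that size) are linearly dependent in $r$, so Theorem~\ref{thm:SV2d-deg-pos-intro} ceases to determine the series one at a time and a finer computation, or an extra symmetry input, is needed. Second, and more fundamentally, the partitions $\nu\neq(0)$ do contribute to the equivariant series ${\mathcal{S}}_S(E,V;q)$: the case $\nu=(1)$ should follow the pattern of Section~\ref{sec:reduced-computation} once an explicit expression for \cite[(4.24)]{Bojko2} is found, but $|\nu|>1$ falls outside the scope of Lemma~\ref{lem:differential} and appears to need a new idea. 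An approach that would bypass both gaps is to lift the wall-crossing proof of the compact Segre symmetry \cite[Theorem~1.7]{Bojko2} to the $\T$-equivariant setting on $\CC^2$: express both sides via virtual localisation as generating series over tuples of Young diagrams and produce a colour-swapping re-summation exchanging the $N$ copies of $E$ with the $r$ copies of $V$. The main obstacle there is that the equivariant virtual tangent weights do not display the $E\leftrightarrow V$ symmetry term by term, so one would likely need a structural — for instance vertex-algebra or Joyce-style — incarnation of the wall-crossing rather than an explicit combinatorial bijection.
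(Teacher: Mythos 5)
The statement you are addressing is stated in the paper as a \emph{conjecture}, and the paper does not prove it: its only support is the aggregated $\nu=(0)$ consistency check carried out in Section~\ref{sec:sym} together with the computer verifications listed in \eqref{eq:2dsegsym}. Your reduction to the master identity $A^{N,r}_{\mu,\nu,\xi}((-1)^Nq)=A^{r,N}_{\xi,\nu,\mu}((-1)^rq)$, and your verification of its $\binom r\mu\binom N\xi$-weighted, $\nu=(0)$ specialization via Theorem~\ref{thm:SV2d-deg-pos-intro} and the binomial identity $r(-1)^{nN}\binom{-nr-1}{nN-1}=N(-1)^{nr}\binom{-nN-1}{nr-1}$, is exactly the computation the paper performs in Section~\ref{sec:sym} (the paper rewrites the common value as $(-1)^{k-1}rN\,(n(r+N)+k-2)_{(k-2)}\binom{n(r+N)}{nN}$, manifestly $r\leftrightarrow N$ symmetric). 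Your diagnosis of what is missing is also the paper's own: the inability to separate individual $A_{\mu,(0),\xi}$ beyond the specialization $V\cong L^{\oplus r}$, $E\cong L'^{\oplus N}$, and the fact that $\nu=(1)$ would require an explicit form of \cite[(4.24)]{Bojko2} while $|\nu|>1$ lies outside Lemma~\ref{lem:differential} — compare the remark following Theorem~\ref{cor:SV2d-closed-intro}. So your proposal is correct as a partial verification and matches the paper's evidence, but neither it nor the paper constitutes a proof; the statement remains open, and your closing suggestion (an equivariant lift of the wall-crossing proof of \cite[Theorem~1.7]{Bojko2}) is a reasonable, genuinely new direction not pursued in the paper.
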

We checked that this condition holds using a program in the cases summarized in \eqref{eq:2dsegsym}.

\subsubsection{Reduced Segre-Verlinde correspondence}
For the reduced invariants on $S=\CC^2$, let us denote ${\mathcal{S}}_i^{\redu}$ and ${\mathcal{V}}_i^{\redu}$ the degree $i$ parts of the reduced Segre and Verlinde series respectively, in the sense of Definition \ref{defn:degree}. In this setting, the fact that the series from Theorem \ref{thm:reduced-intro} are the same ones from Theorem \ref{thm:SV2d-virtual-intro} allows us to obtain relations from the non-reduced case. For example, the Segre-Verlinde correspondence of Corollary \ref{cor:weak-SV2d-virtual-intro} and symmetry of Corollary \ref{cor:sv-sym2d-intro} give the following reduced Segre-Verlinde correspondence and symmetry in degree $-1$. 
\begin{corollary}
We have the following correspondence in degree $-1$:
\equa{{\mathcal{S}}_{-1}^{\redu}(E,V;q)
&={\mathcal{V}}_{-1}^{\redu}(E,\a;(-1)^Nq).
}
When $\a=V$ is an equivariant vector bundle, we have the following symmetry:
\equa{{\mathcal{S}}_{-1}^{\redu}(E,V;(-1)^Nq)
&={\mathcal{S}}_{-1}^{\redu}(V,E;(-1)^rq).
}
\end{corollary}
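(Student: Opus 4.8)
The plan is to deduce both identities from the already-proven ``weak'' correspondence and symmetry, Corollaries~\ref{cor:weak-SV2d-virtual-intro} and~\ref{cor:sv-sym2d-intro}, by identifying exactly which universal series survive in cohomological degree $-1$. First I would invoke Theorem~\ref{thm:reduced-intro}, which writes the reduced Segre and Verlinde series as the additive sums $\sum_{\mu,\nu,\xi}\log A_{\mu,\nu,\xi}(q)\int_S c_\mu(\a)c_\nu(S)c_\xi(E)$ and $\sum_{\mu,\nu,\xi}\log B_{\mu,\nu,\xi}(q)\int_S c_\mu(\a)c_\nu(S)c_\xi(E)$, where $A_{\mu,\nu,\xi},B_{\mu,\nu,\xi}$ are the universal power series of Theorem~\ref{thm:SV2d-virtual-intro}, having purely numerical coefficients.

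The key bookkeeping step is a degree count in the sense of Definition~\ref{defn:degree}. Since $\log A_{\mu,\nu,\xi}(q)$ and $\log B_{\mu,\nu,\xi}(q)$ carry no cohomological weight, the total degree of the $(\mu,\nu,\xi)$-summand in $H^*_\T(\pt)_\loc$ equals that of $\int_S c_\mu(\a)c_\nu(S)c_\xi(E)$; as each Chern class $c_k$ contributes degree $k$ and the equivariant pushforward along $\CC^2$ (with the one-dimensional torus $\T_0$) lowers total degree by $2$, this degree is $|\mu|+|\nu|+|\xi|-2$. Hence the degree $-1$ parts ${\mathcal{S}}_{-1}^{\redu}$ and ${\mathcal{V}}_{-1}^{\redu}$ are precisely the sub-sums over triples with $|\mu|+|\nu|+|\xi|=1$, i.e. over $((1),(0),(0))$, $((0),(1),(0))$ and $((0),(0),(1))$:
\[
{\mathcal{S}}_{-1}^{\redu}(E,\a;q)=\log A_{(1),(0),(0)}(q)\!\int_S\! c_1(\a)+\log A_{(0),(1),(0)}(q)\!\int_S\! c_1(S)+\log A_{(0),(0),(1)}(q)\!\int_S\! c_1(E),
\]
and the same with $B$ in place of $A$ for ${\mathcal{V}}_{-1}^{\redu}$. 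Corollary~\ref{cor:weak-SV2d-virtual-intro} asserts $A_{\mu,\nu,\xi}(q)=B_{\mu,\nu,\xi}((-1)^Nq)$ for exactly these three triples, so replacing $q$ by $(-1)^Nq$ in the $B$-expression identifies it term by term with the $A$-expression; this is the first claimed identity.

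For the symmetry I would specialise $\a=V=\oplus_{i=1}^r\O_S\<v_i\>$ and compare ${\mathcal{S}}_{-1}^{\redu}(E,V;(-1)^Nq)$, expressed through the rank data $(N,r)$, with ${\mathcal{S}}_{-1}^{\redu}(V,E;(-1)^rq)$, expressed through $(r,N)$; in the latter $E$ and the insertion exchange roles, so the partition $\mu$ now tracks $E$ and $\xi$ tracks $V$. Corollary~\ref{cor:sv-sym2d-intro} gives $A^{N,r}_{\mu,\nu,\xi}((-1)^Nq)=A^{r,N}_{\xi,\nu,\mu}((-1)^rq)$ for the three degree-$(-1)$ triples, which interchanges the coefficient of $\int_S c_1(\a)$ with that of $\int_S c_1(E)$ and fixes the coefficient of $\int_S c_1(S)$; matching $\int_S c_1(V)$ with $\int_S c_1(V)$, $\int_S c_1(S)$ with $\int_S c_1(S)$, and $\int_S c_1(E)$ with $\int_S c_1(E)$ then gives the asserted equality.

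The only point that needs genuine care is the degree bookkeeping: one must verify that the reduced universal series $A_{\mu,\nu,\xi}$ and $B_{\mu,\nu,\xi}$ contribute nothing to the cohomological degree, so that the degree of each summand is governed entirely by $\int_S c_\mu(\a)c_\nu(S)c_\xi(E)$, and that the equivariant pushforward along the non-compact $\CC^2$, with the one-dimensional torus $\T_0$, really shifts total degree by $-2$; once this is in place both identities are immediate substitutions. One may also note that $c_1(S)=\lambda_1+\lambda_2=0$ under $\T_0$, so the $\nu=(1)$ term vanishes outright, though this is not needed.
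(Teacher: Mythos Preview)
Your proposal is correct and follows essentially the same approach as the paper: invoke Theorem~\ref{thm:reduced-intro} to express the reduced series additively, observe that the integral $\int_S c_\mu(\a)c_\nu(S)c_\xi(E)$ has degree $|\mu|+|\nu|+|\xi|-2$, so degree $-1$ picks out exactly the three triples with $|\mu|+|\nu|+|\xi|=1$, and then apply Corollaries~\ref{cor:weak-SV2d-virtual-intro} and~\ref{cor:sv-sym2d-intro} term by term. Your observation that the $\nu=(1)$ term vanishes outright under the reduced torus is a nice bonus not made explicit in the paper.
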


Let $\alpha\in K_\T(S)$ with rank $r>0$. Write $c_1\lambda=c_1(\a)$ and $c_2\lambda^2=c_2(\a)$ for some $c_1,c_2\in\QQ$, then we can use the Theorem \ref{thm:reduced-intro} to show that for some series $A_2(q),A_1(q),A_0(q),B_1(q),B_0(q)$, dependent on $r$ and $N$,
\equa{{\mathcal{S}}_0^{\redu}(E,\alpha;q)|_{m_1=\dots=m_N=0}&=-\log \left(A_{(2),(0)}(q)\right)\cdot c_2-\log \left(A_{(1,1),(0)}(q)\right)\cdot c_1^2+\log \left(A_{(0),(2)}(q)\right)\\
&=:A_2(q)c_2+A_1(q)c_1^2+A_0(q),\\
{\mathcal{V}}_0^{\redu}(E,\alpha;q)|_{m_1=\dots=m_N=0}&=-\log \left(B_{(1,1),(0)}(q)\right)\cdot c_1^2+\log \left(B_{(0),(2)}(q)\right)\\
&=:B_1(q)c_1^2+B_0(q).}
These degree 0 terms should correspond to the ones for compact K3-surfaces and their reduced invariants. In particular, one can use the results obtained in Theorem \ref{thm:SV2d-deg-pos-intro}, Theorem \ref{cor:SV2d-closed-intro}, and the ones below to predict the Segre and the Verlinde series in the compact case and describe their correspondences.

 Corollary \ref{cor:sv2d-deg1-intro} provides relations between $A_1,A_2$ and $B_1$, and we have the following formula for $B_1(q)$ obtained from Theorem \ref{thm:SV2d-deg-pos-intro} by setting $k_1=2,k_2=0$.
\begin{corollary}

The series $B_1(q)$ is explicitly given by
\[[q^n]B_1(q)=-\frac{1}{2}\binom{n(N+r)-1}{nr}\]
for $n,r>0$.
\end{corollary}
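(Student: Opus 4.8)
The plan is to recognize $B_1(q)$, up to sign, as one of the universal Verlinde series appearing in Theorem~\ref{thm:SV2d-virtual-intro}, and then to read off its closed form from the already-established identity in Theorem~\ref{thm:SV2d-deg-pos-intro}. From the display defining $B_1$ (the formula for ${\mathcal{V}}_0^{\redu}(E,\alpha;q)|_{m_1=\dots=m_N=0}$ just above the statement) one has, by definition, $B_1(q)=-\log\!\big(B^{N,r}_{(1,1),(0),(0)}(q)\big)$, where $B^{N,r}_{(1,1),(0),(0)}$ is the series attached to the triple $\mu=(1,1)$, $\nu=(0)$, $\xi=(0)$: the coefficient of $c_1^2$ is produced by $\int_S c_{(1,1)}(\alpha)=\int_S c_1(\alpha)^2$ (equivariant integration over $\CC^2$ with $\lambda_1+\lambda_2=0$ contributing the constant, and its sign folded into the definition of $B_1$), while the contributions with $\xi\neq(0)$ or $\nu\neq(0)$ are absent after the specialization $m_i=0$ and the passage to the degree-$0$ part.

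The main step is then to specialize the second (Verlinde) identity of Theorem~\ref{thm:SV2d-deg-pos-intro} to $k_1=2$, $k_2=0$, so that $k=k_1+k_2=2$: here $(1)_{k_1}=(1,1)$, the sum $\sum_{|\xi|=k_2}$ runs only over $\xi=(0)$ with $\binom{N}{(0)}=1$, and the exponent $n^{k-2}=n^{0}=1$. The identity reads
\[2!\,0!\,r^{2}\,\log B^{N,r}_{(1,1),(0),(0)}(q)=-r^{2}\sum_{n=1}^{\infty}\binom{-nr-1}{nN-1}\big((-1)^{N}q\big)^{n},\]
so dividing by $2r^{2}$ (legitimate since $r>0$) gives $\log B^{N,r}_{(1,1),(0),(0)}(q)=-\tfrac12\sum_{n\ge1}\binom{-nr-1}{nN-1}\big((-1)^{N}q\big)^{n}$, and hence $[q^{n}]B_1(q)=\tfrac12(-1)^{Nn}\binom{-nr-1}{nN-1}$.

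Finally I would convert the binomial coefficient via the upper-negation identity $\binom{-m}{j}=(-1)^{j}\binom{m+j-1}{j}$ with $m=nr+1$ and $j=nN-1\ge0$, which yields $\binom{-nr-1}{nN-1}=(-1)^{nN-1}\binom{n(r+N)-1}{nN-1}$; then the sign collapses, $(-1)^{Nn}(-1)^{nN-1}=-1$, and the complementation $\binom{n(N+r)-1}{nN-1}=\binom{n(N+r)-1}{nr}$ gives exactly $[q^{n}]B_1(q)=-\tfrac12\binom{n(N+r)-1}{nr}$. There is no genuine obstacle here: the computation is a one-line specialization followed by a standard binomial manipulation. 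The only points that need care are bookkeeping — correctly reading off $B_1(q)=-\log B^{N,r}_{(1,1),(0),(0)}(q)$ from the defining display (and matching the two-index reduced notation $B_{(1,1),(0)}$ with the three-index universal series), and tracking the sign $(-1)^{N}$ through the substitution $q\mapsto(-1)^{N}q$ so that it cancels against the $(-1)^{nN-1}$ produced by upper negation.
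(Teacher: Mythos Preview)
Your proof is correct and follows exactly the paper's approach: the paper states that the formula is ``obtained from Theorem~\ref{thm:SV2d-deg-pos-intro} by setting $k_1=2,k_2=0$,'' and you carry out precisely this specialization together with the binomial identity needed to reach the stated form. Your identification $B_1(q)=-\log B_{(1,1),(0),(0)}(q)$ and the sign tracking are both accurate.
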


We give some conjectural formulas for $A_0,B_0$ in terms of $A_1,B_1$ when $N=1$, respectively, checked for $n\leq 20,r<5$.
\begin{conjecture}
When $N=1$ and $r\in\ZZ$, we have
\[[q^n]B_0(q)=\frac16\left(\binom{r+1}{2}(n-1)-1\right)[q^n]B_1(q).\]
When $N=1$, $r<-1$ and $n>1$, we have
\[[q^n]A_0(q)=\frac{1}{12} r (n r + n + 2)[q^n]A_1(q).\]
\end{conjecture}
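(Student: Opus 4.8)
The plan is to reduce both identities first to closed forms for a short list of the universal series of Theorem~\ref{thm:SV2d-virtual-intro}, and then to a coefficient comparison. By Theorem~\ref{thm:reduced-intro}, for $N=1$ and after the specialization $m_1=0$ one has $B_1(q)=-\log B_{(1,1),(0),(0)}(q)$, $B_0(q)=\log B_{(0),(2),(0)}(q)$, $A_2(q)=-\log A_{(2),(0),(0)}(q)$, $A_1(q)=-\log A_{(1,1),(0),(0)}(q)$ and $A_0(q)=\log A_{(0),(2),(0)}(q)$, so the conjecture concerns finitely many universal series. Of these, only $B_1$ is presently known in closed form: Theorem~\ref{cor:SV2d-closed-intro} with $k_1=2,\,k_2=0$ gives $B_1\big((-1)^Nq\big)=\tfrac12\,H/\big(1+(r+1)H\big)$, where $H=H(q)$ is the algebraic series with $H=q(1+H)^{-r}$, equivalently $[q^n]B_1(q)=-\tfrac12\binom{n(r+1)-1}{nr}$. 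Since $q\partial_q$ preserves $\CC(H)$ (one computes $q\,\partial_q H=H(1+H)/\big(1+(r+1)H\big)$), the right-hand sides of the conjectured proportionalities $[q^n]B_0=\tfrac16\big(\binom{r+1}{2}(n-1)-1\big)[q^n]B_1$ and $[q^n]A_0=\tfrac1{12}r\big((r+1)n+2\big)[q^n]A_1$ become, after the substitution $q\leadsto(-1)^Nq$, explicit elements of $\CC(H)$; so, modulo a separate computation of $A_1$, the conjecture asserts closed forms for $B_0$ and for the corresponding Segre combination, and the linear-in-$n$ factors $(n-1)$ and $(r+1)n+2$ are exactly what one picks up from one extra application of $q\partial_q$, which is the structural mechanism one would like to isolate.

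What is missing is a closed form for $B_0$, and for $A_0$ together with the separate evaluations of $\log A_{(1,1),(0),(0)}$ and $\log A_{(2),(0),(0)}$, since Theorems~\ref{thm:SV2d-deg-pos-intro} and \ref{cor:SV2d-closed-intro} (and the Chern formula, and Corollary~\ref{cor:sv2d-deg1-intro}) pin down only the single combination $(r-1)\log A_{(2),(0),(0)}+2r\log A_{(1,1),(0),(0)}$ at each rank. All of these are controlled by the virtual torus-localization formula for ${\mathcal V}^{\redu}$ and ${\mathcal S}^{\redu}$ on $\hilb^n(\CC^2)$, whose $q^n$-coefficient is an explicit sum over partitions of $n$; extracting the coefficient of $\int_S c_2(S)$ in the universal expansion of Theorem~\ref{thm:reduced-intro} isolates precisely the $\nu=(2)$ contribution of the tangent bundle. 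For $\nu=(0)$ and $\nu=(1)$ that sum is evaluated via Lagrange inversion and Lemma~\ref{lem:differential}, but as flagged in the remark after Theorem~\ref{cor:SV2d-closed-intro} that lemma breaks down for $|\nu|>1$, so the heart of a proof is to replace it: either (i) prove a second-order Lagrange--B\"urmann / multivariate-residue identity handling the ``$c_2(S)$-weighted'' sum over partitions of $n$, or (ii) establish a recursion in $n$ — arising from the torus-fixed-point combinatorics of $\hilb^n(\CC^2)$, or from a one-parameter deformation of the tangent weights — expressing the $\nu=(2)$ piece as a single $q$-derivative of the $\nu=(0)$ piece; route (ii) is strongly suggested by the shape of the conjectured answer, and the $\tfrac1{12}$ there is the familiar Todd-genus coefficient. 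A less direct alternative is to make precise the expected identification of these degree-$0$ reduced invariants on $\CC^2$ with reduced Quot-scheme invariants of a compact $K3$ surface and to import a closed formula from there, though such a formula does not appear to be in the literature.

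With $B_0$ (and $A_0,A_1,A_2$) in hand the conjecture follows by matching $[q^n]$-coefficients: using $[q^n]B_1(q)=-\tfrac12\binom{n(r+1)-1}{nr}$, the Verlinde statement reduces to the single identity $[q^n]B_0(q)=-\tfrac1{12}\big(\binom{r+1}{2}(n-1)-1\big)\binom{n(r+1)-1}{nr}$, a hypergeometric-type identity in $n$ and $r$ that can be certified by Zeilberger's algorithm / the WZ method, and likewise on the Segre side; the low-order exceptions ($n=1$, and $r\in\{-1,0\}$ on the Segre side, where $\binom{r+1}{2}$ vanishes and $[q^n]A_1$ may itself be zero) are checked directly, in line with the verification already carried out for $n\le 20$, $r<5$. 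The \textbf{main obstacle} is the $|\nu|=2$ evaluation in the middle step: it lies exactly outside the range of the Lagrange-inversion machinery behind Theorems~\ref{thm:SV2d-deg-pos-intro} and \ref{cor:SV2d-closed-intro}, so genuinely new combinatorial or geometric input is needed there, while the steps before and after it are essentially routine.
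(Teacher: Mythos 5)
This statement is labeled a \emph{conjecture} in the paper, and the paper offers no proof of it: the authors only report a numerical verification for $n\le 20$, $r<5$, and the surrounding discussion (in particular the remark following Theorem \ref{cor:SV2d-closed-intro}) explicitly flags that their Lagrange-inversion machinery, via Lemma \ref{lem:differential}, does not reach the $|\nu|\ge 2$ universal series. Your reduction of the statement to closed forms for $B_0=\log B_{(0),(2)}$, $A_0=\log A_{(0),(2)}$, $A_1=-\log A_{(1,1),(0)}$, $A_2=-\log A_{(2),(0)}$ is correct and consistent with the paper's definitions (your evaluation of $B_1$ via Theorem \ref{cor:SV2d-closed-intro} with $k_1=2$, $k_2=0$, $N=1$ checks out, as does the observation that Theorem \ref{thm:SV2d-deg-pos-intro} only pins down the combination $\binom{r}{2}\log A_{(2),(0)}+r^2\log A_{(1,1),(0)}$). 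But the proposal does not constitute a proof, and you say so yourself: the closed-form evaluation of the $\nu=(2)$ contribution is precisely the open step, and neither of your two suggested routes (a second-order Lagrange--B\"urmann identity, or a recursion in $n$ coming from the fixed-point combinatorics) is carried out. Until one of them is, the coefficient-matching and WZ-certification endgame has nothing to act on.

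So the honest verdict is that your plan correctly locates the difficulty in exactly the place the authors do, and the pieces you do supply (the dictionary between $A_i,B_i$ and the $\mu,\nu,\xi$-labeled series, the closed form of $B_1$ in terms of $H$, the observation that $q\partial_q$ preserves $\CC(H)$ and naturally produces the linear-in-$n$ prefactors) are correct and would be the right scaffolding for a proof. But there is a genuine gap, not a fixable oversight: the statement remains a conjecture both in the paper and after your proposal, because the $|\nu|=2$ universal series have no known evaluation. If you want to make progress, the most promising concrete target is your route (ii): express the $\nu=(2)$ piece of $\log\mathcal{N}_S$ as a differential operator in $q$ applied to the $\nu=(0)$ and $\nu=(1)$ pieces, using the explicit fixed-point sums of (\ref{eqn:CV2d-local}) rather than Lemma \ref{lem:differential}; the conjectured prefactors $\frac16\bigl(\binom{r+1}{2}(n-1)-1\bigr)$ and $\frac{1}{12}r(nr+n+2)$ are indeed of exactly the shape such an operator would produce.
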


\subsection{Correspondence for 4-folds and other observations}
\subsubsection{Segre-Verlinde correspondence}
Since all toric Calabi-Yau 4-folds are non-compact, we do not know how the invariants in the non-compact case relate to the ones in the compact case. We have seen for the surface case the powers on the universal series have a factor of $c_1(S)$. Considering the series given in \cite[Proposition 4.13]{boj} and \cite[(3.38)]{Bojko2}, together with Section \ref{sec:factor}, we see that this term should be replaced by $c_3(X)$ in the 4-fold case. Motivated by Corollary \ref{cor:weak-SV2d-virtual-intro} and \ref{cor:sv-sym2d-intro}, we conjecture a weak Segre-Verlinde correspondence and symmetry for $X=\CC^4$. 

\begin{conjecture}\label{con:sv4d-intro}
Let $X=\CC^4$, $E=\oplus_{i=1}^{N}\O_X\<y_i\>$, $V=\oplus_{i=1}^{r}\O_X\<v_i\>$, and $\a\in K_\T(X)$, then for some choice of signs $o(\mathcal{L})$, we have the following symmetry and correspondence
\equa{{\mathcal{S}}_X(E,V;(-1)^Nq)&={\mathcal{S}}_X(V,E;(-1)^rq),\\
{\mathcal{S}}_{X,0}(E,\alpha;q)-{\mathcal{V}}_{X,0}(E,\alpha;(-1)^Nq)&=\sum_{n=1}^\infty \frac{F_n}{(\lambda_1\lambda_2\lam_3\lam_4)^{n-2}}\cdot\left(\int_X c_3(X)\right)^2\cdot q^n}
for some terms $F_n\in H_\T^{4n-6}(\pt)$ dependent on $\a$ through its rank and Chern classes.
\end{conjecture}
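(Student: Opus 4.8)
The plan is to transport, over $X=\CC^4$, the three-step scheme that produced Corollaries \ref{cor:weak-SV2d-virtual-intro} and \ref{cor:sv-sym2d-intro} over $\CC^2$, the only genuinely new ingredient being a universal-series decomposition in which $c_3(X)$ plays the role that the surface factor $c_1(S)$ plays in Theorem \ref{thm:SV2d-virtual-intro}. \emph{Step 1 (universal structure).} Using the virtual equivariant localization of \cite[\S7]{Oh:2020rnj} for the twisted virtual structure sheaf $\hat{\O}^\vir$ together with the oriented virtual class of \cite[\S2.1]{Bojko2}, the $\T$-fixed locus of $\quot_{\CC^4}(E,n)$ is a finite set of reduced points indexed by $N$-tuples of solid partitions of total size $n$, and at each such $Z$ the contribution to ${\mathcal{S}}_X,{\mathcal{C}}_X,{\mathcal{V}}_X$ is a sign $(-1)^{o(\mathcal{L})|_Z}$ times an explicit rational (resp. K-theoretic) expression in the weights of $T^\vir|_Z$ and of $\alpha^{[n]}|_Z$. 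Running the cohomology-splitting argument of Section \ref{sec:factor} on these contributions — which, as explained after Conjecture \ref{con:sv4d-intro}, forces $c_1(S)$ to be replaced by $c_3(X)$ in view of \cite[Proposition 4.13]{boj} and \cite[(3.38)]{Bojko2} — should give the fourfold analogue of Theorem \ref{thm:SV2d-virtual-intro}: infinite-product formulas ${\mathcal{S}}_X(E,\alpha;q)=\prod_{\mu,\nu,\xi}A_{\mu,\nu,\xi}(q)^{\int_X c_\mu(\alpha)c_\nu(X)c_\xi(E)c_3(X)}$ and ${\mathcal{V}}_X(E,\alpha;q)=\prod_{\mu,\nu,\xi}B_{\mu,\nu,\xi}(q)^{\int_X c_\mu(\alpha)c_\nu(X)c_\xi(E)c_3(X)}$ with $A_{\mu,\nu,\xi},B_{\mu,\nu,\xi}$ power series depending only on $N$, $r$ and the orientation, whose exponents are homogeneous of degree $|\mu|+|\nu|+|\xi|-1$ in $H^*_\T(\pt)_\loc$.

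\emph{Step 2 (degree-$0$ exponents).} By the projective-reduction argument of Section \ref{sec:proj-reduction} these series do not see the ambient variety, so the ones with $|\mu|+|\nu|+|\xi|=1$ — those carrying degree-$0$ exponents — must agree with the universal series of the compact Calabi–Yau fourfold case \cite[Proposition 4.13]{boj}. The compact virtual Segre–Verlinde correspondence ${\mathcal{S}}_Y(E,\alpha;q)={\mathcal{V}}_Y(E,\alpha;(-1)^Nq)$ of \cite[Theorem 1.6]{Bojko2} and the compact Segre symmetry ${\mathcal{S}}_Y(E,V;(-1)^Nq)={\mathcal{S}}_Y(V,E;(-1)^rq)$ of \cite[Theorem 1.7]{Bojko2} then translate into $A^{N,r}_{\mu,\nu,\xi}(q)=B^{N,r}_{\mu,\nu,\xi}((-1)^Nq)$ and $A^{N,r}_{\mu,\nu,\xi}((-1)^Nq)=A^{r,N}_{\xi,\nu,\mu}((-1)^rq)$ whenever one of $\mu,\nu,\xi$ equals $(1)$ and the other two are $(0)$.

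\emph{Step 3 (assembling the weak statements).} Because $X$ has a single $\T$-fixed point, every exponent in Step 1 is $\big(\int_X c_3(X)\big)$ times the fixed-point restriction of $c_\mu(\alpha)c_\nu(X)c_\xi(E)$, a polynomial in the equivariant parameters; writing $c:=\int_X c_3(X)$ and $\log{\mathcal{S}}_X=c\cdot\sum_{d\geq 0}\Psi^{\mathcal{S}}_d$ with $\Psi^{\mathcal{S}}_d$ homogeneous of degree $d$, and likewise $\log{\mathcal{V}}_X=c\cdot\sum_{d\geq 0}\Psi^{\mathcal{V}}_d$, the computation behind Corollary \ref{cor:weak-SV2d-virtual-intro} applies verbatim: the total-degree-$0$ part of ${\mathcal{S}}_X$ equals $\exp(c\,\Psi^{\mathcal{S}}_1)+c^2\cdot(\text{power series})$, since any exponential monomial in the $c\,\Psi^{\mathcal{S}}_d$ that lands in degree $0$ other than via $\exp(c\,\Psi^{\mathcal{S}}_1)$ must pair a factor of degree $d\geq 2$ with at least as many copies of the purely-$q$ Nekrasov factor $\Psi^{\mathcal{S}}_0=\log A_{(0),(0),(0)}(q)$ (the fourfold analogue of Corollary \ref{cor:vanishing-intro}), costing an extra $c^2$. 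As $\exp(c\,\Psi^{\mathcal{S}}_1)(q)$ and $\exp(c\,\Psi^{\mathcal{V}}_1)((-1)^Nq)$ are the compact Segre and Verlinde formulas evaluated on the equivariant $\CC^4$ data and hence coincide by Step 2, the difference ${\mathcal{S}}_{X,0}(E,\alpha;q)-{\mathcal{V}}_{X,0}(E,\alpha;(-1)^Nq)$ is divisible by $\big(\int_X c_3(X)\big)^2$; the denominator $(\lambda_1\lambda_2\lambda_3\lambda_4)^{n-2}$ and the homogeneity $F_n\in H^{4n-6}_\T(\pt)$ follow from the localization denominators being powers of $e_\T(\CC^4)=\lambda_1\lambda_2\lambda_3\lambda_4$ together with a degree count in the sense of Definition \ref{defn:degree}. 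The full Segre symmetry ${\mathcal{S}}_X(E,V;(-1)^Nq)={\mathcal{S}}_X(V,E;(-1)^rq)$ would follow the same way, but needs Step 2 upgraded to the strong equality $A^{N,r}_{\mu,\nu,\xi}((-1)^Nq)=A^{r,N}_{\xi,\nu,\mu}((-1)^rq)$ for \emph{all} $\mu,\nu,\xi$.

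\emph{The main obstacle.} The combinatorics on $\quot_{\CC^4}(E,n)$ is genuinely harder than on $\hilb^n(\CC^2)$: the fixed points are solid partitions, the local contributions carry square roots of $\det\mathbf{R}\shom_q(\mathcal{I},\mathcal{I})$, and there is no fourfold counterpart of the line-bundle computations of \cite{GM}, of \cite[Theorem 17]{KHilb}, or of the Lagrange-inversion step built on Lemma \ref{lem:differential} that underlies Theorem \ref{thm:SV2d-deg-pos-intro}; so establishing the strong symmetry $A^{N,r}_{\mu,\nu,\xi}((-1)^Nq)=A^{r,N}_{\xi,\nu,\mu}((-1)^rq)$ in all degrees — still only conjectural even for $\CC^2$ (Conjecture \ref{con:strong-sym}) — seems beyond these methods and would most plausibly require a direct identification of the fixed-point sum for $\quot_X(E,n)$ under interchanging the roles of $E$ and $V$. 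The deeper issue is the orientation dependence: the statement is asserted only \emph{for some choice of signs} $o(\mathcal{L})$, so one must exhibit a choice that is simultaneously compatible with the canonical orientation of the compact case of \cite[\S2.1]{Bojko2} (so that Step 2 applies) and under which the fixed-point series assemble into the clean infinite products of Step 1. Making this equivariant-to-compact comparison of orientations precise for Quot schemes on Calabi–Yau fourfolds is, I expect, the crux, and is what keeps the result conjectural, consistent with its status as a conjecture here.
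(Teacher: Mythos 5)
The statement you are addressing is a conjecture in the paper: the authors give no proof, only motivation (the factor-of-$c_3(X)$ lemma in Section \ref{sec:factor}, the analogy with Corollaries \ref{cor:weak-SV2d-virtual-intro} and \ref{cor:sv-sym2d-intro}, and computer checks in the ranges \eqref{eq:4DSVcomp}, \eqref{eq:4DSymp}). Your proposal is an honest reconstruction of the strategy that would be needed, and you correctly conclude that it does not close; so there is no disagreement about the outcome. But two of your three steps have gaps that are more fundamental than the ones you flag. Step 1 presupposes a fourfold analogue of Theorem \ref{thm:SV2d-virtual-intro}, i.e.\ the infinite-product/universal-series decomposition with exponents $\int_X c_\mu(\alpha)c_\nu(X)c_\xi(E)c_3(X)$. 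For $\CC^2$ that decomposition was proved via admissibility of the Nekrasov genus, and the admissibility proof leaned on the projective reduction of Section \ref{sec:proj-reduction} (substituting the tangent weights of $\PP^1\times\PP^1$ to kill the non-polynomial Laurent coefficients). For $X=\CC^4$ the paper establishes only the $c_3(X)$ divisibility of each local contribution (Section \ref{sec:factor}) and shows that admissibility would \emph{follow from} the Nekrasov--Piazzalunga Conjecture \ref{con:nek2}, and then only in the rank $r=N$ case. So the universal-series structure you take as the starting point is itself conjectural.

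Step 2 is where the argument genuinely breaks rather than merely remaining open: the projective-reduction mechanism of Section \ref{sec:proj-reduction} requires a compact toric model on which to evaluate the equivariant series, and the paper states explicitly that ``since all toric Calabi-Yau 4-folds are non-compact, we do not know how the invariants in the non-compact case relate to the ones in the compact case.'' There is no compact toric strict Calabi--Yau fourfold playing the role of $\PP^1\times\PP^1$, so you cannot identify the degree-$0$ universal series with those of \cite[Proposition 4.13]{boj} and \cite[Theorem 1.6, 1.7]{Bojko2}, and hence cannot import the compact Segre--Verlinde correspondence or Segre symmetry the way Corollary \ref{cor:SV2d-virtual} does for surfaces. (This also interacts with the orientation issue you raise: without a compact comparison there is no canonical way to match $o(\mathcal{L})$ against the compact orientation.) Your Step 3 degree-counting argument is fine conditionally on Steps 1--2, and matches the bookkeeping in the proof of Corollary \ref{cor:SV2d-virtual}; but as it stands the chain of implications has two unproved links, not just the strong-symmetry upgrade and the sign choice that you single out at the end.
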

We checked both the correspondence and symmetry using a computer program as summarized in \eqref{eq:4DSVcomp} and \eqref{eq:4DSymp}.

\subsubsection{Nekrasov's conjectures}
In \cite[Appendix B]{CK1}, Y. Cao and M. Kool gave a mathematical formulation of Nekrasov's conjecture \cite[§5]{Mag}. 
We generalize this to Quot schemes of $\CC^2$ and $\CC^4$ as follows.
\begin{conjecture}\label{con:cao-kool-quot}
Let $Y=\CC^d$, $E=\oplus_{i=1}^{r+1}\O_Y\<y_i\>$, and $V=\oplus_{i=1}^r\O_Y\<v_i\>$. When $d=2$, or $d=4$ with some choice of signs $o(\mathcal{L})$, we have
\[{\mathcal{C}}_Y(E,V;q)=\exp\left(q\int_Yc_{d-1}(Y)\right).
\]
\end{conjecture}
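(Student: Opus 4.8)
We sketch a strategy (the statement is phrased as a conjecture because one step below---the terms with $|\nu|\ge2$---has not been completed, and because for $d=4$ the argument would in addition rest on Nekrasov's conjecture). The plan is to reduce the identity to relations among the universal series of Theorem~\ref{thm:SV2d-virtual-intro} (and, for $d=4$, its conjectural analogue with $c_1(S)$ replaced by $c_3(X)$ as dictated by Section~\ref{sec:factor}). Expanding
\[{\mathcal{C}}_Y(E,V;q)=\prod_{\mu,\nu,\xi}C_{\mu,\nu,\xi}(q)^{\int_Y c_\mu(V)\,c_\nu(Y)\,c_\xi(E)\,c_{d-1}(Y)}\]
with $N=\rk E=r+1$ and $\rk V=r$, and using that the $2r+1$ weights $v_1,\dots,v_r,y_1,\dots,y_{r+1}$ are independent---so that the $c_\mu(V)$ and $c_\xi(E)$ run over monomial bases of the rings of symmetric functions in the $v$'s, respectively the $y$'s, and the $\int_Y c_\nu(Y)c_{d-1}(Y)$ over linearly independent monomials in the $\lambda$'s---comparison of coefficients shows that the conjecture is equivalent to
\[C_{(0),(0),(0)}(q)=e^{q}\qquad\text{and}\qquad C_{\mu,\nu,\xi}(q)=1\ \text{ for all }(\mu,\nu,\xi)\neq((0),(0),(0)),\]
at ranks $N=r+1$, $\rk\a=r$. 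For $r=0$ (i.e.\ $V=0$) these are precisely Corollary~\ref{cor:vanishing-intro} when $d=2$ and Cao--Kool's formulation of Nekrasov's conjecture when $d=4$, so we may assume $r\ge1$.

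For $d=2$ and $\nu=(0)$ this is extracted from Theorem~\ref{thm:SV2d-deg-pos}: its Chern-series identity has a right-hand side carrying the binomial coefficient $\binom{nr-1}{nN-1}=\binom{nr-1}{nr+n-1}$, which vanishes for every $n\ge1$ as soon as $r\ge1$, since then $nr+n-1>nr-1\ge0$. Feeding general weights into the argument of Section~\ref{sec:reduced-computation} that proves Theorem~\ref{thm:SV2d-deg-pos} (rather than the specialisation $v_i=y_i=1$ that produces the stated aggregate identities) then yields $\sum_{\mu,\xi}\log C_{\mu,(0),\xi}(q)\,c_\mu(V)c_\xi(E)=q$ whenever $\rk E=\rk V+1$, while the value $C_{(0),(0),(0)}(q)=e^{q}$ follows from the $k=0$ case by reading $\binom{nr-1}{nN-1}/(n(r-N)+1)$ as a ratio of $\Gamma$-functions and letting $N\to r+1$, where only the $n=1$ term survives and equals $q$. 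The terms with $\nu=(1)$ should succumb to the same method applied to \cite[(4.24)]{Bojko2} once that expression is made explicit (cf.\ the Remark after Theorem~\ref{cor:SV2d-closed-intro}). The main obstacle is $|\nu|\ge2$: there the Lagrange-inversion machinery behind Theorem~\ref{thm:SV2d-deg-pos} no longer applies, and one would need either a closed form for those universal series or a structural input---for instance a strengthening of the Segre symmetry of Conjecture~\ref{con:strong-sym}---forcing their triviality when $\rk E=\rk V+1$.

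For $d=4$ the same scheme works once one has: (i) the $4$-fold analogue of Theorem~\ref{thm:SV2d-virtual-intro}, with $c_3(X)$ in the exponents as in Section~\ref{sec:factor}; (ii) a $4$-fold analogue of Theorem~\ref{thm:SV2d-deg-pos-intro} whose right-hand side again carries a binomial coefficient vanishing at $\rk E=\rk V+1$; and (iii) Cao--Kool's Nekrasov conjecture for $\CC^4$ as the $r=0$ base case, for the orientation $o(\mathcal{L})$ appearing in the statement. The symmetry and correspondence checks recorded in \eqref{eq:4DSVcomp} and \eqref{eq:4DSymp}, together with the direct verifications of Conjecture~\ref{con:cao-kool-quot} in small cases, are all consistent with this picture.
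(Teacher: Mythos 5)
The statement you are proving is a conjecture, and the paper does not prove it either; what the paper actually supplies is (a) a proof of the $d=2$, $N=1$ case (Corollary \ref{cor:nek2d}), (b) computer verification for $d=2$, $N=2,\dots,5$, and (c) Proposition \ref{prop:nek-con-limit}, which derives the entire $d=4$ case from Nekrasov--Piazzalunga's Conjecture \ref{con:nek2}. Your strategy diverges from the paper's most sharply at $d=4$: you invoke Nekrasov's conjecture only as the $r=0$ base case and then ask for unestablished $\CC^4$ analogues of the universal-series theorems, whereas the paper obtains \emph{all} ranks at once by a single cohomological limit of the Nekrasov genus --- substituting $\lambda_i\leadsto\e\lambda_i$, $m_i\leadsto\e(1+m_i)$, $w_i\leadsto\e w_i$, $q=Q/w_N$ and sending $\e\to 0$, $w_N\to\infty$ reduces ${\mathcal{N}}_X(E,V;\cdot)$ with $\rk V=N$ to ${\mathcal{C}}_X(E,V';Q)$ with $\rk V'=N-1$, and the same limit of the closed plethystic formula kills all terms except $n=1$. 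That route needs no universal-series decomposition and no case analysis on $\nu$, so in particular it never meets your $|\nu|\ge 2$ obstruction.

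For $d=2$ there is a concrete gap in your argument beyond the acknowledged $|\nu|\ge2$ problem. Your reduction of the conjecture to ``$C_{(0),(0),(0)}=e^q$ and all other $C_{\mu,\nu,\xi}=1$'' is a correct equivalence (by linear independence of the $c_\mu(V)c_\nu(Y)c_\xi(E)$), and your observation that $\binom{nr-1}{n(r+1)-1}=0$ for $n\ge1$, matching the paper's remark that $\log C_{(0),(0),(0)}=\sum_n\tfrac1{n^2}\binom{nr}{nN-1}q^n$ reduces to $q$ at $N=r+1$, is sound. But Theorem \ref{thm:SV2d-deg-pos} controls only the binomially weighted aggregates $\sum_{|\mu|=k_1,|\xi|=k_2}\binom{r}{\mu}\binom{N}{\xi}\log C_{\mu,(0),\xi}$, i.e.\ the specialization of $\sum_{\mu,\xi}\log C_{\mu,(0),\xi}\,c_\mu(V)c_\xi(E)$ at all Chern roots equal to a common value. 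This is a single linear functional on each graded piece, so its vanishing is necessary but far from sufficient for the full polynomial identity you need (it cannot, for instance, separate $C_{(2),(0),(0)}$ from $C_{(1,1),(0),(0)}$). Your proposed fix --- ``feeding general weights into the argument'' of Section \ref{sec:reduced-computation} --- is not available: that argument rests on Lemma \ref{lem:differential}, whose operator $D_z$ acts on the single auxiliary variable $z$ and intrinsically produces only these aggregates. So even the $\nu=(0)$ stratum of your $d=2$ reduction is not closed by the tools in the paper; at present the only complete evidence there is Corollary \ref{cor:nek2d} and the numerical checks.
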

Note that when $N=1$ and $Y=\CC^2$, this is exactly Corollary \ref{cor:vanishing-intro}. When $Y=\CC^4$, we shall show that this conjecture is a consequence of Nekrasov-Piazzalunga's conjecture \cite[§2.5]{magcolor} using a Quot scheme version of Cao-Kool-Monavari's cohomological limit \cite[Appendix A]{CKM} in Proposition \ref{prop:nek-con-limit}. For $Y=\CC^2$, we check it for $N=2,3,4,5$ up to and including $n=7,4,2,2$ respectively.

Since $[\quot_Y(E,n)]^\vir$ has virtual dimension $nN$, we have $C^N_Y(V;q)=1$ when $N>r$ in the compact case simply for degree reason, which means the corresponding Verlinde series is also trivial due to the Segre-Verlinde correspondence. In the non-compact case, the above conjectures suggest that they may contain negative degree terms. However, with a computer calculation for $Y=\CC^2$, $N=2,3,4,5$ and all possible $r$, up to and including $n=8,5,2,2$, we see a complete vanishing when $N>r+1$ for Chern numbers, and when $r<N$ for rank $-r$ Verlinde numbers.
\begin{conjecture}\label{con:low-rank-vanish}
Let $Y=\CC^d$, $N>1$, and $E=\oplus_{i=1}^{N}\O_Y\<y_i\>$. When $d=2$, or $d=4$ with some choice of signs, we have for $r=0,1,\dots,N-2$ and $V=\oplus_{i=1}^r\O_Y\<v_i\>$,
\equa{{\mathcal{C}}_Y(E,V;q)=1.}
Furthermore, for $r=1,\dots,N-1$, we have
\[{\mathcal{V}}_Y(E,-[V];q)=1.\]
\end{conjecture}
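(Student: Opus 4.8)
Since $c(\beta^{[n]})=s(-\beta^{[n]})$ for every $\beta\in K_\T(Y)$, the Chern assertion ${\mathcal{C}}_Y(E,V;q)=1$ is the vanishing of ${\mathcal{S}}_Y(E,-[V];q)$, so both halves concern a sum of equivariantly split line bundles. For $Y=\CC^2$ I would work through Theorem \ref{thm:SV2d-virtual-intro}: by it, ${\mathcal{C}}_S(E,V;q)=\prod_{\mu,\nu,\xi}C^{N,r}_{\mu,\nu,\xi}(q)^{\int_S c_\mu(V)c_\nu(S)c_\xi(E)c_1(S)}$ and ${\mathcal{V}}_S(E,-[V];q)=\prod_{\mu,\nu,\xi}B^{N,-r}_{\mu,\nu,\xi}(q)^{\int_S c_\mu(-[V])c_\nu(S)c_\xi(E)c_1(S)}$. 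Because $V=\oplus_i\O_S\langle v_i\rangle$ and $E=\oplus_i\O_S\langle y_i\rangle$ are split and $\int_{\CC^2}(-)$ is the residue $(-)|_0/(\lambda_1\lambda_2)$ at the origin, the exponents are explicit: $c_\mu(V)|_0$ and $c_\xi(E)|_0$ are products of elementary symmetric polynomials in $\vec w$ resp.\ $\vec m$ (hence nonzero only for $\mu$ with parts $\le r$, $\xi$ with parts $\le N$), $c_\mu(-[V])|_0$ the corresponding product of complete-homogeneous symmetric polynomials up to sign, and $c_\nu(S)|_0$ a monomial in $\lambda_1+\lambda_2$ and $\lambda_1\lambda_2$ (nonzero only for $\nu$ with parts $\le 2$). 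The conjecture thus becomes a family of identities forcing these two products to be trivial, graded by the degree data of $(\mu,\nu,\xi)$.

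The part with $\nu=(0)$ is governed by Theorem \ref{thm:SV2d-deg-pos-intro}. On the Chern side the relevant combination $\sum_{|\mu|=k_1,|\xi|=k_2}\binom{r}{\mu}\binom{N}{\xi}\log C_{\mu,(0),\xi}(q)$ equals $\frac{r}{k_1!k_2!}\sum_{n}\frac{(n(r-N))_{(k-1)}}{n}\binom{nr-1}{nN-1}q^n$, which vanishes for $0\le r\le N-2$ because $0\le nr-1<nN-1$ there (and the factor $r$ kills $r=0$); on the Verlinde side, evaluating the formula of Theorem \ref{thm:SV2d-deg-pos-intro} at the rank $-r$ of the input $-[V]$ (legitimate, since the universal series and that identity are rational in the rank) again produces the factor $\binom{nr-1}{nN-1}$, now vanishing for $1\le r\le N-1$. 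So the relevant $\nu=(0)$ combinations of $\log C$ and $\log B$ all vanish. Passing from the vanishing of these combinations to the triviality of the individual universal series — what the full products require — is a disentangling step: reading the identities as polynomial identities in the two rank variables and using that they hold on the Zariski-dense region $\{0\le r\le N-2\}$, a triangularity/degree count against the coefficients $\binom{r}{\mu},\binom{N}{\xi}$ isolates each $\log C_{\mu,(0),\xi}(q)$ (resp.\ $\log B$) when $|\mu|,|\xi|$ are small, and the remaining cases are organized via the vertex-algebra/wall-crossing recursion underlying \cite{Bojko2}.

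The contributions with $\nu\ne(0)$ are the crux. For $\nu=(1)$ I would carry the computation of Theorem \ref{thm:SV2d-deg-pos-intro} one step further by applying the methods of Section \ref{sec:reduced-computation} to the term \cite[(4.24)]{Bojko2} once an explicit form of it is in hand — exactly the extension anticipated in the Remark after Theorem \ref{cor:SV2d-closed-intro} — expecting the output again to carry a binomial factor vanishing in the stated ranges. For $|\nu|\ge 2$ Lemma \ref{lem:differential} no longer applies, so I would instead induct on $|\nu|$ using the same recursion, which writes the $|\nu|$-part through strictly smaller ones together with the $|\nu|\le 1$ data just treated; as a consistency check, Conjecture \ref{con:cao-kool-quot} forces the total $\nu\ne(0)$ contribution at the boundary rank $r=N-1$ to be $\exp(q\int_S c_1(S))$, so it must cancel the $\nu=(0)$ part there but vanish for $r\le N-2$. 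Controlling the $|\nu|\ge 1$ universal series is the principal obstacle; it is precisely the limitation flagged in that Remark.

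For $Y=\CC^4$, where no explicit universal series are available, I would first establish the $\CC^4$ analogue of Theorem \ref{thm:SV2d-virtual-intro} — an infinite product over triples of partitions with $c_1(S)$ replaced by $c_3(X)$, along the lines of Section \ref{sec:factor} and \cite{Bojko2} — which is needed for Conjecture \ref{con:sv4d-intro} in any event, and then either transfer the $\CC^2$ vanishing through the Quot-scheme cohomological limit of Proposition \ref{prop:nek-con-limit} (verifying that the limit preserves vanishing, not just the leading term), or rerun the argument above directly on $\quot_{\CC^4}(E,n)$ with the structures of \cite{Bojko2}, the dependence on the orientation $o(\mathcal{L})$ entering only through the consistency condition already present in Conjecture \ref{con:sv4d-intro}. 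Modulo the universal-structure result, the $\CC^4$ case then reduces to the same obstacle as in dimension two.
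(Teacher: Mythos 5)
The first thing to say is that the paper does not prove this statement: it is presented as a conjecture, supported only by (i) computer verification for $Y=\CC^2$, $N=2,3,4,5$ up to $n=8,5,2,2$, (ii) the consistency of the explicitly computed $\mu=\nu=\xi=(0)$ series $\log C(q)=\sum_n \tfrac{1}{n^2}\binom{nr}{nN-1}q^n$ and $\log B(q)$ with the claimed vanishing, and (iii) Proposition \ref{prop:nek-con-limit}, which derives the $d=4$ Chern part of the conjecture from the (itself unproven) Nekrasov--Piazzalunga Conjecture \ref{con:nek2} by a cohomological limit of the $\CC^4$ Nekrasov genus. Your proposal is explicitly a strategy rather than a proof, and to its credit it locates the available partial information correctly: the $\nu=(0)$ combinations in Theorem \ref{thm:SV2d-deg-pos-intro} do vanish in the stated rank ranges because of the factor $r\binom{nr-1}{nN-1}$, and since the monomials $\int_S c_\mu(V)c_\nu(S)c_\xi(E)c_1(S)$ for distinct $(\mu,\nu,\xi)$ are linearly independent rational functions of the generic weights, triviality of the full product is indeed equivalent to triviality of each universal series with nonzero exponent.

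However, the three steps you lean on to close the argument are precisely where it cannot currently be closed. First, the $\nu\neq(0)$ series are completely uncontrolled: the paper's own Remark after Theorem \ref{cor:SV2d-closed-intro} says that for $|\nu|>1$ the techniques of Section \ref{sec:reduced-computation} fail because Lemma \ref{lem:differential} does not apply, and your appeal to an ``induction via the vertex-algebra/wall-crossing recursion'' is a placeholder, not an argument. Second, your evaluation of the Verlinde formula ``at rank $-r$'' glosses over a subtlety the paper flags explicitly: for negative ranks the Verlinde limit of $H_{\mu,\nu,\xi}$ does not return $\log B_{\mu,\nu,\xi}$, because $H_{\mu,\nu,\xi}$ is attached to $c_\mu(V)$ while $\log B_{\mu,\nu,\xi}$ is attached to $c_\mu(-[V])$, and the basis change breaks Lemma \ref{lem:differential}; polynomiality in the rank is only established for $r\geq 0$. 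Third, for $\CC^4$ no analogue of Theorem \ref{thm:SV2d-virtual-intro} is available, and you slightly misread Proposition \ref{prop:nek-con-limit}: it is not a transfer of $\CC^2$ vanishing to $\CC^4$, but a limit of the $\CC^4$ Nekrasov genus that conditionally yields only the Chern half of the conjecture for $d=4$, conditional on Conjecture \ref{con:nek2}. So the proposal is a reasonable roadmap whose genuine gaps coincide with the open problems the authors themselves identify; it does not constitute a proof, and the paper offers none.
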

In Proposition \ref{prop:nek-con-limit}, we also show that the Chern series part of this conjecture for $d=4$ is a consequence of Nekrasov-Piazzalunga's Conjecture \ref{con:nek2}.

	\section*{Acknowledgements}
 We would like to thank Rahul Pandharipande for asking about equivariant Segre-Verlinde correspondences during the first author's seminar talk and Henry Liu whose box-counting code sped up the progress of this project. We further wish to express gratitude towards Martijn Kool and Sergej Monavari for helpful discussions on the topic.

 A.B. was supported by ERC-2017-AdG-786580-MACI. This project has received funding from the European Research Council (ERC) under the European Union Horizon 2020 research and innovation program (grant agreement No 786580).
\section{Preliminaries}

\subsection{Equivariant cohomology and K-theory}\label{sec:equivar-coho}
Given a topological group $G$ acting on a topological space $M$, the \emph{equivariant cohomology} $H_G^*(M)$ is defined to be $H^*(EG\times M/G)$, where $EG\->BG$ is the universal principle $G$-bundle on the classifying space $BG$. The map $M\->\text{pt}$ induces a ring homomorphism $H^*_G(\text{pt})\->H^*_G(M)$, making $H^*_G(M)$ a module over $H^*_G(\text{pt})$ for any $M$, and we can view $H^*_G(\text{pt})$ as a ``coefficient ring". 

\begin{definition} Given a $G$ representation $V$, viewed as a vector bundle $V\->\{\text{pt}\}$, we define its \emph{equivariant characteristic classes} by taking the associated bundle 
\[EG\times_GV\-> EG\times_G\{\pt\}=BG\]
and taking its characteristic classes in $H^*(BG)=H^*_G(\pt)$. Denote $c^G_i,e_G,\ch_G,\td_G$ the equivariant versions of the $i$-th Chern class, the Euler class, the Chern character, and the Todd class respectively.
\end{definition}

\begin{example}For the action of a $d$-dimensional torus $\T=(\CC^*)^d=\{(t_1,\dots ,t_d):t_i\neq 0\}$, the coefficient ring is
\[H^*_{\T}(\pt)=H^*(B\T)=H^*((\CC P^\infty)^d)=\CC[\lambda_1,\dots ,\lambda_d]\]
and $\lambda_1,\dots ,\lambda_d$ are exactly the equivariant first Chern classes of 1-dimensional $\T$-representations with weight $t_1,\dots ,t_d$ respectively. In general \cite[§3.2]{edidin},
\[c_1^{\T}(\CC\< t_1^{w_1}\dots t_d^{w_d}\>)=w_1\lambda_1+\dots +w_d\lambda_d.\]
For the $d-1$-dimensional subtorus $\T'=\{(t_1,\dots ,t_d)\in \CC^d: t_1\dots t_d=1\}\seq \T$, the inclusion induces the following isomorphism to the quotient ring:
\[H_{\T'}^*(\text{pt})\cong\CC[\lambda_1,\dots ,\lambda_d]/(\lambda_1+\dots +\lambda_d).\]
\end{example}
We can construct the \emph{equivariant K-group} $K_G(M)$ from the $G$-equivariant vector bundles. When $M=\CC^d$,  vector bundles over $M$ are trivial, but they may carry non-trivial $G$-actions. Therefore the equivariant bundles on $\CC^d$ correspond to finite-dimensional $G$-representations. The equivariant characteristic classes of vector bundles can then be extended to the K-theory classes. For example, the Euler class of $\alpha=[V]-[W]\in K_G(\pt)$ is $e^G(\a)=e^G(V)/e^G(W)$, which lives in the ring of fractions $H_G^*(\pt)_\loc$; the Chern character is $\ch_G(\a)=\ch_G(V)-\ch_G(W)$, which lives in $\prod_{i=0}^\infty H^i_G(\pt)$.

\begin{example} When $Y=\CC^d$ with the natural action by $\T=(\CC^*)^d$ or $\T'=(\CC^*)^{d-1}$, we have the following character rings:
\[K_{\T}(Y)\cong \ZZ[t_1^{\pm1},\dots ,t_d^{\pm1}]\cong K_{\T}(\pt),\]
\[K_{\T'}(Y)\cong \frac{\ZZ[t_1^{\pm1},\dots ,t_d^{\pm1}]}{(t_1\cdots t_d-1)}\cong K_{\T'}(\pt) \]
where for any weight $w=(w_1,\dots ,w_d)$, the line bundle $\O_Y\<t^w\>:=\mathcal{O}_Y\otimes t^w$ simply corresponds to its character $t^w=t_1^{w_1}\cdots t_d^{w_d}$.
\end{example}

\begin{remark}\label{rmk:K-H-identification}
We will occasionally identify Chern characters, which are power series in cohomology, with elements in K-theory by
\[t_1^{w_1}\cdots t_d^{w_d}\leftrightarrow \ch_\T(\O_Y\<t_1^{w_1}\cdots t_d^{w_d}\>)=e^{w_1\lambda_1+\dots +w_d\lambda_d}.\]
This allows us to consider certain classes in cohomology as elements of $K_\T(\pt)$. For example, for the line bundle $L=\O_Y\<t_1^{w_1}\cdots t_d^{w_d}\>$, we write
\[\ch_\T(\Lambda_{-1}L^\vee)=1-e^{-c_1^\T(L)}=1-t_1^{-w_1}\cdots t_d^{-w_d}\in K_\T(Y).\]
\end{remark}

The reason we consider equivariant cohomology is for equivariant integration. The integration formula of \cite[Corollary 1]{Bott-residue} via equivariant localization states that on a smooth complete variety $Y$ with the action of a torus $\T$, for $\lambda$ an equivariant cohomological class, we have
\[\pi_{Y\ast}(\lambda)=\sum_{F}\pi_{F\ast}\left(\frac{i_F^*\lambda}{e_\T(N_FY)}\right)\]
where the sum goes through the components $F$ of the fixed locus, $N_FY$ denotes the normal bundle, $\pi$ denotes projection to a point, and $i$ denotes the inclusion map. The right-hand side of this formula can be used to define equivariant integration in general; for $Y$ an arbitrary smooth variety with finitely many fixed (reduced) points, the \emph{equivariant push-forward} of $\pi_Y$ is
\equanum{\label{eqn:equivar-int}\int_Y:H^*_\T(Y)&\->H^*_\T(\pt)_{\loc},\\
\alpha\indent&\mapsto\sum_{x\in Y^\T}\frac{i_x^*\alpha}{e_\T(T_xY)}.}

\begin{example}\label{ex:equi-push} Again let $Y=\CC^d$ with the natural $\T=(\CC^*)^d$-action. The only $\T$-fixed point of $Y$ is the origin. At the origin, the character for the tangent space is $T_{0}Y=t_1+t_2+\cdots+t_d\in K_T(\pt)$, so $e_\T(T_{0}Y)=\lambda_1\cdots\lambda_d$. Substituting into (\ref{eqn:equivar-int}), we have
\[\int_Y\alpha=\frac{\alpha}{\lambda_1\cdots\lambda_d}\]
\end{example}

\subsection{Partitions and solid partitions}\label{sec:partition}
A \emph{partition} $\mu$ is a finite sequence $(\mu_1,\mu_2,\dots ,\mu_\ell)$ of non-increasing positive integers. The size $|\mu|$ is the sum of $\mu_i$'s and we call $\ell=\ell(\mu)$ its length. The empty sequence $(0)$ is the \emph{empty partition} with size $|(0)|=0$. Each partition $\mu$ corresponds to a young diagram which consists of pairs of non-negative integers $(i,j)\in\ZZ_{\geq 0}^2$ as follows
\[\mu\xleftrightarrow{} \{(i,j):j< \mu_{i+1}\}.\]
A pair $\square=(i,j)$ in the above set is called a box in $\mu$, which we denote $\square\in\mu$. The \emph{conjugate partition} $\mu^t$ is defined to be the partition whose boxes are $\{(j,i):(i,j)\in\mu\}$. Denote $c(\square),r(\square),a(\square),l(\square)$ the \emph{column index}, \emph{row index}, \emph{arm length} and \emph{leg length} of $\square=(i,j)\in\mu$, defined explicitly as follows
\equa{&c(\square)=j,\indent r(\square)=i,\\ &a(\square)=\mu_{i+1}-j-1,\indent l(\square)=\mu_{j+1}^t-i-1.
}

When $i,j>0$, a necessary condition for box $(i,j)$ to be in $\mu$ is that both $(i-1,j)$ and $(i,j-1)$ are in $\mu$. When $i=0$ (resp. $j=0$), we only need $(i,j-1)\in\mu$ (resp. $(i-1,j)\in\mu$).

A \emph{solid partition} $\pi$ is a finite sequence $(\pi_{ijk})_{i,j,k\geq 1}$ of positive integers such that
\equa{\pi_{ijk}\geq \pi_{i+1,j,k},\indent \pi_{ijk}\geq \pi_{i,j+1,k},\indent\pi_{ijk}\geq \pi_{i,j,k+1}.}
The size of $|\pi|$ is the sum of the $\pi_{ijk}$'s. As a 4-dimensional analogue to partitions, the solid partition can also be viewed as a collection of boxes
\[\pi\xleftrightarrow{}\{(i,j,k,l):l< \pi_{i,j,k}\}\seq \ZZ^4_{\geq 0}.\]
Similar to partitions, we have
\equanum{\label{solid-partition}(i,j,k,l)\in\pi\text{ implies }\begin{cases}
(i-1,j,k,l)\in\pi \text{ unless $i=0$,}\\
(i,j-1,k,l)\in\pi \text{ unless $j=0$,}\\
(i,j,k-1,l)\in\pi \text{ unless $k=0$,}\\
(i,j,k,l-1)\in\pi \text{ unless $l=0$.}\\
\end{cases}
}

For a positive integer $N$, an \emph{$N$-colored partition} of size $n$ is an $N$-tuple of partitions $\mu=(\mu^{(1)},\dots ,\mu^{(N)})$ such that $|\mu|:=\sum|\mu^{(i)}|=n$. Figure \ref{fig1} illustrates how the partitions are colored based on their index. Similarly, an \emph{$N$-colored solid partition} is an $N$-tuple of solid partitions.
\begin{figure}
\centering
\includegraphics[width=0.4\textwidth]{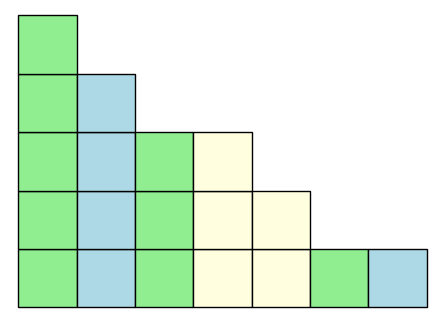}
\caption{A $3$-colored partition $\mu=(\mu^{(1)},\mu^{(2)},\mu^{(3)})$ of size $|\mu|=19$ where $\mu^{(1)}=(5,3,1)$, $\mu^{(2)}=(4,1)$, $\mu^{(3)}=(3,2)$ are colored by green, blue and yellow respectively}
\label{fig1}
\end{figure}

\subsection{Admissible functions and universal series}
We consider the notion of admissibility in the sense of \cite{mel-HLV}, which will be an important condition in finding universal series for equivariant invariants.

\begin{definition}\label{def:admissible}
Let $F(Q_1,Q_2\dots ;q_1,\dots ,q_d)\in \QQ(q_1,\dots ,q_d)[\![Q_1,Q_2,\dots ]\!]$ be a series in finitely many variables $Q_1,Q_2,\dots $ with constant term equal to 1. Then using the plethystic exponential $\Exp$, we can write
\[F=\Exp\left(\frac{L}{(1-q_1)\cdots(1-q_d)}\right)\]
such that $L$ is a power series in the variables $Q_1,Q_2,\dots$ whose coefficients are rational functions in $q_1,\dots ,q_d$. The series $F$ is called \emph{admissible with respect to the variables} $q_1,\dots, q_d$ if the coefficients of $L$ are polynomials in $q_1,\dots ,q_d$.
\end{definition}

Suppose $F(Q;m_1,\dots ,m_N;w_1,\dots ,w_r;q_1,\dots ,q_d)\in \QQ(q_1,\dots ,q_d)[\![Q;m_1,\dots ,m_N;w_1,\dots ,w_r]\!]$ is admissible with respect to $q_1,\dots,q_d$ with constant term 1, we have the following Laurent expansion
\equa{\log F(Q;\vec{m};\vec{w};;e^{\lambda_1},\dots ,e^{\lambda_d})=\sum_{k_1,\dots ,k_d=-\infty}^\infty H_{k_1,\dots ,k_d}(Q;\vec{m};\vec{w})\lambda_1^{k_1}\dots \lambda_d^{k_d}.}
Since $F$ is admissible, by the definition of plethystic exponential, 
\[(1-q_1)\cdots(1-q_d)\log F(Q;\vec{m};\vec{w};\vec q)\]
is regular in a neighbourhood of $q_1=\dots =q_d=0$ as a power series in $q_1,\dots,q_d$, meaning we have a lower bound $k_1,\dots ,k_d\geq -1$ for the above summation.

Furthermore, suppose $F$ is symmetric in $w_1,\dots ,w_r$ and symmetric in $m_1,\dots ,m_N$, then we can expand in the following elementary symmetric polynomial basis:
\[\log F(Q;\vec{m};\vec{w};e^{\lambda_1},\dots ,e^{\lambda_d})=\sum_{\substack{\mu,\xi\text{ partitions}\\k_1,\dots ,k_d\geq -1}}H_{\mu,\xi,\vec{k}}(Q)\prod_{i=1}^{\ell(\mu)}e_{\mu_i}(\vec w)\prod_{i=1}^{\ell(\xi)}e_{\xi_i}(\vec m)\lambda_1^{k_1}\dots \lambda_d^{k_d}\]
for some series $H_{\mu,\xi,\vec{k}}$.

Let $Y=\CC^d$ and 
\[\T_0=(\CC^*)^d,\T_1=(\CC^*)^N,\T_2=(\CC^*)^r\] 
with the natural actions on $Y,E=\CC^N\otimes\O_Y,V=\CC^r\otimes \O_Y$ respectively. Denote $\T=\T_0\times \T_1\times \T_2$. Say the equivariant cohomology ring of $\T$ is $H_\T^*(\pt)=\CC[\lambda_1,\dots ,\lambda_d;m_1,\dots ,m_N;w_1,\dots ,w_r]$. Then $V$ as a $\T$-equivariant bundle has equivariant Chern roots $w_1,\dots ,w_r$, and $E$ has Chern roots $m_1,\dots ,m_N$, so $e_i(m_1,\dots ,m_N)=c_i^\T(E),e_i(w_1,\dots ,w_r)=c_i^\T(V)$. Therefore
\[\log F(Q;\vec{m};\vec{w};e^{\lambda_1},\dots ,e^{\lambda_d})=\sum_{\substack{\mu,\xi\text{ partitions}\\k_1,\dots ,k_d\geq -1}}H_{\mu,\xi,\vec k}(Q)c_\mu(V)c_\xi(E)\lambda_1^{k_1}\dots \lambda_d^{k_d}.\]

For $\vec k=(k_1,\dots ,k_d)$ where $k_1,\dots ,k_d\geq -1$, there exist polynomials $E_{\vec k}$ such that 
\equa{\frac1{d!}\sum_{\tau\text{ permutation}}\lambda_1^{k_{\tau(1)}}\cdots\lambda_d^{k_{\tau(d)}}=\frac{E_{\vec k}(e_1(\lambda_1,\dots ,\lambda_d),\dots ,e_d(\lambda_1,\dots ,\lambda_d))}{\lam_1\cdots\lam_d}}
Now suppose $F$ is symmetric in the variables $q_1,\dots ,q_d$, so $H_{\mu,k}=H_{\mu,\tau(k)}$ for any permutation $\tau$. Hence
\equa{&\log F(Q;\vec{m};\vec{w};e^{\lambda_1},\dots ,e^{\lambda_d})\\
=&\sum_{\substack{\mu\text{ partition}\\k_1,\dots ,k_d\geq -1}}H_{\mu,\xi,\vec k}(Q)E_{\vec k}(e_1(\lambda_1,\dots ,\lambda_d),\dots ,e_d(\lambda_1,\dots ,\lambda_d))c_\mu(V)c_\xi(E)}
Note the equivariant weights of the tangent space $T_0Y$ are exactly $\lambda_1,\dots ,\lambda_d$, so $e_i(\lambda_1,\dots ,\lambda_d)=c_i^\T(Y)$. By Example  \ref{ex:equi-push}, we have
\equanum{\label{eqn:log-partition-E}\log F(Q;\vec{m};\vec{w};e^{\lambda_1},\dots ,e^{\lambda_d})=\sum_{\substack{\mu\text{ partition}\\k_1,\dots ,k_d\geq -1}}H_{\mu,\xi,\vec{k}}(Q)\int_Y E_{\vec k}(c_1(Y),\dots ,c_d(Y))c_\mu(V)c_\xi(E)}
Redistributing the terms, we get 
\[\log F(Q;\vec{m};\vec{w};e^{\lambda_1},\dots ,e^{\lambda_d})=\sum_{\mu,\nu, \xi \text{ partitions}}H_{\mu,\nu,\xi}(Q)\int_Y c_\nu(Y)c_\mu(V)c_\xi(E)\]
for some series $H_{\mu,\nu,\xi}$. Exponentiate both sides, and we obtain the following \emph{universal series expression} for $F$.

\begin{proposition}\label{prop:series-admissible}
Let $F(Q;\vec{m};\vec{w};\vec{q})\in \QQ(q_1,\dots ,q_d)[\![Q;m_1,\dots ,m_N;w_1,\dots ,w_r]\!]$ be admissible with respect to the variables $q_1,\dots,q_d$. Suppose $F$ is symmetric in $w_1,\dots ,w_r$, in $m_1,\dots ,m_N$, and symmetric in $q_1,\dots ,q_d$, then there exist power series $G_{\mu,\nu,\xi}(Q)$ labeled by partitions $\mu,\nu,\xi$, such that
\[F(Q;\vec{m};\vec{w};e^{\lambda_1},\dots,e^{\lambda_n})=\prod_{\mu,\nu}G_{\mu,\nu,\xi}(Q)^{\int_Y c_\nu(Y)c_\mu(V)c_\xi(E)}.\]
\end{proposition}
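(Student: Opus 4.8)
The plan is to carry out, in organized form, the computation performed in the paragraphs immediately preceding the statement: pass to logarithms, expand in the equivariant parameters using admissibility, rewrite the resulting monomials using the two symmetry hypotheses, recognize the coefficients as equivariant integrals over $Y=\CC^d$, and finally re-exponentiate.

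First I would put $F$ in plethystic form. Since $F$ is admissible with respect to $q_1,\dots,q_d$ and has constant term $1$, Definition \ref{def:admissible} writes $F=\Exp\!\big(L/((1-q_1)\cdots(1-q_d))\big)$ with $L$ a power series in $Q$ (and in the $m_i,w_j$) whose coefficients are \emph{polynomials} in $q_1,\dots,q_d$. Hence $(1-q_1)\cdots(1-q_d)\log F$ is a power series regular at $q_1=\cdots=q_d=0$. Substituting $q_i=e^{\lambda_i}$ and expanding as a Laurent series in $\lambda_1,\dots,\lambda_d$, each factor $1-e^{\lambda_i}$ contributes a simple zero at $\lambda_i=0$, so $\log F$ acquires an expansion $\sum_{k_1,\dots,k_d\geq -1}H_{\vec k}(Q;\vec m;\vec w)\,\lambda_1^{k_1}\cdots\lambda_d^{k_d}$ with the crucial lower bound $k_i\geq -1$. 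This is the only place admissibility enters, and it is precisely what forces the eventual exponents to be honest classes in $H^*_\T(Y)_{\loc}$ of the expected shape rather than objects with worse poles.

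Second, I would use the remaining symmetries to convert coefficients into Chern classes. As $F$, hence $\log F$, is symmetric in $w_1,\dots,w_r$ and in $m_1,\dots,m_N$, each $H_{\vec k}$ is symmetric in the $w$'s and in the $m$'s, so it expands in the elementary symmetric polynomial bases: $H_{\vec k}=\sum_{\mu,\xi}H_{\mu,\xi,\vec k}(Q)\,e_\mu(\vec w)\,e_\xi(\vec m)$. Under the identifications $e_i(\vec w)=c_i^\T(V)$ and $e_i(\vec m)=c_i^\T(E)$ arising from $V=\CC^r\otimes\O_Y$, $E=\CC^N\otimes\O_Y$ with torus weights as Chern roots, this gives $\log F=\sum_{\mu,\xi,\vec k}H_{\mu,\xi,\vec k}(Q)\,c_\mu(V)\,c_\xi(E)\,\lambda_1^{k_1}\cdots\lambda_d^{k_d}$. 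Next, symmetry of $F$ in $q_1,\dots,q_d$ forces $H_{\mu,\xi,\vec k}=H_{\mu,\xi,\tau(\vec k)}$ for all $\tau\in S_d$; averaging the monomials $\lambda_1^{k_1}\cdots\lambda_d^{k_d}$ over $S_d$ and using $k_i\geq -1$, one writes each symmetrized monomial as $E_{\vec k}(e_1(\vec\lambda),\dots,e_d(\vec\lambda))/(\lambda_1\cdots\lambda_d)$ for a polynomial $E_{\vec k}$, the single denominator factor $\lambda_1\cdots\lambda_d$ absorbing any $k_i=-1$. Since $e_i(\vec\lambda)=c_i^\T(T_0Y)=c_i^\T(Y)$ and, by Example \ref{ex:equi-push}, division by $\lambda_1\cdots\lambda_d$ is exactly $\int_Y$, the coefficient becomes $\int_Y E_{\vec k}(c_1(Y),\dots,c_d(Y))$. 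Collecting $E_{\vec k}(c_\bullet(Y))$ into a sum over a third partition $\nu$ indexing $c_\nu(Y)$ yields $\log F=\sum_{\mu,\nu,\xi}H_{\mu,\nu,\xi}(Q)\int_Y c_\nu(Y)\,c_\mu(V)\,c_\xi(E)$.

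Finally I would exponentiate: set $G_{\mu,\nu,\xi}(Q)=\exp\!\big(H_{\mu,\nu,\xi}(Q)\big)$, a well-defined power series in $Q$ since each $H_{\mu,\nu,\xi}$ has vanishing constant term in $Q$ (as $F$ has constant term $1$). This gives $F=\prod_{\mu,\nu,\xi}G_{\mu,\nu,\xi}(Q)^{\int_Y c_\nu(Y)c_\mu(V)c_\xi(E)}$. I do not expect a real obstacle here, since the argument is essentially bookkeeping; the one point deserving care is the well-definedness of the infinite product, namely that at each order $Q^n$ only finitely many triples $(\mu,\nu,\xi)$ contribute — which holds because $c_\mu(V)$ and $c_\xi(E)$ vanish unless $\ell(\mu)\leq r$, $\ell(\xi)\leq N$ (and the $\lambda$-expansion truncates at each $Q$-order), so the product is term-by-term finite in $Q$. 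The only genuinely substantive input is admissibility, which pins down $k_i\geq -1$ and thereby guarantees the exponents are classes in $H^*_\T(Y)_{\loc}$ of the stated form.
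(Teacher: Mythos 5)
Your proposal is correct and follows essentially the same route as the paper: admissibility gives the lower bound $k_i\geq -1$ in the $\lambda$-Laurent expansion of $\log F$, the $w$-, $m$-, and $q$-symmetries convert the coefficients into $c_\mu(V)c_\xi(E)$ times symmetrized monomials $E_{\vec k}(e_\bullet(\vec\lambda))/(\lambda_1\cdots\lambda_d)=\int_Y E_{\vec k}(c_\bullet(Y))$, and exponentiation yields the product. The only quibble is your finiteness remark at the end: at fixed order in $Q$ infinitely many $(\mu,\xi)$ can contribute (lengths are unbounded), but only finitely many at each fixed total degree in $(\vec\lambda,\vec m,\vec w)$, which is what actually makes the product converge in the completed ring — a point the paper does not address either.
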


\section{Segre and Verlinde invariants on \texorpdfstring{$\CC^2$}{Lg}}

\subsection{Virtual equivariant invariants on Hilbert schemes}

Before defining virtual invariants, we recall the notion of a perfect obstruction theory in the sense of \cite[Definition~4.4]{BF}. For our purposes, we use the following simplified version.

\begin{definition}\label{defn:obs} Let $X$ be a scheme over $\CC$. An \emph{obstruction theory} is a complex of vector bundles
\[E^\bullet=[\dots \->E^{-2}\->E^{-1}\->E^0]\]
for some $a\in \ZZ$, together with a morphism in the derived category $D(\text{QCoh}(X))$ to the cotangent complex
\[\f:E^\bullet\->L_X^\bullet\]
such that $h^0(\f)$ is an isomorphism and $h^{-1}(\f)$ is surjective. It is a \emph{(2-term) perfect obstruction theory} if $E^i=0$ for $i\neq 0,-1$. The \emph{virtual tangent space} $T^{\vir}=E_\bullet=(E^\bullet)^*$ is the class of the dual complex of a given obstruction theory.
\end{definition}

Let $S$ be a surface and $E$ a torsion-free sheaf. It is well known that $\quot_S(E,n)$ admits an obstruction theory given by the dual complex of $\mathbf{R}\shom_{p}(\mathcal{I},\mathcal{F})$, where $\mathcal{I},\mathcal{F}$ are respectively the universal subsheaf and quotient sheaf. When $S$ is a projective surface, \cite[Lemma~1]{MOP1} shows that this obstruction theory is perfect of virtual dimension $nN$. Using this, we can define a virtual fundamental class $[\quot_S(E,n)]^\vir$ via the methods from \cite{BF,LiTian} as well as a virtual structure sheaf $\O^{\vir}$ using \cite{k-local-vir}. Applying the same argument for $S=\CC^2$ gives us a $\T$-equivariant perfect obstruction theory. We note that since $\mathcal{F}$ is compactly supported, the $\ext$-groups are finite dimensional vector spaces, so the steps involving Serre duality still work.

Let $S=\CC^2$ and $E=\oplus_{i=1}^N\O_S\<y_i\>$. Recall from the introduction the following tori:
\[\T_0=(\CC^*)^2, \indent \T_1=(\CC^*)^N,\indent\T_2=(\CC^*)^{r+s}.\] 
Set $\T=\T_0\times \T_1\times\T_2$, with
\equa{K_\T(\pt)&=\ZZ[t_1^{\pm1},t_2^{\pm1};y_1^{\pm1},\dots ,y_N^{\pm1};v_1^{\pm1},\dots ,v_{r+s}^{\pm1}],\\
H^*_\T(\pt)&=\CC[\lambda_1,\lambda_2;m_1,\dots ,m_N;w_1,\dots ,w_{r+s}].}
This gives us a $\T$-action on $\quot_S(E,n)$. For a $\T$-equivariant bundle $V$, the tautological bundle $V^{[n]}$ is also $\T$-equivariant. The $\T_1$-fixed quotients of $\quot_S(E,n)$ decomposes into the form
\[0\->\oplus_{i=1}^N I_i\<y_i\>\->\oplus_{i=1}^N \O_S\<y_i\>\->\oplus_{i=1}^N F_i\<y_i\>\->0.\]
Thus the $\T_1$-fixed locus can be identified as
\[\bigsqcup_{n_1+\dots +n_N=n}\hilb^{n_1}(S)\times\dots \times \hilb^{n_N}(S).\]
The $\T_0$-fixed locus of these Hilbert schemes are collections of finitely many reduced points, labeled by partitions. Consequently, the fixed locus $\quot_S(E,n)^\T$ consists of finitely many reduced points of form
\[Z_{\mu}=\left([Z_{1}],[Z_{2}],\dots ,[Z_{N}]\right)\in \hilb^{n_1}(S)\times\dots \times \hilb^{n_N}(S),\]
labeled by $N$-colored partitions $\mu=\left(\mu^{(1)},\dots,\mu^{(N)}\right)$.

For a vector bundle $V$ over $Y$, define
\[\Lambda_{z}(V):=\sum_{i\geq 0}[\Lambda^iV]z^i\in K^0(Y)[z],\indent\Lambda_{z}(-V):=\sum_{i\geq 0}[\sym^iV](-z)^i\in K^0(Y)[\![z]\!]\]
which extends to a homomorphism $\Lambda_z:(K^0(Y),+)\->(K^0(Y)[\![z]\!],\cdot)$.
We define the following equivariant invariants using virtual equivariant localization \cite{vir-loc} and K-theoretic virtual equivariant localization \cite[Theorem 5.3.1]{k-local-vir}. Since we are interested in comparing the Segre and Verlinde series, we convert the Verlinde invariants into cohomological invariants using virtual Hirzebruch-Riemann-Roch formula \cite[Corollary 1.2]{HRR-vir}. 

\begin{definition}
Let $S=\CC^2$ and
\[\alpha=[\oplus_{i=1}^r\O_Y\<v_i\>]-[\oplus_{i=r+1}^{r+s}\O_Y\<v_i\>]\in K_{\T}(S)\]
the \emph{equivariant virtual Segre, Chern, Verlinde series} on Quot schemes are respectively
\equa{&{\mathcal{S}}_S(E,\a;q):=\sum_{n=0}^\infty q^n\sum_{Z\in\quot_S(E,n)^\T}\frac{s(\alpha^{[n]}|_{Z})}{e(T_{Z}^\vir)},\\
&{\mathcal{C}}_S(E,\a;q):=\sum_{n=0}^\infty q^n\sum_{Z\in\quot_S(E,n)^\T}\frac{c(\alpha^{[n]}|_{Z})}{e(T_{Z}^\vir)},\\
&{\mathcal{V}}_S(E,\a;q):=\sum_{n=0}^\infty q^n\sum_{Z\in\quot_S(E,n)^\T}\frac{\ch(\det(\alpha^{[n]}|_{Z}))}{\ch(\Lambda_{-1}(T_{Z}^\vir)^\vee)}.
}
\end{definition}

We shall describe how to calculate these invariants, and refer to \cite[§5.1]{Quot-DT} and \cite[§3.3]{lim} for the following argument. On each $\T_1$-fixed locus 
\[D=\hilb^{n_1}(S)\times\dots \times \hilb^{n_N}(S),\]
the universal subsheaf and universal quotient sheaf of $D$ are $\bigoplus_{i=1}^N I_{\mathcal{Z}_i}\<y_i\>\text{ and } \bigoplus_{j=1}^N \O_{\mathcal{Z}_j}\<y_i\>$
where $\mathcal{Z}_i$ is the universal subscheme of $\hilb^{n_i}(S)$.
The virtual tangent bundle over $D$ 
is then
\[T^\vir_D=\bigoplus_{i,j=1}^N \mathbf{R}\shom_{p}( I_{\mathcal{Z}_i},\O_{\mathcal{Z}_j})\<y_i^{-1}y_j\>\]
where $p:D\times X\->D$ is the projection. Further restricting to each $Z_\mu=([Z_1],[Z_2],\dots ,[Z_N])\in\quot_S(E,n)^\T$ gives the virtual tangent bundle at $Z_{\mu}$ as follows
\equanum{\label{eqn:tangent2}T^\vir_{Z_{\mu}}=\bigoplus_{i,j=1}^N \ext( I_{Z_i},\O_{Z_j})\<y_i^{-1}y_j\>\in K_{\T}(S).}
To give an explicit formula for $T^\vir$, we consider a $\T_0$-equivariant free resolution of $I_{Z_i}$. We refer to \cite[Page 439]{Eisenbud} for the following \emph{Taylor resolution}. Say $I_{Z_i}$ is generated by monomials $m_1,\dots,m_s$. For each $k=0,\dots,s$, let $F_k$ be the free $\CC[x_1,\ldots, x_n]$-module module with basis $\{e_I\}$, indexed by subsets $I\seq \{1,\dots,s\}$ of size $k$. Set
\[m_I=\text{least common multiple of }\{m_i:i\in I\}.\]
For $k=1,\dots, s$, define differential $d_k:F_k\->F_{k-1}$ by 
\[d_k(e_I)=\sum_{j=1}^k(-1)^j\frac{m_I}{m_{I- \{i_j\}}}e_{I- \{i_j\}}\]
for each subset $I=\{i_1,\dots, i_k\}$ such that $i_1<\dots<i_k$. Giving each $e_I$ the weight of $m_I$, we obtain the $T_0$-equivariant free resolution
\[0\->F_s\->\dots \->F_0\->I_{Z_i}\->0\]
where
\[F_k=\bigoplus_{I\seq \{1,\dots, s\},|I|=k}\O_S\<m_I(t)\>\]
for some $d_{kI}\in\ZZ^2$. Define
\equanum{\label{def:poincare}P(I_{Z_i})=\sum_{k,|I|=k}(-1)^km_I(t).}
Note that the character of $\O_S=\CC[x_1,x_2]$ is $\sum_{i,j\geq 0}t_1^{-i}t_2^{-j}=1/(1-t_1^{-1})(1-t_2^{-1})$, so the character of $\O_{Z_i}=\O_S/I_{Z_i}$ is
\[Q_{i}:=\frac{1-P(I_{Z_i})}{(1-t_1^{-1})(1-t_2^{-1})}.\]
Therefore the character of $T^\vir_{Z_{\mu}}$ in $K_{\T}(\pt)$ can be expressed as
\equanum{\label{eqn:Tvir-quot2d}\bigoplus_{i,j=1}^N \ext( I_{Z_i},\O_{Z_j})\<y_i^{-1}y_j\>=&\sum_{i,j=1}^N\sum_{k,|I|=k}(-1)^k \hom(\O_S\<m_I(t)\>,\O_{Z_j})y_i^{-1}y_j\\
=&\sum_{i,j=1}^N\sum_{k,I}(-1)^k\O_{Z_j}\<m_I(t)^{-1}\>y_i^{-1}y_j\\
=&\sum_{i,j=1}^N\overline{P(I_{Z_i})}Q_{j}y_i^{-1}y_j\\
=&\sum_{i,j=1}^N( Q_{j}-(1-t_1)(1-t_2)\overline{Q_{i}} Q_{j})\cdot y_i^{-1}y_j
}
where $\overline{(\cdot)}$ denotes the involution $t_i\mapsto t_i^{-1}$. For the $\T$-equivariant bundle $V=\oplus_{i=1}^r\O_S\<v_i\>$, the fiber of $V^{[n]}$ over ${Z_{\mu}}=(Z_1,\dots Z_N)$ is the $rn$-dimensional representation
\equanum{\label{eqn:vb2d}\bigoplus_{i=1}^r\bigoplus_{j=1}^N\O_{Z_{j}}\<v_iy_j\>=\sum_{i=1}^r \sum_{j=1}^N\sum_{\square\in\mu^{(j)}} v_iy_jt_1^{-c(\square)}t_2^{-r(\square)}.}

Substituting the above calculations into the definition, we obtain the following expressions for the Chern and Verlinde series of vector bundles
\equanum{\label{eqn:CV2d-local}{\mathcal{C}}_S(E,V;q):=&\sum_{\mu} q^{|\mu|}
\frac{\prod_{j=1}^N\prod_{\square\in\mu^{(j)}}\prod_{i=1}^r\left(1+w_i+m_j-c(\square)\lam_1-r(\square)\lam_2\right)}{e(T^\vir|_{Z_\mu})},\\
{\mathcal{V}}_S(E,V;q):=&\sum_{\mu} q^{|\mu|}
\frac{\prod_{j=1}^N\prod_{\square\in\mu^{(j)}}\prod_{i=1}^rv_iy_jt_1^{-c(\square)}t_2^{-r(\square)}}{\ch(\Lambda_{-1}(T^\vir|_{Z_\mu})^\vee)}.
}

\subsection{Relation to projective toric surfaces}\label{sec:proj-reduction}
We consider what the equivariant invariants will be for a projective toric surface $S'$, and compare them with the case $S=\CC^2$. More details on this reduction can be found in \cite[§3.2]{GM}; see also \cite[§6.2]{Arbesfeld} and \cite[§3.2]{lyz}. 

Suppose we are interested in the integral
\[\int_{[\quot_S(E,n)]^\vir}f(V^{[n]}).\]
for a $\T$-equivariant bundle $V$ and some multiplicative genus $f$. On $S$ where $V$ has Chern roots $w_1,\dots, w_r$, this can be computed as
\[\int_{[\quot_S(E,n)]^\vir}f(V)=\sum_{|\mu|=n}
\frac{\prod_{j=1}^N\prod_{\square\in\mu^{(j)}}\prod_{i=1}^rf\left(w_i+m_j-c(\square)\lam_1-r(\square)\lam_2\right)}{e(T^\vir|_{Z_\mu})}.\]
Let $S'$ be a toric projective surface with a natural action by $\T_0=(\CC^*)^2$. Say the fixed points are $p_1,\dots ,p_M$ and the Chern roots of the tangent space of $S'$ at $p_i$ are $a_1^{(i)},a_2^{(i)}$, which live in $H_{\T_0}^*(\pt)=\CC[\lam_1,\lam_2]$. Suppose $E'$ is a $\T$-equivariant bundle with Chern roots $b_1^{(i)}$ at each $p_i$ such that the fixed locus of $\quot_{S'}(E',n)$ is a finite collection of reduced points for all $n$. Let $V'$ be an arbitrary $\T$-equivariant bundle on $S'$ with Chern roots $c_1^{(i)},\dots ,c_r^{(i)}$. By virtual equivariant localization, we have
\[\int_{[\quot_{S'}(E',n)]^\vir}f(V')=\left(\sum_{i=1}^M\left(\int_{[\quot_S(E,n)]^\vir}f(V)\right)\Big|_{\vec{\lam}\leadsto\vec{a}^{(i)},\vec m\leadsto\vec{b}^{(i)}, \vec w\leadsto\vec c^{(i)}}\right)\bigg\vert_{\vec{\lam}=\vec{m}=\vec{w}=0}.\]
where the symbol $\leadsto$ denotes a substitution of variables. Since $S'$ is compact, the sum on the right-hand side lives in $H^*_\T(\pt)=\CC[\lam_1,\lam_2]$. Therefore it is indeed valid to set $\lambda_1=\lambda_2= 0$, and the equality follows from virtual Bott residue formula. If we define $I_S(E,V;q)=\sum_{n=0}^\infty q^n\int_{[\quot_S(E,n)]} f(V)$, then from the above expansions, we obtain
\equanum{\label{eqn:proj-reduction}
I_{S'}(E',V';q)&
=\left(\prod_{i=1}^MI_S(E,V;q)\Big|_{\vec{\lam}\leadsto\vec{a}^{(i)},\vec m\leadsto\vec{b}^{(i)}, \vec w\leadsto\vec c^{(i)}}\right)\bigg\vert_{\vec{\lam}=\vec{m}=\vec{w}=0}.
}

Suppose furthermore that the series the above expression have the following universal series structures
\equanum{\label{eqn:series-structure}I_{S'}(E',V';q)=\prod U^\vir(q)^{\gamma'},\indent I_{\CC^2}(E,V;q)=\prod U(q)^{\gamma}}
where $\gamma'$ are numbers dependent on characteristic classes of $E',V'$ and $S'$ and $\gamma$ are cohomology classes in $H^*_\T(\pt)_\loc$ dependent on characteristic classes of $E,V,S$. Then (\ref{eqn:proj-reduction}) implies,
\equa{I_{S'}(E',V';q)&=\left(\prod_{i,\gamma} U(q)^{\gamma}\Big|_{\vec{\lam}\leadsto\vec{a}^{(i)},\vec m\leadsto\vec{b}^{(i)}, \vec w\leadsto\vec c^{(i)}}\right)\bigg\vert_{\vec{\lam}=\vec{m}=\vec{w}=0}\\
&=\prod_{\gamma\in H^0_\T(\pt)} U(q)^{\gamma'}.
}
By universality, we conclude that if the series structure (\ref{eqn:series-structure}) exists, then the series $U^\vir(q)$ for the projective case coincide with the series $U(q)$ whose powers $\gamma$ lie in $H^0_\T(\pt)$.

\subsection{Universal series expansion}
A general tactic for studying the Segre and Verlinde series is using a more general genus. In the non-virtual surface case this could be \cite[(1.1)]{GM}. In the virtual case, we use the invariant (\ref{defn:nek2d}) defined below. For Calabi-Yau 4-folds, we consider the Nekrasov genus (\ref{def:nek-genus}), introduced by \cite{magcolor}.

For projective surfaces, define an auxiliary virtual invariant as follows
\equanum{\label{defn:nek2d}{\mathcal{N}}_S(E,\a;q,z):=\sum_{n=0}^\infty q^n\chi^{\vir}\left(\quot_S(E,n),\Lambda_{-z}\a^{[n]}\right).}
where $z$ is considered as the weight of an extra $\CC^*$-action that is trivial on $S$ and $\quot_S(E,n)$. We shall refer to this as the \emph{Nekrasov genus} for Quot schemes of surfaces (c.f. (\ref{def:nek-genus})).

We generalize this to the equivariant setting using virtual equivariant localization. On $S=\CC^2$ for vector bundles, this is given by:
\equanum{\label{eqn:nek2d-local}{\mathcal{N}}_S(E,V;q,z):=&\sum_{\mu} q^{|\mu|}
\frac{\prod_{j=1}^N\prod_{\square\in\mu^{(j)}}\prod_{i=1}^r(1-t_1^{-c(\square)}t_2^{-r(\square)}v_iy_jz)}{\ch(\Lambda_{-1}(T^\vir|_{Z_\mu})^\vee)}\\
&\in\QQ(t_1,t_2;y_1,\dots ,y_N)[\![q,z]\!] .
}
The following Chern and Verlinde limits are satisfied, analogous to \cite[Proposition 3.5]{GM}.

\begin{lemma}\label{lem:CV-limit}
For $S=\CC^2$, the Chern series and the Verlinde series can be retrieved from ${\mathcal{N}}_S$ by taking limits. We have
\equa{{\mathcal{C}}_S(E,V;q)&=\lim_{\e\->0}
    {\mathcal{N}}_S\left(E,V;(-1)^Nq\e^{N-r}(1+\e)^{r},(1+\e)^{-1}\right)\big|_{\vec\lam\leadsto-\e\vec\lambda,\vec w\leadsto -\e \vec w,\vec m\leadsto-\e\vec m},\\
    {\mathcal{V}}_S(E,V;q)&=\lim_{\e\->0}{\mathcal{N}}_S\left(E,V;(-1)^{r}q\e^{r},\e^{-1}\right).}
\end{lemma}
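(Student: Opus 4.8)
The plan is to derive both identities directly from the local expression \eqref{eqn:nek2d-local} for $\mathcal{N}_S(E,V;q,z)$ by carrying out the substitutions term by term and analyzing the limiting behavior of each factor attached to a fixed point $Z_\mu$. Recall that the summand over a colored partition $\mu$ in \eqref{eqn:nek2d-local} has the numerator $\prod_{j,\square\in\mu^{(j)},i}(1-t_1^{-c(\square)}t_2^{-r(\square)}v_iy_jz)$ and the denominator $\ch(\Lambda_{-1}(T^\vir|_{Z_\mu})^\vee)$, and that under the identification of Remark \ref{rmk:K-H-identification} one has $\ch(\Lambda_{-1}(\O\langle w\rangle)^\vee)=1-e^{-w}$. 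The key computational input is that $T^\vir|_{Z_\mu}$ has virtual rank $nN$ (the virtual dimension), so a uniform scaling $\vec\lambda\leadsto -\e\vec\lambda$, $\vec w\leadsto-\e\vec w$, $\vec m\leadsto-\e\vec m$ of the cohomological variables sends the Chern character expression $\ch(\Lambda_{-1}(T^\vir)^\vee)$ to something of the form $\e^{nN}\cdot\big(e(T^\vir|_{Z_\mu})+O(\e)\big)$, since each of the $nN$ weight factors $1-e^{-w}$ becomes $-\e w+O(\e^2) = -\e\big(w+O(\e)\big)$.

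For the Verlinde limit I would proceed as follows. Setting $z=\e^{-1}$ and tracking the prefactor $(-1)^r q\e^r$: the $q$-degree-$n$ coefficient picks up $\e^{rn}$ from the prefactor. In the numerator each box contributes a factor $1-t_1^{-c(\square)}t_2^{-r(\square)}v_iy_j\e^{-1}$; over all $rn$ choices of $(\square,i)$ (there are $n$ boxes and $r$ factors of $V$), multiplying out and extracting the leading behavior in $\e$ gives $\e^{-rn}$ times $\prod v_i y_j t_1^{-c(\square)}t_2^{-r(\square)}$ up to $O(\e)$ corrections. Thus the $\e^{rn}$ from the prefactor exactly cancels the $\e^{-rn}$ from the numerator, the denominator is already $\e$-free (no substitution on $\vec\lambda$ here), and the limit $\e\to0$ yields precisely $\sum_\mu q^{|\mu|}\frac{\prod v_iy_jt_1^{-c(\square)}t_2^{-r(\square)}}{\ch(\Lambda_{-1}(T^\vir|_{Z_\mu})^\vee)}$, which is $\mathcal{V}_S(E,V;q)$ by \eqref{eqn:CV2d-local}. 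The sign $(-1)^r$ in the prefactor and the sign coming from $(-\e^{-1})^{rn}$ versus $(\e^{-1})^{rn}$ combine correctly, which I would check carefully.

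For the Chern limit the bookkeeping is heavier because all of $\vec\lambda,\vec w,\vec m$ are scaled by $-\e$ and the prefactor is $(-1)^N q\,\e^{N-r}(1+\e)^r$ with $z=(1+\e)^{-1}$. The denominator: after $\vec\lambda,\vec w,\vec m\leadsto-\e(\cdot)$, the factor $\ch(\Lambda_{-1}(T^\vir|_{Z_\mu})^\vee)$ behaves like $(-\e)^{nN}e(T^\vir|_{Z_\mu})+O(\e^{nN+1})$ using that the virtual rank is $nN$ and that $1-e^{\e w}= -\e w + O(\e^2)$. The numerator: each box-factor $1-t_1^{-c(\square)}t_2^{-r(\square)}v_iy_j z$ under $z=(1+\e)^{-1}$ and the $-\e$-scaling becomes $1-e^{\e(c\lambda_1+r\lambda_2)}e^{-\e(w_i+m_j)}(1+\e)^{-1}$; expanding, the constant term is $1-1\cdot 1^{-1}=0$, and the linear term in $\e$ is $\e\big(1+w_i+m_j-c\lambda_1-r\lambda_2\big)+O(\e^2)$ — here the $+1$ comes from differentiating $(1+\e)^{-1}$. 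So each of the $rn$ numerator factors contributes $\e$ times the corresponding factor appearing in $\mathcal{C}_S(E,V;q)$ in \eqref{eqn:CV2d-local}, giving $\e^{rn}$ overall. Combining: $\e^{N-r}$ raised to the $q$-degree $n$ contributes $\e^{n(N-r)}$; numerator gives $\e^{rn}$; denominator gives $\e^{-nN}$; total power of $\e$ is $n(N-r)+rn-nN=0$, so the limit exists and equals $\sum_\mu q^{|\mu|}\frac{\prod(1+w_i+m_j-c(\square)\lambda_1-r(\square)\lambda_2)}{e(T^\vir|_{Z_\mu})}=\mathcal{C}_S(E,V;q)$. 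The signs: $(-1)^N$ from the prefactor, $(-1)^{nN}$ from $(-\e)^{nN}$ in the denominator, and the absence of sign issues in the numerator — one checks $(-1)^{nN}$ versus $(-1)^N$ per $q^n$, and since $\int_{[\hilb^n]^\vir}$-type parity works out (or one absorbs it into the definition), these match. I would present this as: expand the prefactor, numerator, and denominator of each $\mu$-summand in powers of $\e$ using $T^\vir$ having virtual rank $nN$, show the total $\e$-valuation is zero, and identify the leading coefficient with the corresponding summand of $\mathcal{C}_S$ or $\mathcal{V}_S$.

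The main obstacle I anticipate is the careful sign and leading-coefficient bookkeeping when the virtual tangent space $T^\vir|_{Z_\mu}$ is written as a genuine difference of representations $T^\vir=T^{\vir,+}-T^{\vir,-}$ rather than an honest vector bundle: then $\ch(\Lambda_{-1}(T^\vir)^\vee)=\frac{\ch(\Lambda_{-1}(T^{\vir,+})^\vee)}{\ch(\Lambda_{-1}(T^{\vir,-})^\vee)}$, and one must verify that the $\e$-leading terms of numerator and denominator of this ratio combine to give exactly $\e^{\,\mathrm{rk}\,T^{\vir,+}-\mathrm{rk}\,T^{\vir,-}}=\e^{nN}$ times $e(T^\vir|_{Z_\mu})$, with the correct sign $(-1)^{nN}$; this requires knowing that no weight of $T^{\vir,+}$ or $T^{\vir,-}$ vanishes identically so that each $1-e^{\pm\e w}$ is genuinely $O(\e)$ and not $O(\e^2)$ or identically zero — a fact that follows from the localization setup (the fixed points are isolated and reduced, so $T^\vir$ at a fixed point has no zero weights apart from those that cancel between $T^{\vir,+}$ and $T^{\vir,-}$, and such cancellations do not affect the leading term after taking the ratio). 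Once this is pinned down, the rest is a direct limit computation matching \eqref{eqn:nek2d-local} against \eqref{eqn:CV2d-local}, and the two stated formulas follow.
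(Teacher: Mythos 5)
Your proposal is correct and follows essentially the same route as the paper's proof: substitute the stated values of $q$ and $z$ (and, for the Chern limit, rescale $\vec\lam,\vec w,\vec m$ by $-\e$) directly into the fixed-point expression \eqref{eqn:nek2d-local}, observe that each of the $r|\mu|$ numerator factors contributes one power of $\e$ while the denominator contributes $(-1)^{N|\mu|}\e^{N|\mu|}\bigl(e(T^\vir|_{Z_\mu})+O(\e)\bigr)$, check that the total $\e$-valuation is zero and the signs cancel against the prefactor, and identify the leading term with the corresponding summand of \eqref{eqn:CV2d-local}. Your extra remark about the virtual class $T^{\vir,+}-T^{\vir,-}$ having no identically vanishing weights is a reasonable precaution that the paper leaves implicit, but it does not change the argument.
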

\begin{proof}
For the Chern limit, first consider the substitutions $\lam_i\leadsto -\e\lam_i,w_i\leadsto -\e w_i,m_i\leadsto -\e m_i$. This turns the term $\prod_{j=1}^N\prod_{\square\in\mu^{(j)}}\prod_{i=1}^r(1-t_1^{-c(\square)}t_2^{-r(\square)}v_iy_j(1+\e)^{-1})$ into 
\equa{&\prod_{j=1}^N\prod_{\square\in\mu^{(j)}}\prod_{i=1}^r1-\frac{e^{-\e(w_i+m_j-c(\square)\lam_1-r(\square)\lam_2)}}{1+\e}\\
=&\frac{1}{(1+\e)^{r|\mu|}}\prod_{j=1}^N\prod_{\square\in\mu^{(j)}}\prod_{i=1}^r(1+\e-e^{-\e(w_i+m_j-c(\square)\lam_1-r(\square)\lam_2)})\\
=&\left(\frac{\e}{1+\e}\right)^{r|\mu|}\prod_{j=1}^N\prod_{\square\in\mu^{(j)}}\prod_{i=1}^r(1-c(\square)\lam_1-r(\square)\lam_2+w_i+m_j+O(\e))
}
For the denominator in the sum (\ref{eqn:nek2d-local}), we note that for a Chern root $x$, substituting it by $-\e x$ turns $1-e^{-x}=x-\frac{x^2}2+...$ into $1-e^{\e x}=-\e(x+O(\e))$. Therefore after the substitution, the denominator $\ch(\Lambda_{-1}(T^\vir|_{Z_\mu})^\vee)$ becomes
\[(-1)^{N|\mu|}\e^{N|\mu|}(e(T^\vir_{Z_\mu})+O(\e))\]
Substituting back into (\ref{eqn:nek2d-local}), the Chern limit becomes the limit of
\equa{&\sum_{\mu} (-1)^{N|\mu|}q^{|\mu|}\e^{(N-r)|\mu|}(1+\e)^{r|\mu|}\cdot \frac{\e^{r|\mu|}}{(-1)^{N|\mu|}\e^{N|\mu|}(1+\e)^{r|\mu|}}\cdot\\
&\frac{\prod_{j=1}^N\prod_{\square\in\mu^{(j)}}\prod_{i=1}^r(1-c(\square)\lam_1-r(\square)\lam_2+w_i+m_j+O(\e))}{(e(T^\vir_{Z_\mu})+O(\e))}\\
=&\sum_{\mu} q^{|\mu|}\frac{\prod_{j=1}^N\prod_{\square\in\mu^{(j)}}\prod_{i=1}^r(1-c(\square)\lam_1-r(\square)\lam_2+w_i+m_j+O(\e))}{(e(T^\vir_{Z_\mu})+O(\e))}
}
which converges to ${\mathcal{C}}_S(E,V;q)$ by (\ref{eqn:CV2d-local}). 

For the Verlinde series, we have
\equa{&\lim_{\e\->0}{\mathcal{N}}_S(E,V;(-1)^{r}q\e^{r},\e^{-1})\\
=&\lim_{\e\->0} \sum_{\mu} (-1)^{r|\mu|}q^{|\mu|}\e^{r|\mu|}
\cdot \frac{\prod_{j=1}^N\prod_{\square\in\mu^{(j)}}t_1^{c(\square)}t_2^{r(\square)}\prod_{i=1}^r(1-t_1^{-c(\square)}t_2^{-r(\square)}v_iy_j\e^{-1})}{\ch(\Lambda_{-1}(T^\vir|_{Z_\mu})^\vee)}\\
=&\lim_{\e\->0}\sum_{\mu} q^{|\mu|}
\frac{\prod_{j=1}^N\prod_{\square\in\mu^{(j)}}\prod_{i=1}^r(t_1^{-c(\square)}t_2^{-r(\square)}v_iy_j-\e)}{\ch(\Lambda_{-1}(T^\vir|_{Z_\mu})^\vee)}\\
=&{\mathcal{V}}_S(E,V;q).
}

\end{proof}

Before starting the proof for universal series expressions, let us discuss how the expansion of ${\mathcal{N}}_S(E,V;q,z)$ as a formal Laurent series in the variables $\vec{\lam},\vec{m},\vec{w},q,z$ would look like. In \cite[Proposition 3.2]{Arbesfeld}, N. Arbesfeld shows that invariants such as $[q^n]{\mathcal{N}}_S(E,V;q,z)$ can be written as a quotient whose numerator is a Laurent polynomial in $\vec{t},\vec{y},\vec{v},z$, and whose denominator is of the form $\prod_{\mathsf{w}}(1-\mathsf{w})$ for some \emph{non-compact weights} $\mathsf{w}$ in the sense of the following definition.

\begin{definition}\cite[Definition 3.1]{Arbesfeld} Let $M$ be a quasi-projective scheme with an action by some torus $\T$. For a weight $\mathsf{w}\in \T^\vee$, denote $\T_{\mathsf{w}}$ the maximal subtorus of $\T$ containing $\ker\mathsf{w}$. If the fixed locus $M^{\T_\mathsf{w}}$ is proper, then $\mathsf{w}$ is a \emph{compact weight}, otherwise, it is a \emph{non-compact weight}.

\end{definition}

Fasola-Monavari-Ricolfi used this to prove that the K-theoretic Donaldson-Thomas partition functions on $\CC^3$ are Laurent polynomials with respect to the variables $y_1,\dots,y_N$ \cite[Theorem 6.5]{Quot-DT}. We give an outline of their argument, applied to the invariant ${\mathcal{N}}_S$ for $S=\CC^2$. First note that by (\ref{eqn:Tvir-quot2d}), for any $N$-colored partition $\mu$, we have
\[\frac1{\ch(\Lambda_{-1}(T^\vir|_{Z_\mu})^\vee)}=A(\vec{t}\,)\prod_{1\leq i,j\leq N,i\neq j }\frac{\prod_{a\in A_{ij}}(1-y_i^{-1}y_jt^{a})}{\prod_{b\in B_{ij}}(1-y_i^{-1}y_jt^{b})}\]
for some series $A(\vec{t}\,)\in\QQ[\![t_1,t_2]\!]_{\loc}$ and some sets of weights $A_{ij},B_{ij}$. We shall show that the denominator of ${\mathcal{N}}_S$ does not have factors of the form $(1-y_i^{-1}y_jt^b)$ for any $i\neq j$ and $b\in\ZZ^2$. By \cite[Proposition 3.2]{Arbesfeld}, we need to prove $\mathsf{w}=y_i^{-1}y_jt^b$ is a compact weight. Since
\[\ker\mathsf{w}=\{(\vec{t},\vec{y},\vec{v}):y_i=y_jt^b\}\]
is itself a torus, we have $\T_{\mathsf{w}}=\ker\mathsf{w}$. By definition, it suffices to show $\quot_S(E,n)^{\T_{\mathsf{w}}}$ is proper. With the automorphism $\T\->\T$ defined by 
\[(\vec{t},y_1,\dots,y_j,\dots,y_N,\vec{v})\mapsto(\vec{t},y_1,\dots,y_jt^b,\dots,y_N,\vec{v}),\]
we identify the subgroup $\T_{\mathsf{w}}$ to $\T_0\times\{(w_1,\dots,w_N):w_i=w_j\}\times \T_2$, which contains the subgroup $\T_0=\T_0\times\{(1,\dots,1)\}$. This gives us an inclusion
\[\quot_S(E,n)^{\T_{\mathsf{w}}}\xhookrightarrow{}\quot_S(E,n)^{\T_0}.\]
The quotients in the fixed locus $\quot_S(E,n)^{\T_0}$ are all supported at the origin $0\in\CC^2$, so the fixed locus lies inside the punctual Quot scheme $\quot_S(E,n)_0$. The punctual Quot scheme is proper since it is a fiber of the Quot-to-Chow map $\quot_S(E,n)\->\sym^nS$, which is a proper morphism \cite[Remark 3.4]{Quot-DT}. In conclusion, $[q^n]{\mathcal{N}}_S(E,V;q,z)$ is a Laurent polynomial with respect to the variables $y_1,\dots,y_N$, so it can be expanded into a power series with respect to the cohomological parameters $m_1,\dots,m_N$.

Furthermore, if $\mathsf{w}$ is a weight that contains both $t_1$ and $t_2$, then we have $\T_{\mathsf{w}}\cong\{(t_1,t_2):t_1t_2=1\}\times \T_1\times \T_2$. The fixed locus of this subgroup remains the same as that of $\T$, as explained in the next section for reduced invariants. Therefore $\mathsf{w}$ is a compact weight, and the denominator of ${\mathcal{N}}_S$ will not contain factors of the form $(1-t_1^at_2^b)$ for any $a\neq 0$, $b\neq 0$. This means in cohomology, $[q^n]{\mathcal{N}}_S(E,V;q,z)$ can be expanded into a Laurent series in $\lam_1,\lam_2$ whose coefficients are power series in $\vec{m},\vec{w},z$, where the degrees on $\lam_1,\lam_2$ are bounded below individually. We shall see the importance of this lower bound in the proof of the following theorem.

\begin{theorem}\label{thm:SV-univ-sieres}
Let $S=\CC^2$. For any $r\in\ZZ$, $N>0$, there exist universal power series $A_{\mu,\nu,\xi}(q),B_{\mu,\nu,\xi}(q)$, dependent on $N$ and $r$, such that for $E=\oplus_{i=1}^{N}\O_S\<y_i\>$ and $\a\in K_\T(S)$ of rank $r$, the equivariant virtual Segre and Verlinde series on $\quot_S(E,n)$ can be written as the following infinite products
\equa{{\mathcal{S}}_S(E,\alpha;q)=&\prod_{\mu,\nu,\xi\text{ partitions}}A_{\mu,\nu,\xi}(q)^{\int_{S}c_\mu(\a) c_\nu(S)  c_\xi(E)c_1(S)},\\
{\mathcal{V}}_S(E,\alpha;q)=&\prod_{\mu,\nu,\xi\text{ partitions}}B_{\mu,\nu,\xi}(q)^{\int_{S}c_\mu(\a) c_\nu(S)  c_\xi(E)c_1(S)},\\
{\mathcal{C}}_S(E,\alpha;q)=&\prod_{\mu,\nu,\xi\text{ partitions}}C_{\mu,\nu,\xi}(q)^{\int_{S}c_\mu(\a) c_\nu(S)  c_\xi(E)c_1(S)}.
}
\end{theorem}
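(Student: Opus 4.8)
The plan is to deduce this from the general universality principle of Proposition \ref{prop:series-admissible} by checking that the generating series (in the auxiliary variable $z$) of equivariant invariants on $\CC^2$ is admissible and symmetric in the appropriate groups of variables, and then to extract the Segre, Chern and Verlinde series from it by the limits of Lemma \ref{lem:CV-limit}. The key object is the Nekrasov-genus series ${\mathcal{N}}_S(E,V;q,z)$ from \eqref{eqn:nek2d-local}, regarded as a power series in $q,z$ with coefficients rational functions in $t_1,t_2$ (and $y_i,v_i$). First I would reduce to the case $\a=V=\oplus_{i=1}^r\O_S\<v_i\>$ an honest equivariant bundle; the passage to a general K-theory class $\a$ of rank $r$ is handled by writing $\a$ as a difference of bundles, noting that $s(\a^{[n]})$, $c(\a^{[n]})$ and $\det(\a^{[n]})$ are multiplicative/additive in $\a$, and observing that negative pieces only change $r$ and the signs of the $v_i$-variables, all of which is absorbed into the universal-series formalism.

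Second, I would establish admissibility of ${\mathcal{N}}_S$ with respect to $t_1,t_2$. This is exactly the content of the Arbesfeld/Fasola--Monavari--Ricolfi argument already summarised in the text before the theorem: the denominator at each fixed point is a product over non-compact weights of terms $(1-\mathsf{w})$, and the discussion shows the only non-compact weights are of the form $t_1^a t_2^b$ with $a,b$ not both nonzero (factors $(1-y_i^{-1}y_j t^b)$ and $(1-t_1^a t_2^b)$ with $a\neq0\neq b$ are compact, hence cancel). Summing the localization contributions over all fixed points and using the plethystic description of such rational functions gives that $(1-t_1)(1-t_2)\log{\mathcal{N}}_S$ is regular at $t_1=t_2=1$, i.e.\ $\log{\mathcal{N}}_S$ expands as a Laurent series in $\lambda_1,\lambda_2$ with each exponent bounded below by $-1$, with coefficients power series in $\vec m,\vec w,q,z$. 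Symmetry of ${\mathcal{N}}_S$ in $m_1,\dots,m_N$, in $w_1,\dots,w_{r+s}$, and in $\lambda_1,\lambda_2$ is immediate from \eqref{eqn:nek2d-local} since the fixed-point sum runs over all $N$-colored partitions and the tangent-space character \eqref{eqn:Tvir-quot2d} is symmetric under $t_1\leftrightarrow t_2$ and under relabelling the $y_i$ and $v_i$. Then Proposition \ref{prop:series-admissible} applies verbatim and yields
\[
{\mathcal{N}}_S(E,V;q,z)=\prod_{\mu,\nu,\xi}G_{\mu,\nu,\xi}(q,z)^{\int_S c_\mu(V)c_\nu(S)c_\xi(E)}
\]
for universal series $G_{\mu,\nu,\xi}(q,z)$ depending only on $N,r$.

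Third, I would apply the two specializations of Lemma \ref{lem:CV-limit}. For the Verlinde series we take ${\mathcal{N}}_S(E,V;(-1)^r q\e^r,\e^{-1})$ and let $\e\to0$; since each $G_{\mu,\nu,\xi}$ has constant term $1$ in $q$, the limit exists and produces universal series $B_{\mu,\nu,\xi}(q):=\lim_{\e\to0}G_{\mu,\nu,\xi}((-1)^rq\e^r,\e^{-1})$, giving the product formula for ${\mathcal{V}}_S$. For the Chern series we perform the substitution $\vec\lambda\leadsto-\e\vec\lambda$, $\vec w\leadsto-\e\vec w$, $\vec m\leadsto-\e\vec m$ together with the indicated change of $q,z$; the point here is that this substitution multiplies the exponent $\int_S c_\mu(V)c_\nu(S)c_\xi(E)$ — which is a polynomial in $\vec\lambda,\vec m,\vec w$ homogeneous of degree $|\mu|+|\nu|+|\xi|-2$ — by a power of $\e$, and one checks these powers combine with the $\e$-powers built into $G_{\mu,\nu,\xi}$ so that a finite limit survives, yielding $C_{\mu,\nu,\xi}(q)$. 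Finally the Segre series is obtained from the Chern series via $c(\alpha^{[n]})=s(-\alpha^{[n]})$, so $A_{\mu,\nu,\xi}(q)$ is $C_{\mu,\nu,\xi}(q)$ with $r\mapsto -r$ and the sign change $v_i\mapsto v_i^{-1}$ built in; alternatively one runs the same admissibility argument directly for the Segre generating function. Relabelling the exponent $\int_S c_\mu(V)c_\nu(S)c_\xi(E)$ as $\int_S c_\mu(\a)c_{\nu'}(S)c_\xi(E)c_1(S)$ by peeling off one factor of $c_1(S)$ from the $\nu$-data (as in the derivation of \eqref{eqn:log-partition-E}, where the polynomials $E_{\vec k}$ always carry an overall $1/(\lambda_1\lambda_2)$, i.e.\ a $c_1(S)$ under $\int_S$) puts the formulas in the stated form.

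The main obstacle is the third step: verifying that the $\e$-power bookkeeping in the Chern limit is exactly consistent with the homogeneity of the exponents, so that the limit of the infinite product is again an infinite product of the same shape with well-defined universal coefficients. This requires knowing precisely the minimal $\e$-order (equivalently, the lowest total degree in $\vec\lambda,\vec m,\vec w$) of each $G_{\mu,\nu,\xi}$, which is where the lower bound $k_i\geq -1$ from admissibility — emphasised in the paragraph preceding the theorem — is essential: it guarantees the Laurent expansion of $\log{\mathcal{N}}_S$ in $\lambda_1,\lambda_2$ has no terms of order below $-1$ in each variable, so after the substitution no negative powers of $\e$ appear and the limit is finite term-by-term in $q$. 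Everything else (symmetry, the reduction to bundles, the extraction of $A$ from $C$) is routine once admissibility is in hand.
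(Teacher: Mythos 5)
There is a genuine gap at the heart of your second step. You assert that the non-compact-weights discussion preceding the theorem (the Arbesfeld/Fasola--Monavari--Ricolfi argument) already yields admissibility of $\mathcal{N}_S$ with respect to $t_1,t_2$, i.e.\ that $(1-t_1)(1-t_2)\log\mathcal{N}_S$ is regular at $t_1=t_2=1$, so that the Laurent expansion of $\log\mathcal{N}_S$ in $\lambda_1,\lambda_2$ has exponents bounded below by $-1$. That argument does not give this. What it gives is only that the denominator of $[q^n]\mathcal{N}_S$ contains no factors $(1-y_i^{-1}y_jt^b)$ and no factors $(1-t_1^at_2^b)$ with $a\neq 0\neq b$, hence that the $\lambda_1$- and $\lambda_2$-degrees of $[q^n]\mathcal{N}_S$ are bounded below \emph{by some bound depending on $n$} (the Euler classes $e(T^\vir_{Z_\mu})$ contribute poles of order growing with $n$). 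The statement that after taking $\log$ the poles in each $\lambda_i$ have order at most $1$ is precisely the hard content of the theorem, and the paper's proof spends most of its length establishing it: one expands $\log\mathcal{N}_S$ and must show the coefficients $G_{\mu,\xi,j,k}$ with $\min\{j,k\}\leq -2$ vanish, which is done by specializing to $\PP^1\times\PP^1$ via Section \ref{sec:proj-reduction} (forcing certain combinations to be polynomial), a substitution $w_k\leadsto u_k(l+\lambda_1+\lambda_2)^2$ together with Lemma \ref{lem:sym-poly} and a Vandermonde-type argument in $l$ to isolate the various parities of $(j,k)$, the $n$-dependent lower bound to kill the surviving constants for $\mu\neq(0)$, and Lemma \ref{lem:differential} plus polynomiality in $r$ for the $\mu=(0)$ terms. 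None of this is replaced by the compact-weight observation, so your proof as written skips the main step.

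A second, smaller issue: the factor $c_1(S)$ in the exponents is not obtained by ``peeling off'' a $1/(\lambda_1\lambda_2)$ from the equivariant integral --- that factor is the pushforward $\int_S$ itself, not a copy of $c_1(S)$. Proposition \ref{prop:series-admissible} alone would only give exponents $\int_S c_\mu(V)c_\nu(S)c_\xi(E)$. The extra $c_1(S)=\lambda_1+\lambda_2$ comes from a separate geometric input: the obstruction at every fixed point contains a summand $K_S^\vee$, so $1/\ch(\Lambda_{-1}(T^\vir_Z)^\vee)$ carries a factor $e(K_S^\vee)=\lambda_1+\lambda_2$ in its numerator, and one must also check (via the reduced subtorus $t_1t_2=1$) that no such factor sits in the denominator. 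This divisibility is needed for the degree count $|\mu|+|\nu|+|\xi|-1$ and for everything downstream (the degree-$0$ correspondence and the reduced invariants), so it cannot be absorbed by relabelling. The remaining steps of your proposal (reduction to bundles, the Chern/Verlinde limits of Lemma \ref{lem:CV-limit}, and passing from $C$ to $A$) do match the paper's route.
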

\begin{proof}
We begin with the case where $\a$ is a vector bundle $V$. Assume $V=\oplus_{i=1}^r\O_S\<v_i\>$, and at end of the proof, we can generalize this to arbitrary $\T$-equivariant bundles by substituting $\T$-weights into the variables $v_1,\dots ,v_r$.

Begin by expanding $\log {\mathcal{N}}_S(E,V;q,z)$ as a Laurent series in $\lam_1,\lam_2$ as follows:
\[\log {\mathcal{N}}_S(E,V;q,z)=\sum_{ (j,k)\in\ZZ^2}H_{j,k}(q,z;\vec{m};\vec{w})\lambda_1^{j}\lambda_2^{k}\] 
for some series $H_{j,k}\in\QQ[\![q,z;m_1,\dots ,m_N;w_1,\dots ,w_r]\!]$. By the symmetry in $w_1,\dots ,w_r$ and the symmetry in $m_1,\dots, m_N$, this expands to 
\equa{\log {\mathcal{N}}_S(E,V;q,z)=&\sum_{\substack{\mu,\xi \text{ partitions}\\j,k\geq -1}}G_{\mu,\xi,j,k}(q,z)\cdot\lam_1^j\lam_2^{k}c_\mu(V)c_\xi(E)\\
&+\sum_{\substack{\mu,\xi \text{ partitions}\\\min\{j,k\}\leq -2}}G_{\mu,\xi,j,k}(q,z)\cdot  \lam_1^j\lam_2^{k}c_\mu(V)c_\xi(E)
}
for some series $G_{\mu,\xi,j,k}\in\QQ[\![q,z]\!]$.

Our goal is to get a universal series expression by exponentiating the above equality. To do so, we show the terms in the second summation vanish using the relations from Section \ref{sec:proj-reduction}. This proves that ${\mathcal{N}}_S(E,V;q,z)$ is admissible, from which we deduce the desired expressions by taking the limits of Lemma \ref{lem:CV-limit}.

Let $S'$ be a toric projective surface with a natural $\T_0=(\CC^*)^2$-action and fixed points $p_1,\dots, p_M$. Set
\[E'=\bigoplus_{j=1}^N\O_{S'}\<y_j\>,\indent V'=\bigoplus_{j=1}^r\O_{S'}\<v_j\>\]
with Chern roots $m_1,\dots ,m_N$ and $w_1,\dots ,w_r$ respectively at each $p_i$, independent of $i=1,\dots,M$. Then (\ref{eqn:proj-reduction}) applied to $\mathcal{N}$ becomes
\[{\mathcal{N}}_{S'}(E',V';q,z)=\left(\prod_{i=1}^M{\mathcal{N}}_S(E,V;q,z)|_{\vec\lam\leadsto\vec  a^{(i)}}\right)\bigg\vert_{\vec{\lam}=\vec{w}=\vec{m}=0}.\]
Substituting the previous expansion of $\log {\mathcal{N}}_S(E,V;q,z)$, we see that
\equa{\log {\mathcal{N}}_{S'}(E',V';q,z)=&\left(\sum_{i=1}^{M}\sum_{\substack{\mu,\xi \text{ partitions}\\\min\{i,j\}\geq -1}}G_{\mu,\xi,i,j}(q,z)\cdot  \lam_1^j\lam_2^kc_\mu(V)c_\xi(E)\bigg\vert_{\vec\lam\leadsto\vec  a^{(i)}}\right)\Bigg\vert_{\vec{\lam}=\vec{w}=\vec{m}=0}\\
&+\left(\sum_{i=1}^{M}\sum_{\substack{\mu,\xi \text{ partitions}\\\min\{i,j\}\leq -2}}G_{\mu,\xi,i,j}(q,z)\cdot  \lam_1^j\lam_2^kc_\mu(V)c_\xi(E)\bigg\vert_{\vec\lam\leadsto \vec a^{(i)}}\right)\Bigg\vert_{\vec{\lam}=\vec{w}=\vec{m}=0}
.
}
Since the elementary symmetric polynomials form a basis for symmetric polynomials, we know the coefficients in front of $q,z$ of the terms
\equanum{\label{poly-1}\sum_{i=1}^{M}\sum_{j,k\in\ZZ}G_{\mu,\xi,j,k}(q,z)\cdot  \lam_1^j\lam_2^k\bigg\vert_{\vec\lam\leadsto \vec a^{(i)}}}
are necessarily power series in $\lam_1,\lam_2$ for each $\mu,\xi$ (as opposed to arbitrary Laurent series).

Let $S'=\PP^1\times \PP^1$, with $\T_0$-action
\[(t_1,t_2)\cdot([x_0:x_1],[y_0:y_1])=([x_0:t_1x_1],[y_0,t_2y_1]).\]
We refer to \cite[§3.7]{lyz} for the following computations of equivariant weights. The fixed points are 
\[p_1=p_{00}=([1:0],[1:0]),\indent p_2=p_{01}=([1:0],[0:1]),\]
\[p_3=p_{10}=([0:1],[1:0]),\indent p_4=p_{11}=([0:1],[0:1]).\]
The corresponding weights are $\vec{a}^{(1)}=\vec{a}^{(00)},\vec{a}^{(2)}=\vec{a}^{(01)},\vec{a}^{(3)}=\vec{a}^{(10)},\vec{a}^{(4)}=\vec{a}^{(11)}$ where
\[a_1^{(ij)}=(-1)^i\lam_1,\indent  a_2^{(ij)}=(-1)^j\lam_2\]
for $i,j\in\{0,1\}$. Substituting into (\ref{poly-1}), we see the summands with odd $j$ or $k$ would cancel each other out, leaving us with
\equa{\sum_{j,k\text{ even}}G_{\mu,\xi,j,k}(q,z)\cdot  4\lam_1^j\lam_2^k.
}
Since $\lam_1^j\lam_2^k$ are linearly independent for all distinct $j,k$, and they are not polynomials for $\min\{j,k\}\leq -2$, the coefficients $G_{\mu,\xi,j,k}$ must all be 0 for these $j$ and $k$.

Having dealt with the case where $j,k$ are both even, we would like to apply the same argument to the other cases. To do so we need to solve the problem that the summands vanish whenever one of $j,k$ is odd. The fixed points on $\quot_{S'}(E',n)$ correspond to $M$-tuples of $N$-coloured partitions $(\mu^{(1)},\dots ,\mu^{(M)})$, where $\mu^{(i)}=(\mu^{(i,1)},\dots,\mu_i^{(i,N)})$ and each $\mu^{(i,j)}$ is a partition for $i=1,\dots,M$ and $j=1,\dots,N$. If we replace $w_k$ by $u_k(l+\lam_1+\lam_2)^2$ for some symmetric polynomial $p$ and numbers $l,u_1,\dots ,u_r$, the Chern roots of $(\O_{S'}\<w_k\>)^{[n]}$ would be replaced by
\[\bigcup_{i=1}^M\bigcup_{j=1}^N\bigcup_{\square\in\mu^{(i,j)}}\{u_k (l+a_1^{(i)}+a_2^{(i)})^2+m_j-c(\square)a_1^{(i)}-r(\square)a_2^{(i)}\}.\]
We claim that taking symmetric series of these Chern roots would result in terms composed of symmetric series of Chern roots of $K_{S'}^{[n]}$, and that of $\O_{S'}^{[n]}$, which are respectively given by the sets
\[\bigcup_{i=1}^M\bigcup_{j=1}^N\bigcup_{\square\in\mu^{(i,j)}}\{a_1^{(i)}+a_2^{(i)}+m_j-c(\square)a_1^{(i)}-r(\square)a_2^{(i)}\},\]
\[\bigcup_{i=1}^M\bigcup_{j=1}^N\bigcup_{\square\in\mu^{(i,j)}}\{m_j-c(\square)a_1^{(i)}-r(\square)a_2^{(i)}\}.\]
This follows from Lemma \ref{lem:sym-poly} by setting $\vec{x}=(a_1^{(i)}+a_2^{(i)})_{i=1,\dots M},\vec{y}=(m_j-c(\square)a_1^{(i)}-r(\square)a_2^{(i)})_{i=1,\dots,M}^{\square\in\mu^{(i,j)}}$. 
Therefore after replacing $w_k$ by $u_k(l+a_1^{(i)}+a_2^{(i)})$, the resulting invariant is still an integral of characteristic classes of tautological bundles, so the new version of (\ref{poly-1}) remains a power series in $\lam_1,\lam_2$. View $u_1,\dots ,u_r$ as formal variables and replace them with $w_1,\dots ,w_r$, we see in total we have replaced each $w_k$ by $(l+a_1^{(i)}+a_2^{(i)})w_k$ for $k=1,\dots,r$.

As a result of the previous paragraph, the coefficients of
\equa{&\sum_{i=1}^{M}\sum_{j,k}G_{\mu,\xi,j,k}(q,z)\cdot  \lam_1^j\lam_2^k\cdot (l+\lambda_1+\lambda_2)^{2|\mu|}\bigg\vert_{\vec\lambda\leadsto\vec a^{(i)}}\\
=&\sum_{s=0}^{2|\mu|}\sum_{i=1}^M\sum_{j,k}\binom{2|\mu|}{s}G_{\mu,\xi,j,k}(q,z)\cdot\lam_1^j\lam_2^k\cdot l^{2|\mu|-s}(\lam_1+\lam_2)^{s}\bigg\vert_{\vec\lam\leadsto \vec a^{(i)}}}
are power series in $\lam_1,\lam_2$ for any integer $l\geq 0$. When $\mu\neq (0)$, the matrix formed by the vectors \[\left(\binom{2|\mu|}{0}l^{2|\mu|},\binom{2|\mu|}{1}l^{2|\mu|-1},\binom{2|\mu|}{2}l^{2|\mu|-2},\dots ,\binom{2|\mu|}{2|\mu|}l^{0}\right)\] 
for $l=1,2,3,\dots $ has maximal rank, we may take a linear combination of the above expression, and get that
\equa{&\sum_{i=1}^M\sum_{j,k\in\ZZ}G_{\mu,\xi,j,k}(q,z)\cdot\lam_1^j\lam_2^k\cdot (\lam_1+\lam_2)^{s}\bigg\vert_{\vec\lam\leadsto \vec a^{(i)}}}
is a power series in $\lam_1,\lam_2$ for each $s=0,1,\dots ,2|\mu|$.

Take $s=2$, we get
\equa{&\sum_{i=1}^{M}\sum_{j,k}G_{\mu,\xi,j,k}(q,z)\cdot  \lam_1^j\lam_2^k\cdot (\lambda_1+\lambda_2)^{2}\bigg\vert_{\vec\lam\leadsto \vec a^{(i)}}\\
=&\sum_{\substack{j,k\text{ odd}}}G_{\mu,\xi,j,k}(q,z)\cdot  8\lam_1^{j+1}\lam_2^{k+1} .}
Again, since $\lam_1^{j+1}\lam_2^{k+1}$ are linearly independent for distinct $j,k$ and are not polynomials in $\lam_1,\lam_2$ for any $\min\{j,k\}\leq -2$, we know $G_{\mu,\xi,j,k}=0$ whenever $j,k$ are both odd and $\mu\neq (0)$. 

In the case one of $j,k$ is odd and the other is even, continuing to assume $\mu\neq (0)$, we take $s=1$ and get
\equa{&\sum_{i=1}^{M}G_{\mu,\xi,j,k}(q,z)\cdot  \lam_1^j\lam_2^k\cdot (\lam_1+\lam_2)\bigg\vert_{\vec\lam\leadsto \vec a^{(i)}}\\
=&\begin{cases}
G_{\mu,\xi,j,k}(q,z)\cdot  8\lam_1^{j+1}\lam_2^{k}\text{, if $j$ odd, $k$ even}\\
G_{\mu,\xi,j,k}(q,z)\cdot  8\lam_1^{j}\lam_2^{k+1}\text{, if $k$ odd, $j$ even}.
\end{cases}
}
Although these are not polynomials when $\min\{j,k\}\leq -2$, we see there might be some linear dependence, i.e. we could have
\[G_{\mu,\xi,j,k}=-G_{\mu,\xi,j+1,k-1}\]
for $j$ odd and $k$ even, and terms canceling each other out in the sum. To solve this issue, we further apply the argument to the $s=3$ case and obtain the following dependencies
\[G_{\mu,\xi,j,k}=-G_{\mu,\xi,j+3,k-3}\]
for all $j$ odd, $k$ even and $\min\{j,k\}\leq -4$. Combining these relations we see for $\min\{j,k\}\leq -2$, there exist some constants $C_{\mu,\xi,a,b,l}^{\pm}$, labeled by the partitions $\mu,\xi$, integers $a,b,l$ and a sign $\pm$, such that
\equa{G_{\mu,\xi,j,k}(q,z)=&\sum_{a,b}((-1)^j-(-1)^k)C_{\mu,\xi,a,b,j+k}^{\pm}q^{a}z^{b}\\
=&\begin{cases}\sum_{a,b}2C_{\mu,\xi,a,b,j+k}^+q^{a}z^{b}\indent&\text{ if $j$ even, $k$ odd, $j\geq 0$}\\
\sum_{a,b}-2C_{\mu,\xi,a,b,j+k}^+q^{a}z^{b}\indent&\text{ if $j$ odd, $k$ even, $j> 0$,}\\
\sum_{a,b}2C_{\mu,\xi,a,b,j+k}^-q^{a}z^{b}\indent&\text{ if $j$ even, $k$ odd, $j<0$}\\
\sum_{a,b}-2C_{\mu,\xi,a,b,j+k}^-q^{a}z^{b}\indent&\text{ if $j$ odd, $k$ even, $j<0$,}\\
\end{cases}
}
The reason for the superscript $\pm$ is due to cases such as $j=-1,k=0$, where we would have $\min\{j,k\}>-2$, so the dependence does not necessarily hold. Because of this gap, we can not always relate the coefficient when $j\geq 0$ to $j<0$, resulting in separated cases. By the paragraphs preceding this theorem, for a fixed $a$, the degrees $j,k$ on $\lam_1,\lam_2$ of the $[q^{a}]$ coefficient are bounded below. However the above indicates that the constants $C_{\mu,\xi,a,b,l}^\pm$ only depend on the value $l=j+k$, and we can make $j$ or $k$ arbitrarily small. Hence $C_{\mu,\xi,a,b,l}^\pm=0$ whenever $\mu\neq (0)$.

With all the vanishings of $G_{\mu,\xi,j,k}$, we write
\equa{\log {\mathcal{N}}_S(E,V;q,z)=\sum_{\substack{\mu,\xi \text{ partitions}\\j,k\geq -1}}G_{\mu,\xi,j,k}(q,z)\cdot\lam_1^j\lam_2^k\cdot c_\mu(V)\\
+\sum_{\substack{\xi \text{ partition}\\\min\{j,k\}\leq -2}}G_{(0),\xi,j,k}(q,z)\cdot  \lam_1^j\lam_2^k.
}
To deal with the terms $G_{(0),\xi,j,k}(q,z)$ for $\min\{j,k\}\leq -2$, we apply Lemma \ref{lem:differential} and find
\[D_zG_{(0),\xi,j,k}(q,z)=kG_{(1),\xi,j,k}(q,z)=0,\]
so $G_{(0),j,k}$ is constant with respect to the variable $z$. Let us attempt to extract the $[q^{n}\lam_1^j\lam_2^kc_{(0)}(V)c_\xi(E)]$ coefficient of the Chern series from $G_{(0),\xi,j,k}$ using the Chern limit of Lemma \ref{lem:CV-limit}. This results in a limit $\e\->0$ of the term $\e^{n(N-r)}\e^{|\xi|}\e^{j+k}$, which does not make sense when the rank $r$ is sufficiently large, so we must have $G_{(0),\xi,j,k}=0$ for such $r$. To generalize this to arbitrary ranks, we apply \cite[Lemma 3.3]{GM} to ${\mathcal{N}}_S$, which says the coefficients of ${\mathcal{N}}_S$ are polynomials in $r$ when $r\geq 0$. Now we can write
\equa{\log {\mathcal{N}}_S(E,V;q,z)=\sum_{\substack{\mu,\xi \text{ partitions}\\j,k\geq-1}}G_{\mu,\xi,j,k}(q,z)\cdot\lam_1^j\lam_2^k\cdot c_\mu(V)c_\xi(E).
}
As noted in \cite[(31)]{OP}, the obstruction on $\hilb^n(S)$ at a fixed point $[Z_\mu]$ is $(K_S^{[n]})^\vee|_{Z_\mu}$. From (\ref{eqn:vb2d}), we see a copy of $K_S^\vee=t_1t_2$ is in $K_S^{[n]}|_{Z_\mu}$. By (\ref{eqn:Tvir-quot2d}), the obstruction bundle on $\quot_S(E,n)$ at any fixed point has at least one copy of $K_S^\vee$ as a direct summand. For a line bundle $L$, we have
\[\ch(\Lambda_{-1}L^\vee)=1-e^{-c_1(L)}=e(L)\cdot(1+\dots )\] 
where $\dots $ are some omitted terms in $H_\T^{>0}(\pt)$. Therefore $1/\ch(\Lambda_{-1}(T^\vir_Z)^\vee)$ has a factor of $e(K_S^\vee)=c_1(S)=\lambda_1+\lambda_2$ in its numerator. We also note that this factor does not appear in the denominator because if we pass to the subtorus $\{(t_1,t_2):t_1t_2=1\}$, the Zariski tangent space has no $\T$-fixed parts: by (\ref{eqn:tangent2}), the fixed part can only come from the direct summands with $i=j$, which correspond to the Hilbert scheme case; but by \cite[(1.6)]{Arbesfeld}, these summands have no fixed parts because $a(\square),l(\square)\geq 0$ for any box $\square$. Therefore we may extract this factor of $c_1(S)$ and obtain
\equa{\log {\mathcal{N}}_S(E,V;q,z)=\sum_{\substack{\mu,\xi \text{ partitions}\\j,k\geq-1}}H_{\mu,\xi,j,k}(q,z)\cdot\lam_1^j\lam_2^k\cdot c_\mu(V)c_\xi(E)c_1(S).
}
for some series $G_{\mu,\xi,j,k}\in\QQ[\![q,z]\!]$. Furthermore, since $j,k$ are now bounded below by $-1$, multiplying by $\lam_1\lam_2$ would give us a power series expansion in $\lam_1,\lam_2$, allowing us to use the symmetry in $\lam_1,\lam_2$ and write
\equanum{\label{eqn:log-nek2d-vir}\log {\mathcal{N}}_S(E,V;q,z)=\sum_{\mu,\nu, \xi\text{ partitions}}H_{\mu,\nu,\xi}(q,z)\cdot \int_Sc_\mu(V) c_\nu(S)  c_\xi(E) c_1(S).
}
for some series $H_{\mu,\nu,\xi}\in\QQ[\![q,z]\!]$.

Finally, taking Chern and Verlinde limits of $H_{\mu,\nu,\xi}$ as in Lemma \ref{lem:CV-limit}, then exponentiating gives us series $C_{\mu,\nu,\xi},B_{\mu,\nu,\xi}$ such that
\equa{
{\mathcal{C}}_S(E,V;q)=&\prod_{\mu,\nu,\xi\text{ partitions}}C_{\mu,\nu,\xi}(q)^{\int_{S}c_\mu(V) c_\nu(S)  c_\xi(E)c_1(S)},\\
{\mathcal{V}}_S(E,V;q)=&\prod_{\mu,\nu,\xi\text{ partitions}}B_{\mu,\nu,\xi}(q)^{\int_{S}c_\mu(V) c_\nu(S)  c_\xi(E)c_1(S)}.
}
and the fact that ${\mathcal{S}}_S(E,V;q)={\mathcal{C}}_S(E,-V;q)$ implies that there exists series $A_{\mu,\nu,\xi}$ such that
\[{\mathcal{S}}_S(E,V;q)=\prod_{\mu,\nu,\xi\text{ partitions}}A_{\mu,\nu,\xi}(q)^{\int_{S}c_\mu(V) c_\nu(S)  c_\xi(E)c_1(S)}.\]
To generalize this to arbitrary K-theory classes $\a=[V']-[V'']\in K_\T(S)$ for equivariant bundles $V',V''$ of rank $m,l$ respectively, we apply \cite[Lemma 3.3]{GM} once more; it states that the invariants for $\alpha$ are obtained by substituting
\[r\leadsto m-l,\indent p_n(v_1,v_2,\dots ,v_r)\leadsto p_n(v'_1,v'_2,\dots ,v'_m)-p_n(v''_1,v''_2,\dots ,v''_l),\]
where $p_n$ are the power-sum symmetric polynomials. Hence the above universal series expressions hold for all $\a\in K_\T(S)$.
\end{proof}

\begin{lemma}\label{lem:sym-poly}
Let $F(\vec x,\vec y)$ be a polynomial symmetric in $\vec x=(x_1,\dots, x_n)$ and symmetric in $\vec y=(y_1,\dots, y_m)$, then $F$ can be written as a polynomial expression of symmetric functions in $\{y_1,\dots,y_m\}$ and symmetric functions in $\{x_i+y_j\}_{i=1,\dots,n}^{j=1,\dots,m}$.
\end{lemma}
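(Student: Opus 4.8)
The plan is to fix a base field $\mathbf{k}$ of characteristic zero and let $R\subseteq \mathbf{k}[\vec x,\vec y]$ be the subring generated by the symmetric functions in $y_1,\dots,y_m$ together with the symmetric functions in the collection $\{x_i+y_j\}_{1\le i\le n,\,1\le j\le m}$; the goal is to prove $F\in R$. One may assume $m\geq 1$: if $n=0$ then $F$ is already a symmetric polynomial in $\vec y$ and there is nothing to do, while the case $m=0,\ n\geq 1$ is genuinely false (and never arises in the application).

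First I would reduce everything to the single assertion that $p_k(\vec x):=\sum_{i=1}^n x_i^k$ lies in $R$ for all $k\geq 1$. Indeed, viewing $F$ as a polynomial in $x_1,\dots,x_n$ over the coefficient ring $\mathbf{k}[\vec y]$, the fundamental theorem of symmetric functions writes $F=G(p_1(\vec x),\dots,p_n(\vec x))$ for a unique polynomial $G$ with coefficients in $\mathbf{k}[\vec y]$; applying an arbitrary permutation of the $y$-variables and using the uniqueness of $G$ forces those coefficients to be symmetric in $\vec y$, hence to lie in $R$. Then $F\in R$ as soon as each $p_k(\vec x)\in R$.

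To establish the latter I would argue by induction on $k$, using the power sums of $\{x_i+y_j\}$. Set $P_k:=\sum_{i=1}^n\sum_{j=1}^m (x_i+y_j)^k$, which lies in $R$ by construction. The binomial theorem gives
\[
P_k=\sum_{l=0}^k\binom{k}{l}\,p_l(\vec x)\,p_{k-l}(\vec y),\qquad\text{where } p_0(\vec x)=n,\ p_0(\vec y)=m,
\]
so, isolating the top term, $m\,p_k(\vec x)=P_k-\sum_{l=0}^{k-1}\binom{k}{l}p_l(\vec x)\,p_{k-l}(\vec y)$. Here $p_l(\vec x)\in R$ for $l<k$ by the inductive hypothesis (with $p_0(\vec x)$ a scalar), and $p_{k-l}(\vec y)\in R$ because $k-l\geq 1$; hence the right-hand side lies in $R$, and dividing by $m\neq 0$ gives $p_k(\vec x)\in R$, completing the induction.

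The one point to be careful about is the reduction step — that the coefficients of $G$ really are symmetric in $\vec y$ — but this is forced by the uniqueness clause of the fundamental theorem, since $p_1(\vec x),\dots,p_n(\vec x)$ are algebraically independent over $\mathbf{k}[\vec y]$. Everything else is elementary binomial bookkeeping. If desired, the whole argument runs verbatim with elementary symmetric functions replacing power sums, since over a characteristic-zero field they generate the same subring.
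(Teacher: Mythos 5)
Your proof is correct, and it is worth keeping in mind as a cleaner alternative to the argument in the paper. Both proofs share the same outline — reduce to a generating set of the $\vec x$-symmetric functions and then express those generators via symmetric functions of the shifted collection $\{x_i+y_j\}$ — but they diverge at the key computational step. The paper expands $F$ in the basis $e_\mu(\vec y)$, reduces by an induction on degree and by multiplicativity to $F=e_k(\vec x)$, and then writes $e_k(\{x_i+y_j\})=K\cdot e_k(\vec x)+G(\vec x,\vec y)$ with every monomial of $G$ containing some $y_j$; as stated this identity is slightly imprecise, since the pure-$x$ part of $e_k(\{x_i+y_j\})$ is a degree-$k$ symmetric polynomial involving lower $e_j(\vec x)$'s as well (already for $n=m=k=2$ it equals $e_1(\vec x)^2+2e_2(\vec x)$), so an extra induction on $k$ is silently needed to absorb those terms. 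Your use of power sums sidesteps this entirely: the binomial identity $P_k=\sum_{l=0}^{k}\binom{k}{l}p_l(\vec x)p_{k-l}(\vec y)$ is exact, the pure-$\vec x$ contribution is precisely $m\,p_k(\vec x)$, and the recursion is triangular, at the modest cost of assuming characteristic zero (harmless here, as the paper works over $\QQ$ and $\CC$). Your reduction step — uniqueness in the fundamental theorem of symmetric functions forcing the coefficients of $G$ to be $\vec y$-symmetric — is likewise a tidier replacement for the paper's induction on the degree of $F$. Finally, your observation that the statement fails for $m=0$, $n\geq 1$ is accurate; the lemma as printed does not exclude this case, though it never occurs in the application.
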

\begin{proof}
Expand $F$ in the elementary symmetric polynomial basis with respect to the variables $y_1,\dots, y_m$: 
\[F(\vec x,\vec y)=\sum_{\mu\text{ partition}}f_\mu(\vec x)e_\mu(\vec y).\]
An induction on the degree of $F$ shows that $f_\mu(\vec x)$ can be written in the desired form for $\mu\neq (0)$. Thus we may assume $F$ is independent of $y$. Furthermore, since if the statement holds for $F$ and $G$, then it holds for $F+G$ and $F\cdot G$, we can assume $F=e_k(\vec x)$. We have
\[e_k(\{x_i+y_j\}_{i=1,\dots,n}^{j=1,\dots,m})=K\cdot e_k(\vec x)+G(\vec x,\vec y)\]
for some constant $K\in\ZZ$, and $G$ is a polynomial symmetric in $\vec x$ and in $\vec y$, and every monomial term in $G$ contains some $y_j$. Apply the same argument to $G$ and we conclude that $G$ satisfies the claim, therefore so does $F$ by the above equation.
\end{proof}

By the non-equivariant Segre-Verlinde correspondence \cite[Theorem 1.7]{Bojko2} and the relations between the non-equivariant series and equivariant series illustrated in Section \ref{sec:proj-reduction}, we have a weak Segre-Verlinde correspondence as the following corollary. The same argument of the following proof also gives us a weak symmetry in the form of Corollary \ref{cor:sv-sym2d-intro}.

\begin{corollary}\label{cor:SV2d-virtual}
In the setting of Theorem \ref{thm:SV-univ-sieres}, we have the following correspondence
\equa{A_{\mu,\nu,\xi}(q)&=B_{\mu,\nu,\xi}((-1)^Nq).
}
whenever one of $\mu,\nu,\xi$ is $(1)$ and the other two are $(0)$. In particular, the degree 0 part satisfies
\[{\mathcal{S}}_{S,0}(E,\alpha;q)-{\mathcal{V}}_{S,0}(E,\alpha;(-1)^Nq)=\sum_{n=2}^\infty \frac{f_n}{(\lambda_1\lambda_2)^{n-2}}\cdot\left(\int_S c_1(S)\right)^2\cdot q^n\]
for some terms $f_n\in H_\T^{2n-2}(\pt)$ dependent on $\a$ through its rank and Chern classes.
\end{corollary}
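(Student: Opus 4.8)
The plan is to deduce the three series identities from the non-equivariant Segre--Verlinde correspondence together with the reduction recorded in (\ref{eqn:proj-equi}), and then to extract the degree-zero statement from the product formula of Theorem~\ref{thm:SV-univ-sieres} by taking total-degree-zero parts.

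First I would prove the identities $A_{\mu,\nu,\xi}(q)=B_{\mu,\nu,\xi}((-1)^Nq)$ for the three distinguished triples. By \cite[Theorem 1.6]{Bojko2}, for a smooth projective surface $S'$ and a torsion-free sheaf $E'$ of rank $N$ one has ${\mathcal{S}}_{S'}(E',\a';q)={\mathcal{V}}_{S'}(E',\a';(-1)^Nq)$, and inserting the universal expressions of \cite[Theorem 1.2]{Bojko2} rewrites this as
\[
A_1^\vir(q)^{c_1(S')c_1(\a')}A_2^\vir(q)^{c_1(S')^2}A_3^\vir(q)^{c_1(S')c_1(E')}=B_1^\vir((-1)^Nq)^{c_1(S')c_1(\a')}B_2^\vir((-1)^Nq)^{c_1(S')^2}B_3^\vir((-1)^Nq)^{c_1(S')c_1(E')}.
\]
Evaluating on $S'=\PP^2$ with $c_1(\a')$ and $c_1(E')$ ranging over $\{0,H\}$ ($H$ the hyperplane class), so that the exponent triples $(c_1(S')c_1(\a'),c_1(S')^2,c_1(S')c_1(E'))$ run through $(0,9,0),(0,9,3),(3,9,0)$, and taking successive unique roots inside $1+q\QQ[\![q]\!]$, one isolates $A_i^\vir(q)=B_i^\vir((-1)^Nq)$ for $i=1,2,3$. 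Combining with (\ref{eqn:proj-equi}), which identifies $A_{(1),(0),(0)},A_{(0),(1),(0)},A_{(0),(0),(1)}$ with $A_1^\vir,A_2^\vir,A_3^\vir$ and likewise for $B$, yields the claim.

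For the degree-zero statement I would repackage Theorem~\ref{thm:SV-univ-sieres}: on $\CC^2$ the equivariant integral $\int_S\gamma=\gamma/(\lambda_1\lambda_2)$ is $H^*_\T(\pt)$-linear, so each exponent factors as $\int_S c_\mu(\a)c_\nu(S)c_\xi(E)c_1(S)=u\cdot c_\mu(\a)c_\nu(S)c_\xi(E)$ with $u:=\int_S c_1(S)=(\lambda_1+\lambda_2)/(\lambda_1\lambda_2)$ homogeneous of degree $-1$. Hence $\log{\mathcal{S}}_S(E,\a;q)=uP^{\mathcal{S}}$ and $\log{\mathcal{V}}_S(E,\a;(-1)^Nq)=uP^{\mathcal{V}}$, where $P^{\mathcal{S}}=\sum_{\mu,\nu,\xi}\log A_{\mu,\nu,\xi}(q)\,c_\mu(\a)c_\nu(S)c_\xi(E)$ and $P^{\mathcal{V}}$ is the $\log B_{\mu,\nu,\xi}((-1)^Nq)$-analogue; its homogeneous degree-$d$ part $P^{\mathcal{S}}_d$ is a polynomial with coefficients in $q\QQ[\![q]\!]$ (finitely many partitions of each size, $A_{\mu,\nu,\xi}(0)=1$), with $P^{\mathcal{S}}_0=\log A_{(0),(0),(0)}(q)$ and $P^{\mathcal{S}}_1=\log A_{(1),(0),(0)}(q)\,c_1(\a)+\log A_{(0),(1),(0)}(q)\,c_1(S)+\log A_{(0),(0),(1)}(q)\,c_1(E)$. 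Applying Definition~\ref{defn:degree} to ${\mathcal{S}}_S=\exp(uP^{\mathcal{S}})$ (rescale $\vec\lambda,\vec m,\vec w$ by $b$, so $u\mapsto b^{-1}u$, $P^{\mathcal{S}}_d\mapsto b^dP^{\mathcal{S}}_d$, and read off the $b^0$-term) gives ${\mathcal{S}}_{S,0}(E,\a;q)=\sum_{k\ge 0}\frac{u^k}{k!}Q^{\mathcal{S}}_k$, where $Q^{\mathcal{S}}_k$ is the homogeneous degree-$k$ part of $(P^{\mathcal{S}})^k$; this is $q$-adically convergent since $Q^{\mathcal{S}}_k=O(q^k)$, its $k=0$ term is $1$, its $k=1$ term is $uP^{\mathcal{S}}_1$, and similarly for ${\mathcal{V}}_{S,0}(E,\a;(-1)^Nq)$. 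Subtracting, the $k=0$ terms cancel, the $k=1$ terms cancel because the first assertion forces $P^{\mathcal{S}}_1=P^{\mathcal{V}}_1$, and every surviving term ($k\ge 2$) carries $u^k=u^2u^{k-2}$, so $u^2=\big(\int_S c_1(S)\big)^2$ factors out. Since $Q^{\mathcal{S}}_k,Q^{\mathcal{V}}_k=O(q^k)$, only $k\le n$ reach the $q^n$-coefficient, whose pole order in $\lambda_1\lambda_2$ is therefore at most $n-2$; clearing that pole defines the polynomial $f_n$, a degree count puts $f_n\in H_\T^{2n-2}(\pt)$, the sum starts at $n=2$, and $f_n$ depends on $\a$ only through $\rk\a$ and the $c_\mu(\a)$ because $P^{\mathcal{S}},P^{\mathcal{V}}$ do. (The symmetry in Corollary~\ref{cor:sv-sym2d-intro} follows by the identical argument, replacing the correspondence by the non-equivariant Segre symmetry \cite[Theorem 1.7]{Bojko2}.)

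The hard part will be the separation of variables in the first step: passing from the single equality of composite series to the three elementwise equalities $A_i^\vir(q)=B_i^\vir((-1)^Nq)$. This relies on the genuine surface-independence of $A_i^\vir,B_i^\vir$ (the content of \cite[Theorem 1.2]{Bojko2} and \cite[Theorem 1.6]{Bojko2}) and on $\PP^2$ already supplying enough independent exponent triples; otherwise one must bring in further toric surfaces. Everything afterwards — the factorization of the exponents through $c_1(S)$, the commutation of the degree-zero projection with $\exp$, and the truncation $k\le n$ controlling the pole order — is routine bookkeeping.
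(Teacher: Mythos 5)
Your proposal is correct and follows essentially the same route as the paper: the three elementwise identities come from the reduction to the projective case (Section \ref{sec:proj-reduction}) combined with the non-equivariant Segre--Verlinde correspondence of \cite{Bojko2}, and the degree-zero statement comes from exponentiating the logarithmic universal-series expression, cancelling the $k=0,1$ terms via the first assertion, and observing that every surviving term carries $\left(\int_S c_1(S)\right)^2$ with pole order in $\lambda_1\lambda_2$ bounded by the number $k\le n$ of factors. The only difference is cosmetic: you spell out the separation of the composite projective identity into the three equalities $A_i^\vir(q)=B_i^\vir((-1)^Nq)$ by evaluating on $\PP^2$, a step the paper leaves implicit in its citation of \cite{Bojko2}.
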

\begin{proof}
By the last paragraph of Section \ref{sec:proj-reduction}, the universal series in Theorem \ref{thm:SV-univ-sieres}, when passed to a toric projective surface, must give the Segre-Verlinde correspondence of \cite[Theorem 1.7]{Bojko2} in degree 0. Since the degree 0 terms occur only when one of $\mu,\nu,\xi$ is $(1)$ and the other two are $(0)$, we have
\equa{A_{\mu,\nu,\xi}(q)&=B_{\mu,\nu,\xi}((-1)^Nq)}
in those cases.

Note that when we take $\exp$ of (\ref{eqn:log-nek2d-vir}), the total degree 0 part might come from the product of a negative-degree term and a positive-degree term, but since each term in the integrand is accompanied by a copy $c_1(S)$, we know this difference must be a multiple of $c_1(S)^2$. We also see the $[q^n]$ coefficients are sums of products of at most $n$ such integrals, giving a denominator of $\lambda_1^n\lambda_2^n$, so we are done. 

For illustration, we shall extract this difference, and express it explicitly. This is just a standard computation. For a partition $\mu=(\mu_1,\mu_2,\dots ,\mu_L)$ of size $n$ with length $L$, and a sequence of positive integers $\kappa=(k_1,k_2,\dots ,k_L)$, write $\kappa|\mu$ if each $k_i|\mu_i$. We also associate a set of tuples of partitions to $\kappa$ by
\[
M_{\kappa} := \left\{ \left((\mu^{(i)})_{i=1}^{L},(\nu^{(i)})_{i=1}^{L},(\xi^{(i)})_{i=1}^{L}\right)\ \middle\vert \begin{array}{l}
    \text{$\mu^{(i)},\nu^{(i)},\xi^{(i)}$ are partitions for each $i$, s.t.}\\
    \indent \sum_{i=1}^Lk_i(|\mu^{(i)}|+|\nu^{(i)}|+|\xi^{(i)}|-1)=0\text{ and}\\
    \indent \text{$\mu_i=\nu_i=\xi_i=0$ for some $i$.}
  \end{array}\right\}.
\]
For each $n>0$, we would like to find the degree 0 part of the $[q^n]$ coefficient of $\exp$ of (\ref{eqn:log-nek2d-vir}). By expanding the exponential using definition, we observe that these terms come from products of integrals labeled by $\mu^{(i)},\nu^{(i)},\xi^{(i)}$ in $M_\kappa$ for some tuples $\kappa|\pi$ for some partition $\pi$ of size $n$. A more precise description is given by the following equation. Suppose $\log(A_{\mu,\nu,\xi}(q))=\sum_{i=1}^\infty a_{\mu,\nu,\xi,i}q^i$ and $\log(B_{\mu,\nu,\xi}(q))=\sum_{i=1}^\infty b_{\mu,\nu,\xi,i}q^i$, we have
\equa{&[q^n]({\mathcal{S}}_{S,0}(E,\alpha;q)-{\mathcal{V}}_{S,0}(E,\alpha;(-1)^Nq)\\
=&\sum_{\substack{|\pi|=n\\\ell(\pi)>1}}\sum_{\kappa|\pi}\prod_{(\vec\mu,\vec\nu,\vec\xi)\in M_\kappa}\prod_{i=1}^{\ell(\pi)}\frac{1}{k_i!}\left(a_{\mu^{(i)},\nu^{(i)},\xi^{(i)},\pi_i/k_i}\int_Sc_{\mu^{(i)}}(\a)c_{\nu^{(i)}}(S)c_{\xi^{(i)}}(E)c_1(S)\right)^{k_i}\\
&\indent-(-1)^{Nn}\sum_{\substack{|\pi|=n\\\ell(\pi)>1}}\sum_{\kappa|\pi}\prod_{(\vec\mu,\vec\nu,\vec\xi)\in M_\kappa}\prod_{i=1}^{\ell(\pi)}\frac{1}{k_i!}\left(b_{\mu^{(i)},\nu^{(i)},\xi^{(i)},\pi_i/k_i}\int_Sc_{\mu^{(i)}}(\a)c_{\nu^{(i)}}(S)c_{\xi^{(i)}}(E)c_1(S)\right)^{k_i}\\
=&\sum_{\substack{|\pi|=n\\\ell(\pi)>1}}\sum_{\kappa|\pi}\prod_{(\vec\mu,\vec\nu,\vec\xi)\in M_\kappa}\left(\prod_{i=1}^{\ell(\pi)}a_{\mu^{(i)},\nu^{(i)},\xi^{(i)},\pi_i/k_i}^{k_i|M_k|} -(-1)^{Nn}\prod_{i=1}^{\ell(\pi)}b_{\mu^{(i)},\nu^{(i)},\xi^{(i)},\pi_i/k_i}^{k_i|M_k|}\right)\\
&\cdot \prod_{i=1}^{\ell(\pi)}\frac{c_1(S)^{k_i}}{k_i!\lambda_1^{k_i}\lambda_2^{k_i}}\left(c_{\mu^{(i)}}(\a)c_{\nu^{(i)}}(S)c_{\xi^{(i)}}(E)\right)^{k_i}.
}
In the summation we have $\ell(\pi)>1$ because $M_\kappa$ is empty for any $\kappa|\pi$ if $\pi=(n)$ by definition. Therefore $2\leq \sum_{i=1}^{\ell(\pi)}k_i\leq n$. By multiplying some appropriate power of $\lambda_1\lambda_2$ to the denominator and numerator of the right-hand side, we can express it as a rational function in $\lambda_1,\lambda_2$, with denominator $\lambda_1^n\lambda_2^n$ and numerator a multiple of $c_1(S)^2$. Setting this multiple as $f_n\in\CC[\lambda_1,\lambda_2]$ gives the desired expression.

\end{proof}

\begin{example}
The universal series for ${\mathcal{N}}_S$ are known explicitly in the compact case \cite[Theorem 1.2]{Bojko2}. For a smooth projective surface $S$ and $\alpha$ of rank $r$, we apply \cite[Theorem~17, (16),(17)]{KHilb} for $f(x)=1-ze^x,g(x)=\frac{x}{1-e^{-x}}$ and get
\equa{{\mathcal{N}}_S(\O_S,\a;q,z)=\left[\left(\frac{1-zQ}{1-z}\right)^{r}\left(\frac{-rzQ(1-Q)}{1-zQ}+1\right)\right]^{c_1(S)^2}\left[\frac{1-zQ}{1-z}\right]^{c_1(S)\cdot c_1(\alpha)}}
via the substitution 
\[q=\frac{1-Q^{-1}}{(1-zQ)^r}.\]
As mentioned in the introduction, when $N=1$, we shall set $y_1=1$, and omit the subscript $\xi$ for $H_{\mu,\nu,\xi}$. The formula above allows us to compute $H_{(1),(0)}(q,z)$ and $H_{(0),(1)}(q,z)$.  For a smooth projective surface $S$ and $\alpha$ of rank $0$, we have
\[{\mathcal{N}}_S(\O_S,\a,q,z)=\left(\frac{1 - q - z}{(1 - q) (1 - z)}\right)^{ c_1(S)\cdot c_1(\alpha)}.\]
The exponent is interpreted as intersection product, which in the toric case corresponds to the equivariant push-forward
\[\int_Sc_1(\alpha)c_1(S).\]
Therefore for rank 0, we have the following series from expansion (\ref{eqn:log-nek2d-vir})
\[H_{(1),(0)}(q,z)=\log\frac{1 - q - z}{(1 - q) (1 - z)},\indent H_{(0),(1)}(q,z)=0,\]
Take the Chern limit of Lemma \ref{thm:SV-univ-sieres} by substituting $q\leadsto -q\e$, $z\leadsto (1+\e)^{-1}$ and get
\[C_{(1),(0)}(q)={1+q}\]
Replacing $\a$ by $-\a$ in the Chern series to get the Segre series for $\a$, we see
\[A_{(1),(0)}(q)=C_{(1),(0)}(q)^{-1}=\frac{1}{1+q}\]
On the other hand, the Verlinde limit yields
\[B_{(1),(0)}(q)=\frac{1}{1-q}\]
The Segre-Verlinde correspondence of Corollary \ref{cor:SV2d-virtual} is indeed satisfied.
\end{example}

\begin{example}
Let $S=\CC^2$, $n=N=2$, $E=\O_S\<y_1\>\oplus\O_S\<y_2\>$ and $L=\O_S\<v\>$. The $\T_1$-fixed locus of $\quot_S(E,n)$ is the disjoint union of 
\[\hilb^0(S)\times \hilb^2(S),\indent \hilb^1(S)\times \hilb^1(S), \indent\hilb^2(S)\times\hilb^0(S)\]
Denote $Z_\mu$ the point in $\hilb^i(S)^{\T_0}$ corresponding to a partition $\mu$, then the $\T$-fixed points of $\quot_S(\CC^2,2)$ are
\[(Z_\phi,Z_{(2)}),(Z_\phi,Z_{(1,1)}), (Z_{(1)},Z_{(1)}), (Z_{(2)},Z_\phi),(Z_{(1,1)},Z_\phi)\]
Therefore by (\ref{eqn:tangent2}), the virtual tangent bundles at these five points are respectively 
\equa{&(t_1^2 + t_2-t_1^2t_2  + y_1^{-1}y_2)(1 + t_1^{-1})\\
&(t_2^2 + t_1-t_1t_2^2  + y_1^{-1}y_2)(1 + t_2^{-1})\\
&(t_1+t_2-t_1t_2)(1 + y_1^{-1}y_2)(1+y_1y_2^{-1})\\
&(t_1^2 + t_2-t_1^2t_2  + y_1y_2^{-1})(1 + t_1^{-1})\\
&(t_2^2 + t_1-t_1t_2^2  + y_1y_2^{-1})(1 + t_2^{-1})\\
}
The equivariant Chern roots of $\alpha^{[n]}$ at these points are respectively
\[\{m_2 + w, m_2 - \lambda_1 + w\},\{m_2 + w, m_2 - \lambda_2 + w\},\{m_1+w,m_2 + w\}, \{m_1+w, m_1-\lambda_1 + w\},\{m_1+w, m_1-\lambda_2 + w\}\\
\]
The contribution to the Segre numbers at each of these fixed points are
\[\frac{(2\lam_1 + \lam_1)(\lam_1 + \lam_2)}{2(m_1 - m_2 + \lam_1)(m_1 - m_2)(m_2 - \lam_1 + w_1 + 1)(m_2 + w_1 + 1)(\lam_2 - \lam_1)\lam_1^2\lam_2},\]
\[\frac{(\lam_1 + 2\lam_2)(\lam_1 + \lam_2)}{2(m_1 - m_2 + \lam_2)(m_1 - m_2)(m_2 - \lam_2 + w_1 + 1)(m_2 + w_1 + 1)(\lam_1 - \lam_2)\lam_1\lam_2^2},\]
\[\frac{( \lam_1 + \lam_2+m_1 - m_2)(\lam_1 + \lam_2-m_1 + m_2 )(\lam_1 + \lam_2)^2}{(m_1 - m_2 + \lam_1)(m_1 - m_2 - \lam_1)(m_1 - m_2 + \lam_2)(m_1 - m_2 - \lam_2)(m_1 + w1 + 1)(m_2 + w1 + 1)\lam_1^2\lam_2^2},\]
\[\frac{(2\lam_1 + \lam_1)(\lam_1 + \lam_2)}{2(m_1 - m_2 - \lam_1)(m_1 - m_2)(m_1 - \lam_1 + w_1 + 1)(m_1 + w_1 + 1)(\lam_2 - \lam_1)\lam_1^2\lam_2},\]
\[\frac{(\lam_1 + 2\lam_2)(\lam_1 + \lam_2)}{2(m_1 - m_2 - \lam_2)(m_1 - m_2)(m_1 - \lam_2 + w_1 + 1)(m_1 + w_1 + 1)(\lam_1 - \lam_2)\lam_1\lam_2^2}.\]

Summing them up, we have
\[\scriptstyle[q^2]S^2(\a;q)=\frac{{\begin{pmatrix*}[l]\scriptstyle m_{1} m_{2} \lambda_1 - m_{1} \lambda_1^{2} - m_{2} \lambda_1^{2} + \lambda_1^{3} + m_{1} m_{2} \lambda_2 - 3  m_{1} \lambda_1 \lambda_2 - 3  m_{2} \lambda_1 \lambda_2 + 3  \lambda_1^{2} \lambda_2 - m_{1} \lambda_2^{2} - m_{2} \lambda_2^{2}\\\scriptstyle + 3  \lambda_1 \lambda_2^{2} + \lambda_2^{3} + m_{1} \lambda_1 w + m_{2} \lambda_1 w - 2  \lambda_1^{2} w + m_{1} \lambda_2 w + m_{2} \lambda_2 w - 6  \lambda_1 \lambda_2 w - 2  \lambda_2^{2} w\\\scriptstyle + \lambda_1 w^{2} + \lambda_2 w^{2} + m_{1} \lambda_1 + m_{2} \lambda_1 - 2  \lambda_1^{2} + m_{1} \lambda_2 + m_{2} \lambda_2 - 6  \lambda_1 \lambda_2 - 2  \lambda_2^{2} + 2  \lambda_1 w + 2  \lambda_2 w + \lambda_1 + \lambda_2\end{pmatrix*}} {\left(\lambda_1 + \lambda_2\right)}}{2  {\left(m_{1} - \lambda_1 + w + 1\right)} {\left(m_{1} - \lambda_2 + w + 1\right)} {\left(m_{1} + w + 1\right)} {\left(m_{2} - \lambda_1 + w + 1\right)} {\left(m_{2} - \lambda_2 + w + 1\right)} {\left(m_{2} + w + 1\right)} \lambda_1^{2} \lambda_2^{2}}.\]
A similar computation yields another complicated expression for the Verlinde number. We are interested in the total degree 0 part of their difference in the variables $\vec \lam,\vec m,\vec w$, this computes to
\equa{[q^2](S_{S,0}(E,L;q)-V_{S,0}(E,L;q))&=-\frac{{\left(3  m_{1} m_{2} - \lambda_1\lambda_2 + 3  m_{1} w + 3  m_{2} w + 3  w^{2}\right)} {\left(\lambda_1 + \lambda_2\right)}^{2}}{3  \lambda_1^{2} \lambda_2^{2}}\\
&=\left(\frac13c_2(S)-c_2(E)-c_1(E)c_1(V)-c_1(V)^2\right)\left(\int_Sc_1(S)\right)^2.
}
Note that even though the expressions for Segre and Verlinde numbers are complicated, their difference in degree 0 simplifies tremendously and satisfies Corollary \ref{cor:SV2d-virtual}.
\end{example}

\subsection{Reduced virtual classes and invariants}
As mentioned previously, the obstruction for $\quot_S(E,n)$ at $Z$ contains at least one copy of $K_S^\vee$. For $K$-trivial surfaces, this causes the Euler class of $T^\vir$ to vanish. Therefore the virtual Verlinde and Segre numbers both vanish. One can instead study the ``reduced" versions of these invariants. By \cite[Proposition 9]{lim}, when $S$ is a $K$-trivial surface, $n>0$, and $E$ a torsion-free sheaf, there is a \emph{reduced obstruction theory} that is perfect in the sense of Definition \ref{defn:obs}. The reduced (virtual) tangent bundle in this case is given by adding a trivial summand to the usual virtual tangent bundle:
\[T^\redu=T^\vir+\O_{\quot_S(E,n)}.\]

In this section, we study the equivariant analogue where $S=\CC^2$ with the natural action of the the 1-dimensional torus \equa{\T_0=\CC^*=\{(t_1,t_2):t_1t_2=1\}.}
Write
\[H^*_{\T_0}(\pt)=\CC[\lambda_1,\lambda_2]/(\lambda_1+\lambda_2)=\CC[\lambda],\]
\[K_{\T_0}(\pt)=\ZZ[t_1^{\pm1},t_2^{\pm1}]/(t_1t_2-1)=\ZZ[t^{\pm1}].\]

Using the argument of \cite[Lemma~3.1]{CK1}, we see that the $\T=(\T_0\times\T_1\times\T_2)$-fixed locus of $\quot_S(E,n)$ stays unchanged, and the Zariski tangent space at each of the fixed points has no fixed parts by (\ref{eqn:Tvir-quot2d}). The \emph{equivariant reduced Segre and Verlinde series} $\mathcal{S}^\redu_S$ and $\mathcal{V}^\redu_S$ are defined the same as the virtual ones by replacing $T^\vir$ with $T^\redu$. Here we omit the subscript since we are only interested in the $S=\CC^2$ case.
\equa{{\mathcal{S}}^{\redu}(E,\alpha;q):=&\sum_{n>0}^\infty q^n\sum_{Z\in\quot_S(E,n)^\T}\frac{c(\alpha^{[n]}|_{Z})}{e(T_{Z}^\redu)},\\
{\mathcal{V}}^{\redu}(E,\alpha;q):=&\sum_{n>0}^\infty q^n\sum_{Z\in\quot_S(E,n)^\T}\frac{\ch(\det(\alpha^{[n]}|_{Z}))}{\ch(\Lambda_{-1}(T_{Z}^\redu)^\vee)}.}
Note that we do not include the $n=0$ term because condition 2 of \cite[Proposition 9]{lim} is only satisfied when $n>0$.

The same strategy from the previous section can be applied to study these invariants. For $E=\oplus_{i=1}^N\O_S\<y_i\>$ and $V=\oplus_{i=1}^r\O_S\<v_i\>$, define
\equa{{\mathcal{N}}^{\redu}(E,V;q,z):=&\sum_{\mu\neq(0)} q^{|\mu|}\prod_{\square\in\mu}
\frac{\prod_{j=1}^N\prod_{i=1}^r(1-t^{-c(\square)+r(\square)}v_iy_jz)}{\ch(\Lambda_{-1}(T^\redu|_Z)^\vee)}.
}
Again note that the $[q^0]$ coefficient is 0. We can think of the reduced obstruction as removing a copy of $K_S^\vee$ from the usual obstruction in $\ZZ[t_1^{\pm1},t_2^{\pm1}]$, then passing to the quotient ring $\ZZ[t_1^{\pm1},t_2^{\pm1}]/(t_1t_2-1)$. This gives us the following corollary.
\begin{corollary}
For $n>0$, the $[q^n]$ coefficient of $\mathcal{N}^\redu$ can be obtained from the non-reduced version by taking the following limit:
\equa{[q^n]{\mathcal{N}}^{\redu}(E,V;q,z)&=[q^n]\frac{{\mathcal{N}}_S(E,V;q,z)}{1-e^{-c_1(K_S^\vee)}}\bigg\vert_{-\lambda_2\->\lam_1=\lambda}\\
&=[q^n]\lim_{-\lam_2\->\lam_1=\lam}\frac{{\mathcal{N}}_S(E,V;q,z)}{\lambda_1+\lambda_2}.
}
\end{corollary}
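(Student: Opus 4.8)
The plan is to prove both displayed identities summand by summand over the $\T$-fixed locus. By the argument of \cite[Lemma~3.1]{CK1} recalled above, that locus is the \emph{same} finite set of reduced points $Z_\mu$, indexed by $N$-colored partitions $\mu$ with $|\mu|=n$, for the $\T_0=(\CC^*)^2$-action used to define ${\mathcal{N}}_S$ and for the one-dimensional action $\T_0=\{t_1t_2=1\}$ used to define ${\mathcal{N}}^\redu$. Virtual equivariant localization writes, for $n>0$, $[q^n]{\mathcal{N}}_S(E,V;q,z)=\sum_{|\mu|=n}\mathsf N_\mu(z)/\ch\Lambda_{-1}((T^\vir|_{Z_\mu})^\vee)$ and $[q^n]{\mathcal{N}}^\redu(E,V;q,z)=\sum_{|\mu|=n}\mathsf N^\redu_\mu(z)/\ch\Lambda_{-1}((T^\redu|_{Z_\mu})^\vee)$, where $\mathsf N_\mu(z)=\prod_{j}\prod_{\square\in\mu^{(j)}}\prod_i(1-t_1^{-c(\square)}t_2^{-r(\square)}v_iy_jz)$ and $\mathsf N^\redu_\mu(z)$ is its image under $t_1=t$, $t_2=t^{-1}$.

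The first observation is that, since $n>0$ forces every $Z_\mu$ to contain the corner box $(0,0)$, the character $T^\vir|_{Z_\mu}\in K_\T(\pt)$ contains a distinguished summand $-K_S^\vee=-t_1t_2$ coming from that box of the obstruction (the statement already used in the proof of Theorem~\ref{thm:SV-univ-sieres}, following \cite[(31)]{OP}). Set $\widehat T_\mu:=T^\vir|_{Z_\mu}+K_S^\vee$. Since $\Lambda_{-1}$ is a homomorphism from $(K_\T(\pt),+)$ and $\ch\Lambda_{-1}((-L)^\vee)=(1-e^{-c_1(L)})^{-1}$ for a line bundle $L$, we get $\ch\Lambda_{-1}((T^\vir|_{Z_\mu})^\vee)=\ch\Lambda_{-1}(\widehat T_\mu^\vee)/(1-e^{-c_1(K_S^\vee)})$, so every localization summand of ${\mathcal{N}}_S$ carries exactly the factor $1-e^{-c_1(K_S^\vee)}=1-e^{-(\lambda_1+\lambda_2)}$ in its numerator, whence $[q^n]\tfrac{{\mathcal{N}}_S(E,V;q,z)}{1-e^{-c_1(K_S^\vee)}}=\sum_{|\mu|=n}\mathsf N_\mu(z)/\ch\Lambda_{-1}(\widehat T_\mu^\vee)$. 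Next I would specialize to $t_1t_2=1$: this kills $K_S^\vee$, so $\widehat T_\mu|_{t_1t_2=1}=T^\vir|_{Z_\mu}|_{t_1t_2=1}+\O=T^\redu|_{Z_\mu}$ and $\mathsf N_\mu(z)|_{t_1t_2=1}=\mathsf N^\redu_\mu(z)$, which identifies the specialized sum with $[q^n]{\mathcal{N}}^\redu$ and yields the first equality. The second equality then follows from $\tfrac{1}{1-e^{-(\lambda_1+\lambda_2)}}=\tfrac1{\lambda_1+\lambda_2}\cdot\tfrac{\lambda_1+\lambda_2}{1-e^{-(\lambda_1+\lambda_2)}}$, whose last factor is a unit near $\lambda_1+\lambda_2=0$ with value $1$ on that locus, together with the fact (established by the previous sentence) that $[q^n]{\mathcal{N}}_S$ vanishes along $\lambda_1+\lambda_2=0$.

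The step that needs real care, and which I expect to be the main obstacle, is to justify that each specialized summand $\mathsf N_\mu(z)/\ch\Lambda_{-1}(\widehat T_\mu^\vee)$ is actually regular at $\lambda_1+\lambda_2=0$. Here one uses that the Zariski (deformation) part of $T^\vir|_{Z_\mu}$ has no $\T_0$-fixed part (already noted in the text, via \cite[(1.6)]{Arbesfeld} and \cite{CK1}), so that \emph{every} weight of $T^\vir|_{Z_\mu}$ restricting trivially to $\{t_1t_2=1\}$ --- i.e.\ every $(t_1t_2)^{j}$ --- occurs only inside the obstruction, and hence enters $\widehat T_\mu$ with non-positive multiplicity (the $j=1$ multiplicity being $\le 0$ precisely because the obstruction contains at least one $K_S^\vee$). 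Consequently $\ch\Lambda_{-1}(\widehat T_\mu^\vee)$ is either a nonzero finite element at $\lambda_1+\lambda_2=0$ --- the ``movable'' case, in which the summand genuinely reproduces the reduced localization term $\mathsf N^\redu_\mu(z)/\ch\Lambda_{-1}((T^\redu|_{Z_\mu})^\vee)$ --- or else has a pole there, in which case the summand specializes to $0$; and in exactly that latter case $T^\redu|_{Z_\mu}=\widehat T_\mu|_{t_1t_2=1}$ acquires a nonzero multiple of the trivial representation with negative sign, forcing $1/\ch\Lambda_{-1}((T^\redu|_{Z_\mu})^\vee)=0$, so the $\mu$-term of ${\mathcal{N}}^\redu$ vanishes as well. (Such degenerate fixed points occur, e.g., for colored partitions containing a $2\times2$ block.) Thus the two sides match summand by summand, completing the proof; the bookkeeping is entirely parallel to the ``at least one copy of $K_S^\vee$'' analysis in the proof of Theorem~\ref{thm:SV-univ-sieres}.
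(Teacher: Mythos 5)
Your proof is correct and follows essentially the same route as the paper, which justifies the corollary in a single sentence by viewing the reduced obstruction as the usual one with a copy of $K_S^\vee$ removed, followed by passage to the quotient ring $\ZZ[t_1^{\pm1},t_2^{\pm1}]/(t_1t_2-1)$. Your additional care about regularity of each summand at $\lambda_1+\lambda_2=0$ (using that the Zariski tangent space has no fixed parts for the subtorus, and that degenerate terms vanish on both sides) fills in details the paper leaves implicit, but does not change the argument.
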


Expand the universal series expression from Theorem \ref{thm:SV-univ-sieres} and obtain
\equa{{\mathcal{S}}_S(E,\a;q)=&\sum_{i=1}^\infty \frac{1}{i!}(\lambda_1+\lambda_2)^i\left(\sum_{\mu,\nu,\xi} \log A_{\mu,\nu,\xi}(q)\cdot  \int_Sc_\mu(\a)c_\nu(S)c_{\xi}(E)\right)^i.
}
Using the above corollary to extract the reduced coefficients, we see the terms with $i>1$ all vanish and
\[{\mathcal{S}}^{\redu}(E,\a;q)=\sum_{\mu,\nu,\xi} \log A_{\mu,\nu,\xi}(q)\cdot  \int_Sc_\mu(\a)c_\nu(S)c_{\xi}(E).\]
The Chern and Verlinde cases are similar, thus we have the following result.

\begin{theorem}\label{cor:reduced-series}
When $S=\CC^2$, the equivariant reduced Segre and Verlinde series for $E=\oplus_{i=1}^N\O_S\<y_i\>$ and $\a\in K_\T(S)$ are
\equa{
{\mathcal{S}}^{\redu}(E,\a;q)=&\sum_{\mu,\nu,\xi}\log\left(A_{\mu,\nu,\xi}(q)\right)\cdot \int_S c_\mu(\a)c_\nu(S)c_\xi(E),\\
{\mathcal{V}}^{\redu}(E,\a;q)=&\sum_{\mu,\nu,\xi}\log\left(B_{\mu,\nu,\xi}(q)\right)\cdot \int_S c_\mu(\a)c_\nu(S)c_\xi(E),\\
{\mathcal{C}}^{\redu}(E,\a;q)=&\sum_{\mu,\nu,\xi}\log\left(C_{\mu,\nu,\xi}(q)\right)\cdot \int_S c_\mu(\a)c_\nu(S)c_\xi(E)\\
}
where $A_{\mu,\nu,\xi}, B_{\mu,\nu,\xi}$ and $C_{\mu,\nu,\xi}$ are the same series from Theorem \ref{thm:SV-univ-sieres}.
\end{theorem}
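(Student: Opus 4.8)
The plan is to derive everything from the product formula of Theorem~\ref{thm:SV-univ-sieres} together with the description, given in the Corollary just above, of the reduced invariants as a leading coefficient of the non-reduced ones along $c_1(S)=\lambda_1+\lambda_2$. I would start by taking logarithms in Theorem~\ref{thm:SV-univ-sieres}. Since on $S=\CC^2$ with the two-dimensional torus we have $c_1(S)=\lambda_1+\lambda_2$, and this is a scalar in $H^*_\T(\pt)$ that pulls out of the equivariant push-forward, this gives
\[
\log{\mathcal{S}}_S(E,\alpha;q)=(\lambda_1+\lambda_2)\,\mathcal{L}_A(q),\qquad
\mathcal{L}_A(q):=\sum_{\mu,\nu,\xi}\log\!\big(A_{\mu,\nu,\xi}(q)\big)\int_S c_\mu(\alpha)c_\nu(S)c_\xi(E),
\]
and likewise $\log{\mathcal{V}}_S=(\lambda_1+\lambda_2)\mathcal{L}_B$ and $\log{\mathcal{C}}_S=(\lambda_1+\lambda_2)\mathcal{L}_C$ with $B_{\mu,\nu,\xi},C_{\mu,\nu,\xi}$ in place of $A_{\mu,\nu,\xi}$. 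Each universal series has constant term $1$, so $\mathcal{L}_A,\mathcal{L}_B,\mathcal{L}_C\in q\,H^*_\T(\pt)_{\loc}[\![q]\!]$ have vanishing $q^0$-coefficient.

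Exponentiating, ${\mathcal{S}}_S(E,\alpha;q)=\sum_{i\geq 0}\frac{(\lambda_1+\lambda_2)^i}{i!}\mathcal{L}_A^i$, so for every $n>0$ the coefficient $[q^n]{\mathcal{S}}_S$ lies in $(\lambda_1+\lambda_2)H^*_\T(\pt)_{\loc}$, since only the terms $i\geq 1$ contribute in positive $q$-degree. The same reasoning that proves the Corollary above — which only uses $e(T^\redu_Z)=(\lambda_1+\lambda_2)\,e(T^\vir_Z)$ at each fixed point and the matching factorization $\ch(\Lambda_{-1}(T^\redu_Z)^\vee)=(1-e^{-(\lambda_1+\lambda_2)})\,\ch(\Lambda_{-1}(T^\vir_Z)^\vee)$ — gives, for $n>0$,
\[
[q^n]{\mathcal{S}}^{\redu}(E,\alpha;q)=\Big[q^n\Big]\Big(\frac{{\mathcal{S}}_S(E,\alpha;q)}{\lambda_1+\lambda_2}\Big)\Big|_{\lambda_1=-\lambda_2},
\qquad
[q^n]{\mathcal{V}}^{\redu}(E,\alpha;q)=\Big[q^n\Big]\Big(\frac{{\mathcal{V}}_S(E,\alpha;q)}{1-e^{-(\lambda_1+\lambda_2)}}\Big)\Big|_{\lambda_1=-\lambda_2},
\]
and similarly for ${\mathcal{C}}^{\redu}$. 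Substituting the exponential expansions, one has $\frac{{\mathcal{S}}_S}{\lambda_1+\lambda_2}=\frac{1}{\lambda_1+\lambda_2}+\sum_{i\geq 1}\frac{(\lambda_1+\lambda_2)^{i-1}}{i!}\mathcal{L}_A^i$ and $\frac{{\mathcal{V}}_S}{1-e^{-(\lambda_1+\lambda_2)}}=\frac{1}{\lambda_1+\lambda_2}+\mathcal{L}_B+\frac12+O(\lambda_1+\lambda_2)$, the remaining terms carrying strictly positive powers of $(\lambda_1+\lambda_2)$ (Bernoulli corrections in the Verlinde case). Taking $[q^n]$ with $n>0$ kills the purely $q$-independent pieces, and restricting to $\lambda_1=-\lambda_2$ kills every term with a positive power of $(\lambda_1+\lambda_2)$; only the linear-in-$\mathcal{L}$ summand survives. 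Summing over $n>0$ and noting that $\int_S$ restricted to $\lambda_1=-\lambda_2$ is exactly the equivariant push-forward over $\CC^2$ for the one-dimensional torus $\T_0=\{t_1t_2=1\}$ used in the reduced setting produces the three displayed identities; the generalization from $\alpha$ a vector bundle to an arbitrary $\alpha\in K_\T(S)$ goes by the power-sum substitution of \cite[Lemma~3.3]{GM}, exactly as at the end of the proof of Theorem~\ref{thm:SV-univ-sieres}.

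The heavy lifting is all upstream: the universality-and-vanishing analysis is Theorem~\ref{thm:SV-univ-sieres}, and the reduced-versus-virtual comparison is the Corollary above, itself immediate from $T^\redu=T^\vir+K_S^\vee$. Within this argument the only delicate point is the legitimacy of dividing by $\lambda_1+\lambda_2$ and then setting $\lambda_1+\lambda_2=0$: this is handled coefficient-by-coefficient in $q$, where for $n>0$ the $q^n$-coefficient genuinely lies in $(\lambda_1+\lambda_2)H^*_\T(\pt)_{\loc}$, so the quotient is a well-defined element of $H^*_\T(\pt)_{\loc}$ admitting a value at $\lambda_1=-\lambda_2$; the $n=0$ term, where this would fail, is excluded from all three reduced series by definition, consistently with the vanishing of the $q^0$-coefficients of $\mathcal{L}_A,\mathcal{L}_B,\mathcal{L}_C$.
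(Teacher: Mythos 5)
Your proposal is correct and follows essentially the same route as the paper: take logs of the product formula of Theorem \ref{thm:SV-univ-sieres}, pull out the scalar $c_1(S)=\lambda_1+\lambda_2$, expand the exponential, and observe that after dividing by the Euler (resp.\ K-theoretic Euler) class of the trivial obstruction summand and setting $\lambda_1+\lambda_2=0$ only the $i=1$ term survives for $n>0$. Your explicit handling of the $\frac1x+\frac12$ Bernoulli-type terms in the Verlinde case and of the excluded $n=0$ coefficient is slightly more detailed than the paper's, but the argument is the same.
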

The integrals in the above theorem labeled by $\mu,\nu,\xi$ have degree $|\mu|+|\nu|+|\xi|-2$, which is one degree lower than the integrals in the non-reduced expressions. Therefore we have a Segre-Verlinde correspondence in degree $-1$ for the reduced setting. However, results for degree $-1$ have no compact analogues since they automatically vanish in the compact setting. In the Section \ref{sec:reduced-computation}, we compute some of the universal series explicitly, giving us some Segre-Verlinde relations in non-negative degrees for the reduced case.

\section{Explicit computations of universal series}

\subsection{Virtual Segre number in rank \texorpdfstring{$r=-1$}{Lg}} \label{sec:chern-rank-2}
We shall compute the explicit expression for the Chern series at rank $r=1$ for Hilbert schemes. To begin, we consider the non-virtual Chern series for rank $r=2$, which is known to have universal series structure as a result of \cite[(2.6)]{GM}. By definition, the non-virtual Chern number $[q^n]I^{\mathcal{C}}(V;q)$ of any rank 2 equivariant bundle $V$ has total degree $-2n$ to $0$ in $\lambda_1,\lambda_2$. Comparing coefficients, we see the positive degree terms must vanish. By a non-virtual version of the argument of Section \ref{sec:proj-reduction}, the degree 0 terms are given by taking the Chern limit \cite[Proposition 3.4]{GM} of the expressions \cite[(3.16)]{GM}.

The following lemma, based on a similar statement mentioned in \cite[§4.6]{GM}, allows us to compute the remaining negative degree terms using these degree 0 terms. At last, we obtain
\equanum{\label{eqn:chern-rank-2}I^{\mathcal{C}}(V;q)=&\exp\left(\log(1+q)\int_Sc_0(V)+\log(1+q)\int_Sc_1(V)+\log(1+q)\int_Sc_2(V)\right)\\
=&(1+q)^{\int_Sc(V)}}
for any $\T$-equivariant bundle $V$ of rank 2.

\begin{lemma}\label{lem:differential}
Let $r\geq 0$ and suppose $H(z_1,\dots,z_r)$ is a power series in $z_1,\dots z_r$ whose coefficients are series in some other variables $q_1,q_2,\dots$. If $H$ is symmetric in $z_1,\dots,z_r$ with expansion
\[H(ze^{x_1},\dots,ze^{x_r})=\sum_{\substack{\mu\text{ partition}}}H_{\mu}(z)\prod_{i=1}^{\ell (\mu)}e_{\mu_i}(x_1,\dots,x_r),\]
then for any $k\geq 0$, we have
\[D_z^kH_{(0)}(z)=k!\sum_{|\mu|=k}\binom{r}{\mu}H_{\mu}(z)\]
where $\binom{r}{\mu}$ denotes $\prod_{i=1}^{\ell(\mu)}\binom{r}{\mu_i}$, and $D_z=z\frac{\partial}{\partial z}$.
\end{lemma}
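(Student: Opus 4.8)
The plan is to reduce the multivariate identity to a one-variable statement by collapsing the variables $x_1,\dots,x_r$ to a single variable, and then to recognize $D_z=z\,\partial_z$ as the infinitesimal generator of the dilation $z\mapsto e^x z$.

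First I would specialize the defining expansion of $H$ in two ways. Setting $x_1=\dots=x_r=0$ annihilates every $e_{\mu_i}$ with $\mu_i\geq 1$, so only the empty partition survives and $H_{(0)}(z)=H(z,z,\dots,z)$; write $G(w):=H(w,w,\dots,w)$, a power series in a single variable $w$ with coefficients in the $q_j$. Next, specialize instead to $x_1=\dots=x_r=x$ for one formal variable $x$. Since $e_m(x,\dots,x)=\binom{r}{m}x^m$ (with the convention $\binom{r}{m}=0$ for $m>r$, matching the vanishing of $e_m$ when $m$ exceeds the number of variables), one gets $\prod_{i=1}^{\ell(\mu)}e_{\mu_i}(x,\dots,x)=\binom{r}{\mu}\,x^{|\mu|}$, and hence
\[
H(ze^{x},\dots,ze^{x})\;=\;\sum_{k\geq 0}\Bigl(\sum_{|\mu|=k}\binom{r}{\mu}H_\mu(z)\Bigr)x^{k}.
\]
The inner sum is finite, and the identity is insensitive to whatever convention is used for $H_\mu$ when $\mu$ has a part exceeding $r$, since such terms carry a vanishing binomial coefficient.

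On the other hand, $H(ze^{x},\dots,ze^{x})=G(ze^{x})$ by definition of $G$. Since $D_z=z\,\partial_z$ satisfies $D_z z^{n}=n z^{n}$, we have $e^{xD_z}z^{n}=(e^{x}z)^{n}$, so as formal power series in $x$,
\[
G(ze^{x})\;=\;e^{xD_z}G(z)\;=\;\sum_{k\geq 0}\frac{x^{k}}{k!}\bigl(D_z^{k}G\bigr)(z)\;=\;\sum_{k\geq 0}\frac{x^{k}}{k!}\bigl(D_z^{k}H_{(0)}\bigr)(z).
\]
Comparing the coefficient of $x^{k}$ in the two displayed expansions of $H(ze^{x},\dots,ze^{x})$ gives $\tfrac{1}{k!}D_z^{k}H_{(0)}(z)=\sum_{|\mu|=k}\binom{r}{\mu}H_\mu(z)$, which is the claim.

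The computation is essentially free once this setup is in place; the only points needing a little care are the legitimacy of specializing the $x_i$ (fine at the level of formal power series, where for each monomial in $z$ and $x$ only finitely many terms contribute) and the identity $G(ze^{x})=\sum_k\frac{x^k}{k!}D_z^{k}G(z)$, which is the observation that $D_z$ generates dilations, verified on monomials and extended by linearity and continuity. I do not anticipate a genuine obstacle.
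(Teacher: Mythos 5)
Your proof is correct, and it takes a genuinely different route from the paper's. The paper proves the identity by first showing the statement is closed under sums and products (via a Leibniz-type expansion of $D_z^k(F_{(0)}G_{(0)})$), then reducing to the case $H=p_n(z_1,\dots,z_r)$, where it computes $H_{(0)}(z)=rz^n$ directly and pins down the combinatorial factor by expanding $p_k$ in the basis $\{e_\mu\}$ and evaluating at $x_1=\dots=x_r=1$ to get $\sum_{|\mu|=k}\binom{r}{\mu}C_\mu=r$. You instead collapse all the $x_i$ to a single $x$, use $e_m(x,\dots,x)=\binom{r}{m}x^m$ to package the right-hand side as the coefficient of $x^k$ in $H(ze^x,\dots,ze^x)$, and identify that same series with $e^{xD_z}H_{(0)}(z)$ because $D_z=z\partial_z$ generates the dilation $z\mapsto e^xz$. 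This avoids both the multiplicativity lemma and the reduction to power sums, and it makes transparent why the answer is insensitive to the non-uniqueness of the coefficients $H_\mu$ for partitions with a part exceeding $r$ (the vanishing binomial coefficient), a point the paper leaves implicit. The only care needed in your argument is that the specializations and the interchange of summations are legitimate as formal power series, which you address; both proofs are valid, yours is shorter and arguably more conceptual.
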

\begin{proof}
We begin by claiming that the statement is closed under polynomial expressions; that is, if the equality holds for both $F(z_1,\dots,z_r)$ and $G(z_1,\dots,z_r)$, then it holds for $F\cdot G$ and  $aF+G$ for any $a\in\QQ[\![q_1,q_2,\dots]\!]$. The additive part is straightforward, and we shall prove the multiplicative part of this claim. Expand 
\equa{F(ze^{x_1},\dots ,ze^{x_r})=\sum_{\mu\text{ partition}}F_\mu(z) e_\mu(x_1,\dots,x_r),\\
G(ze^{x_1},\dots ,ze^{x_r})=\sum_{\mu\text{ partition}}G_\mu(z) e_\mu(x_1,\dots,x_r),
}
then $H=F \cdot G$ can be expanded as
\[H(ze^{x_1},\dots ,ze^{x_r})=\sum_{\mu\text{ partition}}H_\mu(z) e_\mu(x_1,\dots,x_r)=\sum_{\nu+\xi=\mu}F_\nu(z) G_\xi(z) e_\mu(x_1,\dots,x_r)\]
where by $\nu+\xi$ we mean combining them as sequences to get a partition of size $|\nu|+|\xi|$ with length $\ell(\nu)+\ell(\xi)$. Suppose the statement holds for both $F$ and $G$, then we have
\equa{D_z^kH_{(0)}(z)&=D_z^k\left(F_{(0)}(z)G_{(0)}(z)\right)\\
&=\sum_{i=1}^k\binom{k}{i}D_z^iF_{(0)}D_z^{k-i}G_{(0)}\\
&=\sum_{i=1}^k\binom{k}{i}i!(k-i)!\sum_{|\nu|=i,|\xi|=k-i}\binom{r}{\nu}F_{\nu}\binom{r}{\xi}G_{\xi}\\
&=k!\sum_{|\mu|=k}\binom{r}{\mu}H_{\mu}.
}

Since $H(z_1,\dots ,z_r)$ is symmetric, by the above observation, it suffices to prove the statement when $H$ is the power sum symmetric polynomial $p_n(z_1,\dots ,z_r)=z_1^n+\dots +z_r^n$. For each $n\geq 0$, we expand
\equa{H(ze^{x_1},\dots ,ze^{x_r})=p_n(ze^{x_1},\dots ,ze^{x_r})&=\sum_{j=1}^rz^ne^{nx_j}=z^n\left(r+\sum_{i>0}\frac{n^i}{i!}p_i(x_1,\dots ,x_r)\right).}
This means $H_{(0)}(z)=rz^n$ and 
\[D_z^kH_{(0)}(z)=rn^kz^n.\]

Fix $r\geq 1$, we write
\[p_n(x_1,\dots ,x_r)=\sum_{|\mu|=n}C_\mu e_\mu(x_1,\dots ,x_r)\]
for some constant terms $C_\mu$. Evaluate at $x_1=\dots=x_r=1$ and get
\[r=\sum_{|\mu|=n}\binom{r}{\mu}C_\mu.\]
Hence
\[k!\sum_{|\mu|=k}\binom{r}{\mu}H_\mu(z)=z^nn^k\sum_{|\mu|=k}\binom{r}{\mu}C_\mu=rn^kz^n=D_z^kH_{(0)}(z).\]
A quick calculation for the $k=0$ or $r=0$ cases finishes the proof.

\end{proof}

Now we proceed with the virtual case. Recall that on Hilbert schemes, the obstruction theory at a fixed point $[Z_\mu]$ is given by $(K^{[n]}_S)^\vee|_{Z_\mu}$, so
\[\frac{1}{e(T^\vir_{Z_\mu})}=\frac{e((K_S^{[n]})^\vee|_{Z_\mu})}{e(T_{Z_\mu})} =(-1)^{|\mu|}\frac{e((K_S^{[n]})|_{Z_\mu})}{e(T_{Z_\mu})}.\] 
Let $L=\O_S\<v_1\>$ be an equivariant line bundle, and $V=L\oplus\O_S\<v_2\>$. Apply (\ref{eqn:chern-rank-2}) to $V$ and we have
\[I^{\mathcal{C}}(V;q)=(1+q)^{\int_Sc(V)}=(1+q)^{\int_S(1+w_1+w_2+w_1w_2)}.\]
Set $w_2=c_1(K_S)-1$ and replace $q$ by $-q$, then this becomes
\[I^{\mathcal{C}}(V;-q)|_{w_2=c_1(K_S)-1}=(1-q)^{\int_Sc(L)c_1(K_S)}.\]
On the other hand, we have by definition
\equa{&I^{\mathcal{C}}(V;-q)|_{w_2=c_1(K_S)-1}\\
=&\sum_{\mu} (-1)^{|\mu|}q^{|\mu|}\prod_{\square\in\mu}\frac{(1+w_1-c(\square)\lam_1-r(\square)\lam_2)(c_1(K_S)-c(\square)\lam_1-r(\square)\lam_2)}{((a(\square)+1)\lambda_1-l(\square)\lambda_2)((l(\square)+1)\lambda_2-a(\square)\lambda_1)}\\
=&\sum_{\mu}(-1)^{|\mu|}q^{|\mu|}\frac{c(L^{[n]}|_{Z_\mu})e((K_S^{[n]})|_{Z_\mu})}{e(T|_{Z_\mu})}\\
=&\sum_{\mu}q^{|\mu|}\frac{c(L^{[n]}|_{Z_\mu})}{e(T^\vir|_{Z_\mu})}=\mathcal{C}_S(\O_S,L;q)
}
Therefore we conclude
\equa{{\mathcal{C}}_S(\O_S,L;w)=(1-q)^{\int_Sc(L)c_1(K_S)}=\left(\frac1{1-q}\right)^{\int_Sc(L)c_1(S)}.}
In particular, restricting to the lowest degree part in the variables $\lam_1,\lam_2,w_1$, we obtain the following Corollary.
\begin{corollary}\label{cor:nek2d}For $S=\CC^2$, the following equality holds
\[\sum_{n=1}^\infty q^n\int_{[\hilb^n(S)]^\vir}1:=\sum_{Z\in\hilb^n(S)^\T}\frac{1}{e(T^\vir_Z)}=e^{\frac{(\lambda_1+\lambda_2)}{\lambda_1\lambda_2}q}.\]
\end{corollary}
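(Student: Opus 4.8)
The plan is to read off the claimed identity from the closed formula ${\mathcal{C}}_S(\O_S,L;q)=\big(\tfrac{1}{1-q}\big)^{\int_S c(L)c_1(S)}$ just established, by isolating the part of lowest total degree in $\lambda_1,\lambda_2,w_1$ on both sides. On the localization side, recall that
\[{\mathcal{C}}_S(\O_S,L;q)=\sum_{n\ge 0}q^n\sum_{|\mu|=n}\frac{c(L^{[n]}|_{Z_\mu})}{e(T^\vir_{Z_\mu})}.\]
Two homogeneity observations drive the argument. By \eqref{eqn:vb2d} the equivariant Chern roots of $L^{[n]}|_{Z_\mu}$ are the classes $w_1-c(\square)\lambda_1-r(\square)\lambda_2$, each of total degree $1$, so $c(L^{[n]}|_{Z_\mu})=1+(\text{terms of total degree}\ge 1)$; and since the virtual dimension of $\hilb^n(S)$ is $n$ and $T^\vir_{Z_\mu}$ carries no $w_1$-weight, $1/e(T^\vir_{Z_\mu})$ is a homogeneous rational function of total degree $-n$ in $\lambda_1,\lambda_2$ alone. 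Multiplying, every term of $c(L^{[n]}|_{Z_\mu})/e(T^\vir_{Z_\mu})$ has total degree $\ge -n$, and its degree-$(-n)$ part is precisely $1/e(T^\vir_{Z_\mu})$. Summing over $\mu$, the degree-$(-n)$ part of $[q^n]{\mathcal{C}}_S(\O_S,L;q)$ equals $\sum_{Z\in\hilb^n(S)^\T}1/e(T^\vir_Z)=\int_{[\hilb^n(S)]^\vir}1$.

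On the closed-formula side, write the exponent via Example~\ref{ex:equi-push} as $\int_S c(L)c_1(S)=\frac{(1+w_1)(\lambda_1+\lambda_2)}{\lambda_1\lambda_2}=A_{-1}+A_0$, where $A_{-1}=\frac{\lambda_1+\lambda_2}{\lambda_1\lambda_2}$ has total degree $-1$ and $A_0=\frac{w_1(\lambda_1+\lambda_2)}{\lambda_1\lambda_2}$ has total degree $0$. Expanding
\[{\mathcal{C}}_S(\O_S,L;q)=\exp\!\big(-\log(1-q)\,(A_{-1}+A_0)\big)=\sum_{k\ge 0}\frac{(-\log(1-q))^k}{k!}\,(A_{-1}+A_0)^k,\]
the monomials of $(A_{-1}+A_0)^k=\sum_{j=0}^{k}\binom{k}{j}A_{-1}^{j}A_0^{k-j}$ have total degrees $-j\ge -k$, while $(-\log(1-q))^k$ is divisible by $q^k$ with $[q^k](-\log(1-q))^k=1$. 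Hence $[q^n]$ of the sum receives contributions only from $k\le n$, the smallest total degree occurring is $-n$, and it comes solely from the $k=n$, $j=n$ term, contributing $\tfrac1{n!}A_{-1}^n\cdot[q^n](-\log(1-q))^n=\tfrac1{n!}\big(\tfrac{\lambda_1+\lambda_2}{\lambda_1\lambda_2}\big)^{n}$. Comparing with the previous paragraph gives $\int_{[\hilb^n(S)]^\vir}1=\tfrac1{n!}\big(\tfrac{\lambda_1+\lambda_2}{\lambda_1\lambda_2}\big)^{n}$ for every $n$, and summing over $n$ (the $n=0$ term being $1$) yields $\sum_{n}q^n\int_{[\hilb^n(S)]^\vir}1=\exp\!\big(\tfrac{\lambda_1+\lambda_2}{\lambda_1\lambda_2}\,q\big)$.

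The only delicate point is the bookkeeping in the second paragraph: one must check that truncating the exponential to lowest total degree may be done factor by factor, i.e.\ that the total-degree-$0$ summand $A_0$ of the exponent, although nonzero, can never decrease the total degree and therefore does not enter the minimal-degree part. This is immediate from the homogeneity grading used above, but it is the step where a degree or sign slip would be easiest to make; everything else is the routine extraction of leading coefficients of $(1-q)^{-a}$ and of $(-\log(1-q))^n$.
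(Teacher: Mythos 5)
Your proposal is correct and follows exactly the paper's (very terse) argument: the paper derives the corollary by ``restricting to the lowest degree part in the variables $\lambda_1,\lambda_2,w_1$'' of the identity ${\mathcal{C}}_S(\O_S,L;q)=(1-q)^{-\int_S c(L)c_1(S)}$, and your two homogeneity observations plus the extraction of the $k=j=n$ term from the exponential are precisely the details that make that one-line reduction rigorous.
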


\subsection{Segre-Verlinde correspondence in non-zero degrees}\label{sec:reduced-computation} Recall that we use the notation $H_{\mu,\nu,\xi}$ for the series from (\ref{eqn:log-nek2d-vir}) describing the virtual Nekrasov genus for  Quot schemes on $S=\CC^2$. Note that $\mathcal{N}_S$ satisfies
\equa{{\mathcal{N}}_{S}(y_1,\dots,y_N;v_1,\dots,v_r;q,z)&={\mathcal{N}}_{S}(y_1,\dots,y_N;ze^{w_1},\dots,ze^{w_r};q,1)\\
&={\mathcal{N}}_{S}(ze^{m_1},\dots,ze^{m_N};v_1,\dots,v_r;q,1).
}
Applying Lemma \ref{lem:differential} to $H_{\mu,\nu,\xi}$ in the variables $w_1,\dots, w_r$ gives us for $r>0$, 
\[D_z^{k}H_{(0),\nu,\xi}(q,z)=rD_z^{k-1}H_{(1),\nu,\xi}(q,z)=k!\sum_{|\mu|=k}\binom{r}{\mu}H_{\mu,\nu,\xi}(q,z),\]
and applying in the variables $m_1,\dots,m_N$ yields
\equanum{\label{eqn:xi}D_z^{k}H_{\mu,\nu,(0)}(q,z)=ND_z^{k-1}H_{\mu,\nu,(1)}(q,z)=k!\sum_{|\xi|=k}\binom{N}{\xi}H_{\mu,\nu,\xi}(q,z).}

When the rank $r$ is negative, we consider $\a=-[V]$ where $V=\oplus_{i=1}^{-r}\O_S\<v_i\>$. Write 
\[\log {\mathcal{N}}_S(E,-[V];q,z)=\sum_{\mu,\nu, \xi\text{ partitions}}H_{\mu,\nu,\xi}(q,z)\cdot \int_Sc_\mu(V) c_\nu(S)  c_\xi(E) c_1(S).\]
Then the same argument with Lemma \ref{lem:differential} applies, and for all $r\neq 0$,
\equanum{\label{eqn:mu}D_z^{k}H_{(0),\nu,\xi}(q,z)=|r|D_z^{k-1}H_{(1),\nu,\xi}(q,z)=k!\sum_{|\mu|=k}\binom{|r|}{\mu}H_{\mu,\nu,\xi}(q,z).}
Observe that when $r>0$, the Chern limit of $H_{\mu,\nu,\xi}$ returns the universal series for the Chern invariants $\log C_{\mu,\nu,\xi}$, and the Verlinde limit returns $\log B_{\mu,\nu,\xi}$. On the other hand, when $r<0$, the Chern series retrieves the rank $-r$ Segre invariants $\log A_{\mu,\nu,\xi}$, but the Verlinde limit does not give the Verlinde series. This is because $H_{\mu,\nu,\xi}$ is associated to $c_\mu(V)$, whereas $\log B_{\mu,\nu,\xi}$ is associated to $c_\mu(\a)=c_\mu(-[V])$. This results in a change of basis for the symmetric series in the Chern roots of $V$, and Lemma \ref{lem:differential} no longer applies for these negative ranks.

Our goal for this section is to apply Chern and Verlinde limits to (\ref{eqn:xi}) and (\ref{eqn:mu}) for $k>0$, then obtain relations for the Chern and Verlinde series for various $\mu$, $\nu$, and $\xi$. We may obtain explicit expressions for when $|\mu|+|\nu|+|\xi|=1$ from the compact setting. For a smooth projective surface $S'$, a torsion-free sheaf $E'$ of rank $N$, and a K-theory class $\a$ of rank $r$, the universal series structure for $\mathcal{N}_S$ can be obtained from \cite[(5.1), Theorem 5.1]{Bojko2} by setting 
\[f(t)=1-ze^t,\indent g(t)=\frac{t}{1-e^{-t}}.\]
This gives us
\[\mathcal{N}_S(E,\a;q,z)=\left(\prod_{i=1}^NF(H_i)\right)^{c_1(S)c_1(\a)}\left(\prod_{i=1}^NF(H_i)\right)^{\frac{r}{N}c_1(S)c_1(E)}G(R)^{c_1(S)^2},\]
where 
\[R=f^rg^N,\indent F=\frac{f}{f(0)},\]
the series $H_i(q)$ are Newton–Puiseux solutions to 
\[H_i^N=q R(H_i),\]
and $G(R)$ is given explicitly by \cite[(4.24)]{Bojko2}. Therefore 
\equanum{\label{eqn:general-N}H_{(1),(0),(0)}(q,z)&=\frac{r}{|r|}\sum_{i=1}^N\log F(H_i),\\
H_{(0),(1),(0)}(q,z)&=\log G(R),\indent H_{(0),(0),(1)}(q,z)=\frac{r}{N}\sum_{i=1}^N\log F(H_i).
}
The sign $\frac{r}{|r|}$ in the first line appears as a result of $c_1(-V)=-c_1(V)$.

\subsubsection{The Chern limit}\label{sec:chern-limit}
We begin with the case $\nu=\xi=(0)$. Apply \cite[(4.17)]{Bojko2}, which gives a term-by-term expression for (\ref{eqn:general-N}), by setting $f(t)=1-ze^t$ and $g(t)=\frac{t}{1-e^{-t}}$. We have
\equanum{\label{eqn:100}H_{(1),(0),(0)}(q,z)&=\frac{r}{|r|}\sum_{n=1}^\infty\frac{1}{n}q^n[t^{nN-1}]\left\{-ze^t(1-ze^t)^{nr-1}\left(\frac{t}{1-e^{-t}}\right)^{nN}\right\}}
By (\ref{eqn:xi}) and (\ref{eqn:mu}), we have for $k_1,k_2\geq 0$ and $k:=k_1+k_2>0$,
\equanum{\label{eqn:H-sum}&k_1!k_2!\sum_{|\mu|=k_1}\sum_{|\xi|=k_2}\binom{|r|}{\mu}\binom{N}{\xi}H_{\mu,(0),\xi}\\
=&|r|D^{k-1}_zH_{(1),(0),(0)}\\
=&r\sum_{n=1}^\infty\frac{1}{n}q^n[t^{nN-1}]\left\{D^{k-1}_z\left(-ze^t(1-ze^t)^{nr-1}\right)\left(\frac{t}{1-e^{-t}}\right)^{nN}\right\}\\
=&r\sum_{n=1}^\infty\frac{1}{n}q^n[t^{nN-1}]\left\{(-1)^k(1-ze^t)^{nr-k}p_{n,k}(ze^t)\left(\frac{t}{1-e^{-t}}\right)^{nN}\right\}
}
where $p_{n,k}$ is a polynomial of degree $k$. We may show inductively that $p_{n,k}(1)=(nr-1)_{(k-1)}$. With this expansion, we would like to apply the Chern limit of Lemma \ref{lem:CV-limit}. In the expansion of universal series (\ref{eqn:log-nek2d-vir}), the series $H_{\mu,(0),(0)}$ are multiplied by a term in $\vec{\lam},\vec{w},\vec{m}$ of homogeneous degree $|\mu|-1=k-1$. When taking the Chern limit, we need to make substitutions 
\[\vec\lam\leadsto-\e\vec\lambda,\indent \vec w\leadsto -\e \vec w,\indent \vec m\leadsto-\e\vec m.\]
Therefore we need to multiply by a factor of $(-\e)^{k-1}$ when taking the limit of $H_{\mu,(0),(0)}$. Furthermore, we substitute $q\leadsto (-1)^Nq\e^{N-r}(1+\e)^r$ and $z\leadsto (1+\e)^{-1}$, and the right-hand side of the above expansion becomes
\[r\sum_{n=1}^\infty\frac{1}{n}q^n[t^{nN-1}]\left\{(-1)^{nN-1}\e^{n(N-r)+k-1}(1+\e-e^t)^{nr-k} (1+\e)^{k}p_{n,k}\left(\frac{e^t}{1+\e}\right)\left(\frac{t}{1-e^{-t}}\right)^{nN}\right\}.\]
Since we are extracting the $[t^{nN-1}]$ coefficient of the function inside the curly bracket, we may replace $t\leadsto -\e t$ and divide the function by $(-\e)^{nN-1}$, which gives us
\equa{r\sum_{n=1}^\infty\frac{1}{n}q^n[t^{nN-1}]\left\{\e^{k-nr}(1+\e-e^{-\e t})^{nr-k} (1+\e)^k p_{n,k}\left(\frac{e^{-\e t}}{1+\e}\right)\cdot \left(\frac{-\e t}{1-e^{\e t}}\right)^{nN}\right\}.
}
Taking $\e\->0$, the term $p_{n,k}\left(\frac{\exp(-\e t)}{1+\e}\right)$ converges to $p_{n,k}(1)=(nr-1)_{(k-1)}$. Thus the limit gives us \equa{r\sum_{n=1}^\infty\frac{1}{n}q^n[t^{nN-1}]\left\{(nr-1)_{(k-1)}(1+t)^{nr-k}\right\}=r\sum_{n=1}^\infty\frac{(nr-1)_{(k-1)}}{n}\binom{nr-k}{nN-1}q^n.
}
We further apply the following identity
\equa{(nr-1)_{(k-1)}\binom{nr-k}{nN-1}=(n(r-N))_{(k-1)}\binom{nr-1}{nN-1}.}
The Chern limit of the left-hand side of (\ref{eqn:H-sum}) will be the Chern series of rank $r$ for $r>0$, and the Segre series of rank $-r$ for $r<0$. Therefore for all $r>0$,
\equanum{\label{eqn:segre-mu} k_1!k_2!\sum_{|\mu|=k_1}\sum_{|\xi|=k_2}\binom{r}{\mu}\binom{N}{\xi}\log A_{\mu,(0),\xi}(q)&=-r\sum_{n=1}^\infty\frac{(-n(r+N))_{(k-1)}}{n}\binom{-nr-1}{nN-1}q^n\\
&=-r\sum_{n=1}^\infty\frac{1}{n}q^n[t^{nN-1}]\left\{(-n(r+N))_{(k-1)}(1+t)^{-nr-1}\right\},\\
}
\equa{k_1!k_2!\sum_{|\mu|=k_1}\sum_{|\xi|=k_2}\binom{r}{\mu}\binom{N}{\xi}\log C_{\mu,(0),\xi}(q)&=r\sum_{n=1}^\infty\frac{(n(r-N))_{(k-1)}}{n}\binom{nr-1}{nN-1}q^n.
}

\subsubsection{The Verlinde limit}\label{sec:verlinde-limit}
We apply a similar argument for the Verlinde limit using (\ref{eqn:general-N}). To simplify computation, we consider a different change of variable. Let
\[\tilde{H}_i=1-e^{-H_i}.\]
Then $\tilde{H}_i$ are the Newton-Puiseux solutions to
\[\tilde{H}_i^N=q\frac{(1-\tilde{H}_i-z)^r}{(1-\tilde{H}_i)^r}.\]
Also,
\[F(H_i)=\frac{1-ze^{H_i}}{1-z}=\frac{1-\tilde{H}_i-z}{(1-\tilde{H}_i)(1-z)},\] 
so
\[H_{(1),(0),(0)}(q,z)=\frac{r}{|r|}\sum_{i=1}^N\log F(H_i)=\frac{r}{|r|}\sum_{i=1}^N\log\frac{1-\tilde{H}_i-z}{(1-\tilde{H}_i)(1-z)}.\]
Apply Lagrange inversion theorem \cite[Corollary 2]{bojko-lag} with $\f(t)=\log((1-t-z)/(1-t)(1-z))$ and $R=(1-t-z)^r/(1-t)^{r}$, we have
\equa{&k_1!k_2!\sum_{|\mu|=k_1}\sum_{|\xi|=k_2}\binom{|r|}{\mu}\binom{N}{\xi}H_{\mu,(0),\xi}\\
=&|r|D^{k-1}_zH_{(1),(0),(0)}\\
=&rD^{k-1}_z\sum_{n=1}^\infty\frac{1}{n}q^n[t^{nN-1}]\left\{\f'(t)R(t)^n\right\}\\
=&r\sum_{n=1}^\infty\frac{1}{n}q^n[t^{nN-1}]\left\{D^{k-1}_z\left(-z(1-t-z)^{nr-1}\right)\left(1-t\right)^{-nr-1}\right\}\\
=&r\sum_{n=1}^\infty\frac{1}{n}q^n[t^{nN-1}]\left\{(-1)^k(1-t-z)^{nr-k}q_{n,k}(z)(1-t)^{-nr-1}\right\}.
}
Here $q_{n,k}(z)$ is a polynomial of degree $k$ in $z$ whose coefficients involve the variable $t$, and its leading coefficient is $(nr)^{k-1}$. To take the Verlinde limit for $r>0$, we substitute 
\[q\leadsto (-1)^rq\e^r,\indent z\leadsto \e^{-1},\] 
then take $\e\->0$ and get
\equa{k_1!k_2!\sum_{|\mu|=k_1}\sum_{|\xi|=k_2}\binom{r}{\mu}\binom{N}{\xi}\log  B_{\mu,(0),\xi}&=r\sum_{n=1}^\infty\frac{1}{n}q^n[t^{nN-1}]\left\{(nr)^{k-1}(1-t)^{-nr-1}\right\}.
}
Observe that the Verlinde series for $\a\in K_\T(S)$ only depends on $c_1(\a)$ by definition, so the universal series are non-trivial only when $\mu=(1)_k:=(1,\dots, 1)$ has $k$ copies of 1. We can therefore simplify the left-hand side of the above equation and get
\equanum{\label{eqn:verlinde-mu}k_1!k_2!r^{k_1}\sum_{|\xi|=k_2}\binom{N}{\xi}\log B_{(1)_{k_1},(0),\xi}(q)&=r\sum_{n=1}^\infty\frac{1}{n}q^n[t^{nN-1}]\left\{(nr)^{k-1}(1-t)^{-nr-1}\right\}\\
&=r^k\sum_{n=1}^\infty (-1)^{nN-1}n^{k-2}\binom{-nr-1}{nN-1}q^n.
}

By \cite[Lemma 3.3]{GM}, the universal series are polynomials in $r$ for $r\geq 0$. Therefore the above results hold for the rank $r=0$ case as well.

\subsubsection{The degree $-1$ case}
When $k=0$, the argument from above still applies, where $D_z^{-1}(-)$ denotes taking the anti-derivative of $\frac1z(-)$ with respect to $z$. However this would result in an undetermined constant term from the integration, so we deal with this case separately. We compute $A_{(0),(0),(0)},B_{(0),(0),(0)},C_{(0),(0),(0)}$ using expressions for $H_{(0),(0),(0)}$, which we shall denote as $A,B,C$ and $H$ respectively. Since these series are associated to the part of their respective invariants independent of the weights of $\a$, we have
\[A^{N,r}_{(0),(0),(0)}=C^{N,-r}_{(0),(0),(0)},\]
and by the second part of \cite[Lemma 3.3]{GM}, $A,B,C,H$ are polynomials with respect to $r$ for all $r\in\ZZ$.

By Lemma \ref{lem:differential}, $D_zH=|r|H_{(1),(0),(0)}$. Taking $D_z^{-1}$ of (\ref{eqn:100}) with respect to $z$, we get
\equa{H(q,z)&=H_0(q)+\sum_{n=1}^\infty\frac{1}{n^2}q^n[t^{nN-1}]\left\{(1-ze^t)^{nr}\left(\frac{t}{1-e^{-t}}\right)^{nN}\right\}\\
&=:H_0(q)+H_1(q,z)}
for some $H_0(q)$ independent of the variable $z$.

Apply the result of Section \ref{sec:chern-limit} with $k=0$. We see that $H_1(q)$ admits the following Chern limit
\[\sum_{n=1}^\infty\frac{1}{n^2}\binom{nr}{nN-1}q^n.\]
Write $H_0(q)=\sum_{n=1}^\infty h_nq^n$, then its Chern limit is
\[\lim_{\e\->0}\sum_{n=1}^\infty h_n(-1)^{nN-1}\e^{n(N-r)-1}(1+\e)^{rn}q^n.\]
When $N-r\leq 0$, we must have $h_n=0$ for all $n$ since otherwise we would have negative powers on $\epsilon$ and the limit does not make sense. As each $h_n$ is polynomial in $r$, we conclude that $H_0(q)=0$. Hence for all $r\in\ZZ$
\[\log A(q)=\sum_{n=1}^\infty\frac{1}{n^2}\binom{-nr}{nN-1}q^n,\]
\[\log C(q)=\sum_{n=1}^\infty\frac{1}{n^2}\binom{nr}{nN-1}q^n.\]
When $0\leq r\leq N-1$, the formula for $C(q)$ is consistent with Conjecture \ref{con:cao-kool-quot} and Conjecture \ref{con:low-rank-vanish}. 

Similarly by Section \ref{sec:verlinde-limit}, the Verlinde limit of $H_{(0),(0),(0)}$ is
\[\log B(q)=\sum_{n=1}^\infty\frac1{n^2} (-1)^{nN-1}\binom{-nr-1}{nN-1}q^n.\]

Combining the results of the above sections, we have the following theorem.

\begin{theorem}\label{thm:SV2d-deg-pos}
For rank $r\geq 0$ and integers $k_1,k_2\geq 0$ with $k:=k_1+k_2$, the universal series of Theorem \ref{thm:SV-univ-sieres} satisfy
\equa{k_1!k_2!\sum_{|\mu|=k_1}\sum_{|\xi|=k_2}\binom{r}{\mu}\binom{N}{\xi}\log A_{\mu,(0),\xi}(q)&=-r\sum_{n=1}^\infty\frac{(-n(r+N))_{(k-1)}}{n}\binom{-nr-1}{nN-1}q^n,\\
k_1!k_2!r^{k_1}\sum_{|\xi|=k_2}\binom{N}{\xi}\log B_{(1)_{k_1},(0),\xi}(q)&=-r^{k}\sum_{n=1}^\infty n^{k-2}\binom{-nr-1}{nN-1}\left((-1)^Nq\right)^n.}
Furthermore, we have
\equa{
&k_1!k_2!\sum_{|\mu|=k_1}\sum_{|\xi|=k_2}\binom{r}{\mu}\binom{N}{\xi}\log C_{\mu,(0),\xi}(q)=r\sum_{n=1}^\infty\frac{(n(r-N))_{(k-1)}}{n}\binom{nr-1}{nN-1}q^n,
}
which can be compared to the identities above by replacing $r$ with $-r$.
\end{theorem}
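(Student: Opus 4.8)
The plan is to assemble the theorem from the term-by-term expressions already obtained for the universal series $H_{(1),(0),(0)}(q,z)$ of the virtual Nekrasov genus $\mathcal{N}_S$ on $\CC^2$, combined with the differential identities of Lemma~\ref{lem:differential} and the Chern and Verlinde limits of Lemma~\ref{lem:CV-limit}. First I would recall from (\ref{eqn:general-N}), which specializes the compact universal series of \cite[Theorem~5.1]{Bojko2} to $f(t)=1-ze^t$ and $g(t)=t/(1-e^{-t})$, the closed form (\ref{eqn:100}) for $H_{(1),(0),(0)}$ as a Lagrange-type coefficient extraction. Applying Lemma~\ref{lem:differential} in the variables $m_1,\dots,m_N$ and in $w_1,\dots,w_{|r|}$ turns the symmetric combination $k_1!k_2!\sum_{|\mu|=k_1}\sum_{|\xi|=k_2}\binom{|r|}{\mu}\binom{N}{\xi}H_{\mu,(0),\xi}$ into $|r|\,D_z^{k-1}H_{(1),(0),(0)}$; carrying out $D_z^{k-1}$ introduces, inside the $[t^{nN-1}]$-bracket, auxiliary polynomials $p_{n,k}(ze^t)$ and $q_{n,k}(z)$ of degree $k$, and I would pin down the specializations $p_{n,k}(1)=(nr-1)_{(k-1)}$ and $\mathrm{lead}(q_{n,k})=(nr)^{k-1}$ by a short induction on $k$.

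The bulk of the work is then two limit computations. For the Chern side I substitute $q\leadsto(-1)^Nq\e^{N-r}(1+\e)^r$, $z\leadsto(1+\e)^{-1}$, $\vec\lambda,\vec w,\vec m\leadsto-\e(\cdot)$, rescale the dummy variable $t\leadsto-\e t$ inside the coefficient extraction, include the compensating $(-\e)^{k-1}$ forced by the degree $k-1$ of the cohomology class paired with $H_{\mu,(0),\xi}$ in (\ref{eqn:log-nek2d-vir}), and let $\e\to0$; the surviving limit is $r\sum_n\frac{(nr-1)_{(k-1)}}{n}\binom{nr-k}{nN-1}q^n$, which I rewrite via the identity $(nr-1)_{(k-1)}\binom{nr-k}{nN-1}=(n(r-N))_{(k-1)}\binom{nr-1}{nN-1}$ to get the stated formula for $\log C_{\mu,(0),\xi}$ when $r>0$; running the same computation with $\a=-[V]$ (effectively $r\mapsto-r$, since that limit produces the rank-$(-r)$ Segre series) yields the $\log A$ formula. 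For the Verlinde side I first change variables to $\tilde H_i=1-e^{-H_i}$ to simplify $F(H_i)$, apply the Lagrange inversion of \cite[Corollary~2]{bojko-lag}, then take the Verlinde limit $q\leadsto(-1)^rq\e^r$, $z\leadsto\e^{-1}$, $\e\to0$; using that the Verlinde series depends on $\a$ only through $c_1(\a)$, so only $\mu=(1)_{k_1}$ contributes, together with $(-1)^{nN-1}=-\big((-1)^N\big)^n$, gives the $\log B$ formula.

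Two loose ends remain. The case $k=0$ (total degree $-1$) must be handled separately, because $D_z^{-1}$ introduces an integration constant $H_0(q)$; this is killed by the Chern-limit bookkeeping, since a nonzero coefficient $h_n$ would force a negative power of $\e$ once $N-r\le0$, so $H_0=0$ for those $r$, and polynomiality of the coefficients in $r$ then forces $H_0\equiv0$, which simultaneously determines $\log A$, $\log B$, $\log C$ in degree $-1$. Finally, everything above is derived for $r>0$ (or, via $\a=-[V]$, for $r<0$); to cover $r=0$ I invoke \cite[Lemma~3.3]{GM}, by which the coefficients of the universal series are polynomial in $r$ for $r\ge0$, so the identities, being polynomial in $r$ and valid for all $r\ge1$, extend to $r=0$. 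The step I expect to be the main obstacle is the precise $\e$-power accounting in the two limits — reconciling the contribution $\e^{k-nr}$ (after the $t\leadsto-\e t$ rescaling) with the substitution in $q$ and with the compensating factor $(-\e)^{k-1}$ — together with the inductive control of $p_{n,k}$ and $q_{n,k}$; once those are in place the rest is formal.
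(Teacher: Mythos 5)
Your proposal is correct and follows essentially the same route as the paper: it starts from the closed form (\ref{eqn:100}) for $H_{(1),(0),(0)}$, converts the partition sums into $|r|D_z^{k-1}H_{(1),(0),(0)}$ via Lemma \ref{lem:differential}, controls the auxiliary polynomials $p_{n,k}$ and $q_{n,k}$ inductively, performs the same Chern and Verlinde limit computations (including the $t\leadsto-\e t$ rescaling, the compensating $(-\e)^{k-1}$, the $\tilde H_i$ change of variables and Lagrange inversion), and handles $k=0$ and $r=0$ exactly as in Sections \ref{sec:chern-limit}--\ref{sec:verlinde-limit} and the degree $-1$ discussion. No gaps.
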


When $k=2$, we have the following Segre-Verlinde correspondences in degree $1$.
\begin{corollary}\label{cor:sv2d-deg1}
For rank $r\geq 0$, the universal series of Theorem \ref{thm:SV-univ-sieres} satisfy the following correspondences
\equa{&\indent A_{(1,1),(0),(0)}(q)^{-r}A_{(2),(0),(0)}(q)^{\frac{-(r-1)}{2}}=B_{(1,1),(0),(0)}\left((-1)^Nq\right)^{r+N},\\
&\indent A_{(1),(0),(1)}(q)^{-r}=B_{(1),(0),(1)}\left((-1)^Nq\right)^{r+N},\text{ and }\\
&\indent A_{(0),(0),(1,1)}(q)^{-rN}A_{(0),(0),(2)}(q)^{\frac{-r(N-1)}{2}}\\
&=B_{(0),(0),(1,1)}\left((-1)^Nq\right)^{N(r+N)}B_{(0),(0),(2)}\left((-1)^Nq\right)^{\frac{(N-1)(r+N)}{2}}.
}
\end{corollary}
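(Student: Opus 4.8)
The plan is to read off all three correspondences directly from Theorem~\ref{thm:SV2d-deg-pos} by specializing to $k=2$ and running through the three admissible splittings $(k_1,k_2)\in\{(2,0),(1,1),(0,2)\}$, then taking elementary linear combinations of the resulting Segre and Verlinde identities. Set
\[
S(q):=\sum_{n\ge 1}\binom{-nr-1}{nN-1}q^{n},
\]
a power series whose $q^{n}$-coefficient is a polynomial in $r$ and $N$. For $k=2$ the two right-hand sides in Theorem~\ref{thm:SV2d-deg-pos} depend only on $k$ and simplify: since $(-n(r+N))_{(1)}=-n(r+N)$, the Segre right-hand side equals $r(r+N)S(q)$, while the Verlinde right-hand side equals $-r^{2}S\big((-1)^{N}q\big)$. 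The only input beyond bookkeeping is the triviality $S\big((-1)^{N}(-1)^{N}q\big)=S(q)$: it is what makes the substitution $q\leadsto(-1)^{N}q$ on the $B$-series cancel the sign already present on the Verlinde side.

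Next I would expand the left-hand sides with the conventions $\binom{a}{\mu}=\prod_{i}\binom{a}{\mu_{i}}$ and $c_{\mu}=\prod_{i}c_{\mu_{i}}$, using that the partitions of size two are $(2)$ and $(1,1)$ with $\binom{a}{(2)}=\binom{a}{2}$ and $\binom{a}{(1,1)}=a^{2}$, and that $(1)_{0}=(0)$, $(1)_{2}=(1,1)$. This turns Theorem~\ref{thm:SV2d-deg-pos} into, from $(k_1,k_2)=(2,0)$,
\[
r(r-1)\log A_{(2),(0),(0)}+2r^{2}\log A_{(1,1),(0),(0)}=r(r+N)S(q),\qquad 2r^{2}\log B_{(1,1),(0),(0)}(q)=-r^{2}S\big((-1)^{N}q\big);
\]
from $(k_1,k_2)=(1,1)$,
\[
rN\log A_{(1),(0),(1)}=r(r+N)S(q),\qquad rN\log B_{(1),(0),(1)}(q)=-r^{2}S\big((-1)^{N}q\big);
\]
and from $(k_1,k_2)=(0,2)$,
\[
N(N-1)\log A_{(0),(0),(2)}+2N^{2}\log A_{(0),(0),(1,1)}=r(r+N)S(q),
\]
\[
N(N-1)\log B_{(0),(0),(2)}(q)+2N^{2}\log B_{(0),(0),(1,1)}(q)=-r^{2}S\big((-1)^{N}q\big).
\]
The structural point responsible for the asymmetric shape of the first correspondence is that the Verlinde formula of Theorem~\ref{thm:SV2d-deg-pos} involves only the partition $(1)_{k_1}$ in the $\a$-slot, because ${\mathcal{V}}_{S}$ depends on $\a$ through $\det\a$ only; hence $B_{(2),(0),(0)}$ never appears while $A_{(2),(0),(0)}$ does.

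Then I would take logarithms of the three claimed identities and match each side against the displays above. For the first: the Verlinde $(2,0)$ line gives $\log B_{(1,1),(0),(0)}(q)=-\tfrac12 S\big((-1)^{N}q\big)$, so $\log B_{(1,1),(0),(0)}\big((-1)^{N}q\big)=-\tfrac12 S(q)$ and the claimed right-hand side $(r+N)\log B_{(1,1),(0),(0)}\big((-1)^{N}q\big)$ equals $-\tfrac{r+N}{2}S(q)$; dividing the Segre $(2,0)$ line by $-2r$ shows the claimed left-hand side $-r\log A_{(1,1),(0),(0)}-\tfrac{r-1}{2}\log A_{(2),(0),(0)}$ also equals $-\tfrac{r+N}{2}S(q)$. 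For the second: the Segre $(1,1)$ line gives $\log A_{(1),(0),(1)}=\tfrac{r+N}{N}S(q)$ and the Verlinde $(1,1)$ line gives $\log B_{(1),(0),(1)}\big((-1)^{N}q\big)=-\tfrac{r}{N}S(q)$, so both $-r\log A_{(1),(0),(1)}$ and $(r+N)\log B_{(1),(0),(1)}\big((-1)^{N}q\big)$ equal $-\tfrac{r(r+N)}{N}S(q)$. For the third: scaling the Segre $(0,2)$ line by $-\tfrac{r}{2N}$ gives $-rN\log A_{(0),(0),(1,1)}-\tfrac{r(N-1)}{2}\log A_{(0),(0),(2)}=-\tfrac{r^{2}(r+N)}{2N}S(q)$, i.e. the log of the claimed left-hand side, while scaling the Verlinde $(0,2)$ line after $q\leadsto(-1)^{N}q$ (which turns its right-hand side into $-r^{2}S(q)$) by $\tfrac{r+N}{2N}$ gives $N(r+N)\log B_{(0),(0),(1,1)}\big((-1)^{N}q\big)+\tfrac{(N-1)(r+N)}{2}\log B_{(0),(0),(2)}\big((-1)^{N}q\big)=-\tfrac{r^{2}(r+N)}{2N}S(q)$, the log of the claimed right-hand side.

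Finally, the divisions by $r$ are legitimate for $r\ge 1$; for the boundary case $r=0$ I would invoke that every coefficient of each $\log A_{\mu,\nu,\xi}$ and $\log B_{\mu,\nu,\xi}$ is polynomial in $r$ by \cite[Lemma 3.3]{GM}, so the log-form of each correspondence is an equality of power series with $r$-polynomial coefficients that holds for all $r\ge 1$ and hence for $r=0$ too. I do not expect a genuine obstacle here: the mathematical content is all in Theorem~\ref{thm:SV2d-deg-pos}, and what remains is the partition bookkeeping above, the only subtlety being to keep the $(2)$- and $(1,1)$-contributions separate on the Segre side of the first correspondence even though they collapse to a single $B$-term on the Verlinde side.
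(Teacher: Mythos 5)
Your proposal is correct and follows essentially the same route as the paper, which presents this corollary as the $k=2$ specialization of Theorem \ref{thm:SV2d-deg-pos} over the three splittings $(k_1,k_2)\in\{(2,0),(1,1),(0,2)\}$; your bookkeeping of the binomial coefficients and the sign cancellation $S\big((-1)^N(-1)^Nq\big)=S(q)$ all checks out. Your explicit handling of the $r=0$ boundary case via polynomiality in $r$ is consistent with the paper's own use of \cite[Lemma 3.3]{GM} and fills a detail the paper leaves implicit.
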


\begin{remark}
As mentioned in the introduction, combining Theorem \ref{thm:SV2d-deg-pos} with Theorem \ref{cor:reduced-series} yields the corresponding relations for reduced invariants. In particular, Corollary \ref{cor:sv2d-deg1} implies a correspondence in degree 0 for reduced invariants, which could provide insight into the reduced invariants for K3 surfaces in the compact setting.
\end{remark}

\subsection{Universal series via Lagrange inversion}\label{sec:closed-form}
Let $r>0$. Consider (\ref{eqn:segre-mu}) and (\ref{eqn:verlinde-mu}) from the previous section. The right-hand sides of these identities are linear combinations of series of form
\[\sum_{n=1}^\infty\frac{1}{n}q^n[t^{nN-1}]\left\{n^{k-1}(1+t)^{-nr}\f'\right\}.\]
for $\f(t)=\log(1+t)$ and $k>0$. The goal of this section is to express these series without the process of extracting coefficients. 

By Lagrange inversion theorem \cite[Corollary 2]{bojko-lag}, we have
\equa{&\sum_{n=1}^\infty\frac{1}{n}q^n[t^{nN-1}]\left\{n^{k-1}(1+t)^{-nr}\f'\right\}\\
=&
D_q^{k-1}\sum_{n=1}^\infty\frac{1}{n}q^n[t^{nN-1}]\left\{(1+t)^{-nr}\f'\right\}\\
=&D_q^{k-1}\sum_{i=1}^N\left(\f(H_i)-\f(0)\right)=D_q^{k-1}\sum_{i=1}^N\f(H_i)
}
where $D_q=q\frac{d}{d q}$ and $H_i$ are the Newton-Puiseux solutions to
\equanum{\label{eqn:newton}H_i^N=q(1+H_i)^{-r}.}
Note that 
\[D_q\f(H_i)=q\frac{d}{dq}\f(H_i)=q\f'(H_i)\cdot\frac{dH_i}{dq}=\f'(H_i)\cdot D_qH_i.\]
Here $H_i(q^{\frac1N})$ is a Puiseux series, and by $dH_i/dq$ we mean $(dq/dH_i)^{-1}$. Differentiating both sides of (\ref{eqn:newton}) with respect to $H_i$ yields
\equa{NH_i^{N-1}\frac{dH_i}{dq}&=(1+H_i)^{-r}-rq(1+H_i)^{-r-1}\frac{dH_i}{dq}\\
NqH_i^{-1}\frac{dH_i}{dq}&=1-rq(1+H_i)^{-1}\frac{dH_i}{dq}\\
D_qH_i&=\frac{1}{NH_i^{-1}+r(1+H_i)^{-1}}
}
Let $\psi(t)=Nt^{-1}+r(1+t)^{-1}$, then $D_qH_i=\frac1{\psi(H_i)}$.
Define $D_\psi=\frac1\psi\cdot \frac{d}{dt}$. We conclude
\[D_q\f(H_i)=(D_\psi \f)(H_i)\]
for arbitrary power series $\f(t)$. Therefore
\equa{\sum_{n=1}^\infty\frac{1}{n}q^n[t^{nN-1}]\left\{n^{k-1}(1+t)^{-nr}\f'\right\}=\sum_{i=1}^N(D_\psi^{k-1}\f)(H_i).}
The following corollary follows directly by applying this to (\ref{eqn:segre-mu}) and (\ref{eqn:verlinde-mu}).

\begin{theorem}\label{cor:SV2d-closed}
Let $\f(t)=\log(1+t)$ and $\psi(t)=Nt^{-1}+r(1+t)^{-1}$. Define the differential operator
\[ D_\psi=\frac{1}{\psi}\cdot \frac{d}{dt}.\]
Furthermore, use the notation
\[D_{\mathcal{S}}^{(k)}=(-(r+N)D_\psi)_{(k-1)},\indent D_{\mathcal{V}}^{(k)}=r^{k-1}D_\psi^{k-1}\]
for $k\geq 0$ where $D_\psi^{-1}(-)$ denotes integrating $\psi\cdot (-)$ assuming a constant term 0. In the setting of Theorem \ref{thm:SV2d-deg-pos}, we have the following relations
\equa{
k_1!k_2!\sum_{|\mu|=k_1}\sum_{|\xi|=k_2}\binom{r}{\mu}\binom{N}{\xi}\log A_{\mu,(0),\xi}(q)&=-r\sum_{i=1}^N\left(D_{\mathcal{S}}^{(k)}\f\right)(H_i),\\
k_1!k_2!r^{k_1}\sum_{|\xi|=k_2}\binom{N}{\xi}\log B_{(1)_{k_1},(0),\xi}\left((-1)^Nq\right)&=-r\sum_{i=1}^N\left(D_{\mathcal{V}}^{(k)}\f\right)(H_i)}
where $H_i$ are the Newton-Puiseux solutions to $H_i^N=q(1+H_i)^{-r}$.
\end{theorem}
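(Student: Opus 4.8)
The plan is to deduce Theorem~\ref{cor:SV2d-closed} directly from Theorem~\ref{thm:SV2d-deg-pos} by rewriting the right-hand sides of \eqref{eqn:segre-mu} and \eqref{eqn:verlinde-mu} as applications of differential operators to the single function $\f(t)=\log(1+t)$. First I would recall from \eqref{eqn:segre-mu} that the Segre side equals $-r\sum_{n\geq 1}\frac1n q^n[t^{nN-1}]\{(-n(r+N))_{(k-1)}(1+t)^{-nr-1}\}$, and observe that $(1+t)^{-nr-1}=\f'(t)(1+t)^{-nr}$, so that every series in sight is a linear combination of the model series $\sum_{n\geq 1}\frac1n q^n[t^{nN-1}]\{n^{k-1}(1+t)^{-nr}\f'(t)\}$; the Verlinde side from \eqref{eqn:verlinde-mu} is handled the same way after the substitution $q\leadsto(-1)^Nq$ absorbs the sign $(-1)^{nN-1}$.

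The core computation, carried out in Section~\ref{sec:closed-form}, is the identity
\[
\sum_{n=1}^\infty\frac1n q^n[t^{nN-1}]\left\{n^{k-1}(1+t)^{-nr}\f'(t)\right\}=\sum_{i=1}^N\left(D_\psi^{k-1}\f\right)(H_i),
\]
where $H_i$ solve $H_i^N=q(1+H_i)^{-r}$, $\psi(t)=Nt^{-1}+r(1+t)^{-1}$, and $D_\psi=\psi^{-1}\frac{d}{dt}$. To prove this I would proceed in three steps. Step one: apply the Lagrange inversion theorem \cite[Corollary~2]{bojko-lag} with $R(t)=(1+t)^{-r}$ to identify $\sum_{n\geq 1}\frac1n q^n[t^{nN-1}]\{(1+t)^{-nr}\f'(t)\}=\sum_{i=1}^N(\f(H_i)-\f(0))=\sum_{i=1}^N\f(H_i)$, the $k=1$ case. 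Step two: note that introducing the extra factor $n^{k-1}$ inside the coefficient extraction is exactly the effect of applying $D_q=q\frac{d}{dq}$ a total of $k-1$ times, since $D_q$ pulls down a factor of $n$ from $q^n$. Step three: compute $D_q$ acting on $\f(H_i)$ via the chain rule, $D_q\f(H_i)=\f'(H_i)D_qH_i$, and evaluate $D_qH_i$ by implicitly differentiating $H_i^N=q(1+H_i)^{-r}$; after simplification using the defining relation one gets $D_qH_i=1/\psi(H_i)$, so $D_q\f(H_i)=(D_\psi\f)(H_i)$, and iterating gives $D_q^{k-1}\f(H_i)=(D_\psi^{k-1}\f)(H_i)$. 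This handles $k\geq 1$; for $k=0$ one interprets $D_\psi^{-1}$ as the antiderivative with vanishing constant term, which matches the $D_q^{-1}$ appearing after Lagrange inversion in the $k=0$ analysis already done in the degree $-1$ subsection. Finally, substituting back into \eqref{eqn:segre-mu} and \eqref{eqn:verlinde-mu}, and recognizing $(-(r+N)D_\psi)_{(k-1)}$ and $r^{k-1}D_\psi^{k-1}$ as $D_{\mathcal{S}}^{(k)}$ and $D_{\mathcal{V}}^{(k)}$, yields the stated formulas.

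I expect the main obstacle to be the bookkeeping in Step three: verifying that the implicit differentiation of $H_i^N=q(1+H_i)^{-r}$ collapses cleanly to $D_qH_i=1/\psi(H_i)$, and making sure the downward-factorial operator $(-(r+N)D_\psi)_{(k-1)}$ is exactly what arises from the downward factorial $(-n(r+N))_{(k-1)}$ once the $n$'s have been converted into copies of $D_q$ (and then into $D_\psi$ via the chain rule). One must be slightly careful that $(-n(r+N))_{(k-1)}$ is a polynomial in $n$ of degree $k-1$ with leading coefficient $(-(r+N))^{k-1}$, so it is genuinely a linear combination of the model series for exponents $0,1,\dots,k-1$; but since $D_q$ and $D_\psi$ intertwine through the chain rule uniformly in the exponent, the whole polynomial $(-n(r+N))_{(k-1)}$ transports to the operator polynomial $(-(r+N)D_\psi)_{(k-1)}$ applied to $\f$. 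A minor additional check is the treatment of Puiseux series: $H_i$ is a power series in $q^{1/N}$, so $dH_i/dq$ must be read as $(dq/dH_i)^{-1}$, but this causes no trouble since all identities are formal. Everything else is a direct substitution, so the result follows.
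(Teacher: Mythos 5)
Your proposal is correct and follows essentially the same route as the paper's Section~\ref{sec:closed-form}: Lagrange inversion \cite[Corollary~2]{bojko-lag} for the $k=1$ case, converting the factor $n^{k-1}$ into $D_q^{k-1}$ with $D_q=q\frac{d}{dq}$, and then implicit differentiation of $H_i^N=q(1+H_i)^{-r}$ to show $D_q\f(H_i)=(D_\psi\f)(H_i)$. Your added remarks on transporting the polynomial $(-n(r+N))_{(k-1)}$ to the operator polynomial and on the Puiseux-series interpretation of $dH_i/dq$ match the paper's treatment.
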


\begin{example}
We shall compute the $\log B_{(1,1,1),(0),(0)}$ using the above theorem. Set $k_1=3,k_2=0$. We have
\[D_F^{2}\f=\frac{N t \left(1+t\right)}{\left(\left(N+r\right) t+N\right)^{3}}.\]
Hence
\equa{\log B_{(1,1,1),(0),(0)}((-1)^Nq)&=\frac{-1}{3!}\sum_{n=1}^\infty \left(D^2_F\f\right)(H_i)\\
&=\frac{-N}{6}\sum_{i=1}^N\frac{ H_i \left(1+H_i\right)}{\left(\left(N+r\right) H_i+N\right)^{3}}.
}
\end{example}

\begin{example}
Similarly, we may compute $\sum_{|\mu|=3}\binom{r}{\mu}\log A_{\mu,(0),(0)}$.
\equa{&\sum_{|\mu|=3}\binom{r}{\mu}\log A_{\mu,(0),(0)}\\
=&\frac{-r}{3!}\sum_{n=1}^\infty\left(\left((r+N)^2 D_F^2+(r+N)D_F\right)\f\right) (H_i)\\
=&\frac{-r(r+N)}{6}\sum_{i=1}^N\frac{H_i\left((N+r)^2H_i^2+3N(N+r)H_i+N(2N+r)\right)}{\left((N+r)H_i+N\right)^{3}}.
}
Again, $H_i$ are the Newton-Puiseux solutions to $H_i^N=q(1+H_i)^{-r}$.
\end{example}

\subsection{Strong Segre symmetry and weak Verlinde symmetry}\label{sec:sym}
Let $r>0$. According to Theorem \ref{thm:SV2d-deg-pos}, the following identity holds for the Segre series
\equa{&[q^n]\sum_{|\mu|=k}\binom{r}{\mu}\log A^{r,N}_{\mu,(0),(0)}((-1)^Nq)=[q^n]\sum_{|\xi|=k}\binom{N}{\xi}\log A^{r,N}_{(0),(0),\xi}((-1)^Nq)\\
=&(-1)^{nN-1}\frac{r}{n}(-n(r+N))_{(k-1)}\binom{-nr-1}{nN-1}\\
=&(-1)^{nN-1}\frac{r}{n}\cdot\frac{(-nr-1)\cdots (-n(r+N)-k+2)}{(nN-1)!}\\
=&(-1)^{k-1}rN\cdot(n(r+N)+k-2)_{(k-2)}\binom{n(r+N)}{nN}
}
The right-hand side does not change if we swap $r$ and $N$, that is
\equa{\sum_{|\mu|=k}\binom{r}{\mu}\log A^{r,N}_{\mu,(0),(0)}((-1)^Nq)=\sum_{|\mu|=k}\binom{r}{\mu}\log A^{N,r}_{(0),(0),\mu}((-1)^rq).}
This is consistent with the strong Segre symmetry of Conjecture \ref{con:strong-sym}. We have checked that this conjecture holds when
\begin{equation}
\label{eq:2dsegsym}
\begin{cases}
n=1,\text{for $N\leq 5$, $r\leq 3$,}\\
n=2,\text{for $N\leq 3$, $r\leq 3$,}\\
n=3,\text{for $N\leq 3$, $r\leq 2$,}\\
n=4,5,\text{for $N\leq 2$, $r=1$.}\\
\end{cases}
\end{equation}

As for the Verlinde series, the weak Segre symmetry of Corollary \ref{cor:sv-sym2d-intro} together with the weak Segre-Verlinde correspondence of Corollary \ref{cor:weak-SV2d-virtual-intro} would imply a weak Verlinde symmetry. However, calculations using a computer program show the ``strong'' Verlinde symmetry does not hold for $S=\CC^2$. This can also be observed from the fact that the equivariant Verlinde series only depends on $V$ through $c_1(V)$ but depends on $E$ through $c_1(E),c_2(E),\dots,c_N(E)$ thus breaking the symmetry.
\section{Segre and Verlinde invariants on \texorpdfstring{$\CC^4$}{Lg}}

Consider $X=\CC^4$ with a $(\CC^4)^*$-action by scaling coordinates
\[(t_1,t_2,t_3,t_4)\cdot (x_1,x_2,x_3,x_4) = (t_1x_1,t_2x_3,t_3x_3,t_4x_4).\]
Let $\T_0=\{(t_1,t_2,t_3,t_4):t_1t_2t_3t_4=1\}\seq (\CC^4)^*$ be the subtorus which preserves the usual volume form on $X$, making $X$ a smooth quasi-projective toric Calabi-Yau 4-fold. As in the surface case, we include two additional tori
\[\T_1=(\CC^*)^N,\indent\T_2=(\CC^*)^{r+s}.\]
where $\T_1$ acts naturally on $\CC^N$, and $\T_2$ acts naturally on $\CC^{r}\times \CC^s$. Set $\T=\T_0\times\T_1\times \T_2$, and consider $E=\oplus_{i=1}^N\O_X\<y_i\>, \a=[\oplus_{i=1}^r\O_X\<v_i\>]-[\oplus_{i=r+1}^{r+s}\O_X\<v_i\>]$. Write
\equa{K_\T(\pt)&=\ZZ[t_1^{\pm1},t_2^{\pm1},t_3^{\pm1},t_4^{\pm1};y_1^{\pm1},\dots ,y_N^{\pm1};v_1^{\pm1},\dots ,v_{r+s}^{\pm1}]/(t_1t_2t_3t_4-1),\\
H^*_\T(\pt)&=\CC[\lambda_1,\lambda_2,\lambda_3,\lambda_4;m_1,\dots ,m_N;w_1,\dots ,w_{r+s}]/(\lam_1+\lam_2+\lam_3+\lam_4).}

By \cite[Theorem~4.1]{HT} the truncated Atiyah class of the universal subsheaf $\mathcal{I}$ defines an obstruction theory
\[\mathbf{R}\shom_{p}(\mathcal{I},\mathcal{I})_0^\vee[-1]\->L_{\quot_X(E,n)}^\bullet\]
where $\mathbf{R}\shom_{q}=\mathbf{R}q_*\circ \mathbf{R}\shom$, $(\cdot)_0$ denotes the trace free part. Note that the obstruction theory is $\T$-equivariant by \cite[Theorem B]{Ri}. The virtual tangent bundle is then
\[T^\vir=-\mathbf{R}\shom_{p}(\mathcal{I},\mathcal{I})_0\in K_\T(\quot_X(E,n)).\]

\subsection{Cohomological virtual invariants}\label{sec:vir-inv4d}
When $X$ is a projective Calabi-Yau 4-fold, the virtual fundamental class involves a choice of orientation on $\quot_X(E,n)$. Let $\mathcal{L}=\text{det}\mathbf{R}\shom_{q}(\mathcal{I},\mathcal{I})$ be the determinant line bundle. An \emph{orientation} $o(\mathcal{L})$ is a choice of square root of the isomorphism
\[Q:\mathcal{L}\otimes \mathcal{L}\->\mathcal{O}_{\quot_X(E,n)}\]
induced by Serre duality. A virtual class $[\quot_X(E,n)]^\vir_{o(\mathcal{L})}\in H_{2nN}(\quot_X(E,n),\ZZ)$ was constructed in \cite[§2.1]{Bojko2} for $X$ a strict Calabi-Yau 4-fold and $E$ a simple rigid locally-free sheaf. For $\gamma\in H^{2nN}(\quot_X(E,n))$, the holomorphic Donaldson invariants are defined to be
\[\mathcal{Q}(\gamma)=\int_{[\quot_X(E,n)]^\vir_{o(\mathcal{L})}}\gamma.\]

For the non-compact $X=\CC^4$, similar to the surface case, we have that the $\T$-fixed locus of $\hilb^n(X)$ consists of only finitely many reduced points \cite[Lemma~3.6]{CK1}, so we can define these invariants equivariantly using Oh-Thomas' localization formula \cite[Theorem~7.1]{Oh:2020rnj}. For a fixed orientation and any $SO(2k)$-bundle $B$, one can define its Edidin-Graham square root Euler class $\sqrt{e}(B)$ \cite[§3]{Oh:2020rnj}. Consider the self-dual resolution (\ref{eqn:res}) of $T^\vir$
\[\phi: B^\bullet\to L^\bullet_{\quot_{X}(E,n)}\]
where $B^\bullet = (T\-> B\->T^*)$ and $B$ a $SO(6n^2)$-bundle. Recall that $T^\vir$ has no fixed parts. Set \cite[(115)]{Oh:2020rnj}
\[\sqrt{e_\T}(T^\vir):=\sqrt{e_\T}(B^\bullet):=\frac{e_\T(T)}{\sqrt{e_\T}(B)}.\]
For computational purposes, we consider a square root $\sqrt{T^\vir|_Z}\in K_\T(\pt)$ for each fixed point $Z$ such that
\[T^\vir|_Z=\sqrt{T^\vir|_Z}+\overline{\sqrt{T^\vir|_Z}},\]
where $\overline{(\cdot)}$ denotes the involution $t_i\mapsto t_i^{-1}$. This allows us to compute the square root Euler class at the cost of a sign dependent on the choice of orientation $o(\mathcal{L})$
\[\sqrt{e}(T^\vir|_Z)=\pm e\left(\sqrt{T^\vir|_Z}\right).\]
As each fixed point is reduced, Kool-Rennemo \cite{KR-draft} show that their virtual fundamental classes are given by further signs determined by some induced orientation on $B^\bullet$.
We will denote the product of the two signs at each $Z\in\quot_X(E,n)^\T$ by 
$(-1)^{o(\mathcal{L})|_Z}$.
\begin{definition} For $n>0$, $\gamma\in H^{*}_\T(\quot_X(E,n))$, the \emph{holomorphic Donaldson invariants} are
\[
\mathcal{Q}(X,E,n,\gamma):=\sum_{Z\in\quot_X(E,n)^\T}(-1)^{o(\mathcal{L})|_Z}\frac{\gamma|_Z}{ e(\sqrt{T^\vir|_Z})}.\]
\end{definition}

The authors of \cite{KR-draft} further gave an explicit description of the  moduli space $\quot_X(E,n)$ when $X=\CC^4$ as a vanishing locus of an isotropic section. This allowed them to derive the signs $(-1)^{o(\mathcal{L})|_Z}$ by knowing $\L$ and $T^\vir$. For the purpose of motivating the Verlinde invariants from the introduction, we recall their approach here. 

 Consider the quiver 
 \begin{figure}[h]
     \centering
 \includegraphics[scale = 0.5]{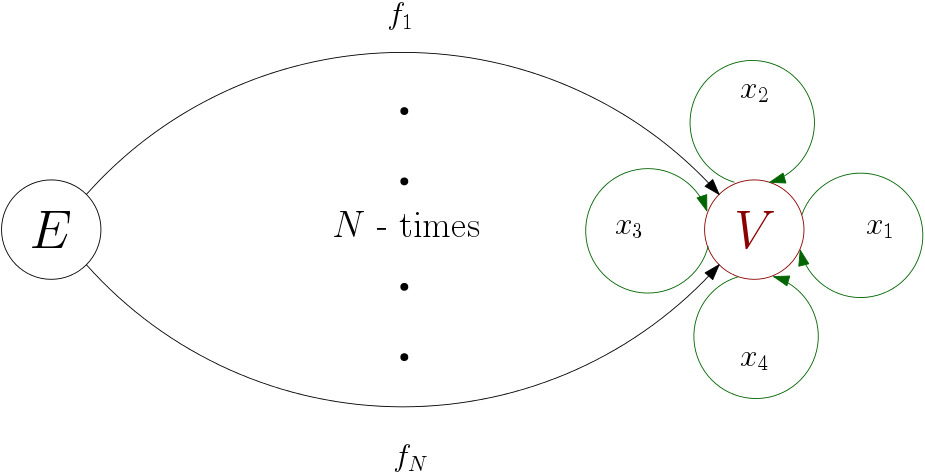}
     \caption{Framed quiver with four loops at one node.}
 \end{figure}
with four loops and $N$ framings. After imposing the relations
\begin{equation}
\label{eq:relation}
[x_i,x_j] = 0\,,
\end{equation}
its representations with dimension vector $(1,n)$ consist of a one-dimensional complex vector space $\CC$ and an $n$-dimensional vector space $V$ together with $N$ morphisms $f_i: \CC\to V$ for $i=1,\ldots,N$ and 4 morphisms $x_i: V\to V$ satisfying \eqref{eq:relation}.

The space of all representations without requiring the relations is 
$$R = \End(V)^{\oplus 4}\oplus\hom(E,V)$$
where we used the suggestive notation $E=\CC^N$. It carries an $SO(6n^2)$ vector bundle $B$ trivial with fiber  
\[
\Lambda^2\CC^4\otimes \End(V)
\]
and the pairing $$
b:  \Lambda^2\CC^4\otimes \End(V) \otimes  \Lambda^2\CC^4\otimes \End(V)\xrightarrow{(-\wedge -)\otimes \tr\big(-\circ-\big)} \Lambda^4\CC^4\otimes \End(V) =\End(V)\,.
$$
The existence of an isotropic section 
$$
s: R\to B\,,\qquad (\vec{x},\vec{f})\mapsto \sum_{i\neq j}(e_i\wedge e_j) \otimes x_i\circ x_j\,.
$$
connects it to the local toy model of \cite[(1)]{Oh:2020rnj}.

To construct the Quot scheme, we need to take a quotient by the $GL(V)$-action on $R$  defined for each $g\in GL(V)$ by $\big(\vec{x},\vec{f}\big)\mapsto \big(g\circ \vec{x}\circ g^{-1}, g\circ \vec{f}\big)$. One can also extend it in a natural way to an action on $B$. In particular, after defining $R^0\subset R$  as the open subscheme of representations satisfying
$$
\CC[x_1,\ldots, x_4] \cdot \CC\big\langle f_1(1),\ldots, f_N(1)\big\rangle = V\,,
$$
the vector bundle $B$ and its section $s$ restrict and then descends to the non-commutative Quot scheme
$$
A = \text{NC}\quot_{X}(E,n) = \big[R^0/GL(V)\big]\,.
$$
The usual Quot scheme is identified with the zero locus
\begin{equation}
\label{eq:quotinncquot}
\quot_{X}(E,n) = s^{-1}(0) \subset A\,.
\end{equation}
We will always use the letters $B$ and $s$ to denote the vector bundle and section on $R$, $R^0$, their descent to $\text{NC}\quot_{X}(E,n)$ and restriction to $\quot_{X}(E,n)$ without distinguishing the four cases.

To make it all equivariant, one uses the action of $\T_0\times \T_1$ on $R$ by 
$$
(\vec t;\vec y)\cdot ( \vec x;\vec f)  =  ( t_1\cdot x_1,t_2\cdot x_2,t_3\cdot x_3,t_4\cdot x_4;y_1\cdot f_1,\ldots,y_N\cdot f_N)
$$
which commutes with the action of $\gl(V)$ so it descends to one on $\quot_{X}(E,n)$. If we use $T$ to denote the tangent bundle of $A$, which at each point is the cokernel of some injective map 
\begin{equation}
\label{eq:T}
\End(V)\hookrightarrow R
\end{equation} obtained by differentiating the action of $\gl(V)$ on $R$,
then there is a $\T_0\times \T_1$-equivariant resolution of $\mathbf{R}\shom_{p}(\mathcal{I},\mathcal{I})^\vee_0[-1]$ given by 
$$
B^\bullet = (T\xrightarrow{ds^*} B\xrightarrow{ds}T^*)
$$
which gives the natural $\T_0\times \T_1$-equivariant obstruction theory 
\equa{\label{eqn:res}
\phi: B^\bullet\to L^\bullet_{\quot_{X}(E,n)}\,.
}
Note that the first term in \eqref{eq:T} has trivial weights.

The choice of orientations $o(\L)$ was shown in \cite[Prop. 4.2]{Oh:2020rnj} to be equivalent in this setting to choosing a positive isotropic subbundle $\sI$ of $B$. This is done explicitly in \cite{KR-draft} by constructing
$
\sI
$ as a trivial bundle with the fiber 
\begin{equation}
\label{eq:I}
\langle v\rangle \wedge \langle v\rangle^{\perp}\otimes \End(V)\end{equation} for some non-zero vector $v\in \CC^4$\footnote{In fact, the particular choice of signs compatible with the existing literature corresponds to setting $v=e_4$ for the fourth vector of the canonical basis of $\CC^4$.}. We will not go further into recalling the explicit derivation of $o(\L)|_Z$ done in \cite{KR-draft}, as we only do computations up to a fixed order $n$ to formulate conjectures the proof of which we leave for the future.

\begin{definition}Let $X=\CC^4$, $\alpha=[\oplus_{i=1}^r\O_X\<v_i\>]-[\oplus_{i=r+1}^{r+s}\O_X\<v_i\>]\in K_\T(X)$, and $E=\oplus_{i=1}^N\O_X\<y_i\>$. The \emph{equivariant Segre and Chern series} for a choice of signs $o(\mathcal{L})$ are respectively
\equa{{\mathcal{S}}_X(E,\a;q):=&\sum_{n=0}^\infty q^n\sum_{Z\in\quot_X(E,n)^\T}(-1)^{o(\mathcal{L})|_Z}\frac{s(\alpha^{[n]}|_Z)}{ e\left(\sqrt{T^\vir|_Z}\right)}\\
&\in\frac{\CC(\lambda_1,\lambda_2,\lambda_3,\lambda_4;m_1,\dots ,m_N;w_1,\dots ,w_{r+s})}{(\lambda_1+\lambda_2+\lambda_3+\lambda_4)}[q],\\
{\mathcal{C}}_X(E,\a;q):=&\sum_{n=0}^\infty q^n\sum_{Z\in\quot_X(E,n)^\T}(-1)^{o(\mathcal{L})|_Z}\frac{c(\alpha^{[n]}|_Z)}{ e\left(\sqrt{T^\vir|_Z}\right)}.
}
\end{definition}
\subsection{K-theoretic virtual invariants}
\label{sec:CY4ktheory}
In the setting where the moduli space $M$ is a zero locus of an isotropic section $s$ of an $SO(2m)$ bundle $B$ on some ambient space $A$, just like in \eqref{eq:quotinncquot} above, \cite{Oh:2020rnj} give a simpler construction of $\hat{\O}^\vir$ relying on their equivariant \textit{localized K-theoretic square root Euler class} $$\sqrt{\mathfrak{e}_{\T}}(E,s): K_0\big(A,\ZZ\big)\to K_0\big(M,\ZZ[2^{-1}]\big)$$ defined after choosing orientations on $M$.

When $B$ admits an isotropic subbundle $\sI$ compatible with the choice of orientation, then the pushforward under the inclusion 
$$
\iota:M=s^{-1}(0)\hookrightarrow A
$$
becomes just tensoring with the equivariant \textit{K-theoretic square root Euler class} 
$$
\iota_*\big(\sqrt{\mathfrak{e}_{\T}}(E,s)\big) = \otimes \sqrt{\mathfrak{e}_{\T}}(E) = (-1)^{ m}\mathfrak{e}_{\T}(I^*)\sqrt{\det}\big(\sI^*\big)
$$
where $\mathfrak{e}_{\T}(\sI^*) = \Lambda_{-1}\sI^*$. In fact, $\sqrt{\mathfrak{e}_{\T}}(E,s)$ can be also written as the product
$$
\sqrt{\mathfrak{e}_{\T}}(E,s) = (-1)^m\mathfrak{e}_{\T}(I^*,s) \sqrt{\det}\big(\sI^*\big)\,,
$$
where $\mathfrak{e}_{\T}(I^*,s)$ is some localization of $\mathfrak{e}_{\T}(I^*)$ to $M=s^{-1}(0)$ constructed using Kiem-Li's cosection localization in K-theory \cite{KLcosectionKtheory}. Their equivariant \textit{twisted virtual structure sheaf} $\hat{\O}^{\vir}$ is then constructed as 
\begin{align*}
\hat{\O}^{\vir} &= \sqrt{\mathfrak{e}_{\T}}(B,s)\big([\![ \O_A]\!]\big) \sqrt{\det}(T^*)\\
&= (-1)^m\mathfrak{e}_{\T}(I^*,s)\big([\![ \O_A]\!]\big) \sqrt{\det}\big(T^*+\sI^* \big)\,.
\end{align*}
For our case of $M=\quot_{\CC^4}(E,n)$,  we can use \eqref{eq:T} and $\eqref{eq:I}$ to show that $$\det(T) = \det\left(\End(V)\right)^{\otimes 4}\prod_{i=1}^4t_i^{n^2}\det\left(\hom(E,V)\right) =\det\left(\hom(E,V)\right)$$ and 
$\det(\sI) = t_4^{2n^2}$. This implies that the only term in the construction of $\hat{\O}^\vir$ that does not admit a square root is
\begin{align*}
{\det}^{-1}\big(\hom(E,V)\big) &= {\det}^{-1}\big((E^\vee)^{[n]}\big) \\
&= (y_1\ldots y_N)^{-n}{\det}^{-N}(V)\,.
\end{align*}
This gives further motivation for the definition of the \emph{untwisted virtual structure sheaf}
\equanum{\label{def:untwist}\mathcal{O}^\vir :=\hat{\mathcal{O}}^\vir\otimes \mathsf{E}^{\frac{1}{2}}\,,\quad \mathsf{E} = \det((E^\vee)^{[n]}).}
that appeared in \cite[Definition 5.10]{boj}, \cite[§1.4]{Bojko2}. From the above discussion it is clear that the following integrality statement holds.
\begin{proposition}
\label{prop:integral}
The untwisted virtual structure sheaf is an integral class:
$$
\mathcal{O}^\vir \in K_0\big(\quot_{\CC^4}(E,n),\ZZ\big)\,.
$$
\end{proposition}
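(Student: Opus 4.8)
The plan is to unwind the construction of $\hat{\O}^\vir$ recalled just above the proposition and isolate precisely which factors carry a square root and which do not. Recall that by Oh--Thomas, once we pick an isotropic subbundle $\sI\subset B$ compatible with the orientation, we have
\[
\hat{\O}^\vir = (-1)^m\,\mathfrak{e}_\T(\sI^*,s)\big([\![\O_A]\!]\big)\,\sqrt{\det}\big(T^* + \sI^*\big)\,,
\]
where $\mathfrak{e}_\T(\sI^*,s)$ is a cosection localization of $\Lambda_{-1}\sI^*$ and hence already an integer class; the only potentially non-integral term is the half-determinant $\sqrt{\det}(T^*+\sI^*)$. So the entire argument reduces to computing $\det(T)$ and $\det(\sI)$ for the ambient space $A = \mathrm{NC}\quot_X(E,n)$ and checking that their product $\det(T+\sI)$ fails to admit a square root only through the factor $\mathsf E^{-1} = \det^{-1}\big((E^\vee)^{[n]}\big)$.

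First I would carry out the determinant computation explicitly. From \eqref{eq:T}, the tangent bundle $T$ of $A$ sits in $0\to \End(V)\to R\to T\to 0$ with $R = \End(V)^{\oplus 4}\oplus \hom(E,V)$, and since the copy of $\End(V)$ being quotiented carries trivial weights, $\det(T) = \det(\End(V))^{\otimes 3}\prod_{i=1}^4 t_i^{n^2}\det(\hom(E,V))$; using $\det(\End(V))=\O$ as a line (it is $\det(V)\otimes\det(V^*)$ up to the weight bookkeeping) and the Calabi--Yau relation $t_1t_2t_3t_4 = 1$, this collapses to $\det(T) = \det(\hom(E,V)) = \det\big((E^\vee)^{[n]}\big) = \mathsf E$. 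Next, from \eqref{eq:I} the isotropic bundle $\sI$ has fiber $\langle v\rangle\wedge \langle v\rangle^\perp\otimes\End(V)$, a trivial bundle of rank $3n^2$; with the choice $v=e_4$ one computes its determinant character to be $t_4^{2n^2}$, which is a perfect square in $K_\T(\pt)$ (it is $(t_4^{n^2})^{\otimes 2}$), so $\sqrt{\det}(\sI^*)$ is an integral class. Hence $\sqrt{\det}(T^*+\sI^*) = \sqrt{\det}(T^*)\cdot\sqrt{\det}(\sI^*)$, and the only obstruction to integrality is $\sqrt{\det}(T^*) = \sqrt{\det}\big((E^\vee)^{[n]\,*}\big) = \mathsf E^{-1/2}$.

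From there the conclusion is immediate: twisting by $\mathsf E^{1/2}$ as in \eqref{def:untwist} exactly cancels the half-determinant $\mathsf E^{-1/2}$, so
\[
\O^\vir = \hat{\O}^\vir\otimes\mathsf E^{1/2}
= (-1)^m\,\mathfrak{e}_\T(\sI^*,s)\big([\![\O_A]\!]\big)\,\sqrt{\det}(\sI^*)\,,
\]
a product of integral K-theory classes, hence $\O^\vir\in K_0(\quot_{\CC^4}(E,n),\ZZ)$. I would also remark that the statement is independent of the auxiliary choice of $v$ (equivalently of $\sI$): a different choice changes $\sI$ by an $SO$-bundle whose determinant is still a square, so the argument is robust, and changing the orientation only affects overall signs, not integrality.

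The main obstacle I anticipate is the determinant bookkeeping: one must be careful that $\det(\End(V))$ contributes trivially as an \emph{equivariant} line (the $t_i$-weights on $\End(V)^{\oplus 4}$ cancel pairwise under $x_i\mapsto t_i x_i$ only after using $t_1t_2t_3t_4=1$, and the framing $\hom(E,V)$ is the one asymmetric piece), and that the rank of $\sI$ and the exponent of $t_4$ in $\det(\sI)$ come out as stated so that $\sqrt{\det}(\sI)$ genuinely lives in the integral K-theory. Everything else — that cosection-localized $\Lambda_{-1}\sI^*$ is integral, and that tensoring integral classes stays integral — is formal given the results of \cite{Oh:2020rnj} recalled above.
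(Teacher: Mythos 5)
Your proposal is correct and follows essentially the same route as the paper: compute $\det(T)=\det(\hom(E,V))=\mathsf E$ from \eqref{eq:T} and $\det(\sI)=t_4^{2n^2}$ (a perfect square) from \eqref{eq:I}, so that $\mathsf E^{-\frac12}$ is the only non-integral factor in $\hat{\O}^\vir$ and is cancelled by the twist in \eqref{def:untwist}. The only cosmetic difference is that you track the quotient by $\End(V)$ explicitly (exponent $3$ rather than $4$), which is immaterial since $\det(\End(V))$ is trivial.
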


Using $\hat{\O}^\vir$ and $\O^\vir$, we define the following \emph{twisted} and \emph{untwisted Euler characteristics}
\[\hat\chi^\vir(\quot_Y(E,n),-) = \chi\big(\quot_Y(E,n),\hat\O^\vir\otimes (-)\big),\]
$$\chi^\vir(\quot_Y(E,n),-) = \chi\big(\quot_Y(E,n),\O^\vir\otimes (-)\big).$$
For compact $X$ and $\alpha\in K^0(X)$, the \emph{Verlinde series} \cite[§1.3]{Bojko2} is then defined by
\equa{{\mathcal{V}}_X(E,\alpha;q):=&\sum_{n=0}^\infty q^n\chi^{\vir}\left(\quot_X(E,n),\text{det}(\alpha^{[n]})\right)\\
=&\sum_{n=0}^\infty q^n\hat\chi^\vir\left(\quot_X(E,n),\text{det}(\alpha^{[n]})\otimes \sqrt{\det}((E^\vee)^{[n]})\right).
}

Using the virtual Riemann-Roch formula and equivariant localization of Oh-Thomas \cite[Theorem 6.1, Theorem 7.3]{Oh:2020rnj}, we have the following \emph{twisted} equivariant virtual Euler characteristic for $X=\CC^4$:
\[\hat\chi^\vir_\T(\quot_X(E,n),\alpha):=\sum_{Z\in\quot_X(E,n)^\T}(-1)^{o(\mathcal{L})|_Z} e\left(-\sqrt{T^\vir|_Z}\right)\sqrt{\td}\left(T^\vir|_Z\right)\ch_\T(\alpha)\]
where $\sqrt{\td}$ is the the square-root Todd class satisfying
\equa{\sqrt{\td}(T^\vir|_Z)=&\td\left(\sqrt{T^\vir|_Z}\right)\ch\left(\sqrt{\det}\sqrt{T^\vir|_Z}^\vee\right)\\
=&\frac{e\left(\sqrt{T^\vir|_Z}\right)}{\ch\left(\Lambda_{-1}\sqrt{T^\vir|_Z}^\vee\right)}\ch\left(\sqrt{K^\vir}^{\frac12}\right).}
Here we denote
\[K^\vir={\det} (T^{\vir})^\vee, \indent \sqrt{K^\vir}={\det}\sqrt{T^\vir}^{\vee}.\]
Substituting into the above equation, we have
\equa{\hat\chi^\vir(\quot_X(E,n),\alpha)=\sum_{Z\in\quot_X(E,n)^\T}(-1)^{o(\mathcal{L})|_Z} \frac{\ch\left(\sqrt{K^\vir|_Z}^\frac12\right)}{\ch\left({\Lambda_{-1}}\sqrt{T^\vir|_Z}^\vee\right)}\ch(\alpha^{[n]}|_Z).}
Now including the twist of (\ref{def:untwist}), we may define the equivariant Verlinde series as follows.

\begin{definition}
The \emph{equivariant Verlinde series} for a choice of signs $o(\mathcal{L})$ is
\equa{{\mathcal{V}}_X(E,\a;q):=&\sum_{n=0}^\infty q^n\sum_{Z\in\quot_X(E,n)^\T}(-1)^{o(\mathcal{L})|_Z}\frac{ \ch\left(\sqrt{K^\vir|_Z}^\frac12\right)\ch\left(\sqrt{\det}((E^\vee)^{[n]}|_Z)\right)}{\ch\left({\Lambda_{-1}}\sqrt{T^\vir|_Z}^\vee\right)}\ch\left(\text{det}(\alpha^{[n]}|_Z)\right)\\
&\indent \in\frac{\QQ(t_1,t_2,t_3,t_4;y_1,\dots ,y_N;v_1,\dots ,v_{r+s})}{(t_1t_2t_3t_4)}[\![q]\!].
}
\end{definition}

The relation between Segre and Verlinde numbers in the compact case is studied in \cite{Bojko2} using the \emph{Nekrasov genus} for Hilbert schemes, introduced for the 3-fold case by \cite{NO}. We consider the following Quot scheme version from \cite{magcolor}
\equanum{\label{def:nek-genus}{\mathcal{N}}_X(E,\a;q)
:=&\sum_{n=0}^\infty q^n\sum_{Z\in\quot_X(E,n)^\T}(-1)^{o(\mathcal{L})|_Z}\frac{\ch(\sqrt{K^\vir|_Z}^{\frac12})}{\ch(\Lambda_{-1} \sqrt{T^\vir|_Z}^\vee)}
\ch\left(\frac{\Lambda_{-1}}{\sqrt{\det}} \a^{[n]}|_Z\right).
}
\begin{remark}
Recall in \cite{CKM}, the Nekrasov genus for Hilbert schemes involves a variable $y$ coming from a trivial $\CC^*$-action on $X$. This is exactly the $N=1$ case for the above definition, where we have the parameter $y_1$ from the $\T_1$-action on $E$.
\end{remark}


\subsection{Vertex formalism}
The invariants in the previous section can be calculated for Hilbert schemes using a vertex formalism developed by \cite{CK2}, based on the method introduced in \cite{MNOP1} for Calabi-Yau 3-folds. We generalize this to Quot schemes using the computations from \cite[§2.1]{Bojko2}. First when $N=1$, the $\T$-fixed points of $\hilb^n(X)$ correspond to monomial ideals of $\CC[x_1,x_2,x_3,x_4]$ \cite[Lemma 3.1]{CK1}, which are labeled by solid partitions $\pi$ of size $n$ where
\[\O_{Z_\pi}=\CC[x_1,x_2,x_3,x_4]/I_{Z_{\pi}}=\Span\{x_1^{a}x_2^bx_3^cx_4^d:(a,b,c,d)\in\pi\}.\]
We denote $Q_\pi$ the character of $\O_{Z_\pi}$
\[Q_\pi=\sum_{(i,j,k,l)\in\pi}t_1^{-a}t_2^{-b}t_3^{-c}t_4^{-d}\in K_{\T}^*(\mathrm{pt})=\frac{\ZZ[t_1^{\pm1},t_2^{\pm1},t_3^{\pm1},t_4^{\pm1}]}{(t_1t_2t_3t_4-1)}.\]
Similar to the surface case, for $E=\oplus_{i=1}^N\O_X\<y_i\>$, the $\T$-fixed points for $\quot_X(E,n)$ are labeled by $N$-colored solid partitions $\pi=(\pi^{(1)},\dots,\pi^{(n)})$ of size $n$, i.e. sequences of form
\[Z_\pi=([Z_1],[Z_2],\dots ,[Z_N])\in\hilb^{n_1}(X)\times\dots \times \hilb^{n_N}(X)\]
such that each $Z_i$ corresponds by solid partition $\pi^{(i)}$.

Let $Q_i$ be the character of $\O_{Z_i}$. The virtual tangent bundle at $Z_\pi$ is
\equanum{\label{eqn:t-vir-4d}T^\vir_{Z_\pi}=&\ext\left(\bigoplus_{i=1}^N I_{\mathcal{Z}_i}\<y_i\>, \bigoplus_{j=1}^NI_{\mathcal{Z}_j}\<y_j\>\right)_0\\
=&\sum_{i,j=1}^N \O_X\otimes (1- \overline{P(I_{Z_i})}P(I_{Z_j}))y_i^{-1}y_j\\
=&\sum_{i,j=1}^N\left(Q_{j}+t_1t_2t_3t_4\overline{Q_{i}}-t_1t_2t_3t_4P_{1234}\overline{Q_{i}}Q_{j}\right)y_i^{-1}y_j
}
where $P(I)$ is the Poincar\'e polynomial of $I$ defined analogously to (\ref{def:poincare}), and $P_{I}:=\prod_{i\in I}(1-t_i^{-1})$ for the set of indices $I$. Specializing $t_1t_2t_3t_4=1$, we get the following (non-unique) square root
\[\sqrt{T^\vir_{Z_\pi}}=\sum_{i,j=1}^N\left(Q_{j}-\overline{P_{123}}\overline{Q_{i}}Q_{j}\right)y_i^{-1}y_j.\]
The reason for the above choice of square root is so that
\equa{\ch\left(\sqrt{K^\vir|_{Z_\pi}}^{\frac12}\right)&=\ch\left(\prod_{i,j}\text{det}((\overline{Q_j-\overline{P}_{123}Q_j\overline{Q_i}})y_i^{-1}y_j)^{\frac12})\right)\\
&=\ch\left(\prod_{i,j}\sqrt{\det}(\overline {Q_j})y_iy_j^{-1})\right)\\
&=\frac1{\ch\left(\sqrt{\det}((E^\vee)^{[n]}|_{Z_\pi})\right)}
}
matches our twist in (\ref{def:untwist}), and this simplifies our computation as now we have
\[{\mathcal{V}}_X(E,\a;q)=\sum_{\pi}^\infty q^{|\pi|}(-1)^{o(\mathcal{L})|_{Z_\pi}}\frac{ \ch\left(\text{det}(\alpha^{[n]}|_{Z_\pi})\right)}{\ch\left({\Lambda_{-1}}\sqrt{T^\vir|_{Z_\pi}}^\vee\right)}\]
In this case, the signs $(-1)^{o(\mathcal{L})}$ are described in \cite{monavari,KR-draft} as follows: for any solid partition $\pi$,
\[o(\mathcal{L})|_{Z_\pi}:=|\pi|+\#\{(i,i,i,j)\in\pi:i<j\};\]
and for any $N$-colored solid partition $\pi$,
\[o(\mathcal{L})|_{Z_\pi}:=\sum_{i=1}^No(\mathcal{L})|_{Z_{i}}.\]

The fiber of $V^{[n]}=\oplus_{i=1}^r \O_X^{[n]}\<v_i\>$ over ${Z_\pi}=(Z_1,\dots Z_N)$ is the $rn$-dimensional representation
\[V^{[n]}|_{Z_\pi}=\bigoplus_{i=1}^r\bigoplus_{j=1}^N\O_{Z_{j}}\<v_iy_j\>=\left(\sum_{i=1}^r\sum_{j=1}^N \sum_{(a,b,c,d)\in\pi^{(j)}}v_iy_j t_1^{-a}t_2^{-b}t_3^{-c}t_4^{-d}\right)\in K_\T(\pt).\]
Therefore for any point $Z$ corresponding to an $N$-colored solid partition $\pi$, we have
\equa{c(V^{[n]}|_{Z_\pi})=&\prod_{j=1}^N\prod_{(a,b,c,d)\in\pi^{(j)}}\prod_{i=1}^r(1+w_i+m_j-a\lambda_1-b\lambda_2-c\lambda_3-d\lambda_4)\\
\text{det}(V^{[n]}|_{Z_\pi})=&\prod_{j=1}^N\prod_{(a,b,c,d)\in\pi^{(j)}}\prod_{i=1}^rv_iy_jt_1^{-a}t_2^{-b}t_3^{-c}t_4^{-d},\\
\ch(\sqrt{K^\vir|_{Z_\pi}}^{\frac12})
\ch\left(\frac{\Lambda_{-1}}{\text{det}^{\frac12}} V^{[n]}|_{Z_\pi}\right)=&\prod_{j=1}^N\prod_{(a,b,c,d)\in\pi^{(j)}}t_1^{\frac a2}t_2^{\frac b2}t_3^{\frac c2}t_4^{\frac d2}\\
&\cdot \prod_{i=1}^r\left(v_i^{\frac12}y_j^{\frac12}t_1^{-\frac a2}t_2^{-\frac b2}t_3^{-\frac c2}t_4^{-\frac d2}-v_i^{-\frac12}y_j^{-\frac12}t_1^{\frac a2}t_2^{\frac b2}t_3^{\frac c2}t_4^{\frac d2}\right).
}
Using these expressions, we see the Chern and Verlinde series can be extracted by taking limits of the Nekrasov genus, similar to the surface case. Also, it follows that
\equa{{\mathcal{N}}_X(E,V;q)\in\frac{\QQ(t_1^{\frac12},t_2^{\frac12},t_3^{\frac12},t_4^{\frac12})}{(t_1t_2t_3t_4-1)}[\![q,y_1^{\pm\frac12},\dots ,y_N^{\pm\frac12},v_1^{\pm\frac12},\dots ,v_r^{\pm\frac12}]\!].}
The argument of \cite[Proposition 1.13, 1.15]{CKM} can be applied to show that ${\mathcal{N}}_X(E,V;q)$ in fact lives in $\frac{\QQ(t_1,t_2,t_3,t_4)}{(t_1t_2t_3t_4-1)}[\![q,y_1^{\pm\frac12},\dots ,y_N^{\pm\frac12},v_1^{\pm\frac12},\dots ,v_r^{\pm\frac12}]\!]$. This enables us to talk about admissibility (up to specializing $t_1t_2t_3t_4=1$) in the sense of Definition \ref{def:admissible}.

\subsection{Factor of \texorpdfstring{$c_3(X)$}{Lg}}\label{sec:factor}
In the surface case, we saw the powers in the universal series of virtual invariants are multiples of $c_1(S)$. In the $X=\CC^4$ case, we shall show that if the universal expressions exist, then they are multiples of $c_3(X)$ by showing show that \equa{e\left(-\sqrt{T^\vir|_{Z_\pi}}\right)}
has $c_3(X)=-(\lambda_1+\lambda_2)(\lambda_1+\lambda_3)(\lambda_2+\lambda_3)$ in its numerator. This factor of $c_3(X)$ and the weak Segre-Verlinde correspondence and Segre symmetry of Corollary \ref{thm:SV2d-virtual-intro} and Corollary \ref{cor:sv-sym2d-intro} in the surface case shall motivate Conjecture \ref{con:sv4d-intro}. This is because the only degree zero contribution linear in $c_3(X)$ is expected to come from $\int_X c_3(X)c_1(\alpha)$ which was already studied in the compact case in \cite[§5.3]{Bojko2}. We do not expect any additional terms with exponential  $\int_X c_3(X)c_1(\alpha)$ coming from the equivariant setting just like we did not have any in the case of a surface.

It suffices to show that this term vanishes when we set $\lambda_i=-\lambda_j$ for $i\neq j$ in $\{1,2,3\}$. By symmetry, we may assume $i=1,j=2$. 
Recall that $e$ is the top equivariant Chern class which vanishes if its input has a trivial summand. The process of setting $\lam_1=-\lam_2$ in cohomology is the same as setting $t_1=t_2^{-1}$ in K-theory. Therefore we would like to show that $-\sqrt{T^\vir|_{Z_\pi}}$ has a trivial summand when we set $t_1=t_2^{-1}$, i.e. the character of $\sqrt{T^\vir|_{Z_\pi}}$ in $K_\T(\pt)$ having a strictly negative constant term. This occurs if and only if the image of $T^\vir_{Z_\pi}$ in 
\[\ZZ[t_1^{\pm1},t_2^{\pm1},t_3^{\pm1},t_4^{\pm1}]/(t_1t_2-1,t_3t_4-1)\]
has a strictly negative constant term (which is necessarily a negative even integer). From (\ref{eqn:t-vir-4d}), we see it suffices to show this for the term
\[Q_{\pi}+t_1t_2t_3t_4\overline{Q_{\pi}}-t_1t_2t_3t_4P_{1234}\overline{Q_{\pi}}Q_{\pi}\]
whenever $\pi$ is a non-trivial solid partition. 

\begin{lemma}
For any non-trivial solid partition $\pi$, the expression 
\[Q_{\pi}+t_1t_2t_3t_4\overline{Q_{\pi}}-t_1t_2t_3t_4P_{1234}\overline{Q_{\pi}}Q_{\pi}\]
has a strictly negative constant term when viewed in the quotient ring
\[\ZZ[t_1^{\pm1},t_2^{\pm1},t_3^{\pm1},t_4^{\pm1}]/(t_1t_2-1,t_3t_4-1).\]
\end{lemma}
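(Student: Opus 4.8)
The plan is to reduce everything to a single solid partition (the $N$-colored case and the Chern-root substitution follow formally), then compute the constant term of the relevant Laurent polynomial modulo the ideal $(t_1t_2-1,t_3t_4-1)$ directly. Set $s = t_1$, so $t_2 = s^{-1}$, and set $u = t_3$, so $t_4 = u^{-1}$; then $t_1t_2t_3t_4 = 1$ already, $P_{1234} = (1-s^{-1})(1-s)(1-u^{-1})(1-u)$, and $\overline{Q_\pi}$ is $Q_\pi$ under $s\mapsto s^{-1}, u\mapsto u^{-1}$. Write $T = Q_\pi + \overline{Q_\pi} - P_{1234}\,\overline{Q_\pi}\,Q_\pi$ in $\ZZ[s^{\pm1},u^{\pm1}]$; the claim is that the constant term $\mathrm{ct}(T)$ (coefficient of $s^0u^0$) is a strictly negative (necessarily even) integer whenever $\pi\neq\varnothing$.

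The key computation: by the standard Euler-form identity on $\CC^4$ (the Hilbert-scheme analogue of (\ref{def:poincare}), used already in (\ref{eqn:t-vir-4d})) one has $1 - \overline{P(I_{Z_\pi})}P(I_{Z_\pi}) = P_{1234}\big(Q_\pi + \overline{Q_\pi} - P_{1234}\overline{Q_\pi}Q_\pi\big) / (t_1t_2t_3t_4)$ up to the bookkeeping already carried out in the excerpt; more usefully, I would instead use the symmetric bilinear pairing interpretation. Introduce the $\ZZ$-bilinear form $\langle f,g\rangle := \mathrm{ct}\big(P_{1234}\,\overline{f}\,g\big)$ on $\ZZ[s^{\pm},u^{\pm}]$. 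A direct expansion gives $\mathrm{ct}(T) = \mathrm{ct}(Q_\pi) + \mathrm{ct}(\overline{Q_\pi}) - \langle Q_\pi, Q_\pi\rangle = 2\,\mathrm{ct}(Q_\pi) - \langle Q_\pi,Q_\pi\rangle$ (note $\mathrm{ct}(\overline{Q_\pi}) = \mathrm{ct}(Q_\pi)$). Now $Q_\pi = \sum_{(a,b,c,d)\in\pi} s^{-a}\, (s)^{b}\, $ — wait, more precisely $Q_\pi = \sum_{(a,b,c,d)\in\pi} t_1^{-a}t_2^{-b}t_3^{-c}t_4^{-d} = \sum s^{b-a} u^{d-c}$ after the substitution. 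So $\mathrm{ct}(Q_\pi) = \#\{(a,b,c,d)\in\pi : a=b,\ c=d\}$, which is $\geq 1$ for $\pi\neq\varnothing$ since $(0,0,0,0)\in\pi$. The heart of the matter is therefore to show $\langle Q_\pi, Q_\pi\rangle \geq 2\,\mathrm{ct}(Q_\pi) + 1$ — in fact I expect $\langle Q_\pi,Q_\pi\rangle - 2\mathrm{ct}(Q_\pi)$ to be a positive even integer, i.e. $\geq 2$.

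To prove this positivity I would expand $\langle Q_\pi,Q_\pi\rangle = \sum_{p,p'\in\pi}\mathrm{ct}\big(P_{1234}\, s^{(b-a)-(b'-a')} u^{(d-c)-(d'-c')}\big)$ and note $\mathrm{ct}(P_{1234}\, s^i u^j)$ is a fixed small integer depending only on $(i,j)$ (it is the coefficient-extraction of $(1-s)(1-s^{-1})(1-u)(1-u^{-1})$, supported on $|i|,|j|\leq 1$, with values among $\{4,-2,-2,-2,-2,1,1,1,1\}$). Thus $\langle Q_\pi,Q_\pi\rangle$ is an explicit sum over pairs of boxes whose "shadows" $(b-a,d-c)$ and $(b'-a',d'-c')$ lie within Chebyshev distance $1$. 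I would then either (i) invoke the connection $T = -\sqrt{T^\vir}-\overline{\sqrt{T^\vir}}+$(honest part) and the fact — provable by passing to the $2$-dimensional Calabi-Yau slice $t_1t_2=1$ and using that the surface virtual tangent at a nonempty Hilbert-scheme point has no $\T$-fixed part, as cited via \cite[(1.6)]{Arbesfeld} in the proof of Theorem \ref{thm:SV-univ-sieres} — that the relevant constant term on the surface is exactly $0$, and track the one extra copy of $K_S^\vee$ contributing $-1$ to each factor so that the $\CC^4$ count picks up a strictly negative surplus; or (ii) argue combinatorially that the "diagonal" contributions ($p=p'$, giving $4$ each for the $\mathrm{ct}(Q_\pi)$ boxes with $a=b,c=d$ and giving $1$ each otherwise) already dominate: $\langle Q_\pi,Q_\pi\rangle \geq 4\mathrm{ct}(Q_\pi) + (|\pi|-\mathrm{ct}(Q_\pi)) - (\text{off-diagonal negatives})$, and that the off-diagonal negative terms are outweighed because each box with a negative pairing to $p$ is itself forced (by the solid-partition staircase condition (\ref{solid-partition})) to contribute positively on its own diagonal. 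Route (i) via dimensional reduction to the surface case is cleaner and is the one I would pursue.

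The main obstacle is making route (i) rigorous: one must check that setting $t_1 = t_2^{-1}$ genuinely decomposes $T^\vir_{Z_\pi}$ (for the $\CC^4$ Hilbert scheme) as the $\CC^2$-Hilbert-scheme virtual tangent (which has no fixed part) plus an explicit "transverse" piece whose only $\T$-fixed contribution is strictly negative, and that no cancellation between these pieces can occur. Concretely one writes $P_{1234} = P_{12}\cdot P_{34}$ with $P_{12} = (1-t_1^{-1})(1-t_2^{-1})$, specializes $t_1t_2=1$ so $P_{12} = -(1-t_1)(1-t_1^{-1})$ becomes (up to sign) the surface Koszul factor, and identifies $Q_\pi\big|_{t_1t_2=1}$ with a sum of characters of the coordinate slices; the bookkeeping of the residual $t_3,t_4$ dependence and of the global sign is where care is needed, but it is a finite check once the decomposition is written down. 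I expect the final inequality to come out as $\mathrm{ct}(T) \leq -2$ for every nonempty $\pi$, and the parity (even) is automatic since $T$ is self-conjugate under $s\mapsto s^{-1},u\mapsto u^{-1}$ so its constant term counts a set closed under an involution with no fixed points among the "$-1$" contributions paired off appropriately.
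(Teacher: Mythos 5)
Your setup is correct and coincides with the paper's opening moves: after specializing $t_1=s=t_2^{-1}$, $t_3=u=t_4^{-1}$, the constant term in question is $2p_{0,0}-\langle Q_\pi,Q_\pi\rangle$, where $p_{i,j}$ counts boxes of $\pi$ with shadow $(b-a,d-c)=(i,j)$ and $\langle f,g\rangle=\mathrm{ct}(P_{1234}\overline{f}g)$, so everything reduces to the positivity $\langle Q_\pi,Q_\pi\rangle>2p_{0,0}$. But that positivity is the entire content of the lemma, and neither of your routes establishes it. Route (i), the one you say you would pursue, rests on a false reduction: $Q_\pi|_{t_1t_2=1}$ is the generating function $\sum_{i,j}p_{i,j}s^iu^j$ of a multiplicity function on $\ZZ^2$, not the character of a point of $\hilb^n(\CC^2)$ (the $p_{i,j}$ are in general larger than $1$, and the residual $u$-dependence survives), so the surface fact that the virtual tangent space at a monomial ideal has no fixed part under $t_1t_2=1$ --- proved via arm and leg lengths of honest Young diagrams --- does not apply, and there is no identification of the specialized four-fold vertex with a surface vertex plus a controlled correction. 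Route (ii) is closer to what actually works, but as written it is a heuristic rather than a proof (and contains a slip: every diagonal pair $p=p'$ contributes $\mathrm{ct}(P_{1234})=4$, not ``$4$ or $1$ depending on the shadow of $p$''); the claim that each negatively paired box is compensated by its own diagonal term is not a valid bound, since the values $p_{a,b}$ can be large and each axis-neighbour pair contributes $-2p_{a,b}p_{a',b'}$.

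The missing idea is an exact algebraic identity: the constant term of $P_{1234}\overline{Q_\pi}Q_\pi$, written as $\sum_{a,b}p_{a,b}s_{a,b}$ with $s_{a,b}$ the discrete second-difference combination of the $p$'s, decomposes as a sum over the four quadrants of sums of squares of mixed second differences, e.g.
\[
S^{++}=\sum_{a,b\geq 0}\big(p_{a,b}-p_{a+1,b}-p_{a,b+1}+p_{a+1,b+1}\big)^2 .
\]
With this in hand, one proves $S^{\pm\pm}\geq p_{0,0}$ for each quadrant by a double induction on the support of $p$, using the monotonicity $p_{a,b}\geq p_{a+1,b}$ and $p_{a,b}\geq p_{a,b+1}$ for $a,b\geq 0$ that follows from the solid-partition staircase condition; this gives $\langle Q_\pi,Q_\pi\rangle\geq 4p_{0,0}$ and hence a constant term at most $-2p_{0,0}<0$. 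Without the sum-of-squares decomposition and the induction, the central positivity is unproved, so your proposal has a genuine gap at exactly the step that carries the weight of the lemma.
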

\begin{proof}
Let $x=t_1=\frac1{t_2}$, $y=t_3=\frac1{t_4}$, so that
\[\ZZ[t_1^{\pm1},t_2^{\pm1},t_3^{\pm1},t_4^{\pm1}]/(t_1t_2-1,t_3t_4-1)=\ZZ[x^{\pm1},y^{\pm1}].\]
Let $P_\pi$ be the image of $Q_\pi$ in $\ZZ[x^{\pm1},y^{\pm1}]$, then
\equanum{\label{eqn:vir-tan-image}T^\vir|_{Z_\pi}=P_\pi + \overline{P_\pi} - P_\pi \overline{P_\pi} (1-x)(1-\frac1x)(1-y)(1-\frac1y).}
Write
\[P_\pi=\sum_{i,j\in\ZZ}p_{i,j}x^iy^j.\]
The image of $\overline{Q_\pi}$ is then
\[\overline{P_\pi}=\sum_{i,j\in\ZZ}p_{i,j}x^{-i}y^{-j}.\]
We see the constant terms of $P_\pi$ and $\overline{P_\pi}$ are both $p_{0,0}$. By definition, all monomial terms in $Q_\pi$ have positive coefficients, and $Q_\pi$ has constant term 1, so $p_{0,0}>0$. We need to find the constant term of $P_\pi \overline{P_\pi} (1-x)(1-\frac1x)(1-y)(1-\frac1y)$.

Observe that
\[(1-x)(1-\frac1x)(1-y)(1-\frac1y)=4-2\left(x+y+\frac1x+\frac1y\right)+\left(xy+\frac1{xy}+\frac xy+\frac yx\right).\]
Write $F=\sum f_{i,j}x^iy^j$, the constant term of $F\cdot (1-x)(1-\frac1x)(1-y)(1-\frac1y)$ is equal to
\equanum{\label{const-term}4f_{0,0}-2(f_{0,1}+f_{1,0}+f_{0,-1}+f_{-1,0})+(f_{1,1}+f_{1,-1}+f_{-1,1}+f_{-1,-1}).}
If we set $F=P_\pi\overline{P_\pi}$, then
\[f_{i,j}=\sum_{\substack{
a-c=i\\
b-d=j}}p_{a,b}p_{c,d}.\]
In particular,
\equa{&f_{0,0}=\sum_{a,b\in\ZZ} p_{a,b}^2,\\
&f_{0,1}+f_{1,0}+f_{0,-1}+f_{-1,0}=\sum_{a,b\in\ZZ} p_{a,b}(p_{{a-1},b}+p_{{a+1},b}+p_{{a},b-1}+p_{{a},b+1}),\\
&f_{1,1}+f_{1,-1}+f_{-1,1}+f_{-1,-1}=\sum_{a,b\in\ZZ} p_{a,b}(p_{{a+1},b+1}+p_{{a+1},b-1}+p_{{a-1},b-1}+p_{{a-1},b+1}).
}
Denote 
\equa{&s_{a,b}=4p_{a,b}-2(p_{{a-1},b}+p_{{a+1},b}+p_{{a},b-1}+p_{{a},b+1});\\
&\indent\indent+(p_{{a+1},b+1}+p_{{a+1},b-1}+p_{{a-1},b-1}+p_{{a-1},b+1}),\\
&s_{a,b}^{++}=p_{a,b}-(p_{{a+1},b}+p_{{a},b+1})+p_{{a+1},b+1},\\
&s_{a,b}^{+-}=p_{a,b}-(p_{{a+1},b}+p_{{a},b-1})+p_{{a+1},b-1},\\
&s_{a,b}^{-+}=p_{a,b}-(p_{{a-1},b}+p_{{a},b+1})+p_{{a-1},b+1},\\
&s_{a,b}^{--}=p_{a,b}-(p_{{a-1},b}+p_{{a},b-1})+p_{{a-1},b-1};\\
}
\equa{
&S^{++}=\sum_{a,b\geq 0}p_{a,b}s_{a,b}^{++}+p_{a+1,b}s_{a+1,b}^{-+}+p_{a,b+1}s_{a,b+1}^{+-}+p_{a+1,b+1}s_{a+1,b+1}^{--},\\
&S^{+-}=\sum_{a\geq 0,b\leq 0}p_{a,b}s_{a,b}^{+-}+p_{a+1,b}s_{a+1,b}^{--}+p_{a,b-1}s_{a,b-1}^{++}+p_{a+1,b-1}s_{a+1,b-1}^{-+},\\
&S^{-+}=\sum_{a\leq 0,b\geq 0}p_{a,b}s_{a,b}^{-+}+p_{a+1,b}s_{a+1,b}^{++}+p_{a,b+1}s_{a,b+1}^{--}+p_{a+1,b+1}s_{a+1,b+1}^{+-},\\
&S^{--}=\sum_{a,b\leq 0}p_{a,b}s_{a,b}^{--}+p_{a+1,b}s_{a+1,b}^{+-}+p_{a,b+1}s_{a,b+1}^{-+}+p_{a+1,b+1}s_{a+1,b+1}^{++}.\\
}
Then (\ref{const-term}) becomes
\equa{\sum_{a,b\in\ZZ}p_{a,b}s_{a,b}=&\sum_{a,b\in\ZZ}p_{a,b}\cdot(s_{a,b}^{++}+s_{a,b}^{+-}+s_{a,b}^{-+}+s_{a,b}^{--})\\
=&S^{++}+S^{+-}+S^{-+}+S^{--}.
}

For the remainder of this proof, we shall show $S^{++}\geq p_{0,0}$. The same will hold for the summands $S^{+-},S^{-+},S^{--}$ by symmetry. We conclude the value of (\ref{const-term}) is at least $4p_{0,0}$. Hence by (\ref{eqn:vir-tan-image}) the constant term of $T^{\vir}|_{Z_\pi}$ is at most $-2p_{0,0}<0$, and we are done.

Recall 
\[Q_\pi=\sum_{(i,j,k,l)\in\pi}t_1^it_2^jt_3^kt_4^l,\]
so
\[P_\pi=\sum_{(i,j,k,l)\in\pi}x^{i-j}y^{k-l},\text{ and}\]
\[p_{a,b}=\#\{(i,j,k,l)\in\pi:i-j=a,k-l=b\}.\]
Fix $k$ and $l$, then the set
$\{(i,j): (i,j,k,l)\in\pi\}$
is a plane partition. 
By property (\ref{solid-partition}) of solid partitions, for fixed $b=k-l$, we have $p_{a,b}\geq p_{{a+1},b}$ when $a\geq 0$. For the same reason, we have $p_{a,b}\geq p_{{a},b+1}$ when $b\geq 0$.
Therefore the numbers $(p_{a,b})_{a,b\geq 0}$ are non-increasing as the pair $(a,b)$ move away from the origin.

We apply induction on $\max\{a:p_{a,0}\neq 0\}$. Suppose for all sequences $(q_{a,b})$ with $\max\{a:q_{a,0}\neq 0\}<\max\{a:p_{a,0}\neq 0\}$, we have
\[S^{++}(q_{a,b})\geq q_{0,0}\]
whenever the sequence $(q_{a,b})_{a,b\geq 0}$ satisfies $q_{a,b}$ is non-increasing in $a,b$. The base case is simply when $q_{a,b}=0$ for all $a,b$, which sums to 0. Let $q_{a,b}=p_{{a+1},b}$, then
\equanum{\label{ineq}S^{++}(p_{a,b})=&\sum_{a,b\geq 0}(p_{a,b}-p_{a+1,b}-p_{a,b+1}+p_{a+1,b+1})^2\\
=&S^{++}(q_{a,b})+\sum_{b\geq 0}(p_{0,b}-p_{1,b}-p_{0,b+1}+p_{1,b+1})^2\\
\geq& p_{1,0}+\sum_{b\geq 0}(p_{0,b}-p_{1,b}-p_{0,b+1}+p_{1,b+1})^2
}
where the first equality follows from the definition and the inequality is by the induction hypothesis. 

Now apply another induction on the value of $\max\{b:p_{0,b}\neq 0\}$. The induction hypothesis is that for any sequences $(q_{a,b})$ with $q_{a,b}$ non-increasing in $a,b$ and $\max\{b:q_{0,b}\neq 0\}<\max\{b:p_{0,b}\neq 0\}$, we have
\[\sum_{b\geq 0}(q_{0,b}-q_{1,b}-q_{0,b+1}+q_{1,b+1})^2\geq q_{0,0}-q_{1,0}.\]
Again, the base case is trivial, and we can apply the hypothesis to $q_{a,b}=p_{a,b+1}$, giving us
\[\sum_{b\geq 1}(p_{0,b}-p_{1,b}-p_{0,b+1}+p_{1,b+1})^2\geq p_{0,1}-p_{1,1}\]
So we have the following inequalities
\equa{&(p_{0,0}-p_{1,0}-p_{0,1}+p_{1,1})^2+\sum_{b\geq 1}(p_{0,b}-p_{1,b}-p_{0,b+1}+p_{1,b+1})^2-(p_{0,0}-p_{1,0})\\
\geq& (p_{0,0}-p_{1,0}-p_{0,1}+p_{1,1})^2-(p_{0,0}-p_{1,0}-p_{0,1}+p_{1,1})\\
\geq& 0
}
where the last inequality is due to $p_{0,0}-p_{1,0}-p_{0,1}+p_{1,1}$ being an integer. Therefore
\[\sum_{b\geq 0}(p_{0,b}-p_{1,b}-p_{0,b+1}+p_{1,b+1})^2\geq p_{0,0}-p_{1,0},\]
finishing the second induction. By (\ref{ineq}),
\[S^{++}(p_{a,b})\geq p_{1,0}+(p_{0,0}-p_{1,0})=p_{0,0},\]
which finishes the first induction and the proof.
\end{proof}
We checked the Conjectures \ref{con:sv4d-intro} using a computer program. The correspondence part was checked with a computer program for
\begin{equation}
\label{eq:4DSVcomp}
\begin{cases}n\leq6, \text{ for $N,r\leq 1$,}\\
n\leq3,\text{ for $N,r\leq 2$,}\\
n\leq2,\text{ for $N,r\leq 3$,}\\
n\leq2,\text{ for $N,r\leq 4$.}
\end{cases}
\end{equation}
The symmetry part was checked for
\begin{equation}
\label{eq:4DSymp}
\begin{cases}n\leq4, \text{ for $N=r=1$,}\\
n\leq3,\text{ for $N=1,r=2$,}\\
n\leq2,\text{ for $N=r=2$,}\\
n\leq2,\text{ for $N=1,r=3$.}\\
\end{cases}
\end{equation}

\subsection{Cohomological limits}
Recall the proof of Theorem \ref{thm:SV-univ-sieres} mainly involved showing that the genus $I_S$ is admissible in the sense of Definition \ref{def:admissible}. Also, by Proposition \ref{prop:series-admissible}, universal series expressions for the Nekrasov genus, and therefore the Segre and Verlinde series, can be obtained if and only if the Nekrasov genus is admissible. Thus one might ask when the Nekrasov genus is admissible. For the rank $r=N$ case, we shall show that admissibility is a consequence of the following explicit formula conjectured by Nekrasov-Piazzalunga \cite[§2.5]{magcolor}. Denote
\[[x]=x^{\frac12}-x^{-\frac12}.\]

\begin{conjecture}[Nekrasov-Piazzalunga]\label{con:nek2}
There exists some choice of signs $o(\mathcal{L})$ such that for $E=\oplus_{i=1}^N\O_X\<y_i\>$, $V=\oplus_{i=1}^N\O_X\<v_i\>$, 
\[{\mathcal{N}}_X(E,V;q)=\Exp\left(\frac{[t_1t_2][t_2t_3][t_1t_3]}{[t_1][t_2][t_3][t_4]}\frac{[s]}{[s^{\frac12}q][s^{-\frac12}q]}\right)\]
with a change of variable $s=\prod_{i=1}^Ny_iv_i$.
\end{conjecture}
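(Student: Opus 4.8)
As this is a conjecture that we do not resolve here, what follows describes the strategy we expect to be viable rather than a complete argument. The first step is to record that, after specializing $t_1t_2t_3t_4=1$, the genus ${\mathcal{N}}_X(E,V;q)$ is admissible in the sense of Definition~\ref{def:admissible}: in the presentation ${\mathcal{N}}_X(E,V;q)=\Exp\big(L/((1-t_1)(1-t_2)(1-t_3)(1-t_4))\big)$ the series $L$ has coefficients polynomial in $t_1,\dots,t_4$. This should follow from the Arbesfeld-type analysis underlying Section~\ref{sec:proj-reduction}, applied now to the $\CC^4$ vertex: the denominators $\ch(\Lambda_{-1}\sqrt{T^\vir|_{Z_\pi}}^\vee)$ in the fixed-point sum over $N$-colored solid partitions should involve only compact weights in $t_1,\dots,t_4$, so that $L$ is, order by order in $q$, a Laurent polynomial in the $t_i$ and in $y_i^{\pm\frac12},v_i^{\pm\frac12}$; together with the rationality refinement of \cite{CKM}, this confines the answer to a finite-dimensional space at each order and makes a ``determine the numerator'' approach realistic.

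The second step is to prove the rigidity hidden in the formula: that ${\mathcal{N}}_X(E,V;q)$ depends on the framing weights only through $s=\prod_{i=1}^N y_iv_i$. I would try to read this off the localization formula \eqref{def:nek-genus} directly, using that the chosen square root $\sqrt{T^\vir_{Z_\pi}}=\sum_{i,j=1}^N(Q_j-\overline{P_{123}}\,\overline{Q_i}Q_j)\,y_i^{-1}y_j$, the insertion $\ch(\tfrac{\Lambda_{-1}}{\sqrt{\det}}\a^{[n]}|_{Z_\pi})$, the Todd contribution $\ch(\sqrt{K^\vir|_{Z_\pi}}^{\frac12})$, and the sign $o(\mathcal{L})|_{Z_\pi}=\sum_i o(\mathcal{L})|_{Z_i}$ are all organized color by color, so that after passing to the plethystic logarithm the $y_i^{-1}y_j$ cross-terms collapse to a single power of $s$. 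A softer alternative is an equivariant rigidity argument: since the coefficients of $L$ are Laurent polynomials in $y_i^{\pm\frac12},v_i^{\pm\frac12}$, it is enough to show that they are annihilated by the operators $y_i\partial_{y_i}-y_j\partial_{y_j}$ and $y_i\partial_{y_i}-v_j\partial_{v_j}$, which can in principle be extracted from the resolved obstruction theory $B^\bullet$ recalled in Section~\ref{sec:vir-inv4d}, since the $\T_1$-action there enters only through the framing maps.

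Granting rigidity, the problem reduces to the $N=1$ case --- the ``Magnificent Four'' series for $\hilb^n(\CC^4)$ with the Nekrasov--Piazzalunga twist and $s=yv$ --- where the right-hand side is $\Exp\big(\tfrac{[t_1t_2][t_2t_3][t_1t_3]}{[t_1][t_2][t_3][t_4]}\tfrac{[s]}{[s^{\frac12}q][s^{-\frac12}q]}\big)$. Here I would use the cohomological/dimensional-reduction limit to $\CC^3$ of Cao--Kool--Monavari \cite[Appendix A]{CKM}: in that limit ${\mathcal{N}}_X$ degenerates to a known (topological-vertex / Donaldson--Thomas) partition function for $\CC^3$, and one then argues that admissibility together with the prescribed pole structure in $t_4$ (equivalently, in $\lam_4=-\lam_1-\lam_2-\lam_3$) rigidifies the lift back to $\CC^4$ uniquely, forcing the claimed closed form. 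Order by order this can be cross-checked by the vertex formalism of Section~\ref{sec:CY4ktheory} and the computations summarized in \eqref{eq:4DSVcomp}.

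The principal obstacle --- and the reason we leave Conjecture~\ref{con:nek2} open --- is control of the orientation data $o(\mathcal{L})|_{Z_\pi}$: the classes $e(\sqrt{T^\vir})$ and $\sqrt{\td}$ are defined only up to the signs $(-1)^{o(\mathcal{L})|_{Z_\pi}}$ fixed by the combinatorial rule of \cite{monavari,KR-draft}, and proving that this rule is compatible with the multiplicativity forced by the plethystic exponential is precisely the delicate point. A secondary difficulty is that no closed form is known for the fully equivariant four-dimensional solid-partition vertex, so a direct combinatorial evaluation of the left-hand side is unavailable and the argument must proceed indirectly; making the ``rigidifies the lift'' step precise --- showing that the $t_4$-pole structure leaves no room for correction terms in the $\CC^3\leadsto\CC^4$ reconstruction --- is where the genuine work lies.
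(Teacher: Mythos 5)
The paper does not prove this statement: it is recorded as a conjecture, attributed to Nekrasov--Piazzalunga \cite{magcolor}, and is only ever used as a \emph{hypothesis} (Proposition \ref{prop:nek-con-limit} and the admissibility proposition both have the form ``Conjecture \ref{con:nek2} implies $\ldots$''). So there is no argument in the paper to compare yours against, and your framing of the proposal as an unexecuted strategy is the correct posture. That said, two of your steps have concrete problems worth flagging.

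First, your opening step asserts that admissibility of ${\mathcal{N}}_X$ ``should follow'' from an Arbesfeld-type compact-weight analysis as in the surface case. In the paper the logical direction is the opposite: admissibility for $\CC^4$ is deduced \emph{from} Conjecture \ref{con:nek2}, precisely because the $SO$-type localization with square-root Euler classes and orientation signs $(-1)^{o(\mathcal{L})|_{Z_\pi}}$ does not obviously fit the pole-analysis framework of \cite{Arbesfeld} (the sign rule is not manifestly compatible with the product structure that a plethystic-exponential presentation requires, which is the obstacle you yourself name at the end). Using admissibility as an unconditional first step therefore risks circularity, or at least presupposes the hardest part.

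Second, and more seriously, the reconstruction step ``admissibility together with the prescribed pole structure in $t_4$ rigidifies the lift back to $\CC^4$ uniquely'' would fail as stated. The cohomological limit of \cite[Appendix A]{CKM} is a degeneration $\e\to 0$ after rescaling all equivariant parameters; it retains only the leading term of each coefficient and is very far from injective. Infinitely many admissible series with the required symmetry and the same lower bounds on the $\lam_i$-degrees share a given $\CC^3$ (or cohomological) limit --- indeed the whole point of the K-theoretic refinement is that it carries strictly more information than its cohomological shadow. Some additional rigidity input (e.g.\ the full $q\leftrightarrow s$ and $t_i$-symmetries of the conjectured answer, or an independent recursion on solid partitions) would be needed to pin down the lift, and identifying such an input is exactly the open content of the conjecture. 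Your color-by-color collapse to a single power of $s$ is likewise unproven and is itself a nontrivial assertion about the $N$-colored solid-partition vertex, not a formal consequence of the fixed-point formula.
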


\begin{proposition}
Nekrasov-Piazzalunga's Conjecture \ref{con:nek2} implies that the Nekrasov genus $\mathcal{N}_X$ of rank $r=N$ is admissible with respect to the variables $t_1,t_2,t_3,t_4$.
\end{proposition}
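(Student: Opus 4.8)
The plan is to read the plethystic logarithm of ${\mathcal{N}}_X$ directly off the conjectural closed form. Granting Conjecture \ref{con:nek2}, we have ${\mathcal{N}}_X(E,V;q) = \Exp(G)$ with
\[
G \;=\; \frac{[t_1t_2][t_2t_3][t_1t_3]}{[t_1][t_2][t_3][t_4]}\cdot\frac{[s]}{[s^{\frac12}q][s^{-\frac12}q]}\,,\qquad s=\prod_{i=1}^N y_iv_i\,,
\]
where $\Exp$ is the plethystic exponential with respect to all of the variables $q$, $y_i^{\pm\frac12}$, $v_i^{\pm\frac12}$, $t_1,\dots,t_4$, as in Definition \ref{def:admissible}. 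Since $\Exp$ is injective on series with vanishing constant term, admissibility with respect to $t_1,t_2,t_3,t_4$ amounts to exhibiting $G$ in the form $G = L\cdot\big((1-t_1)(1-t_2)(1-t_3)(1-t_4)\big)^{-1}$ with $L$ a (Laurent) polynomial in $t_1,\dots,t_4$; once $t_1t_2t_3t_4=1$ is imposed, ``Laurent polynomial'' and ``polynomial'' coincide, since every $t_i^{-1}$ has a polynomial representative such as $t_2t_3t_4$.

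First I would simplify the two $t$-factors. Writing $[t_i]=t_i^{\frac12}-t_i^{-\frac12}=-t_i^{-\frac12}(1-t_i)$ and using $t_1t_2t_3t_4=1$,
\[
[t_1][t_2][t_3][t_4] \;=\; (t_1t_2t_3t_4)^{-\frac12}\prod_{i=1}^4(t_i-1)\;=\;\prod_{i=1}^4(1-t_i)\,.
\]
For the numerator, set $a=t_1^{\frac12}$, $b=t_2^{\frac12}$, $c=t_3^{\frac12}$, $d=t_4^{\frac12}$ with $abcd=1$, so that $[t_1t_2]=ab-cd$, $[t_2t_3]=bc-ad$, $[t_1t_3]=ac-bd$. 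A direct expansion of the triple product, using $abcd=1$ to collapse each of the eight resulting monomials, yields the clean identity
\[
[t_1t_2][t_2t_3][t_1t_3] \;=\; \Big(\tfrac1{t_1}+\tfrac1{t_2}+\tfrac1{t_3}+\tfrac1{t_4}\Big)-\big(t_1+t_2+t_3+t_4\big)\,,
\]
which is manifestly a Laurent polynomial in $t_1,\dots,t_4$.

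Finally I would observe that the $s$--$q$ factor carries no $t_i$. Since $[s^{\frac12}q][s^{-\frac12}q]=q+q^{-1}-s^{\frac12}-s^{-\frac12}$, one has $\dfrac{[s]}{[s^{\frac12}q][s^{-\frac12}q]}=(s^{\frac12}-s^{-\frac12})\,q\,\big(1-q(s^{\frac12}+s^{-\frac12})+q^2\big)^{-1}$, an $O(q)$ power series in $q$ and $s^{\pm\frac12}$ with integer coefficients and no dependence on $t_1,\dots,t_4$; in particular the right-hand side is a genuine power series in $q$ with vanishing constant term. Combining the three displays,
\[
G \;=\; \frac{L}{(1-t_1)(1-t_2)(1-t_3)(1-t_4)}\,,\qquad L:=\Big(\sum_{i=1}^4 t_i^{-1}-\sum_{i=1}^4 t_i\Big)\cdot\frac{[s]}{[s^{\frac12}q][s^{-\frac12}q]}\,,
\]
and each coefficient of $L$ in its expansion in $q$, $y_i^{\pm\frac12}$, $v_i^{\pm\frac12}$ is the fixed Laurent polynomial $\sum_i t_i^{-1}-\sum_i t_i$ times a scalar, hence a polynomial in $t_1,\dots,t_4$ modulo $t_1t_2t_3t_4-1$. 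This is exactly the admissibility condition of Definition \ref{def:admissible} with respect to $t_1,t_2,t_3,t_4$ (after specializing $t_1t_2t_3t_4=1$).

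The only genuine computation is the numerator identity for $[t_1t_2][t_2t_3][t_1t_3]$; it is elementary but uses the Calabi--Yau relation $t_1t_2t_3t_4=1$ in an essential way, both to kill the half-integral monomials and to present the answer polynomially. The one conceptual point to get right is that the $\Exp$ of Conjecture \ref{con:nek2} is the same plethystic exponential (over all variables) used in Definition \ref{def:admissible}, so that reading off $G$ as above really does produce the series $L$ of that definition.
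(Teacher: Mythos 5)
Your proposal is correct and follows essentially the same route as the paper: write $[t_1][t_2][t_3][t_4]=(1-t_1)(1-t_2)(1-t_3)(1-t_4)$ using $t_1t_2t_3t_4=1$, observe the numerator $[t_1t_2][t_2t_3][t_1t_3]$ is a Laurent (hence, modulo $t_1t_2t_3t_4-1$, genuine) polynomial in the $t_i$, and note the $s$--$q$ factor carries no $t$-dependence, so the plethystic logarithm times $\prod_i(1-t_i)$ has polynomial coefficients. Your fully expanded identity $[t_1t_2][t_2t_3][t_1t_3]=\sum_i t_i^{-1}-\sum_i t_i$ is a correct (and slightly more explicit) version of the factored form the paper uses, and the remaining observations match the paper's argument.
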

\begin{proof}
Expand the term inside the plethystic exponential, specializing with the relation $t_1t_2t_3t_4=1$, we have
\[\frac{[t_1t_2][t_2t_3][t_1t_3][s]}{[t_1][t_2][t_3][t_4][s^{\frac12}q][s^{-\frac12}q]}=\frac{(1-t_1t_2)(1-t_2t_3)(1-t_1t_3)}{(1-t_1)(1-t_2)(1-t_3)(1-t_4)}\cdot \frac{[s]}{[s^{\frac12}q][s^{-\frac12}q]}\]
Recall Definition \ref{def:admissible}, we have 
\[L=(1-t_1t_2)(1-t_2t_3)(1-t_1t_3)\frac{[s]}{[s^{\frac12}q][s^{-\frac12}q]}\]
is a series in $q,y_1^{\pm\frac12},\dots ,y_N^{\pm\frac12},v^{\pm\frac12}_1,\dots ,v^{\pm\frac12}_r$ whose coefficients are polynomials in $t_1,t_2,t_3,t_4$, as required.
\end{proof}

Lastly, we prove the claim made in the introduction that Conjecture \ref{con:cao-kool-quot} is a consequence of Conjecture \ref{con:nek2} in the $X=\CC^4$ case.

\begin{proposition}\label{prop:nek-con-limit}
Let $X=\CC^4$. If Conjecture \ref{con:nek2} holds for some choice of signs, then Conjecture \ref{con:cao-kool-quot} holds for $Y=X$.

In particular, we may retrieve the following well-known identity from Conjecture \ref{con:nek2}:
\equa{\sum_{n=0}^\infty q^n\int_{[\quot_{\CC^4}(E,n)]^\vir_{o(\mathcal{L})}}1:=&\sum_{n=0}^\infty q^n \sum_{Z\in\quot_X(E,n)^\T}(-1)^{o(\mathcal{L})|_Z}\frac{1}{e_\T\left(\sqrt{T^\vir_Z}\right)}\\
=&\begin{cases}
e^{\frac{(\lambda_1+\lambda_2)(\lambda_1+\lambda_3)(\lambda_2+\lambda_3)}{\lambda_1\lambda_2\lambda_3(\lambda_1+\lambda_2+\lambda_3)}q},&\text{ when $N=1$}\\
\hfil1,&\text{ otherwise.}
\end{cases}}
\end{proposition}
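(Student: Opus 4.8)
The plan is to deduce Proposition~\ref{prop:nek-con-limit} from Conjecture~\ref{con:nek2} by taking a carefully chosen \emph{cohomological limit} of the Nekrasov genus ${\mathcal{N}}_X$, in the spirit of Cao--Kool--Monavari's limit \cite[Appendix A]{CKM} but adapted to Quot schemes. First I would isolate, exactly as in Lemma~\ref{lem:CV-limit} for surfaces, the substitutions $\vec t\leadsto e^{\e\vec\lambda}$ (so $t_i = e^{\e\lambda_i}$, respecting $t_1t_2t_3t_4=1$ via $\lambda_1+\lambda_2+\lambda_3+\lambda_4=0$), $y_i\leadsto e^{\e m_i}$, $v_i\leadsto e^{\e w_i}$, together with an appropriate rescaling of $q$ by a power of $\e$ and of the Chern-class variables, so that the Verlinde/Nekrasov K-theoretic contribution at each fixed point $Z_\pi$ degenerates to the cohomological contribution $(-1)^{o(\mathcal{L})|_Z}\,s(\alpha^{[n]}|_Z)/e(\sqrt{T^\vir|_Z})$ appearing in the definition of ${\mathcal{C}}_X$ and ${\mathcal{S}}_X$. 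The bookkeeping here is routine given the vertex expressions derived in Section~\ref{sec:CY4ktheory}: each factor $(v_i^{1/2}y_j^{1/2}t^{-\bullet/2}-v_i^{-1/2}y_j^{-1/2}t^{\bullet/2})$ becomes $-\e$ times a linear form plus $O(\e^2)$, each $\ch(\Lambda_{-1}\sqrt{T^\vir}^\vee)$ factor becomes $\pm\e$ times a Chern root plus higher order, and the power of $\e$ is tracked by the virtual dimension $nN$ together with $|\pi|$; the Todd/square-root-determinant pieces $\ch(\sqrt{K^\vir}^{1/2})$ contribute only $1+O(\e)$.

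Next I would carry out the same limit on the right-hand side of Conjecture~\ref{con:nek2}. Writing $s=\prod_{i=1}^N y_iv_i \leadsto e^{\e\sigma}$ with $\sigma=\sum_i(m_i+w_i)$, and using $[x]=x^{1/2}-x^{-1/2}$, one gets $[t_it_j]\leadsto \e(\lambda_i+\lambda_j)+O(\e^2)$, $[t_i]\leadsto \e\lambda_i+O(\e^2)$, and after the correct rescaling $q\leadsto q\e^{\,?}$ the factor $[s]/([s^{1/2}q][s^{-1/2}q])$ limits to a ratio built from $\sigma$ and $q$. The plethystic exponential in the K-theoretic side, upon taking $\e\to 0$ with the denominator $(1-q_1)\cdots(1-q_d)$ replaced by $\prod\lambda_i$ (Example~\ref{ex:equi-push}), turns $\Exp(\,\cdot\,)$ into $\exp(\,\cdot\,)$. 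For $N=1$ the surviving factor is $\dfrac{(\lambda_1+\lambda_2)(\lambda_1+\lambda_3)(\lambda_2+\lambda_3)}{\lambda_1\lambda_2\lambda_3(\lambda_1+\lambda_2+\lambda_3)}\,q$ (using $\lambda_4=-(\lambda_1+\lambda_2+\lambda_3)$), giving the stated exponential; for $N\geq 2$ the extra variables force additional vanishing of the leading $\e$-coefficient, so the limit of the logarithm is $0$ and ${\mathcal{C}}_X(E,V;q)=1$, matching Conjecture~\ref{con:low-rank-vanish} and Conjecture~\ref{con:cao-kool-quot}. Then the identification ${\mathcal{C}}_X(E,V;q)=\exp(q\int_X c_3(X))$ for $N=r+1$ follows because $\int_X c_3(X)=\dfrac{(\lambda_1+\lambda_2)(\lambda_1+\lambda_3)(\lambda_2+\lambda_3)}{\lambda_1\lambda_2\lambda_3\lambda_4}\cdot(-1)$, evaluated via Example~\ref{ex:equi-push} and $\lambda_4=-(\lambda_1+\lambda_2+\lambda_3)$, so the cohomological limit of the Nekrasov formula produces exactly this exponent; matching powers of $q$ gives Conjecture~\ref{con:cao-kool-quot} for $Y=X$. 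The final identity for $\int_{[\quot_{\CC^4}(E,n)]^\vir}1$ is the special case $\alpha=0$ (equivalently $r=0$, $V=0$), where $s(\alpha^{[n]})=c(\alpha^{[n]})=1$ and only the structural factor $e(\sqrt{T^\vir_Z})^{-1}$ survives.

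The step I expect to be the main obstacle is \textbf{justifying that the cohomological limit commutes with the equality of Conjecture~\ref{con:nek2}}, i.e. that one may apply the same substitution-and-limit operation to both sides and that the limits exist and agree coefficient-by-coefficient in $q$. Concretely, one must show: (i) the K-theoretic side ${\mathcal{N}}_X(E,V;q)$ admits the limit — this needs the integrality/Laurent-polynomiality in the $y_i$ and the $\CKM$-type argument (cited after the vertex computation) that ${\mathcal{N}}_X$ lies in $\frac{\QQ(t_1,t_2,t_3,t_4)}{(t_1t_2t_3t_4-1)}[\![q,y_i^{\pm 1/2},v_i^{\pm 1/2}]\!]$, so that setting $t_i=e^{\e\lambda_i}$ and expanding in $\e$ is legitimate and the leading $\e$-order is the claimed cohomological invariant; (ii) the right-hand side's plethystic exponential, which a priori involves $[s^{\pm 1/2}q]$ in the \emph{denominator}, has a well-defined $q$-expansion whose coefficients are Laurent in the $t_i$ and therefore survive the same limit. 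I would handle (i) by the argument of \cite[Proposition 1.13, 1.15]{CKM} already invoked in Section~\ref{sec:CY4ktheory}, and (ii) by expanding $1/([s^{1/2}q][s^{-1/2}q])$ as a geometric-type series in $q$ before taking $\e\to 0$, so that each $[q^n]$-coefficient is a finite rational expression in $t_1,\dots,t_4,s$ to which the limit applies termwise. Once both limits are shown to exist and to land in $H^*_\T(\pt)_\loc[\![q]\!]$, the equality of the limits is forced by the equality of the original series, and reading off the $N=1$ versus $N\geq 2$ cases completes the proof.
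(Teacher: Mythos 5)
Your overall framework (a Cao--Kool--Monavari-style cohomological limit applied to both sides of Conjecture \ref{con:nek2}) is the right starting point, and your technical concerns (i) and (ii) about admissibility and termwise limits are legitimate, but the proposal has a genuine gap: it never bridges the rank mismatch between the two conjectures. Conjecture \ref{con:nek2} only gives a formula for ${\mathcal{N}}_X(E,V;q)$ when $\operatorname{rank}V=\operatorname{rank}E=N$, whereas Conjecture \ref{con:cao-kool-quot} concerns ${\mathcal{C}}_X(E,V;q)$ with $\operatorname{rank}V=N-1$, and the final identity concerns $V=0$. Your ``special case $\alpha=0$'' is therefore not available, and a pure $\e\to 0$ degeneration of the rank-$N$ Nekrasov genus only produces a rank-$N$ tautological insertion, not the rank-$(N-1)$ Chern series. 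The paper supplies the missing device: it combines the cohomological substitution $\lam_i\leadsto\e\lam_i$, $m_i\leadsto\e(1+m_i)$, $w_i\leadsto\e w_i$ with a \emph{second} limit $w_N\to\infty$ while rescaling $q=Q/w_N$. The $n$ factors of $V^{[n]}|_{Z_\pi}$ carrying $w_N$ then contribute $w_N^n\bigl(1+O(1/w_N)\bigr)$, which cancels against $q^n=Q^n/w_N^n$ and ``peels off'' one line-bundle summand of $V$, leaving exactly ${\mathcal{C}}_X(E,V';Q)$ for $V'$ of rank $N-1$; iterating this $N$ times yields the $\int 1$ identity (the exponential for $N=1$, and $1$ for $N>1$).

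This omission also invalidates your claimed vanishing mechanism on the closed-formula side. Without the $w_N\to\infty$ rescaling, the $n$-th plethystic term of $\Exp(\cdots)$ limits under $t_i=e^{\e\lam_i}$, $s=e^{\e\sigma}$ to a multiple of $\frac{(\lam_1+\lam_2)(\lam_1+\lam_3)(\lam_2+\lam_3)}{\lam_1\lam_2\lam_3(\lam_1+\lam_2+\lam_3)}\cdot\frac{\sigma\, q^n}{(q^{n/2}-q^{-n/2})^2}$, which does \emph{not} vanish for $n\geq 2$; so the naive cohomological limit of Conjecture \ref{con:nek2} is not a pure exponential linear in $q$, and ``extra variables for $N\geq2$'' do not force the logarithm to vanish. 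In the paper's argument it is precisely the substitution $q\leadsto Q/w_N$ against a numerator only linear in $w_N$ that kills every term with $n\geq 2$ and isolates the single $n=1$ contribution $\frac{(\lambda_1+\lambda_2)(\lambda_1+\lambda_3)(\lambda_2+\lambda_3)}{\lambda_1\lambda_2\lambda_3(\lambda_1+\lambda_2+\lambda_3)}Q=Q\int_Xc_3(X)$. You would need to incorporate this combined $\e\to0$, $w_N\to\infty$ limit (or an equivalent rank-reduction step) for the proof to go through.
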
 
\begin{remark}
One can compare this to the 3-fold case where \cite[Theorem 7.2]{Quot-DT} states
\[\sum_{n=0}^\infty q^n\int_{[\quot_{\CC^3}(E,n)]^\vir}1=M((-1)^Nq)^{-N\frac{(\lambda_1+\lambda_2)(\lambda_1+\lambda_3)(\lambda_2+\lambda_3)}{\lambda_1\lambda_2\lambda_3}}.\]
Here $M$ denotes the MacMahon function.
\end{remark}
\begin{proof}
We shall compute the following limit using both the definition and the expression from Conjecture \ref{con:nek2}, then compare the two sides:
\[\lim_{\substack{\e\->0\\w_N\->\infty}} {\mathcal{N}}_X\left(E,V;\frac Q{w_N}\right)\bigg\vert_{\lam_i\leadsto\e\lam_i,m_i\leadsto \e(1+m_i),w_i\leadsto\e w_i}.\]
Let $V=\oplus_{i=1}^N\O_X\<v_i\>$ be a rank $N$ bundle, then for any $Z_\pi\in\quot_X(E,n)^\T$, we have
\equa{&\frac{\ch_\T(\sqrt{K^\vir|_{Z_\pi}}^{\frac12})}{\ch_\T(\Lambda_{-1} \sqrt{T^\vir|_{Z_\pi}}^\vee)}\ch_\T\left(\frac{\Lambda_{-1}}{\sqrt{\det}} V^{[n]}|_{Z_\pi}\right)\bigg\vert_{\lam_i\leadsto\e\lam_i,m_i\leadsto \e(1+m_i),w_i\leadsto\e w_i}\\
=&\e ^{Nn-Nn}\frac{e_\T(V^{[n]}|_{Z_\pi})+O(\e )}{e_\T\left(\sqrt{T^\vir_{Z_\pi}}\right)+O(\e )}\\
=&\frac{\prod_{i=1}^N\prod_{j=1}^N\prod_{(a,b,c,d)\in\pi^{(j)}}(1+w_i+m_j+a\lambda_1+b\lambda_2+c\lambda_3+d\lambda_4)+O(\e )}{e_\T\left(\sqrt{T^\vir_{Z_\pi}}\right)+O(\e )}
.}
Take limit $\e \->0$ and let $Q=m_Nq$, then
\equa{&\lim_{\e \->0}\frac{\ch_\T(\sqrt{K^\vir|_{Z_\pi}}^{\frac12})}{\ch_\T(\Lambda_{-1} \sqrt{T^\vir|_{Z_\pi}}^\vee)}\ch_\T\left(\frac{\Lambda_{-1}}{\sqrt{\det}} V^{[n]}|_{Z_\pi}\right)\bigg\vert_{\lam_i\leadsto\e\lam_i,m_i\leadsto \e(1+m_i),w_i\leadsto\e w_i}\cdot q^n\\
=& \frac{\prod_{i=1}^N\prod_{j=1}^N\prod_{(a,b,c,d)\in\pi^{(j)}}(1+w_i+m_j-a\lambda_1-b\lambda_2-c\lambda_3-d\lambda_4)}{e_\T\left(\sqrt{T^\vir_{Z_\pi}}\right)}\cdot\frac{Q^n}{m_N^n}\\
=&\frac{\prod_{i=1}^{N-1}\prod_{j=1}^{N}\prod_{(a,b,c,d)\in\pi^{(j)}}(1+w_i+m_j-a\lambda_1-b\lambda_2-c\lambda_3-d\lambda_4)}{e_\T\left(\sqrt{T^\vir_{Z_\pi}}\right)}\\
&\cdot \prod_{j=1}^N\prod_{(a,b,c,d)\in\pi^{(j)}}\left(1+\frac{m_j}{w_N}-\frac{a\lambda_1}{w_N}-\frac{b\lambda_2}{w_N}-\frac{c\lambda_3}{w_N}-\frac{d\lambda_4}{w_N}\right) Q^n
.}
Now take $w_N\->\infty$ and substitute into Definition \ref{def:nek-genus}. Let $V'=\oplus_{i=1}^{N-1}\O_X\<v_i\>$, then
\equanum{\label{eqn:neklim-def}\lim_{\substack{\e \->0\\w_N\->\infty}} {\mathcal{N}}_X\left(E,V;\frac Q{w_N}\right)\bigg\vert_{\lam_i\leadsto\e\lam_i,m_i\leadsto \e(1+m_i),w_i\leadsto\e w_i}={\mathcal{C}}_X(E,V';Q).}

On the other hand, we apply the same procedure to 
\[{\mathcal{N}}_X(E,V;q)=\Exp\left(\frac{[t_1t_2][t_2t_3][t_1t_3]}{[t_1][t_2][t_3][t_4]}\frac{[s]}{[s^{\frac12}q][s^{-\frac12}q]}\right).\]
For $n\geq 1$, we have
\equa{&\lim_{\substack{\e \->0\\w_N\->\infty}}\frac{[t_1^nt_2^n][t_2^nt_3^n][t_1^nt_3^n]}{[t_1^m][t_2^n][t_3^n][t_4^n]}\frac{[\prod y_i^nv_i^n]}{[\prod y_i^{\frac n2}v_i^{\frac n2}q^n][\prod y_i^{-\frac n2}v_i^{-\frac n2}q^{n}]}\bigg\vert_{\lam_i\leadsto\e\lam_i,m_i\leadsto \e(1+m_i),w_i\leadsto\e w_i}\\
=&\lim_{\substack{\e \->0\\w_N\->\infty}}\frac{(\e n)^3(\lambda_1+\lambda_2)(\lambda_1+\lambda_3)(\lambda_2+\lambda_3)+O(\e ^5)}{(\e n)^4\lambda_1\lambda_2\lambda_3(\lambda_1+\lambda_2+\lambda_3)+O(\e ^5)}\cdot \frac{(\e n)\sum_i(1+m_i+w_i)+O(\e )}{(q^{\frac n2}-q^{-\frac n2})^2}\\
=&\lim_{w_N\->\infty}\frac{(\lambda_1+\lambda_2)(\lambda_1+\lambda_3)(\lambda_2+\lambda_3)}{\lambda_1\lambda_2\lambda_3(\lambda_1+\lambda_2+\lambda_3)}\cdot \frac{\sum_i(1+m_i+w_i)(\frac{Q}{w_N})^n}{(1-(\frac{Q}{w_N})^n)^2}\\
=&\begin{cases}
\frac{(\lambda_1+\lambda_2)(\lambda_1+\lambda_3)(\lambda_2+\lambda_3)}{\lambda_1\lambda_2\lambda_3(\lambda_1+\lambda_2+\lambda_3)}Q,&\text{ when $n=1$}\\
\hfil0,&\text{ otherwise.}
\end{cases}
}
Together with (\ref{eqn:neklim-def}), we have
\equa{{\mathcal{C}}_X(E,V';Q)=&e^{\frac{(\lambda_1+\lambda_2)(\lambda_1+\lambda_3)(\lambda_2+\lambda_3)}{\lambda_1\lambda_2\lambda_3(\lambda_1+\lambda_2+\lambda_3)}Q}\\
=&\exp\left(Q\int_Xc_3(X)\right).
}
This is exactly Conjecture \ref{con:cao-kool-quot}.

With the same method, we can take limits
\[\lim_{\substack{\e \->0\\w_N\->\infty}} {\mathcal{N}}_X\left(E,V;\frac Q{w_Nw_{N-1}\dots w_{N-i+1}}\right)\]
for $1<i\leq N$ and $V$ of rank $N-i$, and get
\[{\mathcal{C}}_X(E,V;Q)=1.\]
In particular, when $i=N$ and $N>1$, we have
\equa{\sum_{n=0}^\infty Q^n\int_{[\quot_{\CC^4}(E,n)]^\vir_{o(\mathcal{L})}}1=1.
}
\end{proof}

\bibliographystyle{amsalpha}
\bibliography{refs}


\end{document}